\newcommand{\g}[2]{g^{#1}_{#2}}
\DeclareMathOperator{\Pic}{Pic}
\newtheoremstyle{thmbld}{\topsep}{\topsep}{}{}{\itshape}{}{0.5em}{}
\newtheorem{theorem}{Theorem}[section]
\newtheorem{lemma}[theorem]{Lemma}
\newtheorem{prop}[theorem]{Proposition}
\newtheorem{cor}[theorem]{Corollary}
\theoremstyle{definition}
\newtheorem{defn}[theorem]{Definition}
\theoremstyle{definition}
\newtheorem{remark}[theorem]{Remark}
\newtheorem{theoremintro}{Theorem}
\newtheorem{conjectureintro}{Conjecture}
\newtheorem{questionintro}{Question}
\newcolumntype{A}{w{c}{0.7cm}}
\title{Maximal Brill--Noether loci via K3 surfaces} 
\author{Asher Auel}
\address{Department of Mathematics\\%
	Dartmouth College\\%
	Kemeny Hall\\%
	Hanover, NH 03755}
\email{asher.auel@dartmouth.edu}
\author{Richard Haburcak}
\email{richard.haburcak.gr@dartmouth.edu}
\begin{document}

\thispagestyle{empty}

\vspace*{-.9cm}

\begin{abstract}
The Brill--Noether loci $\mathcal{M}^r_{g,d}$ parameterize curves of genus $g$ admitting a linear system of rank $r$ and degree $d$; when the Brill--Noether number is negative, they sit as proper subvarieties of the moduli space of genus $g$ curves. We explain a strategy for distinguishing Brill--Noether loci by studying the lifting of linear systems on curves in polarized K3 surfaces, which motivates a conjecture identifying the maximal Brill--Noether loci. Via an analysis of the stability of Lazarsfeld--Mukai bundles, we obtain new lifting results for line bundles of type $\g{3}{d}$ which suffice to prove the maximal Brill--Noether loci conjecture in genus $3$--$19$, $22$, and $23$.
\end{abstract}


\maketitle

\vspace*{-.4cm}

\section*{Introduction}\label{General Definitions}
Given a smooth projective complex curve $C$ of genus $g$, classical Brill--Noether theory concerns the geometry of the variety $W^r_d(C)$, parameterizing the space of line bundles of type $\g{r}{d}$, i.e., having degree $d$ and at least $r+1$ linearly independent global sections on $C$. Specifically, the expected dimension of $W^r_d(C)$ is the \textit{Brill--Noether number} $\rho(g,r,d)\colonequals g-(r+1)(g-d+r)$. In particular, when $\rho(g,r,d)\ge 0$, every smooth curve of genus $g$ admits a line bundle of type $\g{r}{d}$. If $\rho(g,r,d)<0$, then a curve admitting such a $\g{r}{d}$ is called Brill--Noether special, and the \textit{Brill--Noether locus} $\mathcal{M}^r_{g,d}$ parametrizing smooth curves of genus $g$ admitting a line bundle of type $\g{r}{d}$ is a proper subvariety of the moduli space $\mathcal{M}_g$ of smooth curves of genus $g$, see \cite{ACGH}. 

In general, the geometry of Brill--Noether loci is complicated by the existence of multiple components with some that are non-reduced or not of the expected dimension. Indeed, while the Brill--Noether locus $\mathcal{M}^r_{g,d}$ has expected codimension $-\rho$ in $\mathcal{M}_g$, the actual codimension of its components is bounded above by $-\rho$ when $\rho<0$, see e.g., \cite{Farkas_Gavril_2003}, but it could be lower, and known examples with lower than expected codimension exist when $-\rho>g-3$, see \cite{pflueger_legos}. On the other hand, when $\rho(g,r,d)=-1$, Eisenbud and Harris \cite{Eisenbud_Harris_1989} show that $\mathcal{M}^r_{g,d}$ is irreducible of codimension~$1$.  More generally, when $-3\le\rho\le-1$, any component of $\mathcal{M}^r_{g,d}$ has codimension $-\rho$, see \cite{EdidinThesis,Eisenbud_Harris_1989,Steffen1998}.  The Brill--Noether divisors were used by Harris, Mumford, and Eisenbud \cite{eisenbud_harris,harris,harris_mumford} in their investigation of the Kodaira dimension of $\mathcal{M}_g$ when $g \geq 23$.  

A question of interest is then to determine the stratification of
$\mathcal{M}_g$ by Brill--Noether loci and, in particular, to identify
those loci that are maximal with respect to containment.  For
Brill--Noether divisors, this is equivalent to having distinct
support, a property that is crucially used by Eisenbud and Harris
\cite{eisenbud_harris}, and further developed by Farkas
\cite{Farkas2000}, to give lower bounds on the Kodaira dimension of
$\mathcal{M}_{23}$.  There are various trivial containments among the
Brill--Noether loci, e.g., $\mathcal{M}^1_{g,2}\subseteq
\mathcal{M}^1_{g,3}\subseteq \cdots \subseteq
\mathcal{M}^1_{g,k}=\mathcal{M}_g$, where
$k\ge\lfloor\frac{g+3}{2}\rfloor$ is at least the generic gonality of
a curve of genus $g$. Likewise, we have
$\mathcal{M}^r_{g,d}\subseteq\mathcal{M}^{r}_{g,d+1}$ by adding a base
point to a $\g{r}{d}$ on $C$. Similarly, by subtracting a point not in
the base locus, $\mathcal{M}^r_{g,d}\subseteq
\mathcal{M}^{r-1}_{g,d-1}$ when $\rho(g,r-1,d-1)<0$, see
\cite{Farkas_2001,Lelli-Chiesa_the_gieseker_petri_divisor_g_le_13}. Modulo
these trivial containments, the \emph{expected maximal Brill--Noether
loci} are the $\mathcal{M}^r_{g,d}$, where for fixed $r$, with $2r\le
d \le g-1$, $d$ is maximal such that $\rho(g,r,d)<0$ and
$\rho(g,r-1,d-1)\ge 0$.  Hence, every Brill--Noether locus is contained
in an expected maximal one, and we conjecture that the expected
maximal loci are distinct.
%
%

\begin{conjectureintro}\label{conjecture}
In every genus $g \geq 3$, the maximal Brill--Noether loci are
the expected ones, except when $g = 7,8,9$.
\end{conjectureintro}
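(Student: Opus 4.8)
The plan is to reduce \Cref{conjecture} to a collection of pairwise non-containment statements and then to establish each one geometrically on curves lying in polarized K3 surfaces. Because every Brill--Noether locus is contained in one of the expected maximal loci, proving that the maximal loci are exactly the expected ones is equivalent to showing that no expected maximal locus is contained in another: for each pair of distinct expected maximal loci $\M^{r_1}_{g,d_1}$ and $\M^{r_2}_{g,d_2}$ one must produce a smooth genus $g$ curve that admits a $\g{r_1}{d_1}$ but admits no $\g{r_2}{d_2}$. The excluded cases $g=7,8,9$ are precisely the small genera in which a genuine non-trivial containment occurs, so that the maximal loci differ from the expected ones; these constitute a finite set of coincidences that can be verified directly, and the substance of the conjecture is the non-containment in all remaining $g \ge 3$.

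For each witnessing pair I would construct a K3 surface $S$ with a prescribed Picard lattice, typically of rank $2$, together with a smooth curve $C \in |L|$ of genus $g$. The lattice is chosen so that $\Pic(S)$ contains a class $M$ whose restriction $M|_C$ is a $\g{r_1}{d_1}$, forcing $C$ into $\M^{r_1}_{g,d_1}$, while no class of $S$ has the numerical invariants needed to restrict to a $\g{r_2}{d_2}$. Surjectivity of the period map guarantees the existence of such polarized K3 surfaces once the lattice is admissible, and Lazarsfeld's theorem ensures that a general such $C$ is otherwise as Brill--Noether general as the lattice permits. The remaining task is to rule out the existence of a $\g{r_2}{d_2}$ on $C$ that does \emph{not} arise by restriction.

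This is where the lifting results enter. Given a globally generated line bundle $A$ of type $\g{r}{d}$ on $C$, one forms the associated Lazarsfeld--Mukai bundle $E_{C,A}$ on $S$, of rank $r+1$ with $\det E_{C,A} = L$ and $c_2(E_{C,A}) = d$. Under the relevant hypotheses, a Mukai-vector and dimension count shows that $E_{C,A}$ cannot be slope-stable with respect to the polarization --- otherwise the moduli space of such bundles is too small to accommodate its sections --- so $E_{C,A}$ is unstable, and a destabilizing subsheaf produces a line bundle on $S$ whose restriction realizes a subsystem of $A$; that is, $A$ \emph{lifts} to $\Pic(S)$. Having pinned down $\Pic(S)$, every possible lift is numerically accounted for, and one checks directly that none produces a $\g{r_2}{d_2}$. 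Running this analysis for every witnessing pair would complete the proof.

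The hard part is the stability analysis of $E_{C,A}$ for \emph{arbitrary} rank $r+1$. For small $r$ --- in particular the type $\g{3}{d}$ lifting established here, which handles $g = 3$--$19, 22, 23$ --- the possible destabilizing subsheaves and Jordan--H\"older factors can be enumerated and their numerics controlled explicitly. As $g$ grows, the expected maximal loci involve $\g{r}{d}$ with $r$ growing like $\sqrt{g}$, and the combinatorics of the Harder--Narasimhan filtrations of the higher-rank bundles $E_{C,A}$ becomes unwieldy, so that a uniform lifting statement valid for all $r$ is not presently available. Obtaining such a rank-independent stability theorem for Lazarsfeld--Mukai bundles is the principal obstacle to proving \Cref{conjecture} in full generality, and is exactly the gap that restricts the unconditional results to bounded genus.
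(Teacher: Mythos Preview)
The statement is a \emph{conjecture}, not a theorem, and the paper does not prove it in full; it establishes only the cases $g\le 19$ and $g=22,23$ (\Cref{Main Result 1.1}). Your proposal is not a proof either---as you yourself acknowledge in the final paragraph---but rather a strategic outline. At that level, your outline matches the paper's program (\Cref{subsection Philosophy}): reduce to pairwise non-containments of expected maximal loci, construct witnessing curves on K3 surfaces with prescribed rank~$2$ Picard lattice, and use Lazarsfeld--Mukai bundles to lift any hypothetical $\g{r_2}{d_2}$ to $\Pic(S)$, where it is excluded numerically.

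There is, however, a substantive oversimplification. You write that a destabilizing subsheaf of $E_{C,A}$ ``produces a line bundle on $S$ whose restriction realizes a subsystem of $A$.'' This is precisely the point that is \emph{not} automatic and where the real work lies: the maximal destabilizing subsheaf can have rank $>1$, and in that case no lift is immediately produced. The paper's contribution for $r=3$ is an exhaustive case analysis of all terminal (Harder--Narasimhan/Jordan--H\"older) filtrations of the rank~$4$ bundle $E_{C,A}$, showing that under an explicit bound on $d$ the only possible terminal filtration is of type $1\subset 4$, i.e., has a sub-line bundle (\Cref{theorem lifting g3ds general}). Moreover, the Donagi--Morrison conjecture (\Cref{conj DM}) is known to be \emph{false} in general---the paper recounts explicit rank~$3$ counterexamples (\Cref{remark:counterexample})---so your program cannot succeed verbatim for all $g$; the paper supplements the lifting approach with codimension arguments and the lattice conditions L1--L3, and notes genera (e.g., $g=89$) where even \Cref{conj DM} plus these lattice checks would not suffice. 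Your sketch should flag both the higher-rank destabilizer issue and the falsity of unrestricted lifting as genuine obstacles, not merely combinatorial complexity.
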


The conjecture states that at least one component of each expected
maximal Brill--Noether locus is not contained in any other
Brill--Noether locus, hence the expected maximal loci are indeed the
maximal elements in the containment lattice of all Brill--Noether
loci.  Concretely, this means that given any two expected maximal
Brill--Noether loci $\mathcal{M}^r_{g,d}$ and
$\mathcal{M}^{r^\prime}_{g,d^\prime}$, there exists a genus $g$ curve
admitting a $\g{r}{d}$ but not a $\g{r^\prime}{d^\prime}$.

In each genus $g=7,8,9$, there is an unexpected containment between
the two expected maximal Brill--Noether loci. In genus $8$, Mukai
\cite[Lemma 3.8]{Mukai_Curves_and_grassmannians_1993} proved the unexpected containment $\mathcal{M}^1_{8,4}
\subset \mathcal{M}^2_{8,7}$, see \Cref{genus 8 exception}. 
In genus $7$ and $9$, Hannah Larson
pointed out the unexpected containments
$\mathcal{M}^2_{7,6} \subset \mathcal{M}^1_{7,4}$ and
$\mathcal{M}^2_{9,7} \subset \mathcal{M}^1_{9,5}$, see
\Cref{genus 7 exception} and \Cref{genus 9 exception}.

Recently, there have been several breakthroughs in the study of
Brill--Noether special curves of fixed gonality
\cite{Powell_jensen_fixed_gonality,Farkas_2001,jensen2020brillnoether,Larson_Larson_Vogt_fixed_gonality,Larson_A_refined_Brill-Noether_theory_over_Hurwitz_space,pflueger2013linear,Pflueger_2017},
from which one can deduce that the expected maximal
$\mathcal{M}^1_{g,\lfloor\frac{g+1}{2}\rfloor}$ is not contained in
any of the other expected maximal loci and hence is maximal, see
\Cref{Section Maximal Brill--Noether Loci}. Additionally, following
the work of Farkas \cite{Farkas_2001} in genus $23$, there has been
recent focus on showing that Brill--Noether loci of codimension $1$
and $2$ are distinct, and showing various non-containments of
Brill--Noether loci of codimension $2$, see
\cite{CHOI2022,CHOI2012377,CHOI20141458,KIM202070}; in fact, for
$g\ge34$ and not divisible by $3$, one can deduce that there are at
least $2$ maximal Brill--Noether loci. These results are proved using
a mix of tropical, combinatorial, and limit linear series methods.

On the other hand, our approach is to use K3 surfaces to construct
curves admitting a $\g{r}{d}$, but not a $\g{r^\prime}{d^\prime}$,
thus distinguishing the Brill--Noether loci. This idea was introduced
by Farkas \cite{Farkas_2001}, and further developed by Lelli-Chiesa
\cite{Lelli_Chiesa_2013,lellichiesa2021codimension}, who can produce
curves on a K3 surface admitting a $\g{1}{d}$ or $\g{2}{d}$, but not a
$\g{r}{d^\prime}$. We further extend this technique to curves that
admit a $\g{3}{d}$, which suffices to prove our main theorem.

\begin{theoremintro}\label{Main Result 1.1}
\Cref{conjecture} holds in genus $3$--$19$, $22$, and $23$.
\end{theoremintro}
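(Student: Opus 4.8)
The plan is to reduce \Cref{conjecture}, in each of the finitely many genera $g \in \{3,\dots,19,22,23\}$, to a finite list of non-containment statements among the expected maximal Brill--Noether loci, and to verify each by exhibiting a curve on a carefully chosen polarized K3 surface. First, for each such $g$ one enumerates the finitely many pairs $(r,d)$ with $2r\le d\le g-1$ and $\mathcal{M}^r_{g,d}$ expected maximal, i.e.\ $d$ largest with $\rho(g,r,d)<0$ and $\rho(g,r-1,d-1)\ge 0$. Since every Brill--Noether locus lies in an expected maximal one, it suffices to show that for any two distinct such loci $\mathcal{M}^r_{g,d}$ and $\mathcal{M}^{r'}_{g,d'}$ there is a smooth genus $g$ curve carrying a $\g{r}{d}$ but not a $\g{r'}{d'}$; in fact the curve built for a fixed $(r,d)$ will avoid all the other expected maximal loci simultaneously, which gives the component statement in \Cref{conjecture}. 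Many of these pairs are already settled: whenever one of the two loci is the gonality locus $\mathcal{M}^1_{g,\lfloor(g+1)/2\rfloor}$, the required non-containment follows from Brill--Noether theory over the Hurwitz space and the gonality stratification, as recalled in \Cref{Section Maximal Brill--Noether Loci}; for $g=3,\dots,6$ the remaining list is empty or classical; and for $g=7,8,9$ the conjecture instead predicts the single unexpected containment recorded in the Introduction (due to Mukai and to H.\ Larson), whose verification together with a check that there are no further containments settles those genera. What is left is a short explicit list of pairs with $r,r'\ge 2$ in the genera $10,\dots,19,22,23$.

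For such a pair, the construction runs as follows. One fixes a rank-two lattice $\Lambda = \langle H, D\rangle$ with $H^2 = 2g-2$ so that: (i) by surjectivity of the period map, $\Lambda = \Pic(S)$ for a polarized K3 surface $(S,H)$ with $H$ ample; (ii) a general smooth $C \in |H|$ has genus $g$ and carries a $\g{r}{d}$ cut out by the restriction of a prescribed class of $\Lambda$, so $C \in \mathcal{M}^r_{g,d}$; and (iii) no class $M \in \Lambda$ has $M|_C$ or $(H-M)|_C$ of type $\g{r'}{d'}$ --- a finite check that reduces to the (in)solvability of a handful of quadratic (in)equalities in $\Lambda$. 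A dimension count for the family of genus $g$ K3 sections further guarantees that $C$ can be taken general enough to carry no linear systems, below the relevant numerical threshold, other than those forced by $S$.

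The crux is to show that \emph{any} $\g{r'}{d'}$ on $C$ must arise from $S$, so that (ii) and (iii) collide. For $r' \le 2$ this is the lifting machinery of Lelli-Chiesa \cite{Lelli_Chiesa_2013,lellichiesa2021codimension}. For $r'=3$ we invoke the paper's new lifting results: to a line bundle $A$ of type $\g{3}{d'}$ on $C$ one attaches the rank-four Lazarsfeld--Mukai bundle $E_{C,A}$ on $S$ and analyzes its $H$-(semi)stability. If $E_{C,A}$ is $H$-stable, then Mukai's description of moduli of sheaves on K3 surfaces, a Bogomolov-type bound on $c_2(E_{C,A})$, and the scarcity of classes in $\Lambda$ together force $\Lambda$ to contain a class realizing $A$; if $E_{C,A}$ is not $H$-stable, its maximal destabilizing subsheaf is again of Lazarsfeld--Mukai type, of rank $1$ or $2$, and the resulting $\g{1}{\bullet}$ or $\g{2}{\bullet}$ on $C$ lifts by the earlier results. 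Either way $\Lambda$ acquires a class forbidden by (iii). When $g = 22, 23$ an $r'=4$ expected maximal locus also appears; there the rank-five Lazarsfeld--Mukai bundle of a putative $\g{4}{d'}$ is treated the same way, its destabilization yielding lower-rank systems that lift so that no separate $\g{4}{\bullet}$-lifting statement is needed, and in the residual cases a reduction via Serre duality on $C$ returns to the $\g{3}{\bullet}$ case already handled.

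The main obstacle --- and the reason the method currently stops short of $g = 20, 21$ --- is the stability analysis of the rank-four Lazarsfeld--Mukai bundles in the $r'=3$ step, specifically ruling out \emph{rank-two} destabilizing subsheaves. One must control extensions $0 \to F_1 \to E_{C,A} \to F_2 \to 0$ with $F_1, F_2$ of rank two and nearly equal $H$-slope, neither reducing to a pencil, and show that such a configuration is numerically impossible on $\Lambda$ or else still forces $A$ to lift; producing bounds sharp enough to run uniformly over the finitely many pairs $(g,d)$ in range is the technical heart of the paper. Granting these lifting statements, assembling the finitely many K3 constructions over $g = 3,\dots,19$ and $g = 22, 23$ is careful but essentially mechanical lattice bookkeeping, and establishes \Cref{conjecture} in those genera.
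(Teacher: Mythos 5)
Your overall architecture for genera up to $19$ matches the paper: K3 surfaces with prescribed rank-two Picard lattice, Lelli-Chiesa's lifting for $r'\le 2$, the new bounded rank-$3$ lifting for $r'=3$, Hurwitz-space/gonality results for the $\mathcal{M}^1_{g,\lfloor(g+1)/2\rfloor}$ comparisons, classical arguments in low genus, and the exceptional containments in genus $7$--$9$ via secant projections. But there is a genuine gap in how you treat the rank-$4$ expected maximal loci $\mathcal{M}^4_{22,21}$ and $\mathcal{M}^4_{23,22}$. To show the lower-rank expected maximal loci are not contained in these, your construction needs a curve on a K3 surface carrying a $\g{r}{d}$ ($r\le 3$) but no $\g{4}{d'}$, and you propose to rule out the $\g{4}{d'}$ by analyzing its rank-five Lazarsfeld--Mukai bundle ``the same way,'' with destabilization producing lower-rank liftable systems, falling back on Serre duality to return to rank $3$. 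Neither mechanism works. The Serre adjoint of a $\g{4}{21}$ in genus $22$ is again a $\g{4}{21}$, and of a $\g{4}{22}$ in genus $23$ again a $\g{4}{22}$ (these linear systems are self-adjoint), so no reduction to rank $3$ is available. And no rank-$4$ lifting statement exists: the destabilizing subsheaves and terminal-filtration factors of a rank-five LM bundle are not themselves LM bundles of linear systems on $C$ that one can lift (even in the rank-four case the paper's quotients are only \emph{generalized} LM bundles attached to curves in other linear systems, and the whole content of the rank-$3$ theorem is a degree bound excluding the bad filtrations). Indeed, the paper states explicitly that the Donagi--Morrison conjecture is open in rank $\ge 4$, and this is precisely why genus $20$, $21$, and $\ge 24$ are excluded; if your ``same way'' treatment of rank-five LM bundles were available, those genera would follow too. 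Your diagnosis that the obstruction at $g=20,21$ is the rank-two destabilizing subsheaf analysis is likewise off the mark.

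What the paper actually does in genus $22$ and $23$ is sidestep rank-$4$ lifting entirely by codimension arguments: in genus $23$, $\rho=-1$ for $\mathcal{M}^1_{23,12}$, $\mathcal{M}^2_{23,17}$, $\mathcal{M}^3_{23,20}$, so by Eisenbud--Harris these are irreducible divisors, while $\mathcal{M}^4_{23,22}$ has codimension $\ge 2$, hence cannot contain them; in genus $22$ the first three loci have codimension $2$ and $\mathcal{M}^4_{22,21}$ codimension $3$ (using the results valid for $-3\le\rho\le-1$), and the non-containment $\mathcal{M}^2_{22,16}\nsubseteq\mathcal{M}^3_{22,19}$ is imported from Choi--Kim. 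The K3/lifting machinery is then only used in the direction where the putative extra linear system has rank $\le 3$ (e.g.\ showing $\mathcal{M}^4_{23,22}\nsubseteq\mathcal{M}^3_{23,20}$ by excluding a $\g{3}{20}$ on a curve in a very general member of $\mathcal{K}^4_{23,22}$). Without replacing your rank-five LM-bundle step by such codimension (or equivalent) arguments, your proposal does not establish the conjecture in genus $22$ and $23$.
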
 
In genus $23$, Eisenbud and Harris \cite{eisenbud_harris}, and Farkas \cite{Farkas_2001}, prove the part of this conjecture concerning the Brill--Noether divisors in their work on the birational geometry of the moduli space of curves. Concerning genus $20$ and $21$, our results reduce \Cref{conjecture} to the verification that the codimension of $\mathcal{M}^3_{20,17}$ and $\mathcal{M}^4_{21,20}$ is the expected value of~$4$, and that the codimension of $\mathcal{M}^4_{20,19}$ is at least the expected value of $5$, which should be within reach using current techniques.

The geometry of polarized K3 surfaces is intimately related to the
Brill--Noether theory of curves $C$ in the polarization class, see
e.g.,
\cite{Knutsen_k3_models_in_scrolls,Knutsen2003,Martens,MUKAI1988357,Reid1976,Saint_Donat_Proj_Models_of_K3s}. Foundational
to this is Green and Lazarsfeld's celebrated result that the Clifford
index $\gamma(C)$ is constant as $C$ moves in its linear system
\cite{GreenLaz}. Donagi and Morrison \cite[Theorem
5.1']{DonagiMorrison_linsystemsonk3} proved that if $A$ is a complete
basepoint free Brill--Noether special $\g{1}{d}$ on a
non-hyperelliptic smooth curve $C\in|H|$, then $|A|$ is contained in
the restriction of $|M|$ for a line bundle $M\in\Pic(S)$.  In fact,
they conjectured that this is always true, with some slight
modifications due to Lelli-Chiesa.

\begin{conjectureintro}[Donagi--Morrison Conjecture, \cite{Lelli_Chiesa_2015} Conjecture 1.3]\label{conj DM}
	Let $(S,H)$ be a polarized K3 surface and $C\in|H|$ be a smooth irreducible curve of genus $\ge 2$. Suppose $A$ is a complete basepoint free $\g{r}{d}$ on $C$ such that $d\le g-1$ and $\rho(g,r,d)<0$. Then there exists a line bundle $M\in\Pic(S)$ adapted to $|H|$ such that $|A|$ is contained in the restriction of $|M|$ to $C$ and $\gamma(M\otimes\mathcal{O}_C)\le \gamma(A)$.
\end{conjectureintro}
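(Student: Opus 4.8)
This conjecture is open in general; it is known for $r = 1$ (Donagi--Morrison, with the Clifford-index caveat supplied by Lelli-Chiesa) and for $r = 2$ (Lelli-Chiesa), and the plan is to follow the same Lazarsfeld--Mukai bundle strategy, which the lifting results of this paper extend to $r = 3$.

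First I would build the Lazarsfeld--Mukai bundle. Since $A$ is complete and basepoint free, writing $\iota\colon C\hookrightarrow S$ one forms the rank $r+1$ bundle $E = E_{C,A}$ fitting in $0 \to H^0(C,A)^\vee\otimes\mathcal{O}_S \to E \to \iota_*(\omega_C\otimes A^\vee) \to 0$, with $\det E = \mathcal{O}_S(H)$, $c_2(E) = d$, $h^0(E) = h^0(A)+h^1(A)$, and $\chi(E\otimes E^\vee) = 2 - 2\rho(g,r,d)$. Since $\rho < 0$ this Euler characteristic is at least $4$, and Serre duality on the K3 surface forces $h^0(E\otimes E^\vee) \geq 2-\rho \geq 2$; hence $E$ is not simple, so not $\mu_H$-stable. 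The borderline case where $E$ is $\mu_H$-semistable but not stable must be carried along throughout.

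Next I would extract the line bundle $M$ and prove the lifting. From a maximal destabilizing subsheaf (or a Jordan--H\"older factor in the semistable case), after saturating and passing to determinants, one obtains a sub-line-bundle $M\hookrightarrow E$ with torsion-free quotient $N$ of rank $r$; among all such choices, take $M$ with $M\cdot H$ minimal. Minimality, together with $h^0(E)\geq r+1\geq 2$ and $\mu_H(M)\geq\mu_H(E)$, yields $h^0(M)\geq 2$ and $0 < M\cdot H < H^2$, so $M$ is adapted to $|H|$. Restricting the defining sequence of $E$ to $C$ and chasing the evaluation maps then shows that $H^0(C,A)$ factors through $H^0(C,M\otimes\mathcal{O}_C)$, so after removing base points $|A|$ is contained in the restriction of $|M|$ to $C$. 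For the Clifford index inequality, with $k = M\cdot H = \deg(M\otimes\mathcal{O}_C)$ one uses the restriction sequence $0 \to \mathcal{O}_S(M-H) \to \mathcal{O}_S(M) \to M\otimes\mathcal{O}_C \to 0$ to bound $h^0(M\otimes\mathcal{O}_C)$ below and $h^1$ above in terms of $M^2$ and $M\cdot H$, reducing $\gamma(M\otimes\mathcal{O}_C)\leq\gamma(A)$ to a numerical comparison of $(k,h^0(M\otimes\mathcal{O}_C))$ with $(d,r+1)$ that follows from the Hodge index theorem on the sublattice of $\Pic(S)$ spanned by $M$ and $H$ together with the constraint $c_2(E) = d$.

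The hard part is the degenerate cases: $M\otimes\mathcal{O}_C$ not basepoint free, $M^2 < 0$ (where the Hodge index estimate does not immediately close), $N$ having torsion, and---most seriously---$E$ being only $\mu_H$-semistable, where the slope inequality for $M$ is not strict and the Clifford-index estimate can be salvaged only in the inequality form $\gamma(M\otimes\mathcal{O}_C)\leq\gamma(A)$, which is exactly the point of Lelli-Chiesa's correction to the original Donagi--Morrison statement. For $r = 3$ the quotient $N$ has rank $3$ and may itself be unstable, so one must analyze the full Harder--Narasimhan and Jordan--H\"older filtrations of $E$ and invoke the Bogomolov inequality to control $c_2$; carrying out this refined stability analysis of rank $4$ Lazarsfeld--Mukai bundles is the technical heart of the matter, and it is precisely what remains out of reach for $r\geq 4$.
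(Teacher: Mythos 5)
This statement is a conjecture (Lelli-Chiesa's Conjecture 1.3), and the paper does not prove it --- in fact it cannot be proved as stated: as the paper records in \Cref{remark:counterexample}, Knutsen and Lelli-Chiesa construct rank~$3$ counterexamples, the first in genus~$19$, where $\Pic(S)=\Lambda^4_{19,16}$ and the Lazarsfeld--Mukai bundle of a $\g{3}{d}$ with $\rho=-1$ splits as $E=E_1\oplus E_2$ with both $E_i$ simple rank-$2$ LM bundles, so that \emph{no} effective line bundle injects into $E$ and no Donagi--Morrison lift exists. What the paper actually proves is the bounded-degree rank-$3$ version, \Cref{Main Result 1.2}/\Cref{theorem lifting g3ds general}, and the genus-$19$ example shows that bound is essentially sharp. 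So a blind ``proof'' of \Cref{conj DM} itself must contain an error somewhere.

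The error in your sketch is the step ``from a maximal destabilizing subsheaf (or a Jordan--H\"older factor), after saturating and passing to determinants, one obtains a sub-line-bundle $M\hookrightarrow E$.'' Non-simplicity of $E$ only gives a nontrivial HN/JH filtration; the destabilizing subsheaf can have rank $2$ or $3$ (the terminal filtrations $2\subset 4$, $3\subset 4$, $2\subset 3\subset 4$ of \Cref{Section Filtrations of Lazarsfeld--Mukai Bundles of Rank 4}), and taking the determinant of a rank-$2$ subsheaf does not produce a subsheaf of $E$, nor need any effective line bundle inject into $E$ at all --- that is exactly the mechanism of the counterexample above. The entire technical content of the paper is to show that when $d$ is small relative to $\gamma(C)$ (and to the lattice invariants $m,\mu$), every terminal filtration other than $1\subset 4$ forces $c_2(E)=d$ to be large, so that only the $1\subset 4$ case survives and \Cref{Prop Proof Strategy} applies; without such a degree hypothesis the argument, and the statement, fail. (A smaller point: in the paper's normalization the sub-line bundle $N\subset E$ is not itself the lift; the Donagi--Morrison lift is $M=H\otimes N^\vee\cong\det(E/N)$, whose adaptedness and Clifford bound come from \Cref{Prop Proof Strategy}, not from minimality of $M.H$ among sub-line bundles.)
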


For further details and definitions, such as the notion of
\emph{adapted}, 
see \Cref{subsection Philosophy}.  Lelli-Chiesa has verified the
Donagi--Morrison conjecture for linear systems of type $\g{2}{d}$
under some mild hypotheses \cite{Lelli_Chiesa_2013}, and more recently
\cite{Lelli_Chiesa_2015}, has proven the conjecture if the pair
$(C,A)$ does not have unexpected secant varieties up to
deformation. Importantly, in \cite[Appendix~A]{Lelli_Chiesa_2015},
Lelli-Chiesa and Knutsen construct explicit examples that show
\Cref{conj DM} is in general false for linear systems rank $3$, see
\Cref{remark:counterexample}. The proofs of
these results use Lazarsfeld--Mukai bundles $E_{C,A}$ associated to
the pair $(C,A)$, and the fact that when the vector bundle $E_{C,A}$
has a nontrivial maximal destabalizing sub-line bundle $N\in\Pic(S)$,
then $|A|$ is contained in the restriction of $|H\otimes
N^{\vee}|$. For rank $2$ linear systems, a case-by-case analysis of
the Jordan--H\"{o}lder and Harder--Narasimhan filtrations of $E_{C,A}$
is used. This technique becomes much more difficult in higher rank.
In general, Lelli-Chiesa \cite[Theorem 4.2]{Lelli_Chiesa_2015} proves
that $A$ does lift when it computes the Clifford index $\gamma(C)$.
However, in genus $g \geq 14$, except for
$\mathcal{M}^1_{g,\lfloor\frac{g+1}{2}\rfloor}$, all of the expected
maximal Brill--Noether loci correspond to \emph{non-computing}
Brill--Noether special linear systems, i.e., linear systems $|A|$ with
$\rho(A)<0$ and $\gamma(A)>\lfloor\frac{g-1}{2}\rfloor$ so that $A$
cannot compute the Clifford index, as
$\gamma(C)\le\lfloor\frac{g-1}{2}\rfloor$.

Since one cannot hope to prove \Cref{conj DM} in general, our main lifting result is a proof of the Donagi--Morrison conjecture for linear systems of rank~$3$ and bounded degree.

\begin{theoremintro}\label{Main Result 1.2}
	Let $(S,H)$ be a polarized K3 surface of genus $g\neq 2,3,4,8$ and $C\in|H|$ a smooth irreducible curve of Clifford index $\gamma(C)$. Suppose that $S$ has no elliptic curves and $d<\frac{5}{4}\gamma(C)+6$, then \Cref{conj DM} holds for any $\g{3}{d}$ on $C$. Moreover, one has $c_1(M).C\le\frac{3g-3}{2}$.
\end{theoremintro}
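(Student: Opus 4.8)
The plan is to fix a complete basepoint-free $\g{3}{d}$, call it $A$, on $C\in|H|$ with $d\le g-1$ and $\rho(g,3,d)<0$, and to analyze the associated Lazarsfeld--Mukai bundle $E=E_{C,A}$, which is a rank-$4$ vector bundle on $S$ with $c_1(E)=H$, $c_2(E)=d$, and $h^0(E)=h^0(A)+h^1(A)\ge 4+\rho^{-}$-type invariants; one computes $\chi(E)$ and the discriminant $\Delta(E)=2\operatorname{rk}(E)c_2(E)-(\operatorname{rk}(E)-1)c_1(E)^2$ explicitly in terms of $g$ and $d$. First I would recall that $E$ is globally generated off a finite set and that $E^\vee$ has no sections, so $E$ is a "simple-ish" bundle; the key dichotomy is whether $E$ is $H$-slope-stable or not. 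The strategy is: \textbf{(i)} if $E$ is unstable, the maximal destabilizing subsheaf produces (after saturating) a sub-line bundle $N\subset E$ with $\mu_H(N)\ge \mu_H(E)=\tfrac14 H^2$, and the standard Donagi--Morrison/Lelli-Chiesa argument shows $M:=H\otimes N^\vee$ satisfies $|A|\subseteq |M|_C$; one then checks $M$ is adapted to $|H|$ and bounds $\gamma(M\otimes\mathcal O_C)\le\gamma(A)$ and $c_1(M).C\le\tfrac{3g-3}{2}$ using the numerical constraints coming from $d<\tfrac54\gamma(C)+6$ together with the Hodge index theorem on $\operatorname{Pic}(S)$ and the hypothesis that $S$ carries no elliptic curves (so every effective class with nonnegative self-intersection has $D^2\ge 0$ forced into a narrow range, and $D^2=0$ is excluded, killing the problematic "elliptic pencil" configurations). \textbf{(ii)} If $E$ is stable, I would derive a contradiction — or rather, show this case cannot occur in the stated degree range — by a Bogomolov-type inequality argument: stability of $E$ on the K3 surface forces $\Delta(E)\ge 0$, and combined with $h^0(E)$ being large (because $A$ is Brill--Noether special, so $h^1(A)=h^0(K_C-A)$ is sizeable), one gets $\chi(E\otimes E^\vee)\le 2$ and hence a bound $\Delta(E)\le 2\operatorname{rk}(E)^2$ or so; plugging in $c_2(E)=d$ and the hypothesis $d<\tfrac54\gamma(C)+6$ against $\gamma(C)\le\lfloor\tfrac{g-1}{2}\rfloor$ should violate one of these inequalities, so $E$ is necessarily unstable.

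For the numerology I would set $\gamma=\gamma(C)$, write $\rho(g,3,d)<0$ as $4(g-d+3)>g$, and track the Clifford index of the putative lift: if $N$ has $N^2=2s$ and $N.H=t$, then $M=H-N$ has $M^2=2g-2-2t+2s$, $M.H=2g-2-t$, and $\gamma(M\otimes\mathcal O_C)=M.C-2(h^0(M_C)-1)$, which by Riemann--Roch on $S$ and Green--Lazarsfeld's constancy of Clifford index becomes a function of $s,t,g$ only. The bound $c_1(M).C=M.H\le\tfrac{3g-3}{2}$ is then equivalent to $t\ge\tfrac{g-1}{2}$, i.e., to a lower bound on the $H$-degree of the destabilizing $N$, which I expect follows from $\mu_H(N)\ge\tfrac14 H^2$ (giving $t\ge\tfrac{g-1}{2}$ directly once one accounts for $N$ being a line bundle and $H^2=2g-2$) — so this last claim should actually be nearly immediate from the destabilization, with the real content being that $M$ is adapted and $\gamma(M_C)\le\gamma(A)$.

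I expect the \textbf{main obstacle} to be case (i): controlling the geometry of the destabilizing line bundle $N$ (equivalently $M$) when $\operatorname{Pic}(S)$ has rank $\ge 2$, where Hodge index alone leaves several candidate numerical types, and ruling out the "wrong" ones — those where $|A|$ is cut out but $\gamma(M\otimes\mathcal O_C)>\gamma(A)$, which is precisely where Knutsen--Lelli-Chiesa's rank-$3$ counterexamples live. The hypotheses $g\ne 2,3,4,8$, $d<\tfrac54\gamma(C)+6$, and "$S$ has no elliptic curves" are exactly the levers that exclude those configurations: the degree bound forces $\gamma(A)$ to be small relative to $g$, so any lift with $\gamma(M_C)$ too large would itself violate $\gamma(C)\le\gamma(A)$ or the Hodge index bound; and the no-elliptic-curves hypothesis removes the decompositions $M\sim E\cdot(\text{rational multiple})$ through $0$-curves and elliptic pencils that underlie the counterexamples. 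A secondary subtlety is that $E$ might be only semistable but not stable, in which case I would pass to a Jordan--Hölder factor and argue that one of the graded pieces is a line bundle (using $\operatorname{rk}=4$ and that a rank-$2$ or rank-$3$ stable factor would again force $\Delta\ge0$-type bounds incompatible with the degree range), reducing back to case (i).
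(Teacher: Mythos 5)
Your overall frame --- pass to the Lazarsfeld--Mukai bundle $E=E_{C,A}$, use non-stability to extract a sub-line bundle $N$, set $M=H\otimes N^\vee$, and get $c_1(M).C\le\frac{3g-3}{2}$ from $\mu_H(N)\ge\mu_H(E)=\frac{g-1}{2}$ --- matches the paper's, and your case (ii) is even easier than you suggest (since $\rho<0$, $\chi(E^\vee\otimes E)=2(1-\rho)>2$, so $E$ is non-simple and hence never stable, with no appeal to the degree bound). But there is a genuine gap at the heart of case (i): instability of a rank-$4$ bundle does \emph{not} give a destabilizing sub-\emph{line} bundle. The maximal destabilizing subsheaf can have rank $2$ or $3$, and saturation does not change its rank; likewise in the strictly semistable case the Jordan--H\"older factors need not include a rank-one sub. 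The entire technical content of the paper (its Section 4) is the case analysis of all ``terminal filtrations'' of $E$ of types $2\subset4$, $3\subset4$, $1\subset2\subset4$, $1\subset3\subset4$, $2\subset3\subset4$, $1\subset2\subset3\subset4$, showing each forces $d$ to exceed one of the explicit thresholds (e.g.\ $d\ge\frac{\gamma}{2}+\frac{g-1}{2}+4$ for $2\subset4$, $d\ge\frac{5}{4}\gamma+\frac{m}{2}+5$ or $\frac{3}{2}\gamma+5$ for $1\subset2\subset4$, etc.), so that under $d<\frac54\gamma(C)+6$ (with no elliptic curves giving $m\ge2$) only the type $1\subset4$ survives; only then does the Donagi--Morrison mechanism you describe apply, via the proposition that $h^0(S,N)\ge2$ and $E/N$ stable (or semistable with no elliptic curves) yield an adapted $M$ with $|A|\subseteq|M\otimes\mathcal{O}_C|$ and $\gamma(M\otimes\mathcal{O}_C)\le d-6=\gamma(A)$.

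Your suggested tool for ruling out the higher-rank pieces --- Bogomolov/``$\Delta\ge0$''-type inequalities for stable factors --- is not sufficient on its own. Those inequalities only bound $c_2$ of each stable piece from below by its own $c_1^2$; to reach a contradiction with $d<\frac54\gamma+6$ one also needs lower bounds on the cross terms, of the form $c_1(M').c_1(M_1')\ge\gamma(C)+2$ for sub/quotient pairs in the filtration. This is where the hypothesis on $\gamma(C)$ actually enters the proof: one shows the determinants of the quotients are globally generated, nontrivial, and restrict to line bundles on $C$ that contribute to the Clifford index (using Green--Lazarsfeld constancy and Lelli-Chiesa's lemmas), and for the longer filtrations one additionally needs the invariants $m=\min D^2$ and a minimal positive slope $\mu$, which is exactly where ``no elliptic curves'' is used beyond heuristics. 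Your proposal mentions Green--Lazarsfeld only to compute $\gamma(M\otimes\mathcal{O}_C)$ of the final lift, not to control the filtration pieces, so as written the argument would not close in any of the cases where the destabilizing subsheaf has rank $\ge2$ --- which are precisely the configurations (cf.\ the Knutsen--Lelli-Chiesa examples, where $E$ splits as a direct sum of two rank-$2$ bundles) that make the theorem delicate.
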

We prove a slightly more refined version, replacing the hypothesis on non-existence of elliptic curves with an explicit dependence on the Picard lattice of $S$, see \Cref{theorem lifting g3ds general}.

With this lifting result in hand, \Cref{Main Result 1.1} is proved by considering K3 surfaces $(S,H)$ with a prescribed Picard group so that curves $C\in|H|$ have a $\g{r}{d}$, and then proving that if $C$ had a $\g{3}{d^\prime}$, its Donagi--Morrison lift would not be compatible with the Picard group. This latter argument involves some elementary lattice theory. More generally, we explain how a Donagi--Morrison type result together with some lattice theory imply \Cref{conjecture}. As the Donagi--Morrison conjecture is not known in rank $4$ and above, we cannot show that some of the expected maximal Brill--Noether loci are not contained in the $\mathcal{M}^4_{g,d}$ in genus $20$, $21$, and $\ge 24$. In genus $22$ and $23$, known results about the codimension of components of Brill--Noether loci and non-containments of codimension $2$ loci, together with our results, suffice to distinguish the expected maximal loci.

\subsection*{Outline} In \Cref{Section Maximal Brill--Noether Loci}, we briefly analyze some constraints on lifting line bundles and find that in genus $\ge14$ the expected maximal Brill--Noether loci correspond to line bundles that cannot compute the Clifford index of the curve, and summarize how \Cref{conj DM} implies \Cref{conjecture}. The following two sections, \Cref{Stability of Sheaves on K3 Surfaces} and \Cref{Section Lazardsfeld--Mukai Bundles and Lifting and Generalized LM Bundles}, provide some background on the notion of stability of coherent sheaves on K3 surfaces and on Lazarsfeld--Mukai bundles and their relation to lifting line bundles. We also briefly recall some useful facts about generalized Lazarsfeld--Mukai bundles which are needed in particular arguments. At the end of \Cref{Section Lazardsfeld--Mukai Bundles and Lifting and Generalized LM Bundles}, we motivate our proof strategy in \Cref{Prop Proof Strategy}.  In \Cref{Section Filtrations of Lazarsfeld--Mukai Bundles of Rank 4}, we first reduce the problem to finding a bound for each terminal filtration of the Lazarsfeld--Mukai bundle associated to the $\g{3}{d}$, a filtration obtained by taking the Harder--Narasimhan and Jordan--H\"{o}lder filtrations of the Lazarsfeld--Mukai bundle. We then find a bound on the degree of the $\g{3}{d}$ for each filtration. In \Cref{Section Lifting \texorpdfstring{$\g{3}{d}$}{}s}, after having obtained bounds for every terminal filtration that does not have a maximal destabilizing sub-line bundle, we give the proof of \Cref{Main Result 1.2}. Finally, in \Cref{Section Maximal Brill--Noether Loci in Genus \texorpdfstring{$14-23$}{}}, we use known results about dimensions of components of Brill--Noether loci and other lifting results to prove \Cref{Main Result 1.1}. In \Cref{Section Lower Genus}, we prove the results in genus $3$--$13$, and the following sections summarize genus $14$--$23$. 

\subsection*{Acknowledgments} 
The authors would like to thank Gabi Farkas for elucidating
connections between Brill--Noether loci and the birational geometry of
$\mathcal{M}_g$, David Jensen and Nathan Pflueger for very helpful
conversations about Brill--Noether theory, Hannah Larson for pointing out the exceptional case of genus $9$, Margherita Lelli-Chiesa for
explaining technical aspects of her work, Andreas Leopold Knutsen for
helpful comments on the Donagi--Morrison conjecture, Isabel Vogt for comments on a draft of the paper and pointing out recent relevant literature, and John Voight for helpful
computations and comments on a draft of the paper.  The first author
is partially supported by Simons Foundation Collaboration Grant
712097 and National Science Foundation Grant DMS-2200845.

\section{Maximal Brill--Noether Loci}\label{Section Maximal Brill--Noether Loci}
In this section, we take a look at the analytic geometry of various Brill--Noether theory conditions on linear systems. We find simple bounds on the maximal Clifford index of Brill--Noether special linear systems and for linear systems that can potentially lift to a K3 surface without contradicting the Hodge index theorem. Furthermore, we find that all non-computing linear systems are always potentially liftable to K3 surfaces. We end with a discussion of how \Cref{conj DM} and lattice theory can imply \Cref{conjecture}. We work with a fixed genus $g$ throughout this section.

Let $(S,H)$ be a polarized K3 surface of genus $g$. In the moduli space $\mathcal{K}^\circ_g$ of polarized K3 surfaces of genus $g$, the Noether--Lefschetz (NL) locus parameterizes K3 surfaces with Picard rank $>1$. By Hodge theory, the NL locus is a union of countably many irreducible divisors, which we call NL divisors. In \cite{greer-li-tian2014}, Greer, Li, and Tian study the Picard group of $\mathcal{K}^\circ_g$ using Noether--Lefschetz theory and the locus of Brill--Noether special K3 surfaces in $\mathcal{K}^\circ_g$ is identified as a union of NL divisors. More generally, it is convenient to work with the moduli space of primitively quasi-polarized K3 surfaces, denoted $\mathcal{K}_g$ where $\mathcal{K}_g\setminus \mathcal{K}^\circ_g$ is a divisor parameterizing K3 surfaces containing a $(-2)$-exceptional curve. We define the NL divisor $\mathcal{K}^r_{g,d}$ to be the locus of polarized K3 surfaces $(S,H)\in\mathcal{K}_g$ such that \[\Lambda^r_{g,d}=\begin{array}{c|cc}
	\multicolumn{1}{c}{} & H & L \\\cline{2-3}
	H & 2g-2 &d \\
	L &d & 2r-2
\end{array}\] admits a primitive embedding in $\Pic(S)$ preserving $H$. We note that the $\mathcal{K}^r_{g,d}$ are each irreducible by \cite{OGrady_irreducible_NL_divisors}. As we'll show in \Cref{lemma max gon and Cliff dim 1}, polarized K3 surfaces $(S,H)\in\mathcal{K}^r_{g,d}$ should be thought of as those having a curve $C\in|H|$ such that $L\otimes\mathcal{O}_C$ is a line bundle of type $\g{r}{d}$, and we say that the lattice $\Lambda^r_{g,d}$ is \emph{associated} to $\g{r}{d}$. Specifically, we have the following lemma, which we prove in \Cref{Section Maximal Brill--Noether Loci in Genus \texorpdfstring{$14-23$}{}}.

\begin{lemma}[See \Cref{lemma max gon and Cliff dim 1}]
	Let $(S,H)\in\mathcal{K}^r_{g,d}$ and let $C\in |H|$ be a smooth irreducible curve. If $L$ and $H-L$ are basepoint free, $r\ge 2$, and $1\le d \le g-1$, then $L\otimes\mathcal{O}_C$ is a $\g{r}{d}$.
\end{lemma}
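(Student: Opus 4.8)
The plan is to restrict the line bundle $L$ on $S$ to a smooth $C\in|H|$ and read off its invariants from the cohomology of the restriction sequence; the two basepoint-free hypotheses are exactly what is needed to kill the higher cohomology on $S$ that would otherwise spoil the count of sections. First I would record the degree: since $C\in|H|$ and $\Lambda^r_{g,d}$ embeds in $\Pic(S)$ preserving $H$, the Gram matrix gives $\deg(L\otimes\mathcal{O}_C)=L.C=L.H=d$. Then I would tensor the structure sequence of $C$ by $\mathcal{O}_S(L)$ (using $\mathcal{O}_S(C)\isom\mathcal{O}_S(H)$) to obtain
\[
0\to\mathcal{O}_S(L-H)\to\mathcal{O}_S(L)\to L\otimes\mathcal{O}_C\to 0,
\]
and analyze the associated long exact sequence in cohomology.

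The three inputs on $S$ are the following. (i) $L$ is basepoint free with $L^2=2r-2\ge 2>0$ (this uses $r\ge 2$), hence nef and big; so $h^i(S,L)=0$ for $i>0$ by Kawamata--Viehweg vanishing (or classically via Saint-Donat's analysis of linear systems on K3 surfaces), and Riemann--Roch gives $h^0(S,L)=2+L^2/2=r+1$. (ii) $H-L$ is basepoint free with $(H-L)^2=(2g-2)-2d+(2r-2)=2(g-d+r-2)>0$ (this uses $d\le g-1$ and $r\ge 2$), hence nef and big; so $h^i(S,H-L)=0$ for $i>0$, and by Serre duality on the K3 surface $h^1(S,L-H)=h^1(S,H-L)=0$. (iii) $H$ is a (quasi-)polarization, hence nef, and $(L-H).H=d-(2g-2)<0$ since $d\le g-1<2g-2$; therefore $L-H$ is not effective and $h^0(S,L-H)=0$. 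Feeding these into
\[
0\to H^0(S,L-H)\to H^0(S,L)\to H^0(C,L\otimes\mathcal{O}_C)\to H^1(S,L-H)\to\cdots
\]
yields $H^0(C,L\otimes\mathcal{O}_C)\isom H^0(S,L)$, so $h^0(C,L\otimes\mathcal{O}_C)=r+1$; combined with the degree computation, $L\otimes\mathcal{O}_C$ is a (complete) $\g{r}{d}$.

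There is no real obstacle here: the only non-formal ingredient is the vanishing $h^1(S,L)=h^1(S,H-L)=0$ for basepoint-free line bundles of positive self-intersection on a K3 surface, and the bookkeeping of the inequalities $(H-L)^2>0$ and $(L-H).H<0$, which is precisely where the hypotheses $r\ge 2$ and $1\le d\le g-1$ (and $g\ge 2$, automatic for a polarized K3) are used. If one only wants the weaker assertion that $L\otimes\mathcal{O}_C$ has \emph{at least} $r+1$ sections, input (ii) can be dropped, since $h^0(C,L\otimes\mathcal{O}_C)\ge h^0(S,L)-h^0(S,L-H)\ge r+1$ already; basepoint freeness of $H-L$ is what upgrades this to an exact count, and hence to the statement that $L\otimes\mathcal{O}_C$ is a genuine $\g{r}{d}$.
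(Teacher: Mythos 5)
Your argument is correct and is in substance the same as the paper's: the statement is part (ii) of \Cref{lemma max gon and Cliff dim 1}, whose proof invokes \Cref{LC implies 2r-2} (via \Cref{General 2r-2}), and that corollary is proved by exactly your restriction-sequence computation, with the vanishings $h^1(S,L)=h^1(S,H\otimes L^\vee)=0$ and $h^0(S,L\otimes H^\vee)=0$ coming from basepoint-freeness, positive self-intersection, and $(L-H).H<0$ on the K3 surface. The only cosmetic difference is that you count sections of $L\otimes\mathcal{O}_C$ directly via Riemann--Roch and the vanishing, whereas the paper starts from $L\otimes\mathcal{O}_C$ being some $\g{r^\prime}{d}$ and uses $L^2=2r-2$ to force $r^\prime=r$.
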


Conversely, one is interested in the question of when a given $\g{r}{d}$ on a curve in a K3 surface is the restriction of a line bundle from the K3; in this case, we say that the line bundle is a \emph{lift} of the $\g{r}{d}$. Lifting of line bundles on curves on K3 surfaces is considered in \cite{DonagiMorrison_linsystemsonk3,GreenLaz,Lelli_Chiesa_2013,Lelli_Chiesa_2015,Martens,Reid1976}. In lifting Brill--Noether special linear systems on $C\in|H|$ to a line bundle $L\in\Pic(S)$, we are naturally led to considering two constraints. First, we have $\rho(g,r,d)<0$ as the linear system is Brill--Noether special. We call the constraint $\rho(g,r,d)<0$ the \emph{Brill--Noether constraint}. If a $\g{r}{d}$ on a curve $C\in|H|$ on a polarized K3 surface $(S,H)$ has a suitable lift (see \Cref{LC implies 2r-2}), then $\Pic(S)$ admits a primitive embedding of $\Lambda^r_{g,d}$ preserving $H$, and in particular $\operatorname{disc}\bigl(\Lambda^r_{g,d}\bigr)<0$ by the Hodge index theorem. Thus we define \[\Delta(g,r,d)\colonequals\operatorname{disc}\left(\Lambda^r_{g,d}\right)=4(g-1)(r-1)-d^2=4(g-1)(r-1)-(\gamma(r,d)+2r)^2.\]  We thus call the constraint $\Delta(g,r,d)<0$ the \emph{Hodge constraint} as the inequality stems from the Hodge index theorem. We remark that when $\Delta(g,r,d)<0$, the Torelli theorem for polarized K3 surfaces implies that a very general K3 surface in $\mathcal{K}^r_{g,d}$ has $\Pic(S)=\Lambda^r_{g,d}$.

\begin{remark}\label{remark bounds on BN and Hodge par}
	When considering the lifting of linear systems to K3 surfaces, it is more convenient to consider the Brill--Noether and Hodge constraints for fixed $g$ in the $(r,\gamma)$-plane as opposed to the $(r,d)$-plane, in particular, because the Clifford index of curves on K3 surfaces remains constant in their linear system \cite{GreenLaz}. In the $(r,\gamma)$-plane the Brill--Noether and Hodge constraints determine regions that are bounded by the curves $\rho(g,r,d)=0$ and $\Delta(g,r,d)=0$, which we call the \emph{Brill--Noether hyperbola} and  \emph{Hodge parabola}, respectively. Simple calculations show that the maximum $\gamma$ on the Brill--Noether hyperbola is obtained at $r=\sqrt{g}-1$ and $\gamma=g-2\sqrt{g}+1$, the intersection with the line $d=g-1$. Hence, taking $\gamma\le \lfloor g-2\sqrt{g}+1\rfloor$ suffices to bound Brill--Noether special linear systems. Similarly, the maximum $\gamma$ on the Hodge parabola is given by $\gamma=\frac{g-5}{2}$, and obtained at the intersection with the line $d=g-1$ at $r=\frac{g+3}{4}$. Thus if $\gamma>\frac{g-5}{2}$ then $\Delta<0$. Trivially $\lfloor \frac{g-4}{2}\rfloor\ge \frac{g-5}{2}$, and in fact the bound $\gamma\ge \lfloor\frac{g-4}{2}\rfloor \implies \Delta<0$ is the best possible as seen in genus 9, 13, and 17. As an example, we show the bounds in genus $100$, as graphed on the $(r,\gamma)$-plane in \Cref{fig:bnhodgeparabolasgenus100}.
\end{remark}
\begin{figure}
	\centering
	\includegraphics[width=0.7\linewidth]{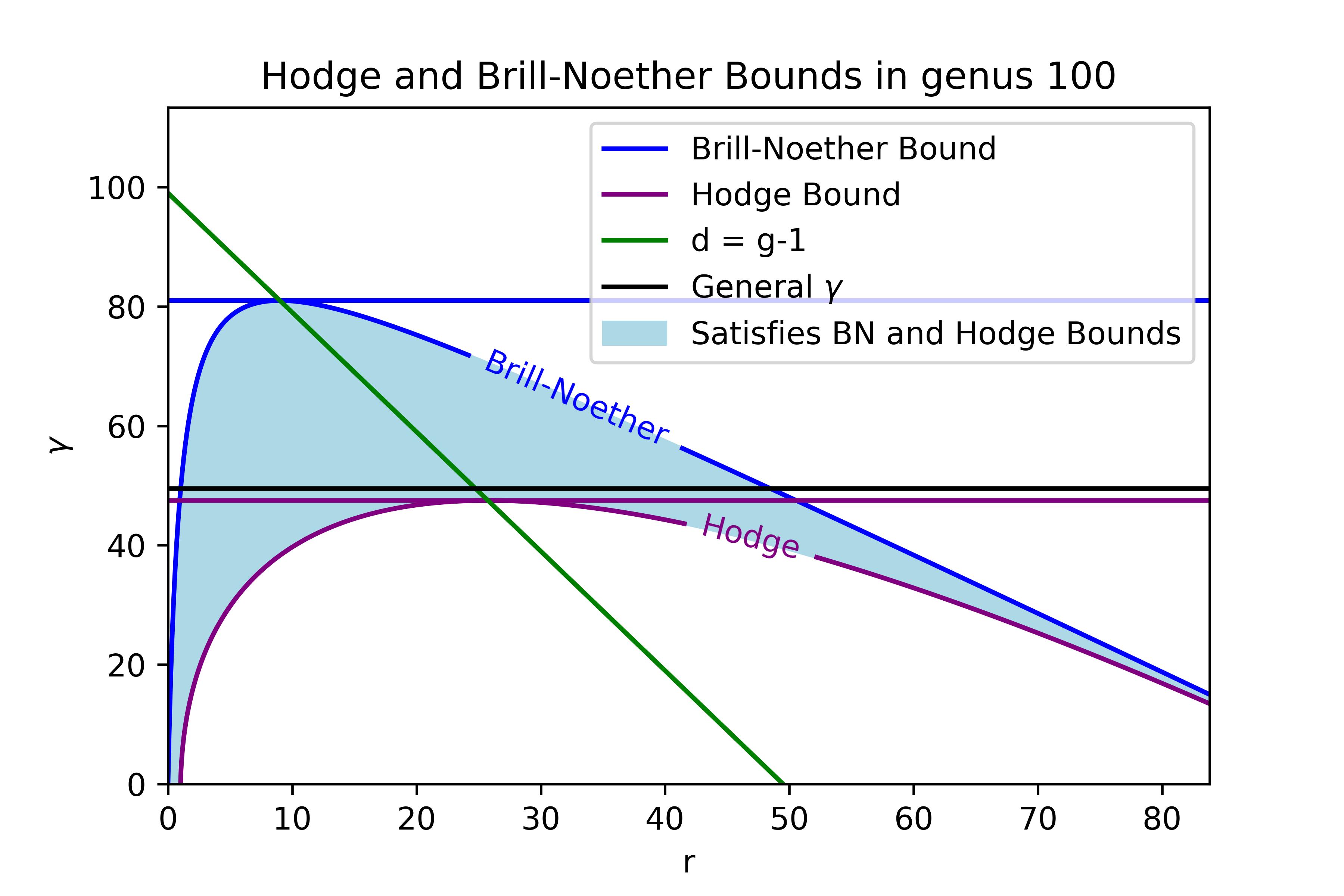}\\
	\caption[]{The Brill--Noether hyperbola ($\rho=0$) and the Hodge parabola ($\Delta=0$) in genus $100$. The shaded area satisfies both $\rho<0$ and $\Delta<0$.}
	
	\label{fig:bnhodgeparabolasgenus100}
\end{figure}

We recall that the \emph{Clifford index} of a line bundle $A$ on a smooth
projective curve $C$ is the integer $\gamma(A) = \deg(A) - 2\, r(A)$
where $r(A) = h^0(C,A)-1$ is the rank of $A$. The Clifford
index of $C$ is $$\gamma(C) \colonequals \min \{\gamma(A) ~\vert~ h^0(C,A) \geq 2\text{ and }h^1(C,A) \geq 2 \}.$$ We say that a line bundle $A$ on $C$ \emph{computes} the Clifford index of $C$ if
$\gamma(A)=\gamma(C)$. Clifford's theorem states that $0\le\gamma(C)\le\lfloor\frac{g-1}{2}\rfloor$, and when $C$ is a general curve of genus $g$, $\gamma(C)=\lfloor\frac{g-1}{2}\rfloor$.
\begin{defn}
Let $A$ be a Brill--Noether special $\g{r}{d}$ on a curve $C$ of genus
$g$, i.e. $\rho(g,r,d)<0$. We say $A$ is \emph{non-computing} if
$\gamma(r,d)>\lfloor \frac{g-1}{2} \rfloor$, that is, $A$ is a
Brill--Noether special $\g{r}{d}$ that cannot compute the Clifford
index of $C$.
\end{defn}
\begin{lemma}\label{lemma exp max loci have large gamma}
	Let $g\ge 14$, $r\ge 2$, and $2r\le d \le g-1$. If $\mathcal{M}^r_{g,d}$ is an expected maximal Brill--Noether locus, then $\gamma(d,r) = d - 2r > \lfloor\frac{g-1}{2}\rfloor$. When $g < 14$, there are no non-computing $\g{r}{d}$'s.
\end{lemma}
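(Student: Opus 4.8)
The plan is to eliminate the degree in favour of the speciality $s\colonequals g-d+r$, which turns every hypothesis into an elementary inequality in the two non‑negative integers $r$ and $s$. Under this substitution $\gamma(d,r)=d-2r=g-r-s$, while $\rho(g,r,d)=g-(r+1)s$ and $\rho(g,r-1,d-1)=g-rs$. Hence the assertion that $\M^r_{g,d}$ is an expected maximal Brill--Noether locus becomes the system $r+1\le s\le g-r$ (from $2r\le d\le g-1$), $(r+1)s>g$ (the Brill--Noether constraint), and $rs\le g$ (the condition $\rho(g,r-1,d-1)\ge 0$), together with minimality of $s$ (equivalently, maximality of $d$); these pin down the exact value $s=\max\{\,r+1,\ \lfloor g/(r+1)\rfloor+1\,\}$. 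Since $s\ge r+1$ and $rs\le g$ force $r(r+1)\le g$, we get $2\le r<\sqrt{g}$, so for each fixed $g$ only finitely many values of $r$ occur.

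For the first assertion I would proceed as follows. Both sides being integers, $\gamma(d,r)>\lfloor\tfrac{g-1}{2}\rfloor$ is equivalent to $s\le\lceil\tfrac{g-1}{2}\rceil-r$, so after substituting the value of $s$ it suffices to establish the two inequalities $2r+1\le\lceil\tfrac{g-1}{2}\rceil$ and $\lfloor g/(r+1)\rfloor+r+1\le\lceil\tfrac{g-1}{2}\rceil$ for all integers $r\ge 2$ with $r(r+1)\le g$. The first follows from $r<\sqrt{g}$ once $g$ is large, the finitely many remaining pairs $(g,r)$ being checked by inspection. For the second, ignoring the floor the left-hand side is $g/(r+1)+(r+1)$, which is convex in $r+1$ with minimum $2\sqrt{g}$ at $r+1=\sqrt{g}$; hence over $2\le r<\sqrt{g}$ it is maximized at an endpoint, and for $g\ge 14$ this maximum occurs at $r=2$, where it equals $\lfloor g/3\rfloor+3$. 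Thus the second inequality reduces to $\lfloor g/3\rfloor+3\le\lceil\tfrac{g-1}{2}\rceil$, which I would settle via the estimate $g/3+3\le\tfrac{g-1}{2}$ valid for $g\ge 21$ together with direct inspection of the remaining genera $14\le g\le 20$ and the corresponding values of $r$. The main obstacle is precisely this boundary bookkeeping: at the extreme values $r=2$ and $r$ near $\sqrt{g}$, and at the smallest admissible genera, the inequalities are essentially sharp, so they must be verified case by case rather than by a soft estimate.

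For the second assertion, suppose $g<14$ and that $A$ is a non-computing $\g{r}{d}$, so that $A$ is Brill--Noether special and $\gamma(d,r)>\lfloor\tfrac{g-1}{2}\rfloor$. By the analysis of the Brill--Noether hyperbola recalled in \Cref{remark bounds on BN and Hodge par}, every Brill--Noether special linear system on a genus $g$ curve satisfies $\gamma\le\lfloor g-2\sqrt{g}+1\rfloor$. Hence a non-computing system can exist only if $\lfloor\tfrac{g-1}{2}\rfloor<\lfloor g-2\sqrt{g}+1\rfloor$; a direct check shows that among $g\le 13$ this occurs only for $g=12$, and there the single candidate value $\gamma=6$ is impossible since $\rho(12,r,2r+6)=12-(r+1)(6-r)\ge 0$ for every $r$. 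This contradiction shows that no non-computing $\g{r}{d}$ exists when $g<14$.
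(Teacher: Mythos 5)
Your change of variables to the speciality $s=g-d+r$ is sound: the description $s=\max\{r+1,\ \lfloor g/(r+1)\rfloor+1\}$ of an expected maximal locus is correct, the equivalence of the conclusion with $s\le\lceil\tfrac{g-1}{2}\rceil-r$ is correct, and your treatment of the second assertion (only $g=12$ survives the comparison of $\lfloor\tfrac{g-1}{2}\rfloor$ with $\lfloor g-2\sqrt{g}+1\rfloor$, and there $\gamma=6$ forces $\rho\ge 0$) is complete, and in fact cleaner than the paper's appeal to enumeration. The gap is precisely the step you defer to ``direct inspection of the remaining genera $14\le g\le 20$'': that inspection does not succeed. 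At $g=15$ the reduced inequality $\lfloor g/3\rfloor+3\le\lceil\tfrac{g-1}{2}\rceil$ reads $8\le 7$, which is false, and this is not a defect of your bounding scheme but of the statement itself. For $g=15$, $r=2$ your formula gives $s=6$, i.e.\ $d=11$, and $\mathcal{M}^2_{15,11}$ really is an expected maximal locus ($\rho(15,2,11)=-3<0$, $\rho(15,2,12)=0$, $\rho(15,1,10)=3\ge 0$; the paper's own genus-$15$ proposition lists it among the maximal loci), yet $\gamma(11,2)=7=\lfloor\tfrac{15-1}{2}\rfloor$, so the asserted strict inequality fails. A proof along your lines, carried out honestly, therefore refutes the lemma at $(g,r,d)=(15,2,11)$ rather than proving it; asserting that the inspection ``settles'' these genera is the flaw.

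For comparison, the paper's one-line proof is the same contrapositive in compressed form --- ``if $d-2r\le\lfloor\tfrac{g-1}{2}\rfloor$ then $\rho(g,r,d+1)<0$'' --- and it breaks at exactly the same point, since $d-2r=7\le 7$ but $\rho(15,2,12)=0$, not $<0$. What your computation actually establishes is the non-strict bound $\gamma(d,r)\ge\lfloor\tfrac{g-1}{2}\rfloor$ for every expected maximal locus with $r\ge 2$ and $g\ge 14$, with equality only for $\mathcal{M}^2_{15,11}$; equivalently, the strict statement holds for all $g\ge 14$ with $g\ne 15$. If you need the lemma in the form used later (all expected maximal loci with $r\ge2$ are non-computing), you must either exclude $g=15$ or weaken $>$ to $\ge$ and handle genus $15$ separately. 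One further small point: your claim that the endpoint maximum of $g/(r+1)+(r+1)$ occurs at $r=2$ is true for $g\ge 14$ but deserves its one-line justification, namely that for the largest admissible $R$ (so $R(R+1)\le g$ and $R\ge 3$) one has $g\ge R(R+1)\ge 3(R+1)$, which is exactly the condition for the value at $r=2$ to dominate the value at $r=R$.
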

\begin{proof}
	One can easily check that if $d-2r\le\lfloor\frac{g-1}{2}\rfloor$, then  $\rho(g,r,d+1)<0$, and hence $\mathcal{M}^r_{g,d}$ is not an expected maximal Brill--Noether locus. When $g<14$, this is a simple computation enumerating all $\g{r}{d}$'s with Clifford index $\le \lfloor\frac{g-1}{2}\rfloor+1$. 
\end{proof}
Thus for genus $g\ge 14$, except for $\mathcal{M}^1_{g,\lfloor\frac{g+1}{2}\rfloor}$, all the maximal Brill--Noether loci are those associated to non-computing $\g{r}{d}$s. If lifting results are able to distinguish between maximal Brill--Noether loci, there should not be an obvious obstruction to lifting the associated linear systems. In particular, the Hodge index theorem implies that the lattices obtained by lifting should have negative discriminant, which we show is true for non-computing $\g{r}{d}$s below.

\begin{prop}
	Let $g,r,d$ be natural numbers with $2\le d\le g-1$ and $1\le r\le g-1$. Then the Hodge parabola lies under the Brill--Noether hyperbola. In particular, all non-computing linear systems, and all expected maximal Brill--Noether loci, satisfy $\Delta<0$.
\end{prop}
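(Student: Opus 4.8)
The plan is to fix $g$ and $r$ and treat everything as a function of the single variable $d$. For fixed $g,r$ the Brill--Noether number $\rho(g,r,d)=g-(r+1)(g-d+r)$ is strictly increasing and linear in $d$, while $\Delta(g,r,d)=4(g-1)(r-1)-d^2$ is strictly decreasing for $d>0$; solving $\rho=0$ and $\Delta=0$ locates the Brill--Noether hyperbola at $d_{\mathrm{hyp}}(r):=\tfrac{r(g+r+1)}{r+1}$ and the Hodge parabola at $d_{\mathrm{par}}(r):=2\sqrt{(g-1)(r-1)}$ (meaningful for $r\ge 1$). Hence $\rho<0\iff d<d_{\mathrm{hyp}}(r)$ and $\Delta<0\iff d>d_{\mathrm{par}}(r)$, so ``the Hodge parabola lies under the Brill--Noether hyperbola'' is exactly the inequality $d_{\mathrm{par}}(r)\le d_{\mathrm{hyp}}(r)$. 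Both sides are nonnegative, so after clearing the denominator and squaring this becomes $4(g-1)(r-1)(r+1)^2\le r^2(g+r+1)^2$; writing $r(g+r+1)=r(g-1)+r(r+2)$ and using $(a+b)^2\ge 4ab$ gives $r^2(g+r+1)^2\ge 4r^2(g-1)(r+2)$, so it suffices to check $r^2(r+2)-(r-1)(r+1)^2=r^2+r+1>0$. This proves the main assertion (strictly once $g>1$; for $r=1$ one has $d_{\mathrm{par}}=0$ and $\Delta=-d^2<0$ outright).

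The two ``in particular'' clauses will follow by pushing the relevant systems up against the hyperbola and invoking the above. A non-computing $\g{r}{d}$ has Clifford index $\gamma=d-2r>\lfloor\tfrac{g-1}{2}\rfloor$; on the other hand the maximum of the Clifford-index coordinate $d_{\mathrm{par}}(r)-2r$ along the Hodge parabola is $\tfrac{g-5}{2}$, attained at $r=\tfrac{g+3}{4}$ (a one-variable maximization, already recorded in \Cref{remark bounds on BN and Hodge par}), and $\tfrac{g-5}{2}<\lfloor\tfrac{g-1}{2}\rfloor$; so any non-computing system lies strictly above the parabola and therefore has $\Delta<0$. (This step uses only the location of the parabola, not even the full hyperbola comparison.)

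For an expected maximal $\mathcal{M}^r_{g,d}$, the case $r=1$ is immediate since $\Delta=-d^2<0$, so I would assume $r\ge 2$. By the definition of expected maximal, $d=\min(d_0,g-1)$ where $d_0$ is the largest degree with $\rho(g,r,d_0)<0$. If $d_0\le g-1$, then $\rho(g,r,d_0+1)\ge 0$ forces $d_0\ge\tfrac{gr+r^2-1}{r+1}$, and the same device — writing $gr+r^2-1=r(g-1)+(r^2+r-1)$, applying $(a+b)^2\ge 4ab$, and using $r(r^2+r-1)-(r-1)(r+1)^2=1>0$ — yields $d_0^2>4(g-1)(r-1)$, i.e. $\Delta<0$. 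If instead $d=g-1$, then $\Delta(g,r,g-1)=(g-1)\bigl(4(r-1)-(g-1)\bigr)$, and the defining condition $\rho(g,r-1,g-2)\ge 0$ reads $g\ge r(r+1)$; since $4r-3<r^2+r\le g$ (because $r^2-3r+3>0$) we get $4(r-1)<g-1$ and hence $\Delta<0$. The whole argument is elementary, the only real content being the two applications of $(a+b)^2\ge 4ab$ that reduce each inequality to a manifestly positive polynomial in $r$; the one place demanding care is the expected-maximal deduction, where one must read off from the definition whether $d=d_0\le g-1$ or $d=g-1$ and, in the latter case, use the auxiliary inequality $\rho(g,r-1,d-1)\ge 0$ rather than $\rho(g,r,d)<0$. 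I anticipate no genuine obstacle.
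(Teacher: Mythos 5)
Your proof is correct, but it runs along a different track than the paper's. For the main inequality, the paper works in the $(r,\gamma)$-plane, solves both constraints for $\gamma$ as functions of $r$ (getting $\gamma_\rho(r)=g-r-\tfrac{g}{r+1}$ and $\gamma_\Delta(r)=2\sqrt{(g-1)(r-1)}-2r$), observes that the equation $\gamma_\rho=\gamma_\Delta$ has no solutions in the relevant range, and concludes by continuity from the sign at $r=1$; you instead compare the threshold degrees $d_{\mathrm{par}}(r)\le d_{\mathrm{hyp}}(r)$ directly, reducing via $(a+b)^2\ge 4ab$ to the manifestly positive polynomials $r^2+r+1$ and $1$ — an explicit algebraic verification of the same non-crossing statement, arguably more self-contained than the paper's ``solve for $r$ and note $g\ge2$'' step. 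For the non-computing clause you and the paper argue identically, via the maximum $\tfrac{g-5}{2}$ of the Clifford coordinate on the Hodge parabola recorded in \Cref{remark bounds on BN and Hodge par}. The real divergence is the expected-maximal clause: the paper deduces it by citing \Cref{lemma exp max loci have large gamma} to reduce to the non-computing case, which as stated only covers $g\ge14$ (and $r\ge2$), leaving low genus to implicit case-checking; you instead argue directly from the definition, splitting into $d<g-1$ (where maximality gives $\rho(g,r,d+1)\ge0$ and the AM--GM device applies) and $d=g-1$ (where $\rho(g,r-1,g-2)\ge0$ gives $g\ge r(r+1)>4r-3$, hence $\Delta=(g-1)(4(r-1)-(g-1))<0$). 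This uniform treatment of all genera is a genuine improvement in completeness over the paper's citation, at the cost of a slightly longer computation; both case hypotheses you use are legitimate consequences of the paper's definition of expected maximal, so there is no gap.
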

\begin{proof}
	For fixed $g\ge 2$, and for each constraint ($\rho=0$ or $\Delta=0$), we solve for $\gamma$ as a function of $r$ and $g$. For $\rho(g,r,\gamma)=0$, we find $\gamma_{\rho}(r)=g-r-\frac{g}{r+1}$. Likewise for $\Delta(g,r,\gamma)=0$ we have $\gamma_{\Delta}(r)=2\sqrt{(g-1)(r-1)}-2r$. Observe that $\gamma_{\rho}=\gamma_{\Delta}$ has no solutions in the given range (solve for $r$ in terms of $g$, and note that $g\ge 2$). Finally, since $\gamma_{\rho}(1)>0$ and $\gamma_{\Delta}(1)<0$, we see by continuity that $\gamma_{\rho}(r)-\gamma_{\Delta}(r)>0$. 
	
	The bound $\gamma\ge\lfloor \frac{g-4}{2}\rfloor$ implies that $\Delta<0$, as in the remark above. Since this is below the general Clifford index ($\lfloor\frac{g-1}{2}\rfloor)$, we see that any lattice associated to a non-computing linear system will have negative discriminant. In particular, by \Cref{lemma exp max loci have large gamma} above, this applies to the expected maximal linear systems.
\end{proof}

We thus conjecture (\Cref{conjecture}) that the maximal Brill--Noether loci are exactly the \emph{expected maximal Brill--Noether loci}, which are Brill--Noether loci $\mathcal{M}^r_{g,d}$ where for fixed $r$, $d$ is maximal such that $\rho(g,r,d)<0$ and $\rho(g,r-1,d-1)\ge 0$. Equivalently, the expected maximal Brill--Noether loci correspond to the maximal $\g{r}{d}$ lying under the Brill--Noether hyperbola for each $r$, up to the containments $\mathcal{M}^r_{g,d}\subseteq \mathcal{M}^r_{g,d+1}$ when $\rho(g,r,d+1)<0$ and $\mathcal{M}^r_{g,d}\subset\mathcal{M}^{r-1}_{g,d-1}$ when $\rho(g,r-1,d-1)<0$.

\medskip

One could imagine that if there are any unexpected containments among Brill--Noether loci, then some would come from containments of the form $\mathcal{M}^1_{g,d}\subset \mathcal{M}^r_{g,d^\prime}$. However, we find that the expected maximal $\mathcal{M}^1_{g,d}$ is not contained in the other expected maximal loci.

\begin{prop}\label{Prop submax gonality not contained in noncomputing locus}
	Let $\rho(g,r,d)<0$, and $\gamma(r,d)\ge \lfloor\frac{g-1}{2}\rfloor+1$, e.g., for a non-computing $\g{r}{d}$. Then $\mathcal{M}^1_{g,\lfloor\frac{g+1}{2}\rfloor}\nsubseteq\mathcal{M}^r_{g,d}$. When $9\le g<14$, $\mathcal{M}^1_{g,\lfloor\frac{g+1}{2}\rfloor}\nsubseteq \mathcal{M}^r_{g,d}$ for an expected maximal Brill--Noether locus with $r\ge 2$.
\end{prop}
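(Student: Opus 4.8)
The plan is to show that the maximal pencil locus $\mathcal{M}^1_{g,\lfloor(g+1)/2\rfloor}$ cannot sit inside $\mathcal{M}^r_{g,d}$ by producing, for each target $(r,d)$ with $\rho(g,r,d)<0$ and $\gamma(r,d)\ge\lfloor(g-1)/2\rfloor+1$, a curve of genus $g$ and gonality exactly $\lfloor(g+1)/2\rfloor$ that carries no $\g{r}{d}$. The natural source of such curves is the recent refined Brill--Noether theory over the Hurwitz space (Larson--Larson--Vogt, Larson, Pflueger, Jensen--Ranganathan, etc.), cited in the introduction: a general $k$-gonal curve $C$ of genus $g$ with $k=\lfloor(g+1)/2\rfloor$ has the property that $W^r_d(C)$ is empty whenever the refined Brill--Noether number (the maximum over splitting types $\vec{e}$ of $\rho(g,r,d,\vec{e})$) is negative. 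So the first step is to recall the precise statement of this refined existence/non-existence result and reduce Proposition 1.9 to the purely numerical claim that for $k=\lfloor(g+1)/2\rfloor$ and any $(r,d)$ in the stated range, the refined Brill--Noether number is negative. (Equivalently, one can phrase the reduction via the result — already invoked in \Cref{Section Maximal Brill--Noether Loci} — that the expected maximal $\mathcal{M}^1_{g,\lfloor(g+1)/2\rfloor}$ is \emph{not} contained in any other expected maximal locus, and then bootstrap from expected maximal loci to all non-computing loci using the containment $\mathcal{M}^r_{g,d}\subseteq\mathcal{M}^r_{g,d+1}$.)

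Next I would carry out the numerical verification. A clean way: a curve of gonality $k$ has Clifford index $\gamma(C)=k-2$ when $k\le\lfloor(g+3)/2\rfloor$, so for $k=\lfloor(g+1)/2\rfloor$ we get $\gamma(C)=\lfloor(g+1)/2\rfloor-2=\lfloor(g-3)/2\rfloor$. A general $k$-gonal curve moreover has Clifford \emph{dimension} one, so any line bundle $A$ on $C$ with $h^0,h^1\ge 2$ satisfies $\gamma(A)\ge\lfloor(g-3)/2\rfloor$, with equality only for multiples of the gonality pencil and the residual series. Now suppose $C$ carried a $\g{r}{d}$ with $r\ge 2$; by subtracting base points we may assume it is basepoint free and, after possibly passing to a subseries, we still have a contributing line bundle, so $\gamma(r,d)\ge\gamma(C)=\lfloor(g-3)/2\rfloor$. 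But the hypothesis $\gamma(r,d)\ge\lfloor(g-1)/2\rfloor+1$ puts us well above this — the point is rather to rule out the specific value $d-2r$ being achieved by a pencil-type series on the general $k$-gonal curve. So the real content is: on a general curve of gonality $k=\lfloor(g+1)/2\rfloor$, there is no $\g{r}{d}$ with $r\ge 2$ and $d-2r\ge\lfloor(g-1)/2\rfloor+1$; and this follows because such a series would force (via Clifford-dimension-one structure, or directly via negativity of the refined $\rho$) a contradiction with $d\le g-1$ and the Hurwitz-space non-existence. For the explicit small-genus clause $9\le g<14$, I would just tabulate: list the finitely many expected maximal $\mathcal{M}^r_{g,d}$ with $r\ge 2$ in each genus, compute $\gamma=d-2r$, compare with the gonality $\lfloor(g+1)/2\rfloor$ of the test curve, and invoke the known refined Brill--Noether dimension statements (these genera are small enough that everything is classical or covered by \cite{pflueger2013linear,Pflueger_2017,Larson_A_refined_Brill-Noether_theory_over_Hurwitz_space}).

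The main obstacle I anticipate is making the reduction to a clean numerical inequality airtight: one must be careful that subtracting base points from a hypothetical $\g{r}{d}$ to reach a basepoint-free, Clifford-index-contributing system does not inadvertently lower $r$ below $2$ or change $\gamma$ in a way that escapes the range — and one must invoke the correct form of the Hurwitz-space non-existence theorem (for the \emph{general} $k$-gonal curve, and for the \emph{maximal} gonality $k=\lfloor(g+1)/2\rfloor$, where the gonality stratum is dense but the refined theory still applies). A secondary subtlety is the boundary behavior when $g$ is even versus odd, since $\lfloor(g+1)/2\rfloor$ and $\lfloor(g-1)/2\rfloor$ interact differently with parity; I would handle the two parities separately in the numerical step. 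Once the inequality $\rho(g,r,d,\vec{e})<0$ for all $\vec e$ is established in the stated range, the proposition follows immediately from non-existence of the corresponding $W^r_d$ on the general curve of maximal gonality.
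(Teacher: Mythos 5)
You have the right framework --- test against a general curve of gonality $k=\lfloor\frac{g+1}{2}\rfloor$ and invoke the fixed-gonality Brill--Noether results to show $W^r_d$ is empty --- and this is exactly the paper's strategy. But the core of the argument, the numerical verification that the relevant fixed-gonality Brill--Noether number is negative under the hypotheses $\rho(g,r,d)<0$ and $\gamma(r,d)\ge\lfloor\frac{g-1}{2}\rfloor+1$, is never carried out. Your attempted substitute via the Clifford index goes nowhere: since the hypothesis places $\gamma(r,d)$ \emph{above} $\gamma(C)=\lfloor\frac{g-3}{2}\rfloor$, Clifford-index or Clifford-dimension-one considerations give no obstruction at all (you half-acknowledge this), and the sentence ``this follows because such a series would force \dots a contradiction with $d\le g-1$ and the Hurwitz-space non-existence'' is an assertion, not a proof. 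The paper's proof is precisely this missing computation: with Pflueger's bound $\dim W^r_d(C)\le\rho_k=\max_{\ell\in\{0,\dots,r'\}}\bigl(\rho(g,r-\ell,d)-\ell k\bigr)$ for a general $k$-gonal curve, one expands $\rho(g,r-\ell,d)-\ell k=\rho(g,r,d)+(g-k-\gamma(r,d)+1)\ell-\ell^2$, and the hypotheses together with $\lfloor\frac{g+1}{2}\rfloor+\lfloor\frac{g-1}{2}\rfloor=g$ force the coefficient of $\ell$ to be $\le 0$, hence $\rho_k\le\rho(g,r,d)<0$ and $W^r_d(C)=\emptyset$. Without some version of this inequality (over all $\ell$, or over all splitting types in the refined theory), your reduction has no content; note also that the ``equivalent'' reduction you propose in parentheses --- citing the already-invoked fact that $\mathcal{M}^1_{g,\lfloor\frac{g+1}{2}\rfloor}$ is not contained in the other expected maximal loci --- is circular, since that fact \emph{is} (a special case of) the proposition being proved.

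A secondary point: your worry about subtracting base points is misplaced --- the non-existence statement is applied directly to $W^r_d(C)$ for the given $(r,d)$, so no reduction to a basepoint-free or contributing series is needed. For the clause $9\le g<14$, explicit tabulation of $\rho_k$ for the finitely many expected maximal loci with $r\ge2$ is indeed what the paper does, so that part of your plan is fine once the general mechanism above is in place.
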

\begin{proof}
	Let $k=\frac{g+1}{2}$, and $r^\prime=\min\{r,g-d+r-1\}$. We compute 
	\begin{align*}
		\rho_k&=\max_{\ell\in\{0,\dots,r^\prime\}} \rho(g,r-\ell,d)-\ell k = \rho(g,r,d)+(g-k-\gamma(r,d)+1)\ell - \ell^2\\ &\le\max_{\ell\in\{0,\dots,r^\prime\}}\rho(g,r,d)+\ell\left(g-\left\lfloor\frac{g-1}{2} \right\rfloor - \left\lfloor \frac{g+1}{2}\right\rfloor\right)-\ell^2\\
		&<\max_{\ell\in\{0,\dots,r^\prime\}}\rho(g,r,d)+2\ell-\ell^2 \le \rho(g,r,d)+1\le0.
	\end{align*}
	Therefore $\rho_k<0$. From \cite[Theorem 1.1]{Pflueger_2017}, as $\dim W^r_d(C)\le\rho_k$, and $W^r_d(C)$ is empty if its dimension is negative, we see that a general $k$-gonal curve does not admit a $\g{r}{d}$. Hence $\mathcal{M}^1_{g,\lfloor\frac{g+1}{2}\rfloor}\nsubseteq\mathcal{M}^r_{g,d}$. 
	
	The statement for $9\le g<14$ is obtained simply by calculating $\rho_k$ explicitly, and noting that in each case $\rho_k<0$. 
\end{proof}
\begin{remark}
	In \cite{Farkas_2001}, Farkas asks the general question of when does a general $k$-gonal curve of genus $g$ have no other linear series $\g{r}{d}$ with $\rho(g,r,d)<0$? The above proposition answers the case when $k=\lfloor\frac{g+1}{2}\rfloor$, when the curve has maximal sub-general gonality. If a curve has a Brill--Noether special $\g{r^\prime}{d^\prime}$, then it has a $\g{r}{d}$ for an expected maximal Brill--Noether locus, and the above shows this is not the case. In general, this question is answered by recent breakthroughs in Brill--Noether theory for curves of fixed gonality, see e.g. \cite{Powell_jensen_fixed_gonality,Farkas_2001,jensen2020brillnoether,Larson_Larson_Vogt_fixed_gonality,Larson_A_refined_Brill-Noether_theory_over_Hurwitz_space,pflueger2013linear,Pflueger_2017}. 

	In \Cref{lemma max gon and Cliff dim 1}, we show that under mild assumptions, the curves $C\in|H|$ on a polarized K3 surface $(S,H)$ with $\Pic(S)=\Lambda^r_{g,d}$ associated to an expected maximal locus with $r\ge 2$, all have general Clifford index. Thus the $\mathcal{M}^1_{g,\lfloor\frac{g+1}{2}\rfloor}$ does not contain other expected maximal loci in many genera. Similar results have been proven by Fakras and Lelli-Chiesa \cite{Farkas_2001,Lelli_Chiesa_2013}. 
\end{remark}

A natural question is whether lattices corresponding to $\g{r}{d}$s can be contained as sublattices in each other. In general, the answer is yes. Already in genus $14$, we see that $\Lambda^2_{14,10}$ could be embedded as a sublattice of $\Lambda^2_{14,8}$. However, these are not associated to expected maximal loci. In particular, we would like to show that lattices associated to expected maximal loci cannot contain any lattices associated to other $\g{r}{d}$. This turns out to be false (see \Cref{subsection Philosophy}). However, we can prove that lattices associated to Brill--Noether special linear systems with lower than general Clifford index cannot be contain in lattices associated to expected maximal loci, and that any containments between lattices associated to an expected maximal loci and those associated to non-computing $\g{r}{d}$s must be equalities.

\begin{prop}\label{prop lattices are not conatined in exp max} Let $\Lambda^r_{g,d}$ be associated to an expected maximal $\g{r}{d}$.
	\begin{enumerate}[label=(\roman*)]
		\item Any lattice $\Lambda^{r^\prime}_{g,d^\prime}$ associated to a special $\g{r^\prime}{d^\prime}$ with $\gamma(\g{r^\prime}{d^\prime})<\lfloor\frac{g-1}{2}\rfloor$ for any $r^\prime$ or $\gamma(\g{r^\prime}{d^\prime})=\lfloor\frac{g-1}{2}\rfloor$ if $r^\prime \neq 1$  cannot be contained in $\Lambda^r_{g,d}$.
		
		\item Let $d^\prime \le g-1$. Any lattice $\Lambda^{r^\prime}_{g,d^\prime}$ associated to another expected maximal $\g{r^\prime}{d^\prime}$ is not contained in $\Lambda^r_{g,d}$, unless the lattices are isomorphic. Similarly, any lattice associated to a non-computing $\g{r^\prime}{d^\prime}$ with $d^\prime\le g-1$ is not contained in the lattice associated to an expected maximal $\g{r}{d}$ unless they are isomorphic.
	\end{enumerate}
\end{prop}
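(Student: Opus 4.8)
The plan is to translate ``containment of lattices preserving $H$'' into an elementary divisibility statement, using that both lattices have rank $2$ and that the polarization class is primitive. Write $\Lambda^r_{g,d}=\Z H\oplus\Z L$ with $H^2=2g-2$, $H\cdot L=d$, $L^2=2r-2$, and say $\Lambda^{r'}_{g,d'}$ is \emph{contained} in $\Lambda^r_{g,d}$ if there is a lattice embedding sending its distinguished class to $H$ (not required primitive; for equal ranks primitivity would force an isomorphism). The structural observation is that, $H$ being primitive, $\Lambda^r_{g,d}/\Z H$ is infinite cyclic, so for each $k\ge1$ there is a \emph{unique} sublattice of index $k$ containing $H$, namely $\langle H,kL\rangle$, whose Gram matrix is that of $\Lambda^{k^2(r-1)+1}_{g,kd}$. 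Comparing Gram matrices, $\Lambda^{r'}_{g,d'}$ is contained in $\Lambda^r_{g,d}$ if and only if there is an integer $k\ge1$ with
\begin{equation}\label{eq:planDiv}
\Delta(g,r',d')=k^2\,\Delta(g,r,d)\qquad\text{and}\qquad d'\equiv\pm kd\pmod{2g-2},
\end{equation}
and the embedding is an isomorphism exactly when $k=1$. Both discriminants are negative (signature $(1,1)$; for the expected maximal locus this is the Proposition above), so \eqref{eq:planDiv} forces $|\Delta(g,r',d')|\ge|\Delta(g,r,d)|$, and the entire task is to rule out $k\ge2$.

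To set up the numerical bound I would first normalize by residuation: the basis change $\{H,L'\}\mapsto\{H,H-L'\}$ gives an isomorphism $\Lambda^{r'}_{g,d'}\isom\Lambda^{g-d'+r'-1}_{g,\,2g-2-d'}$ preserving $H$, with the same discriminant and Clifford index $\gamma(r',d')$; since a Brill--Noether special linear system has $h^1\ge2$ the dual exponent $g-d'+r'-1$ is still $\ge1$, so I may assume $0<d'\le g-1$ (in case (ii) this is given) and likewise $0<d\le g-1$. On one side, for any $\Lambda^{r'}_{g,d'}$ with $r'\ge1$, $0<d'\le g-1$, $\Delta(g,r',d')<0$ one has the crude bound $|\Delta(g,r',d')|=(d')^2-4(g-1)(r'-1)\le(d')^2\le(g-1)^2$ (under the hypothesis of (i) the extra constraints $\gamma(r',d')\le\lfloor\tfrac{g-1}{2}\rfloor$ and $\rho(g,r',d')<0$ push this down to roughly $\lfloor\tfrac{g+1}{2}\rfloor^2$, attained only near $r'=1$). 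On the other side, expected maximality says $d$ is the largest integer with $\rho(g,r,d)<0$ and $2r\le d\le g-1$; rewriting $\rho(g,r,d)<0$ as $d<g+r-\tfrac{g}{r+1}$ and distinguishing $d=g-1$ (where in addition $r(r+1)\le g$) from $d<g-1$, a short computation yields $|\Delta(g,r,d)|>\tfrac14(g-1)^2$ for all but a few small genera. Combining, for $k\ge2$ we would get $|\Delta(g,r',d')|\le(g-1)^2<4\,|\Delta(g,r,d)|\le k^2|\Delta(g,r,d)|$, contradicting \eqref{eq:planDiv}; the finitely many exceptional genera (all $\le 13$, essentially $g=6$) are cleared by the same inequality with the sharper bound on $|\Delta(g,r',d')|$, which the paper computes anyway in \Cref{Section Lower Genus}.

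Hence $k=1$, the embedding is an isomorphism, and the congruence in \eqref{eq:planDiv} together with $0<d,d'\le g-1$ forces $d'=d$, whence $r'=r$ from $\Delta(g,r',d')=\Delta(g,r,d)$; so $\Lambda^{r'}_{g,d'}\isom\Lambda^r_{g,d}$ and in fact $(r',d')=(r,d)$. For (ii) this is precisely the ``unless isomorphic'' conclusion: two distinct expected maximal lattices, or an expected maximal lattice and one associated to a genuinely non-computing $\g{r'}{d'}$ with $d'\le g-1$, then admit no containment at all. For (i) the hypotheses single out a $\g{r'}{d'}$ other than the expected maximal $\g{r}{d}$ (for $g\ge14$, $r\ge2$ this is automatic, since then $\gamma(r,d)>\lfloor\tfrac{g-1}{2}\rfloor\ge\gamma(r',d')$ by \Cref{lemma exp max loci have large gamma}), so $(r',d')=(r,d)$ is excluded and there is no containment.

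The step I expect to be the main obstacle is the lower bound $|\Delta(g,r,d)|>\tfrac14(g-1)^2$ for expected maximal loci: the two sides are asymptotically equal — both $\sim\tfrac14 g^2$, the minimum of $|\Delta|$ over expected maximal loci occurring at $r=1$ — so the estimate is tight and genuinely needs a genus-by-genus verification in low genus (and the still sharper bound on $|\Delta(g,r',d')|$ there). Everything else is routine bookkeeping with \eqref{eq:planDiv} and the definitions of $\rho$ and $\Delta$.
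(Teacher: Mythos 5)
Your proposal is correct in substance and runs on the same engine as the paper's proof --- the relation $\Delta_{sub}=k^{2}\Delta_{exp}$ for a rank-two sublattice of index $k$, followed by discriminant estimates --- but it organizes the argument differently. You fix the $H$-preserving notion of containment and convert it into the exact criterion $\Delta(g,r',d')=k^{2}\Delta(g,r,d)$ together with $d'\equiv\pm kd\pmod{2g-2}$; the congruence has no counterpart in the paper's proof, and it is what lets you settle the index-one case by forcing $(r',d')=(r,d)$, where the paper can only conclude that the lattices are isomorphic (and, for (i), must make its discriminant comparison strict). Since every application (L1--L3, \Cref{prop philosophy}, \Cref{lemma max gon and Cliff dim 1}) concerns sublattices of the form $\langle H,M\rangle\subseteq\Pic(S)$, your reading is the operative one; be aware, though, that the paper's purely discriminant-theoretic argument applies to arbitrary embeddings, and its later remarks (e.g.\ the genus $56$ discussion) are phrased in terms of abstract isomorphism, which your congruence does not detect. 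The second difference is numerical: where the paper compares $\max(-\Delta)$ over the relevant class of sublattices against $-\Delta$ of the expected maximal lattice in (i), and shows the ratio $\min(-\Delta)/\max(-\Delta)>\tfrac14$ over non-computing lattices in (ii), you use the single uniform pair of bounds $|\Delta'|\le(d')^{2}\le(g-1)^{2}$ (after residuation) and $|\Delta(g,r,d)|>\tfrac14(g-1)^{2}$ for expected maximal $(r,d)$, which disposes of $k\ge2$ for both parts at once.

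Two caveats, neither fatal. First, the inequality $|\Delta(g,r,d)|>\tfrac14(g-1)^{2}$ is asserted rather than proved; it is genuinely tight (both sides are asymptotically $g^{2}/4$, the minimum occurring at $r=1$), the case $r=2$ with $d$ near $\tfrac{2g+3}{3}$ requires an honest computation, and spot-checking indicates that the only expected maximal exception is $\mathcal{M}^{2}_{6,5}$ in genus $6$, where $|\Delta|=5<\tfrac{25}{4}$ and your sharper bound on $|\Delta'|$ does clear the case --- so the plan works, and you are no less explicit than the paper's ``we calculate''/``it can be verified,'' but this verification must actually be carried out. Second, in the endgame of (i) the exclusion of $(r',d')=(r,d)$ is not supplied by the hypotheses when $r=1$ or $g<14$: there the expected maximal $\g{r}{d}$ itself (or its Serre adjoint, which after your residuation has the same lattice and Clifford index) lies in the class described in (i), so your claim that the hypotheses ``single out a $\g{r'}{d'}$ other than the expected maximal one'' is literally false in those cases, and the statement has to be read as excluding the lattice itself. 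This self-containment edge case is glossed over in the paper's proof as well, and it is harmless in every application, where the sublattice always comes from a different linear system.
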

\begin{proof}
	To simplify notation, we write $\Delta$ for the discriminant of a lattice $\Lambda$.

	To prove $(i)$, we recall that if $\Lambda_{sub} \subset \Lambda_{exp}$ is a finite index sublattice, then we have $\Delta_{sub}=[\Lambda_{exp}:\Lambda_{sub}]^2\Delta_{exp}$. We calculate that the ratio $\frac{\Delta_{sub}}{\Delta_{exp}}$ is never a square for the lattices considered. Specifically, we show that the largest negative discriminant $-\Delta_{sub}$ among lattices with $\gamma<\lfloor\frac{g-1}{2}\rfloor$, divided by the negative discriminant $-\Delta_{exp}$ of any lattice associated to an expected maximal linear system, is not an integer. 
	Because $\Delta(g,r,d)=\operatorname{disc}(-\Lambda^r_{g,d})=d^2-4(g-1)(r-1)$, it is clear that for fixed $\gamma$ this decreases as $r$ increases until $d=g-1$. It follows that none of the lattices considered can be contained in $\Lambda^1_{g,\lfloor\frac{g+1}{2}\rfloor}$, the expected maximal loci with $r=1$. From now on, we assume $r>1$. Furthermore, we can take
	\begin{itemize}
		\item $\max(-\Delta_{sub})=d^2$ with $d=\frac{g+1}{2}$ when $\gamma=\frac{g-1}{2}-1$ ; or
		\item $\max(-\Delta_{sub})=d^2-4(g-1)$ with $d=\frac{2}{3}g+2$ when $\gamma=\frac{g-1}{2}$.
	\end{itemize}

	We also note that $-\Delta$ increases when $r$ and $\gamma$ both increase by $1$, and increases as $\gamma$ increases for fixed $r$. Thus if $r^\prime \ge r$, then clearly $\frac{\max(-\Delta_{sub})}{-\Delta_{exp}}<1$. If $r^\prime<r$, then moving from $\g{r^\prime}{d^\prime}$ to $\g{r}{d}$, we take steps increasing $r^\prime$ and $\gamma$ by 1 until we hit $r$ (and then take steps increasing $\gamma$) or hit the line $d=g-1$ and we take steps increasing $\gamma$ by $1$ and decreasing $r^\prime$ by $1$. Since each of these steps increase $-\Delta$, we again see that $\frac{\max(-\Delta_{sub})}{-\Delta_{exp}}<1$. We can always take these steps since we may assume we start at $r=1$ or $r=2$, and the expected maximal $\g{r}{d}$ lie far above. Thus $(i)$ is proved. 
	
	To prove $(ii)$, we similarly bound $\max(-\Delta)$ and $\min(-\Delta)$ for non-computing $\g{r}{d}$s. It can be verified that the ratio $\frac{\min(-\Delta)}{\max(-\Delta)}>\frac{1}{4}$ for $r<\sqrt{g}$, and hence $\max(-\Delta)<4\min(-\Delta)$, thus the discriminants of lattices associated to the expected maximal Brill--Noether loci cannot differ by a square greater than $1$. Hence if the lattices associated to expected maximal loci are contained, they must be the same lattice. Since $-\Delta$ increases as $r$ decreases and as $\gamma$ increases until $d=g-1$, this argument in fact shows that any lattice associated to a non-expected maximal non-computing $\g{r^\prime}{d^\prime}$ cannot be contained in the lattice of an expected maximal $\g{r}{d}$ unless they have the same discriminant.
\end{proof}
\begin{remark}
	In fact, computation up to large genus shows that the lattices associated to expected maximal loci do not contain any lattices associated to other expected maximal loci. We conjecture that this is always true, though a proof of this is currently unknown.
\end{remark}

\subsection{Program: Donagi--Morrison implies maximal Brill--Noether loci}\label{subsection Philosophy}
To verify \Cref{conjecture}, our strategy is for fixed genus $g$ and distinct expected maximal $\mathcal{M}^r_{g,d}$ and $\mathcal{M}^{r^\prime}_{g,d^\prime}$ to prove that for a very general K3 surface $(S,H)\in\mathcal{K}^r_{g,d}$, a smooth curve $C\in|H|$ admits a $\g{r}{d}$ but not a $\g{r^\prime}{d^\prime}$. We do this by combining three kinds of results: (i) a lifting result, (ii) showing that $C\in|H|$ has a $\g{r}{d}$ given by restricting $L\in\Lambda^r_{g,d}$, and (iii) a comparison result that distinguishes lattices. The latter two can be checked for any fixed genus. If all the lattices can be distinguished, a lifting result like the Donagi--Morrison conjecture (\Cref{conj DM}) implies \Cref{conjecture}.

We start by defining a few terms in \Cref{conj DM}.

\begin{defn}\label{definition linear system contained in restriction of linear system}
	Let $S$ be a K3 surface, $C\subset S$ be a curve, and $A\in\Pic(C)$ and $M\in\Pic(S)$ be line bundles. We say that the linear system $|A|$ is contained in the restriction of $|M|$ to $C$ when for every $D_0\in|A|$, there is some divisor $M_0\in|M|$ such that $D_0\subset C\cap M_0$.
\end{defn}	
\begin{defn}\label{definition adapted}
	A line bundle $M$ is \emph{adapted} to $|H|$ when \begin{enumerate}[label=(\roman*)]
			\item $h^0(S,M)\ge 2$ and $h^0(S,H\otimes M^\vee)\ge2$; and
			\item $h^0(S,M\otimes\mathcal{O}_C)$ is independent of the smooth curve $C\in|H|$.
		\end{enumerate}  
\end{defn}
Thus whenever $M$ is adapted to $|H|$, condition (i) ensures that $M\otimes\mathcal{O}_C$ contributes to $\gamma(C)$, and condition (ii) ensures that $\gamma(M\otimes\mathcal{O}_C)$ is constant as $C$ varies in its linear system and is satisfied if either $h^1(S,M)=0$ or $h^1(S,H\otimes M^\vee)=0$.
\begin{defn}
\label{defn:DM_lift}
	Let $(S,H)$ be a polarized K3 surface and $C\in|H|$ be a smooth irreducible curve of genus $\ge 2$. Suppose $A$ is a complete basepoint free $\g{r}{d}$ on $C$ such that $d\le g-1$ and $\rho(g,r,d)<0$. We call a line bundle $M$ a \emph{Donagi--Morrison lift} of $A$ if $M$ satisfies the conditions in \Cref{conj DM}. That is,
	\begin{itemize}
		\item $M$ is adapted to $|H|$,
		\item $|A|$ is contained in the restriction of $|M|$ to $C$, and
		\item $\gamma(M\otimes\mathcal{O}_C)\le \gamma(A)$.
	\end{itemize}
	We call a line bundle $M$ a \emph{potential Donagi--Morrison lift} of $A$ if $M$ satisfies $\gamma(M\otimes\mathcal{O}_C)\le\gamma(A)$ and $d(M\otimes\mathcal{O}_C)\ge d(A)$. Note that a Donagi--Morrison lift is a potential Donagi--Morrison lift. We say a (potential) Donagi--Morrison lift is of type $\g{s}{e}$ if $M^2=2s-2$ and $M.H=e$. 
\end{defn}

We summarize a few potential results distinguishing lattices, each of which would be useful in verifying \Cref{conjecture} given an appropriate lifting result.
\begin{itemize}
\label{Expected lattices dont contain sub lattices}
\renewcommand{\itemsep}{1mm}
	\item[\textbf{(L1)}] For a fixed lattice $\Lambda^r_{g,d}$ associated to an expected maximal $\mathcal{M}^{r}_{g,d}$ and any lattice $\Lambda^{r^\prime}_{g,d^\prime}$ associated to another expected maximal $\mathcal{M}^{r^\prime}_{g,d^\prime}$, one has $\Lambda^{r^\prime}_{g,d^\prime}\nsubseteq\Lambda^r_{g,d}$.
	\item[\textbf{(L2)}] For a fixed lattice $\Lambda^r_{g,d}$ associated to an expected maximal $\mathcal{M}^{r}_{g,d}$ and any lattice $\Lambda^{r^\prime}_{g,d^\prime}$ with $\lfloor\frac{g+1}{2}\rfloor \le \gamma(r^\prime,d^\prime) \le \lfloor g-2\sqrt{g}+1\rfloor$ and $1\le r^\prime\le\lfloor\frac{g-1-\gamma(r^\prime,d^\prime)}{2}\rfloor$, one has $\Lambda^{r^\prime}_{g,d^\prime}\nsubseteq\Lambda^r_{g,d}$.
	\item[\textbf{(L3)}] For a pair of lattices $(\Lambda^r_{g,d}, \Lambda^{r^\prime}_{g,d^\prime})$ both associated to expected maximal Brill--Noether loci, and any lattice $\Lambda^s_{g,e}$ such that $\lfloor\frac{g+1}{2}\rfloor \le \gamma(s,e) \le \gamma(r^\prime,d^\prime)$ and $1\le s\le\lfloor\frac{g-1-\gamma(s,e)}{2}\rfloor$, one has $\Lambda^{s}_{g,e}\nsubseteq\Lambda^r_{g,d}$.
\end{itemize}
We note that L2 implies L1. Furthermore, for fixed $r$ and $d$, L2 implies L3 for all $r^\prime$ and $d^\prime$.

\begin{remark}
	The bounds on $\gamma(s,e)$ and $s$ in L3 include all lattices associated to a potential Donagi--Morrison lift of a $\g{r^\prime}{d^\prime}$. Indeed, suppose $M$ is a potential Donagi--Morrison lift of a $\g{r^\prime}{d^\prime}$, and say $M$ is of type $\g{s}{e}$. The lower bound on $\gamma(s,e)$ comes from \Cref{prop lattices are not conatined in exp max} $(i)$. Since $M$ is a potential Donagi--Morrison lift of a $\g{r^\prime}{d^\prime}$, we have $\gamma(s,e)\le\gamma(r^\prime,d^\prime)$, which is the upper bound on $\gamma(s,e)$. Since $M\otimes\mathcal{O}_C$ contributes to $\gamma(C)$, this forces $H\otimes M^\vee\otimes\mathcal{O}_C$ to be at least a $g^1_{2g-2-e}$, whereby $s \le \frac{g-1-\gamma(s,e)}{2}$ as $2s\le e$, which gives the upper bound on $s$.
	
	Similarly, the bounds in L2 include all lattices associated to a potential Donagi--Morrison lift of an expected maximal linear system. $M\otimes\mathcal{O}_C$ must have Clifford index no bigger than the expected maximal $\g{r}{d}$ by \Cref{conj DM}, the upper bound on $\gamma(r^\prime,d^\prime)$ comes from \Cref{remark bounds on BN and Hodge par}. The other bounds are obtained in the same way as for L3. 	 	
\end{remark}

\begin{remark}
	As stated above, computations show that L1 holds for every expected maximal locus up to large genus.
		
	We note that L2 and L3 do not always hold. The first genus where L3 fails is $g=56$, where L3 fails for the lattices $\Lambda^r_{g,d}=\Lambda^2_{56,39}$ and $\Lambda^{r^\prime}_{g,d^\prime}=\Lambda^6_{56,49}$; indeed, in attempting to check whether $\mathcal{M}^2_{56,39}$ can be contained in $\mathcal{M}^3_{56,44}$, a $\g{3}{44}$ on a curve $C\in|H|$ for a very general $(S,H)\in\mathcal{K}^2_{56,39}$ has a potential Donagi--Morrison lift $M$ of type $\g{6}{49}$. However, $\Lambda^2_{56,39}\cong\Lambda^6_{56,49}$, and so L3 does not hold. In this case, because $\rho(56,2,39)=-1$ and $\rho(56,3,44)=-4$, we clearly have $\mathcal{M}^2_{56,39}\nsubseteq\mathcal{M}^3_{56,44}$. Hence the failure of L3 does not necessarily obstruct our program to prove that \Cref{conj DM} implies \Cref{conjecture}.
		
	The next genus where L3 fails is $g=89$, where the locus $\mathcal{M}^3_{89,69}$ could possibly be contained in $\mathcal{M}^4_{89,75}$ or $\mathcal{M}^5_{89,79}$. This is because line bundles of type $\g{3}{36}$ and $\g{4}{75}$ have a potential Donagi--Morrison lift $M$ of type $\g{10}{85}$, and the lattice $\langle H,M\rangle=\Lambda^{10}_{89,85}$ is isomorphic to $\Lambda^3_{89,69}$, so that L3 does not hold. In this example, $\mathcal{M}^3_{89,69}$ has codimension $3$ in $\mathcal{M}_{89}$, whereas $\mathcal{M}^4_{89,75}$ and $\mathcal{M}^5_{89,79}$ both have codimension $1$, hence the codimensions of the loci do not rule out the possibility that $\mathcal{M}^3_{89,69}$ is not maximal. Thus in genus $89$, \Cref{conj DM} together with L2 is not sufficient to imply \Cref{conjecture} without additional techniques.
	
	We note that below genus $200$, except for genus $56,89,91,92,145,153$, and $190$, L2 holds, and thus \Cref{conj DM} implies \Cref{conjecture}.
\end{remark}

\begin{prop}\label{prop philosophy}
	Let $\mathcal{M}^r_{g,d}$ and  $\mathcal{M}^{r^\prime}_{g,d^\prime}$ be two expected maximal Brill--Noether loci. Suppose $(S,H)$ is  a polarized K3 surface with $\Pic(S)=\Lambda^r_{g,d}$, and $L\otimes\mathcal{O}_C$ is a $\g{r}{d}$. If the Donagi--Morrison conjecture (\Cref{conj DM}) holds for $\g{r^\prime}{d^\prime}$ on $C$ and L3 holds for the pair $(\Lambda^r_{g,d},\Lambda^{r^\prime}_{g,d^\prime})$, then $\mathcal{M}^r_{g,d}\nsubseteq \mathcal{M}^{r^\prime}_{g,d^\prime}$. In particular, if \Cref{conj DM} and L2 hold for all expected maximal $\g{r}{d}$ in genus $g$, then \Cref{conjecture} holds in genus $g$.
\end{prop}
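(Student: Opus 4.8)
The plan is to argue by contradiction, so suppose $\mathcal{M}^r_{g,d}\subseteq\mathcal{M}^{r'}_{g,d'}$. Since $\Pic(S)=\Lambda^r_{g,d}$ and $L\otimes\mathcal{O}_C$ is a $\g{r}{d}$ with $\rho(g,r,d)<0$, the curve $C$ lies in $\mathcal{M}^r_{g,d}$, hence in $\mathcal{M}^{r'}_{g,d'}$, so $C$ carries a line bundle of type $\g{r'}{d'}$. I would first reduce this to a well-behaved linear system: passing to its complete linear system and subtracting the base locus yields a complete basepoint free $|A|$ with $r(A)\ge r'$ and $\deg A\le d'$, hence $\gamma(A)\le d'-2r'=\gamma(r',d')$ and $\deg A\le g-1$. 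A short Riemann--Roch computation — using that a $\g{r'}{d'}$ with $d'\le g-1$ has $h^1\ge 2$, so that both $h^0$ and $h^1$ only increase under this reduction — shows $\rho\bigl(g,r(A),\deg A\bigr)\le\rho(g,r',d')<0$ and that $|A|$ still contributes to $\gamma(C)$; by \Cref{lemma max gon and Cliff dim 1}, $C$ has the general Clifford index $\gamma(C)=\lfloor\frac{g-1}{2}\rfloor$, so $\gamma(A)\ge\lfloor\frac{g-1}{2}\rfloor$.

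Next I would apply the Donagi--Morrison conjecture, which by hypothesis holds for the relevant linear systems on $C$, to the complete basepoint free $|A|$: this produces $M\in\Pic(S)$ adapted to $|H|$ with $|A|$ contained in the restriction of $|M|$ to $C$ and $\gamma(M\otimes\mathcal{O}_C)\le\gamma(A)\le\gamma(r',d')$. Since $M$ is adapted, $h^0(S,M)\ge 2$ and $h^0(S,H\otimes M^\vee)\ge 2$, which forces $0<M.H<2g-2$; in particular $M$ is not proportional to $H$, so $\langle H,M\rangle$ is a rank-$2$, finite-index sublattice of $\Pic(S)=\Lambda^r_{g,d}$ preserving $H$. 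After replacing $M$ by $H\otimes M^\vee$ if necessary — which spans the same sublattice — I may assume $M.H\le g-1$; writing $M^2=2s-2$ and $M.H=e$, the sublattice $\langle H,M\rangle$ is isomorphic to $\Lambda^s_{g,e}$, and the cohomological content of \emph{adapted} gives $\gamma(M\otimes\mathcal{O}_C)=\gamma(s,e)=e-2s$, together with $\gamma(s,e)\ge\gamma(C)=\lfloor\frac{g-1}{2}\rfloor$ and $e\ge 2s$; combined with $e\le g-1$ the latter is exactly $1\le s\le\lfloor\frac{g-1-\gamma(s,e)}{2}\rfloor$. To summarize, $\Lambda^s_{g,e}$ embeds into $\Lambda^r_{g,d}$ preserving $H$, with $\lfloor\frac{g-1}{2}\rfloor\le\gamma(s,e)\le\gamma(r',d')$ and $1\le s\le\lfloor\frac{g-1-\gamma(s,e)}{2}\rfloor$.

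Now I would split on $\gamma(s,e)$, using $\lfloor\frac{g+1}{2}\rfloor=\lfloor\frac{g-1}{2}\rfloor+1$. If $\gamma(s,e)\ge\lfloor\frac{g+1}{2}\rfloor$, then $(s,e)$ satisfies every hypothesis of L3 for the pair $(\Lambda^r_{g,d},\Lambda^{r'}_{g,d'})$, so $\Lambda^s_{g,e}\nsubseteq\Lambda^r_{g,d}$, a contradiction. Otherwise $\gamma(s,e)=\lfloor\frac{g-1}{2}\rfloor$, so $M\otimes\mathcal{O}_C$ computes $\gamma(C)$: if $s\ge 2$ this contradicts \Cref{prop lattices are not conatined in exp max}(i); and if $s=1$ then $M^2=0$ and $M\otimes\mathcal{O}_C$ is a pencil of degree $\gamma(C)+2=\lfloor\frac{g+3}{2}\rfloor$, so, since $|A|$ is then cut out by this pencil and $C$ has general gonality $\lfloor\frac{g+3}{2}\rfloor$ (again by \Cref{lemma max gon and Cliff dim 1}), a short argument forces $r'=1$ and a $\g{1}{e'}$ on $C$ with $e'\le\lfloor\frac{g+1}{2}\rfloor$, contradicting the gonality of $C$. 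This establishes $\mathcal{M}^r_{g,d}\nsubseteq\mathcal{M}^{r'}_{g,d'}$. For the last assertion, assume \Cref{conj DM} and L2 hold for all expected maximal $\g{r}{d}$ in genus $g$; since L2 implies L3 for every pair, and since every expected maximal locus with $r\ge 2$ is witnessed by a very general $(S,H)\in\mathcal{K}^r_{g,d}$ with $\Pic(S)=\Lambda^r_{g,d}$ (which exists by the Torelli theorem, as $\Delta(g,r,d)<0$), the above applies to every ordered pair of distinct expected maximal loci whose first member has $r\ge 2$, while \Cref{Prop submax gonality not contained in noncomputing locus} handles the ordered pairs whose first member is $\mathcal{M}^1_{g,\lfloor\frac{g+1}{2}\rfloor}$. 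As every Brill--Noether locus is contained in an expected maximal one, \Cref{conjecture} follows in genus $g$.

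The main obstacle is not conceptual but lies in the two reductions and the boundary case: one must descend from an arbitrary $\g{r'}{d'}$ to a complete basepoint free system while preserving $\rho<0$ and the property of contributing to $\gamma(C)$, and one must control the lift $M$ precisely when $\gamma(M\otimes\mathcal{O}_C)=\gamma(C)$ — the case sitting just outside the range covered by L3 — where one instead invokes \Cref{prop lattices are not conatined in exp max}(i), and, in the elliptic-pencil subcase $M^2=0$, the fact that the curves on these K3 surfaces have general gonality.
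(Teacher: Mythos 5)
Your argument is correct and follows essentially the same route as the paper: the paper's proof is a three-line deduction citing the preceding remark that every potential Donagi--Morrison lift of a $\g{r'}{d'}$ has lattice invariants falling in the range covered by L3 (with the cases of Clifford index below $\lfloor\frac{g+1}{2}\rfloor$ delegated to \Cref{prop lattices are not conatined in exp max}(i)), and your reduction to a complete basepoint free system, lattice identification of the lift, and case split on $\gamma(s,e)$ simply reconstitute that remark, with \Cref{Prop submax gonality not contained in noncomputing locus} and L2 $\Rightarrow$ L3 handling the final assertion exactly as the paper intends. The one identification you state that the paper also leaves implicit --- that \emph{adapted} forces $\gamma(M\otimes\mathcal{O}_C)=\gamma(s,e)$, which really amounts to $h^1(S,M)=0$ --- is asserted at the same level of detail as in the paper's remark, so it is not a gap relative to the paper's own proof.
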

\begin{proof}
	The condition L3 implies that $\Pic(S)$ cannot admit any potential Donagi--Morrison lift of the $\g{r^\prime}{d^\prime}$. Hence the existence of a $\g{r^\prime}{d^\prime}$ on $C$ contradicts the Donagi--Morrison conjecture. Therefore $C$ has no $\g{r^\prime}{d^\prime}$, as was to be shown.
\end{proof}

To state a related question, we need a simple definition.
\begin{defn}\label{defn special Cliff Index}
	We define the \emph{special Clifford index} of $C$ as $$\widetilde{\gamma}(C)\colonequals \min\{\gamma(A) ~\vert~ \rho(A)<0 \text{, } h^0(C,A) \geq 2\text{, and }h^1(C,A) \geq 2 \}.$$ We say a Brill--Noether special line bundle $A$ on $C$ \emph{computes} the special Clifford index if $\gamma(A)=\widetilde{\gamma}(C)$.
\end{defn}
Lelli-Chiesa's lifting result \cite[Theorem 4.2]{Lelli_Chiesa_2015} provides a lift of Brill--Noether special line bundles computing the Clifford index. A similar result for line bundles computing the special Clifford index of the curve together with L1 would imply that $\mathcal{M}^r_{g,d}\nsubseteq \mathcal{M}^{r^\prime}_{g,d^\prime}$ for $\gamma(r,d)\ge\gamma(r^\prime,d^\prime)$. We are left with three questions to which positive answers would imply parts of \Cref{conjecture}.

\begin{questionintro}
	When L1 or L2 fail, can the Brill--Noether loci be distinguished in another way?
\end{questionintro}

\begin{questionintro}
\label{question2}
	Under what conditions does a line bundle computing the special Clifford index of a curve $C$ lift to a line bundle on $S$?
\end{questionintro}

\begin{questionintro}
	Does the Donagi--Morrison conjecture hold for expected maximal $\g{r}{d}$s?
\end{questionintro}

 We note that the work on Brill--Noether theory for fixed gonality, if it were extended to higher rank, could provide another approach to distinguishing Brill--Noether loci that is complementary to the Donagi--Morrison lifting approach.

\section{Stability of Sheaves on K3 Surfaces}\label{Stability of Sheaves on K3 Surfaces}
We recall the notions of stability and slope stability of torsion free coherent sheaves on a polarized K3 surface $(S,H)$ and Harder--Narasimhan (HN) and Jordan--H\"{o}lder (JH) filtrations. Let $E$ be a torsion free coherent sheaf on $(S,H)$ The \emph{slope} of $E$ is $\mu_H(E)\colonequals \frac{c_1(E).H}{\operatorname{rk}(E)}$. A torsion free coherent sheaf is called slope stable or $\mu$-stable ($\mu$-semistable) if $\mu_H(F)<\mu_H(E)$ (respectively, $\mu_H(F)\le\mu_H(E)$) for all coherent sheaves $F\subseteq E$ with $0<\operatorname{rk}(F)<\operatorname{rk}(E)$. We define the \emph{normalized Hilbert polynomial} of $E$ to be \[p(E,n)\colonequals \frac{\chi(E\otimes H^{n})}{\operatorname{rk}(E)}=\frac{H^2}{2!}n^2+\mu(E)n+\frac{\chi(E)}{\operatorname{rk}(E)}\] where the second equality follows from Riemann--Roch. We say $E$ is (Gieseker) stable (semistable) if $p(F,n)<p(E,n)$ (respectively, $p(F,n)\le p(E,n)$) for all proper subsheaves $F\subsetneq E$, where for two polynomials $f(n), g(n)$ we say $f(n)<g(n)$ ($f(n)\le g(n)$) if this is true for $n\gg0$.

We have the following implications for a torsion free coherent sheaf $E$ \[\text{$\mu$-stable} \implies \text{ stable } \implies \text{ semistable } \implies \text{$\mu$-semistable}.\]

Every torsion free coherent sheaf $E$ has a unique Harder--Narasimhan filtration, which is an increasing filtration \[0=HN_0(E) \subset HN_1(E) \subset \cdots \subset HN_{\ell}(E)=E,\] such that the factors $gr_i^{HN}(E)=HN_i(E)/HN_{i-1}(E)$ for $i=1,\dots,\ell$ are torsion free semistable sheaves with normalized Hilbert polynomials $p_i=p(gr_i^{HN}(E),n)$ satisfying \[p_{\text{max}}=p_1>\cdots>p_{\ell}=p_{\text{min}}.\] In particular, we see that $\mu(gr_1^{HN}(E))>\mu(gr_2^{HN}(E))>\cdots > \mu(gr_{\ell}^{HN}(E))$. If $E$ is a vector bundle, the sheaves $HN_i(E)$ are locally free. We also have $\mu(HN_1(E))>\mu(HN_2(E))>\cdots > \mu(E)$.

Likewise, every $(\mu)$-semistable sheaf $E$ has a Jordan--H\"{o}lder filtration, which is an increasing filtration \[0=JH_0(E)\subset JH_1(E) \subset\cdots\subset JH_{\ell}(E)=E,\] such that the factors $gr_i^{JH}(E)=JH_i(E)/JH_{i-1}(E)$ for $i=1,\dots,\ell$ are torsion free stable sheaves with normalized Hilbert polynomial $p(E,n)$. In particular, $\mu(E)=\mu(gr_i^{JH}(E))$ for all $i$. The JH filtration is not uniquely determined, however the associated graded object $gr^{JH}(E)=\bigoplus\limits_i gr_i^{JH}(E)$ is uniquely determined by $E$.

We also briefly recall some facts about the moduli space of stable and semistable sheaves on K3 surfaces from \cite{huybrechts_2016}. For a sheaf $E$ on $(S,H)$, the \emph{Mukai vector} is given by \[v(E)\colonequals ch(E)\sqrt{td(S)}= \left(\operatorname{rk}(E),~ c_1(E),~ ch_2(E)+\operatorname{rk}(E)\right)=\left(\operatorname{rk}(E),~ c_1(E),~\chi(E)-\operatorname{rk}(E)\right),\] 
considered as an element in $H^\ast (S,\mathbb{Z})$. For a fixed Mukai vector $v$, the moduli space of semistable sheaves with Mukai vector $v$ is denoted $M(v)$, and the open (possibly empty) subscheme of stable sheaves is denoted $M(v)^s\subset M(v)$. The Mukai pairing is given by \[\langle v(E),v(F) \rangle \colonequals-\chi(E,F)=-\sum_i(-1)^i \operatorname{Ext}^i(E,F)=-\int_S v(E)^\ast \wedge v(F),\] where for $v(E)=v^0+v^2+v^4\in H^i(S,\mathbb{Z})$ with $v^i\in H^i(S,\mathbb{Z})$, $v(E)^\ast\colonequals v^0-v^2+v^4$. We recall that the space of stable sheaves with Mukai vector $v$, $M(v)^s$ is either empty or a smooth quasi-projective variety of dimension $2+\langle v,v \rangle$.

\section{Lazarsfeld--Mukai Bundles and Lifting}\label{Section Lazardsfeld--Mukai Bundles and Lifting and Generalized LM Bundles}
We briefly recall some facts about Lazarsfeld--Mukai bundles (LM bundles) and state a few useful facts that motivate our proof. Let $\iota\colon C \hookrightarrow S$ be a smooth irreducible curve of genus $g$ in $S$ and $A$ a basepoint free line bundle on $C$ of type $\g{r}{d}$. We define a bundle $F_{C,A}$ on $S$ via the short exact sequence \[\xymatrix{0 \ar[r] & F_{C,A} \ar[r] & H^0(C,A) \otimes \mathcal{O}_S \ar[r]^-{ev} & \iota_\ast(A) \ar[r] & 0.
} \] Dualizing gives $E_{C,A}\colonequals F_{C,A}^\vee$ (the LM bundle associated to $A$ on $C$) sitting in the short exact sequence \[\xymatrix{0 \ar[r] & H^0(C,A)^\vee \otimes\mathcal{O}_S \ar[r] & E_{C,A} \ar[r] & \iota_\ast(\omega_C\otimes A^\vee) \ar[r] & 0;}\] whereby the following facts about the LM bundle $E_{C,A}$ are readily proved.
\begin{prop}\label{LM bundle props}
	Let $E_{C,A}$ be a LM bundle associated to a basepoint free line bundle $A$ of type $\g{r}{d}$ on $C\subset S$, then:
	\begin{itemize}
		\item $\det E_{C,A}=c_1(E_{C,A})=[C]$ and $c_2(E_{C,A})=\operatorname{deg}(A)$;
		\item $\operatorname{rk}(E_{C,A})=r+1$ and $E_{C,A}$ is globally generated off the base locus of $\iota_\ast(\omega_C\otimes A^\vee)$;
		\item $h^0(S,E_{C,A})=h^0(C,A)+h^0(C,\omega_C\otimes A^\vee)=2r+1+g-d=g-(d-2r)+1$;
		\item $h^1(S,E_{C,A})=h^2(S,E_{C,A})=0$, $h^0(S,E^\vee_{C,A})=h^1(S,E^\vee_{C,A})=0$;
		\item $\chi(F_{C,A}\otimes E_{C,A})=2(1-\rho(g,r,d))$.
	\end{itemize}
\end{prop}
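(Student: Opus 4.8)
The plan is to read off every assertion from the two displayed short exact sequences, together with Serre duality and Riemann--Roch on the K3 surface $S$. First I would observe that the second sequence is just $\mathcal{H}om(-,\mathcal{O}_S)$ applied to the first: since $\mathcal{O}_S$ is locally free, $\mathcal{E}xt^1(H^0(C,A)\otimes\mathcal{O}_S,\mathcal{O}_S)=0$; and since $\iota_\ast(A)$ is a torsion sheaf supported on $C$, $\mathcal{H}om(\iota_\ast A,\mathcal{O}_S)=0$ and $\mathcal{E}xt^1(\iota_\ast A,\mathcal{O}_S)\cong\iota_\ast(\omega_C\otimes A^\vee)$ (using $\omega_S=\mathcal{O}_S$). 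This exhibits $E_{C,A}=F_{C,A}^\vee$ in the stated sequence and, because $\iota_\ast A$ has rank $0$, immediately gives $\operatorname{rk}(E_{C,A})=h^0(C,A)=r+1$; note that $F_{C,A}$, hence $E_{C,A}$, is locally free since $A$ is basepoint free.

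For the Chern classes I would apply the Whitney formula to the first sequence, so that $c(F_{C,A})=c(\iota_\ast A)^{-1}$, after computing the Chern character of the torsion sheaf $\iota_\ast A$ by Grothendieck--Riemann--Roch (or directly): $\operatorname{ch}(\iota_\ast A)=[C]+(d+1-g)[\mathrm{pt}]$, using that $C\in|H|$ has genus $g$. With $[C]^2=2g-2$ this gives $c_1(\iota_\ast A)=[C]$ and $c_2(\iota_\ast A)=2g-2-d$, hence $c_1(F_{C,A})=-[C]$, $c_2(F_{C,A})=d$, and dualizing, $c_1(E_{C,A})=[C]$ (so $\det E_{C,A}=\mathcal{O}_S(C)$) and $c_2(E_{C,A})=d=\deg(A)$.

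The cohomological claims I would obtain in two moves. From the long exact sequence of the \emph{first} sequence, the induced map $H^0(S,H^0(C,A)\otimes\mathcal{O}_S)\to H^0(S,\iota_\ast A)$ is the tautological isomorphism $H^0(C,A)\to H^0(C,A)$, and $h^1(S,\mathcal{O}_S)=0$, so $h^0(S,F_{C,A})=h^1(S,F_{C,A})=0$, i.e., $h^0(S,E^\vee_{C,A})=h^1(S,E^\vee_{C,A})=0$; Serre duality on the K3 then converts this into $h^2(S,E_{C,A})=h^1(S,E_{C,A})=0$. From the long exact sequence of the \emph{second} sequence, again using $h^1(S,\mathcal{O}_S)=0$, the restriction $H^0(S,E_{C,A})\to H^0(C,\omega_C\otimes A^\vee)$ is surjective and $h^0(S,E_{C,A})=h^0(C,A)+h^0(C,\omega_C\otimes A^\vee)$; Riemann--Roch and Serre duality on $C$ give $h^0(C,\omega_C\otimes A^\vee)=g-d+r$, yielding $h^0(S,E_{C,A})=2r+1+g-d=g-(d-2r)+1$. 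Global generation of $E_{C,A}$ off the base locus of $\iota_\ast(\omega_C\otimes A^\vee)$ then follows from the second sequence, since $H^0(C,A)^\vee\otimes\mathcal{O}_S$ is globally generated and the sections of $E_{C,A}$ surject onto those of $\iota_\ast(\omega_C\otimes A^\vee)$.

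Finally, for the Euler characteristic I would pass to the Mukai pairing: since $F_{C,A}$ is locally free and $F_{C,A}=E^\vee_{C,A}$, one has $\chi(F_{C,A}\otimes E_{C,A})=\chi(E^\vee_{C,A}\otimes E_{C,A})=\chi(E_{C,A},E_{C,A})=-\langle v(E_{C,A}),v(E_{C,A})\rangle$, and with $v(E_{C,A})=(r+1,\,[C],\,g-d+r)$ read off from the Chern-class step, this equals $2(r+1)(g-d+r)-(2g-2)=2\bigl(1-\rho(g,r,d)\bigr)$ once one recognizes $(r+1)(g-d+r)=g-\rho(g,r,d)$. Every step is routine linear bookkeeping; the only points that require genuine care are the $\mathcal{E}xt$-sheaf identification producing the second exact sequence and the correct normalization of $\operatorname{ch}(\iota_\ast A)$, since a slip in the latter would corrupt all of the Chern classes and the final identity simultaneously.
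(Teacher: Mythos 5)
Your proof is correct and follows exactly the route the paper intends: the paper states these facts are ``readily proved'' from the two displayed exact sequences, and you carry that out, correctly identifying the second sequence as the dual of the first via $\mathcal{E}xt^1(\iota_\ast A,\mathcal{O}_S)\cong\iota_\ast(\omega_C\otimes A^\vee)$, computing $\operatorname{ch}(\iota_\ast A)=[C]+(d+1-g)[\mathrm{pt}]$ for the Chern classes, and using the long exact sequences, Serre duality, and the Mukai pairing with $v(E_{C,A})=(r+1,[C],g-d+r)$ for the remaining items. All the computations check out.
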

A vector bundle $E$ is called \emph{simple} if $\operatorname{End}(E)$ is a division algebra. Over an algebraically closed field, this is equivalent to $h^0(E^\vee \otimes E)=1$. Thus we see that $E_{C,A}$ is non-simple if $\rho(g,r,d)<0$.

In \cite{Lelli_Chiesa_2015}, generalized LM bundles are defined and prove useful in lifting special line bundles on a curve $C\in|H|$ to a line bundle on the polarized K3 surface $(S,H)$.
\begin{defn}
	Let $C$ be a curve and $A\in\Pic(C)$. The linear system $|A|$ is called \emph{primitive} if both $A$ and $\omega_C\otimes A^\vee$ are basepoint free.
\end{defn}
\begin{defn}[\cite{Lelli_Chiesa_2015} Definition 1]\label{gLM bundle defn}
	A torsion free coherent sheaf $E$ on $S$ with $h^2(S,E)=0$ is called a \emph{generalized Lazarsfeld--Mukai bundle} (gLM bundle) of type (I) or (II), respectively, if
	\begin{enumerate}[label=(\Roman*)]
		\item $E$ is locally free and generated by global sections off a finite set;\\
		or
		\item $E$ is globally generated.
	\end{enumerate}
\end{defn}
\begin{remark}[\cite{Lelli_Chiesa_2015} Remark 1]\label{remark glm is lm bundle}
	If conditions (I) and (II) of \Cref{gLM bundle defn} are both satisfied, then $E$ is the LM bundle associated with a smooth irreducible curve $C\subset S$ and a primitive linear series $(A,V)$ on $C$, i.e. $E=E_{C,(A,V)}$, where $E_{C,(A,V)}$ is the dual of the kernel of the evaluation map $V\otimes \mathcal{O}_S\to A$. Furthermore, $V=H^0(C,A)$ if and only if $h^1(S,E)=0$, in which case $E$ is just the LM bundle associated to $A$.
\end{remark}

\begin{defn}
	Let $E$ be a gLM bundle. The \emph{Clifford index of $E$} is:
	\[\gamma(E)\colonequals c_2(E) - 2(\operatorname{rk}(E)-1).\]
\end{defn}
\begin{remark}
	For the LM bundle $E_{C,A}$ for a smooth curve $C\subset S$ and $A$ a $\g{r}{d}$ on $C$, one has $\gamma(E_{C,A})=\gamma(A)$ by \Cref{LM bundle props}.
\end{remark}
\begin{lemma}[\cite{Lelli_Chiesa_2015} Corollary 2.5]\label{LC Cor 2.5}
	Let $E$ be a gLM bundle of rank $r$ and $c_1(E)^2>0$. Then, $\gamma(E)\ge 0$. Furthermore, $\gamma(E)=0$ only in the following cases:
	\begin{enumerate}[label={\normalfont(\alph*)}]
		\item $r=1$ and $E$ is a globally generated line bundle;
		\item $E=E_{C,\omega_C}$ for some smooth irreducible curve $C\subset S$ of genus $g=r\ge 2$;
		\item $r>1$ and $E=E_{C,(r-1)\g{1}{2}}$ for some smooth hyperelliptic curve $C\subset S$ of genus $g>r$.
	\end{enumerate}
\end{lemma}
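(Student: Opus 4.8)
The plan is to reduce the statement to Clifford's theorem for line bundles on curves, packaged with the structure theory of gLM bundles. First record the numerology. On a K3 surface, Riemann--Roch gives $\chi(E)=\tfrac{1}{2}c_1(E)^2-c_2(E)+2r$, and since $c_2(E)=\gamma(E)+2(r-1)$ this reads $\chi(E)=\tfrac{1}{2}c_1(E)^2+2-\gamma(E)=(g+1)-\gamma(E)$, where $2g-2\colonequals c_1(E)^2>0$. Because $h^2(S,E)=0$, the inequality $\gamma(E)\ge 0$ is equivalent to $h^0(S,E)\le (g+1)+h^1(S,E)$; in particular it suffices to prove $h^0(S,E)\le g+1$, and the equality case $\gamma(E)=0$ forces $h^1(S,E)=0$ and $h^0(S,E)=g+1$, which one must track.

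Second, reduce to curves. For a type (I) bundle one first replaces $E$ by the gLM bundle attached to a primitive linear series on a smooth irreducible curve $C\subset S$ (using the remark above on when conditions (I) and (II) coincide), so that one may assume $E$ is globally generated. Then $\det E=c_1(E)$ is globally generated and big, so a general $C\in|\det E|$ is smooth and irreducible of genus $g$ with $\omega_C\cong(\det E)|_C$. The hypothesis $h^2(S,E)=0$ is used here: if $H^0(S,E\otimes(\det E)^\vee)\ne 0$ then $\det E\hookrightarrow E$, the globally generated cokernel has trivial determinant hence is $\mathcal{O}_S^{r-1}$, the sequence splits, and $h^2(S,E)\ge r-1>0$ for $r\ge 2$, a contradiction. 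Hence restriction gives an injection $H^0(S,E)\hookrightarrow H^0(C,E|_C)$, and $E|_C$ is a globally generated bundle of rank $r$ and degree $c_1(E)^2=2g-2$ on $C$. Since $E|_C$ is globally generated, all its Harder--Narasimhan slopes lie in $[0,2g-2]$, the range where Clifford's theorem for semistable bundles applies; summing the Clifford bound over the HN graded pieces gives $h^0(C,E|_C)\le (g-1)+r$.

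Third --- and this is the crux --- the naive estimate only yields $\gamma(E)\ge 2-r$, so for $r\ge 3$ one must recover the missing $r-2$. My plan is to do this by induction on $r$, peeling off the maximal destabilizing subsheaf of $E$ rather than restricting blindly: if it is a line bundle $N$, one obtains a line-bundle quotient of $E|_C$ governed by the sharper inequality $h^0\le\tfrac{1}{2}\deg+1$ (costing only one unit), while the complementary quotient is again a gLM bundle of rank $r-1$ to which the inductive hypothesis applies; if $E$ is already $\mu$-semistable one passes instead to a Jordan--H\"older factor and uses that each stable factor $E_i$ satisfies $\langle v(E_i),v(E_i)\rangle\ge -2$ (nonemptiness of $M(v(E_i))^s$), reassembling via additivity of Mukai vectors and the fact that $c_1(E)^2>0$. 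Organizing this so that $\gamma$ provably never increases along the induction is the delicate bookkeeping, and I expect it to be the main obstacle.

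Finally, the equality case. If $\gamma(E)=0$, every inequality above is sharp: the restriction map on global sections is an isomorphism, $h^1(S,E)=0$, and each graded piece of the filtration computes the curve version of Clifford exactly, so by the classical rigidity of Clifford's theorem each piece is a direct sum of copies of $\mathcal{O}_C$, of $\omega_C$, and of pullbacks of $\mathcal{O}_{\mathbb{P}^1}(1)$ along a hyperelliptic map. Reassembling these on $S$ under the constraint $c_1(E)^2>0$ leaves only three possibilities: $r=1$ with $E$ a basepoint-free line bundle, using that an effective line bundle with $L^2>0$ and only finitely many base points on a K3 is in fact globally generated (Saint-Donat's description of base loci) --- case (a); $E\cong E_{C,\omega_C}$ with $g=r$ --- case (b); and $E\cong E_{C,(r-1)\g{1}{2}}$ with $C$ hyperelliptic and $g>r$ --- case (c). Ruling out every other way the Clifford-extremal pieces could fit together, by tracing the equality constraints back through the restriction and the filtration, is the bulk of the remaining work.
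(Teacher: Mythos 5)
This lemma is not proved in the paper at all: it is quoted verbatim from Lelli-Chiesa (\cite{Lelli_Chiesa_2015}, Corollary 2.5), so the relevant comparison is with her argument. Her route is structural: a type (II) gLM bundle that is not locally free is replaced by its double dual (her Proposition 2.4), which is of type (I) with the same $c_1$ and smaller $c_2$; a locally free globally generated gLM bundle is, by the identification you call ``the remark above,'' an honest LM bundle $E_{C,(A,V)}$ of a base point free linear series on a smooth irreducible $C\in|\det E|$. Then $\gamma(E)=\deg A-2(\dim V-1)$, the series is \emph{special} because $\omega_C\otimes A^\vee$ is effective, and the statement is literally classical Clifford on $C$, whose equality analysis ($A=\omega_C$, or $C$ hyperelliptic and $A$ a multiple of the $\g{1}{2}$, plus the trivial rank-one case) produces exactly (a)--(c). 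Your proposal takes a different route (restriction to $C\in|\det E|$ plus Clifford for semistable bundles on curves), and this is where it breaks.

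The genuine gap is the one you flag yourself, and it is a missing idea rather than bookkeeping: summing the semistable Clifford bound over the Harder--Narasimhan pieces of $E|_C$ only gives $h^0(S,E)\le g-1+r$, hence $\gamma(E)\ge 2-r$, and your plan B does not close it. In the $\mu$-semistable case, Mukai-vector/Bogomolov-type bounds on the Jordan--H\"older factors give only $c_2(E)\ge\frac{(r-1)c_1(E)^2}{2r}$, i.e.\ $\gamma(E)\ge\frac{(r-1)(g-1-2r)}{r}$, which is negative whenever $g\le 2r$ (note $E_{C,\omega_C}$ has $g=r$), so no amount of reassembling Mukai vectors alone can yield $\gamma\ge0$; you need curve-level section counts, i.e.\ precisely the identification with an LM bundle and Clifford for the special series. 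In the unstable case, deducing $\gamma(E)\ge0$ from $\gamma(E/N)\ge0$ requires $c_1(N).c_1(E/N)\ge 2$ (compare \Cref{coker is gLM}), an effectivity/Hodge-index input you never establish. Two smaller problems: your reduction of a type (I) bundle to a globally generated one via the remark is backwards (the remark applies only when (I) \emph{and} (II) both hold; Lelli-Chiesa handles type (I) directly, and type (II) via $E^{\vee\vee}$), and in your injectivity argument the cokernel of $\det E\hookrightarrow E$ need not be torsion free, so ``trivial determinant hence $\mathcal{O}_S^{\oplus(r-1)}$'' and the ensuing splitting (which also silently uses $h^1(S,\det E)=0$) need repair. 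The equality analysis is likewise only a sketch. As it stands the proposal proves the lemma only for $r\le 2$.
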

\begin{lemma}\label{coker is gLM}
	Let $N\in \Pic(S)$ be nontrivial and globally generated with $h^0(S,N)\ne0$. Let $E=E_{C,A}$ and suppose we have a short exact sequence \[\xymatrix{0\ar[r] & N \ar[r] & E \ar[r] & E/N \ar[r] & 0}\] with $E/N$ torsion free. Then $E/N$ satisfies $h^1(S,E/N)=h^2(S,E/N)=0$. If $A$ is primitive, then $E/N$ is a gLM bundle of type (II). If we further assume that $E/N$ is locally free, then it is a LM bundle for a smooth irreducible curve $D\in|H-N|$. If $A$ is not primitive and $E/N$ is assumed locally free, then $E/N$ is a gLM bundle of type (I). In any of the above cases, we have
	\begin{itemize}
		\item $c_1(E/N)=H-N$;
		\item $c_2(E/N)=d+N^2-H. N$;
		\item $\gamma(E/N)=\gamma(E_{C,A})+N^2 - H. N+2$.
	\end{itemize}
\end{lemma}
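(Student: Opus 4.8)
The plan is to extract all three bulleted identities together with the gLM assertions directly from the defining sequence $0 \to N \to E \to E/N \to 0$, using the properties of $E = E_{C,A}$ recorded in \Cref{LM bundle props} and the structure theory recalled above. For the numerics: ranks are additive, so $\operatorname{rk}(E/N) = \operatorname{rk}(E) - 1 = r$; the Whitney sum formula for Chern classes of the line bundle $N$ gives $c_1(E/N) = c_1(E) - N = H - N$ (since $c_1(E) = [C] = H$) and $c_2(E/N) = c_2(E) - N.c_1(E/N) = d + N^2 - H.N$ (since $c_2(E) = d$); then $\gamma(E/N) = c_2(E/N) - 2(\operatorname{rk}(E/N)-1)$ is, after a one-line substitution and using $\gamma(E_{C,A}) = d - 2r$, equal to $\gamma(E_{C,A}) + N^2 - H.N + 2$.

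For the vanishing $h^1(S, E/N) = h^2(S, E/N) = 0$, I would run the long exact cohomology sequence of the short exact sequence above. Since $h^1(S,E) = h^2(S,E) = 0$ by \Cref{LM bundle props}, it collapses to an isomorphism $H^1(S, E/N) \cong H^2(S, N)$ and the vanishing $H^2(S, E/N) = 0$. It then remains only to check $h^2(S,N) = 0$: as $N$ is nontrivial and globally generated with $h^0(S,N) \neq 0$, it is the class of a nonzero effective divisor, so $N.H > 0$ for the ample $H$; hence $N^\vee$ is not effective, $h^0(S, N^\vee) = 0$, and Serre duality on the K3 surface gives $h^2(S, N) = h^0(S, N^\vee) = 0$.

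For the gLM structure, I would start from the fact, recorded in \Cref{LM bundle props}, that $E$ is globally generated off the (finite) base locus of $\iota_\ast(\omega_C \otimes A^\vee)$, and note that a quotient of a sheaf generated by global sections off a finite set retains the same property. If $A$ is primitive, then $\omega_C \otimes A^\vee$ is basepoint free, so $E$, and hence $E/N$, is globally generated; together with torsion-freeness (hypothesized) and $h^2(S, E/N) = 0$, this is precisely a gLM bundle of type (II). If in addition $E/N$ is locally free, it satisfies conditions (I) and (II) of \Cref{gLM bundle defn}, so \Cref{remark glm is lm bundle} identifies it as the LM bundle of a smooth irreducible curve $D \subset S$ with a primitive linear series, and since $h^1(S, E/N) = 0$ the same remark pins it down as the LM bundle $E_{D,A'}$ of a line bundle $A'$ on $D$; matching first Chern classes, $[D] = c_1(E/N) = H - N$, so $D \in |H-N|$. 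Finally, if $A$ is not primitive but $E/N$ is locally free, then $E/N$ is still locally free, generated by global sections off a finite set, and has $h^2 = 0$, hence is a gLM bundle of type (I).

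I do not anticipate a real obstacle here: every step is the Whitney formula, the long exact sequence, or a direct appeal to \Cref{LM bundle props}, \Cref{gLM bundle defn}, and \Cref{remark glm is lm bundle}. The only two points needing a little care are the vanishing $h^2(S,N) = 0$ — which is where the effectivity and nontriviality of $N$ genuinely enter — and the identification of $D$ as a member of $|H-N|$, which rests on the first Chern class computation of the first paragraph.
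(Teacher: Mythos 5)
Your proposal is correct and follows essentially the same route as the paper's proof: the long exact cohomology sequence with $h^1(S,E)=h^2(S,E)=h^2(S,N)=0$ for the vanishing, quotients of globally generated (off a finite set) sheaves for the gLM type statements with \Cref{remark glm is lm bundle} for the LM identification, and Whitney's formula for the Chern class and Clifford index computations. The only difference is that you spell out two details the paper leaves implicit, namely the verification $h^2(S,N)=h^0(S,N^\vee)=0$ and the identification $D\in|H-N|$ via $c_1(E/N)$.
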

\begin{proof}
	If $A$ is primitive, we see that $E/N$ is globally generated as $E$ is globally generated. From the long exact sequence in cohomology, and noting that $h^2(S,N)=h^1(S,E)=h^2(S,E)=0$, we see that $h^1(S,E/N)=h^2(S,E/N)=0$. Thus $E/N$ is a gLM bundle of type (II). If $E/N$ is assumed to be locally free, then as in \Cref{remark glm is lm bundle}, $E/N=E_{D,B}$ is the LM bundle associated to a smooth irreducible curve $D\subset S$ and a line bundle $B$ on $D$. Finally, if $A$ is not primitive, then $E/N$ is globally generated off a finite set as it is the quotient of $E$, which is also globally generated off a finite subset. Thus $E/N$ is a gLM of type (I).
	
	Applying Whitney's formula to the exact sequence, we see that \[1+c_1(E)+c_2(E)=\left(1+c_1(E/N)+c_2(E/N)\right)\left(1+N\right),\] hence $c_1(E/N)=H-N$ and $c_2(E/N)=d+N^2-H. N$. Finally, as $\gamma(E/N)=c_2(E/N)-2(\operatorname{rk}(E/N)-1)$ and  $\operatorname{rk}(E/N)=\operatorname{rk}(E)-1=(r+1)-1=r$, it follows that $$\gamma(E/N)=d+N^2-H. N - 2(r-1) = d-2r +N^2-H. N +2 = \gamma(E)+N^2-H. N +2.$$
\end{proof}
\begin{remark}
	If $A$ is of type $\g{r}{d}$ and $L=H-N$ is a lift of $A$ with $L^2=2r-2$, then the last equality gives $\gamma(E/N)=\gamma(A)+(2r-2)-d+2=0$.
\end{remark}
\begin{remark}\label{bounded c_2}
	The same argument as above shows that if $A$ is primitive and $M\subset E=E_{C,A}$ is a subsheaf such that $E/M$ is torsion free (e.g. obtained through a Harder--Narasimhan filtration), then $E/M$ is a gLM bundle of type (II). Moreover, by \cite[Proposition 2.7]{Lelli_Chiesa_2015}, if $c_1(E/M)^2=0$, then $c_2(E/M)=0$. In the following sections, we will use the contrapositive of this when $c_2(E/M)>0$. 
\end{remark}

We give a brief summary of gLM bundles of low Clifford index. Such a characterization can be useful in eliminating certain types of filtrations of Lazarsfeld--Mukai bundles, see Section~\ref{subsec:g18}.

\begin{prop}\label{Prop Clifford Index 1 and 2 gLM}
Let $E = E_{C,A}$ be a LM bundle associated to a primitive linear system $A$ on $C \subset S$. Suppose there is a globally generated saturated line bundle $N\subset E$ with $h^0(S,N)\ge 2$ and $\gamma(E/N)\le 2$. Then either $c_1(E/N)^2=0$ in which case $E/N=\mathcal{O}_S(\Sigma)^{\oplus r(A)}$ for an irreducible elliptic curve $\Sigma\subset S$, or $c_1(E/N)^2>0$ and one of the following holds:
	\begin{enumerate}[label=(\roman*)]
		\item $\gamma(E/N)=0$ (hence $E/N$ is a LM bundle);
		\item $(E/N)^{\vee\vee}$ is a LM bundle of Clifford index $0$;
		\item $E/N$ or $(E/N)^{\vee\vee}$ is a LM bundle of Clifford index $1$;
		\item $E/N$ is a LM bundle of Clifford index $2$.
	\end{enumerate}   
\end{prop}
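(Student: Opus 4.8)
The plan is to combine the structural description of $E/N$ coming from \Cref{coker is gLM} with the classification of low Clifford index gLM bundles, case-splitting on $c_1(E/N)^2$. First, since $A$ is primitive and $N\subset E$ is a saturated (hence the quotient is torsion free) globally generated line bundle with $h^0(S,N)\ge 2$, \Cref{coker is gLM} shows $E/N$ is a gLM bundle of type (II) with $h^1(S,E/N)=h^2(S,E/N)=0$, rank $r=r(A)$, $c_1(E/N)=H-N$, and $\gamma(E/N)=\gamma(E)+N^2-H.N+2$. Because $c_1(E/N)=H-N$ and $N$ is globally generated with $h^0(S,N)\ge 2$ while $h^0(S,H\otimes N^\vee)\ge 2$ (this last from $E/N$ being a nonzero globally generated sheaf with sections, i.e.\ $h^0(S,E/N)\ne 0$ and surjecting onto a quotient line bundle, or more directly from the LM sequence for $E$), one gets $c_1(E/N)^2\ge 0$ by the Hodge index theorem applied on $S$; I would spell this out, noting that $(H-N)^2 = 0$ forces $H-N$ to be (a multiple of) an elliptic class.

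In the case $c_1(E/N)^2 = 0$: here I would invoke \Cref{bounded c_2} — since $E/N$ is a gLM bundle of type (II) with $c_1(E/N)^2=0$, \cite[Proposition 2.7]{Lelli_Chiesa_2015} gives $c_2(E/N)=0$, so $\gamma(E/N) = -2(\operatorname{rk}(E/N)-1) \le 0$; combined with $\gamma(E/N)\ge 0$ when... wait, no — $\gamma(E/N)\ge 0$ is only guaranteed by \Cref{LC Cor 2.5} when $c_1(E/N)^2>0$. So instead, with $c_2(E/N)=0$ and $c_1(E/N)=H-N$ nef of square zero, standard results on K3 surfaces (e.g.\ $H-N = r\Sigma$ for a primitive elliptic class, but here $c_1=H-N$ is already the determinant of a rank-$r$ bundle with no $c_2$) show $E/N \cong \mathcal{O}_S(\Sigma)^{\oplus r}$ for an irreducible elliptic curve $\Sigma$, with $H-N = \Sigma$; I would cite the structure of globally generated bundles with vanishing second Chern class and determinant a primitive elliptic pencil.

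In the case $c_1(E/N)^2 > 0$: now \Cref{LC Cor 2.5} applies and gives $\gamma(E/N)\ge 0$, so $\gamma(E/N)\in\{0,1,2\}$ by hypothesis. If $\gamma(E/N)=0$, then \Cref{LC Cor 2.5} and the type-(II) global generation of $E/N$ identify $E/N$ as a genuine LM bundle (conclusion (i)); the exceptional possibilities (b),(c) of \Cref{LC Cor 2.5} also are LM bundles so are absorbed here. If $\gamma(E/N)\in\{1,2\}$: the bundle $E/N$ is globally generated of type (II) but need not be locally free, so I would pass to the reflexive hull $(E/N)^{\vee\vee}$, which on a K3 (smooth) surface is locally free, still globally generated off a finite set, and satisfies $\gamma((E/N)^{\vee\vee}) \le \gamma(E/N)$ since $c_2$ can only drop under reflexive hull while the rank and $c_1$ are unchanged — then apply the rank-vs-Clifford-index analysis (again via \Cref{LC Cor 2.5} and the classification of low Clifford index LM bundles, using the quoted results of Lelli-Chiesa) to conclude $(E/N)^{\vee\vee}$ is an LM bundle of Clifford index $0$, or $E/N$ or $(E/N)^{\vee\vee}$ is an LM bundle of Clifford index $1$, or $E/N$ itself is an LM bundle of Clifford index $2$, matching conclusions (ii)–(iv). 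The main obstacle I anticipate is the bookkeeping in this last step: cleanly controlling whether $E/N$ is itself locally free versus only its reflexive hull, and tracking exactly how $c_2$ (hence $\gamma$) behaves under $(-)^{\vee\vee}$, so that the four listed cases are genuinely exhaustive and correctly attributed; this likely requires invoking a precise classification of gLM/LM bundles of Clifford index $\le 2$ from the literature rather than re-deriving it.
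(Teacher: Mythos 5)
Your proposal follows essentially the same route as the paper's proof: \Cref{coker is gLM} makes $E/N$ a gLM bundle of type (II); the case $c_1(E/N)^2=0$ is dispatched by \cite[Proposition 2.7]{Lelli_Chiesa_2015}; and for $c_1(E/N)^2>0$ one invokes \Cref{LC Cor 2.5} to get $\gamma(E/N)\in\{0,1,2\}$ and then splits on whether $E/N$ is locally free, passing to $(E/N)^{\vee\vee}$ otherwise -- the paper cites \cite[Proposition 2.4]{Lelli_Chiesa_2015} for precisely the classification you say you would need to quote.

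Two steps need repair. First, your justification of $c_1(E/N)^2\ge 0$ via the Hodge index theorem is not valid: knowing that $N$ and $H\otimes N^\vee$ are effective with $h^0\ge 2$ does not rule out $(H-N)^2<0$ (the class $H-N$ could a priori have a $(-2)$-curve in its fixed part, e.g.\ an elliptic pencil plus a disjoint $(-2)$-curve has two sections and square $-2$). The correct argument, used elsewhere in the paper (see \Cref{Prop Proof Strategy} and \Cref{LC rank 3 lemma 4.1}), is that $(E/N)^{\vee\vee}$ is globally generated off a finite set, so $\det(E/N)$ is basepoint free and nontrivial by \cite[Lemmas 3.2 and 3.3]{Lelli_Chiesa_2013}, hence nef, which gives $c_1(E/N)^2\ge 0$. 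Second, in the case $\gamma(E/N)\in\{1,2\}$ with $E/N$ not locally free, your statement that $c_2$ ``can only drop'' under double dual must be sharpened to a strict drop: the quotient $(E/N)^{\vee\vee}/(E/N)$ is a nonzero sheaf of finite length, so $\gamma((E/N)^{\vee\vee})<\gamma(E/N)$. Without strictness, the scenario $\gamma(E/N)=2$, $E/N$ not locally free, $\gamma((E/N)^{\vee\vee})=2$ would escape cases (ii)--(iv), since (iv) requires $E/N$ itself to be an LM bundle. With strictness (this is the content of \cite[Proposition 2.4]{Lelli_Chiesa_2015}), and noting that a locally free type (II) gLM bundle is also of type (I) and hence an LM bundle by \Cref{remark glm is lm bundle}, the four cases are exhaustive exactly as in the paper. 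A small slip in the $c_1(E/N)^2=0$ case: if $E/N\cong\mathcal{O}_S(\Sigma)^{\oplus r(A)}$ then $H-N=r(A)\,\Sigma$, not $\Sigma$.
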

\begin{proof}
	By \Cref{coker is gLM}, we see that $E/N$ is a gLM bundle of type (II). If $c_1(E/N)^2=0$, \cite[Proposition 2.7]{Lelli_Chiesa_2015} gives $E/N=\mathcal{O}_C(\Sigma)^{\oplus r(A)}$, as stated. 

We now assume $c_1(E/N)^2>0$.
If $\gamma(E/N)=0$, we are in case $(i)$. 

If $\gamma(E/N)=1$ and $E/N$ is locally free, we are in case $(iii)$. If $E/N$ is not locally free, then \cite[Proposition 2.4]{Lelli_Chiesa_2015} shows that $(E/N)^{\vee\vee}$ has Clifford index $0$, and we are in case $(ii)$.
	
	If $\gamma(E/N)=2$ and $E/N$ is locally free, we are in case $(iv)$. If $(E/N)$ is not locally free, then \cite[Proposition 2.4]{Lelli_Chiesa_2015} again shows that $(E/N)^{\vee\vee}$ has Clifford index $0$ or $1$, and we are in case $(ii)$ or $(iii)$, respectively.
\end{proof}

\begin{remark}\label{rem:bounding_invariants}
	Furthermore, in case $(iv)$ above, fixing the rank of $A$ narrows the possibilities for the classification of $E/N$. For example, when $A$ has rank $3$ and $E/N$ has Clifford index $2$, then $E/N=E_{D,\g{2}{6}}$ for a $\g{2}{6}$ on a smooth curve $D$ in the linear system of $\det(E/N)$.

	Likewise, restricting the Clifford index of a LM bundle $E$
        similarly restricts to which linear system $E$
        corresponds. For example, if $E$ is a LM bundle and
        $\gamma(E)=1$ or $\gamma(E)=2$, then a smooth irreducible
        curve $D\in|\det(E)|$ has $\gamma(D)\le2$ and is thus either
        hyperelliptic (when $\gamma(D)=0$), trigonal or a plane
        quintic (when $\gamma(D)=1$), or a plane sextic (when
        $\gamma(D)=2$ and $\text{rk}(E)=3$). 
	
	One could similarly characterize gLM bundles of type (II) of
        higher Clifford index, using \cite[Proposition
        2.4]{Lelli_Chiesa_2015} repeatedly as in \Cref{Prop Clifford
        Index 1 and 2 gLM}, and then fixing the rank as above. 
\end{remark}

We recall a few lemmas which show when a linear series on a curve $C\in |H|$ is the restriction of a line bundle $L$ on $S$.

\begin{lemma}\label{General 2r-2}
	Let $(S,H)$ be a polarized K3 surface of genus $g\ge2$, $C\in|H|$ be a smooth irreducible curve, and $L$ a globally generated line bundle on $S$ such that $L\vert_C$ is a $\g{r}{d}$ with $c_1(L).C=d<2g-2$. Then if $h^1(S,L)=0$, we have $L^2=2r-2-2h^1(S,L(-C))$.
\end{lemma}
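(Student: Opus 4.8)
The plan is to derive the identity directly from Riemann--Roch on the K3 surface $S$, once the three cohomology groups $h^i(S,L)$ have been computed by comparing $L$ with its restriction to $C$ via the exact sequence
\[
0 \longrightarrow L(-C) \longrightarrow L \longrightarrow L\vert_C \longrightarrow 0,
\]
in which $L(-C) = L \otimes H^{-1}$ because $C \in |H|$.

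First I would check that $H^0(S, L(-C)) = 0$: a nonzero section would exhibit $L \otimes H^{-1}$ as a nonzero effective class $D$, and then ampleness of $H$ forces $0 < D.H = d - (2g-2)$, contradicting $d < 2g-2$. Next I would record that $h^2(S,L) = h^0(S, L^\vee) = 0$ by Serre duality on the K3 surface: $L$ globally generated is nef, so $d = L.C \ge 0$, and the degenerate case $d = 0$ (which by nefness and the Hodge index theorem would force $L \cong \mathcal{O}_S$, contradicting that $L\vert_C$ has rank $r \ge 1$) is excluded, so $d > 0$, whence $L^\vee.H < 0$ and $L^\vee$ cannot be effective.

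With these two vanishings and the hypothesis $h^1(S,L) = 0$ in hand, the long exact cohomology sequence yields
\[
0 \longrightarrow H^0(S,L) \longrightarrow H^0(C, L\vert_C) \longrightarrow H^1(S, L(-C)) \longrightarrow 0,
\]
giving $h^0(S,L) = (r+1) - h^1(S, L(-C))$. Finally I would invoke Riemann--Roch, $\chi(S,L) = \tfrac{1}{2} L^2 + 2$, observe that the left side equals $h^0(S,L)$ since $h^1(S,L) = h^2(S,L) = 0$, and equate the two expressions to obtain $L^2 = 2r - 2 - 2\,h^1(S, L(-C))$.

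I do not expect a genuine obstacle here: the argument is a short cohomology computation whose entire substance is the two vanishing statements, both immediate from the ampleness of $H$ together with the bound $d < 2g-2$. The one thing to be careful about is not to skip the verification that $h^2(S,L) = 0$ (equivalently, that $L$ is not the trivial bundle), since the lemma is phrased only in terms of $h^1(S,L)$.
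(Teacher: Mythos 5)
Your proposal is correct and takes essentially the same approach as the paper: the same restriction sequence $0\to L(-C)\to L\to L\vert_C\to 0$, the vanishing $h^0(S,L(-C))=0$ forced by $c_1(L(-C)).H=d-(2g-2)<0$, and Riemann--Roch on $S$. The only cosmetic difference is that you apply Riemann--Roch to $L$ itself, reading $h^0(S,L)=r+1-h^1(S,L(-C))$ off the truncated long exact sequence, whereas the paper applies it to $L(-C)$ after identifying $h^2(S,L(-C))=h^0(S,L(-C)^\vee)=r-d+g$ via Serre duality and $h^1(C,L\vert_C)$; both routes give the same identity, and your explicit check that $h^2(S,L)=0$ is a point the paper leaves implicit.
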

\begin{proof}
	Since $H$ is basepoint free and $c_1(L(-C)).C=d-(2g-2)<0$, we have $h^0(S,L(-C))=0$, as in the proof of \cite[Proposition~2.1]{Knutsen2003}. We now consider the short exact sequence for a divisor $C\subset S$ tensored with $L$, \[\xymatrix{0  \ar[r] &L(-C) \ar[r]& L \ar[r] & L\vert_C \ar[r] & 0.}\] By Riemann-Roch on $C$ we have $h^1(S,L\vert_C)=h^1(C,L\vert_C)=r-d+g$, and as $h^1(S,L)=h^2(S,L)=0$, the long exact sequence in cohomology and Serre duality give $h^2(S,L(-C))=h^0(S,L(-C)^\vee)= r-d+g$. By Riemann-Roch on $S$, we have  \[ h^0(S,L(-C)^\vee)-h^1(S,L(-C))=2+\frac{c_1(L(-C))^2}{2}=2+\frac{c_1(L)^2-2d+2g-2}{2}= 1-d+g+\frac{c_1(L)^2}{2}\] thus $c_1(L)^2= 2r-2 -2h^1(S,L(-C))$.
\end{proof}
\begin{cor}\label{LC implies 2r-2}
	Let $(S,H)$ be a polarized K3 surface of genus $g\ge 2$, $A$ a complete $\g{r}{d}$ on a smooth $C\in |H|$. Let $N\in\Pic(S)$ be a line bundle with $h^0(S,N)\ge 2$ and $h^1(S,N)=0$. Assume $H\otimes N^\vee$ is globally generated, satisfies $h^1(S,H\otimes N^\vee)=0$, and is a lift of $A$. Then $c_1(H\otimes N^\vee)^2=2r-2$.
\end{cor}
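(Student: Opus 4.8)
The plan is to deduce this directly from \Cref{General 2r-2} applied to the globally generated line bundle $L\colonequals H\otimes N^\vee$. Since $L$ is a lift of $A$, its restriction $L|_C\cong A$ is a $\g{r}{d}$, and $c_1(L).C = L.H = H^2 - N.H = (2g-2)-N.H$. The hypothesis $h^0(S,N)\ge 2$ forces $N$ to be a nonzero effective class, so $N.H>0$ because $H$ is ample; hence $c_1(L).C = d \le 2g-3 < 2g-2$, which is precisely the degree hypothesis needed in \Cref{General 2r-2}. The remaining hypotheses there — $S$ a K3 of genus $g\ge 2$, $C\in|H|$ smooth irreducible, $L$ globally generated, $L|_C$ a $\g{r}{d}$, and $h^1(S,L)=0$ — are all among the assumptions of the corollary.

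Applying \Cref{General 2r-2} then gives $L^2 = 2r-2-2h^1(S,L(-C))$. Now $L(-C)=H\otimes N^\vee\otimes\mathcal{O}_S(-C)\cong N^\vee$ since $C\in|H|$, so it remains only to check that $h^1(S,N^\vee)=0$. This follows from Serre duality on the K3 surface together with $\omega_S\cong\mathcal{O}_S$: one has $h^1(S,N^\vee)=h^1(S,N\otimes\omega_S)=h^1(S,N)=0$ by hypothesis. Therefore $c_1(H\otimes N^\vee)^2 = L^2 = 2r-2$, as claimed.

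There is no genuinely hard step here, since the statement is essentially a corollary of \Cref{General 2r-2}; the only points requiring (minor) care are (a) verifying the degree bound $d<2g-2$, which uses positivity of $N.H$ — itself a consequence of $N$ being effective and nonzero and of ampleness of $H$ — and (b) identifying $L(-C)$ with $N^\vee$ and killing its first cohomology via Serre duality and the triviality of $\omega_S$. In particular, the hypothesis $h^1(S,N)=0$ already guarantees, on any surface with trivial canonical bundle, that the correction term $h^1(S,L(-C))$ in \Cref{General 2r-2} vanishes, so no separate vanishing argument is needed.
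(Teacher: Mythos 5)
Your proof is correct and follows essentially the same route as the paper: identify $L(-C)=H\otimes N^\vee\otimes H^\vee\cong N^\vee$, use Serre duality with $\omega_S\cong\mathcal{O}_S$ and $h^1(S,N)=0$ to kill the correction term, and apply \Cref{General 2r-2}. Your explicit check of the degree hypothesis $d<2g-2$ via $N.H>0$ is a small point the paper leaves implicit, but it is the same argument.
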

\begin{proof}
	We have $h^1(S,N)=0$. Hence as $N^\vee=H\otimes N^\vee\otimes H^\vee$, Serre duality gives $0=h^1(S,N^\vee)=h^1(S,H\otimes N^\vee(-C))$. Thus \Cref{General 2r-2} shows that $(H-N)^2=2r-2$.
\end{proof}

\begin{remark}[\cite{Lelli_Chiesa_2015} Remark 6]\label{LC Remark 6}
	The proof \cite[Lemma 4.1]{Lelli_Chiesa_2015} shows that as soon as we have a nontrivial $N\in \Pic(S)$ with $h^0(S,N)\ne 0$ and an injection $N\hookrightarrow E_{C,A}$, we have $h^0(S,\iota_\ast(A)\otimes (H\otimes N^\vee)\otimes \mathcal{O}_C)=h^0(C,A^\vee \otimes (H\otimes N^\vee)\vert_C)\ne 0$, i.e., the linear series $|A|$ is contained in $|(H\otimes N^\vee)\vert_C|$. We also note that if $h^1(S,N)=0$, then \[H^0(C,(H\otimes N^\vee)\otimes \mathcal{O}_C)=H^0(S,H\otimes N^\vee)\vert_C.\]
\end{remark}

\begin{lemma}
	Let $N$ be a line bundle and $0\to N\to E\to E/N\to 0$ be a short exact sequence of coherent sheaves on a polarized K3 surface $(S,H)$, where $E/N$ is stable, $\operatorname{rk}(E)=r+1$, $c_1(E)=H$, $c_1(E)^2=2g-2\ge0$. If $h^0(S,N)<2$, then $c_2(E)\ge\frac{g(r-1)}{r}+\frac{2g-2}{r(r+1)}+r-\frac{1}{r}$.
\end{lemma}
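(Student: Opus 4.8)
The plan is to reduce the asserted bound to a single numerical inequality for $c_1(N)$ by combining Chern-class bookkeeping for the exact sequence with the Bogomolov inequality for the stable quotient $E/N$, and then to settle that inequality using $h^0(S,N)<2$ together with the fact that $N$ destabilizes $E$ (as is the case in the applications, where $E$ is a Lazarsfeld--Mukai bundle and $N=HN_1(E)$). First, exactly as in \Cref{coker is gLM}, multiplicativity of the total Chern class along $0\to N\to E\to E/N\to 0$ gives $\operatorname{rk}(E/N)=r$, $c_1(E/N)=H-c_1(N)$, and $c_2(E)=c_2(E/N)+c_1(N).c_1(E/N)$, so it suffices to bound $c_2(E/N)$ from below and to control the cross term.

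Next, since $E/N$ is stable the moduli space $M(v(E/N))^s$ is nonempty, hence of dimension $2+\langle v(E/N),v(E/N)\rangle\ge 0$, so $\langle v(E/N),v(E/N)\rangle\ge -2$. Writing $v(E/N)=\bigl(r,\,c_1(E/N),\,\chi(E/N)-r\bigr)$ and using Riemann--Roch on $S$ in the form $\chi(E/N)=2r+\tfrac12 c_1(E/N)^2-c_2(E/N)$, this inequality becomes $c_2(E/N)\ge r-\tfrac1r+\tfrac{r-1}{2r}\,c_1(E/N)^2$. Substituting $c_1(E/N)=H-c_1(N)$, expanding using $H^2=2g-2$, and collecting terms, one finds that the claimed bound $c_2(E)\ge\tfrac{g(r-1)}{r}+\tfrac{2g-2}{r(r+1)}+r-\tfrac1r$ is equivalent to
\[
c_1(N).H-\tfrac{r+1}{2}\,c_1(N)^2\ \ge\ (r-1)+\tfrac{2g-2}{r+1}.
\]

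To prove this I would use two inputs. Since $N=HN_1(E)$ is the maximal destabilizing sub-line-bundle and $E/N\ne 0$, we have $\mu_H(N)>\mu_H(E)=\tfrac{2g-2}{r+1}$, i.e. $c_1(N).H>\tfrac{2g-2}{r+1}>0$. As $c_1(N).H>0$ and $H$ is nef, $c_1(N^\vee).H<0$, so $h^2(S,N)=h^0(S,N^\vee)=0$; then Riemann--Roch $2+\tfrac12 c_1(N)^2=\chi(S,N)=h^0(S,N)-h^1(S,N)\le h^0(S,N)<2$ forces $c_1(N)^2\le -2$. Substituting both estimates, the left-hand side is at least $\tfrac{2g-2}{r+1}+(r+1)$, which exceeds $(r-1)+\tfrac{2g-2}{r+1}$, as required.

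The main obstacle is the first input above: the bound genuinely requires that $N$ be a destabilizing subsheaf (equivalently $\mu_H(N)>\mu_H(E)$), because otherwise $c_1(N).H$, and hence $c_2(E)$, is not bounded below; once that is in hand everything else is forced. The only remaining work is the algebra in the second step — turning the Bogomolov inequality for the \emph{rank-$r$} quotient $E/N$ into the displayed inequality, and taking care throughout that it is the Mukai vector of $E/N$, not that of $E$, that is being used.
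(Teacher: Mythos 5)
Your proof is correct and takes essentially the same route as the paper's: Whitney's formula for $c_2(E)$, the bound $c_2(E/N)\ge r-\frac{1}{r}+\frac{r-1}{2r}c_1(E/N)^2$ coming from nonemptiness of the moduli space of stable sheaves with Mukai vector $v(E/N)$, and the slope condition on $N$ together with $h^0(S,N)<2$ and Riemann--Roch to force $c_1(N)^2\le -2$ and to bound $c_1(N).H$ from below. Your remark that the argument genuinely needs $\mu_H(N)\ge\mu_H(E)$ (not stated in the lemma) is consistent with the paper, whose proof likewise opens by assuming $\mu(N)\ge\mu(E)\ge 0$, this being automatic in the intended application where $N$ is a destabilizing sub-line bundle.
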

\begin{proof}
	Since $\mu(N)\ge\mu(E)\ge 0$, we have $h^2(S,N)=0$. Therefore if $h^0(S,N)<2$ we have $c_1(N)^2\le -2$. Hence $$c_1(E/N)^2+2c_1(N).c_1(E/N)=c_1(E)^2-c_1(N)^2\ge 2g-2+2=2g$$ and $$c_1(E/N).c_1(N)=c_1(N).(c_1(E)-c_1(N))\ge \frac{2g-2}{r+1}+2,$$ where the last inequality comes from the fact that $\mu(N)\ge\mu(E)$. Thus $\frac{c_1(E/N)^2}{2}\ge g-c_1(N).c_1(E/N)$.
	
	Furthermore, since $E/N$ is stable of rank $r$, the dimension of the moduli space of stable sheaves with Mukai vector $\nu(E/N)$, $M^{s}_{\nu(E/N)}$, has non-negative dimension. Thus $2rc_2(E/N)-(r-1)c_1(E/N)^2-2(r^2-1)\ge 0$, and we have $c_2(E/N)\ge r-\frac{1}{r}+\left( \frac{r-1}{2r}\right)c_1(E/N)^2$. 
	
	We now calculate $c_2(E)=c_1(E/N).c_1(N)+c_2(E/N)\ge\frac{g(r-1)}{r}+\frac{2g-2}{r(r+1)}+r-\frac{1}{r}$, as desired.
\end{proof}

We present a version of \cite[Proposition 7.4]{Lelli_Chiesa_2013} which motivates our proof strategy below.
\begin{prop}\label{Prop Proof Strategy}
	Let $(S,H)$ be a polarized K3 surface and $A$ be a complete basepoint free $g^r_d$ on a smooth irreducible curve $C\in|H|$ with $r\ge 2$ and let $E=E_{C,A}$. Suppose that $E$ sits in a short exact sequence \[\xymatrix{0\ar[r] & N\ar [r]& E \ar[r] & E/N\ar[r] & 0}\] for some line bundle $N$ and $c_2(E)=d<\frac{g(r-1)}{r}+\frac{2g-2}{r(r+1)}+r-\frac{1}{r}$. If $E/N$ is stable, or $E/N$ is semistable and there are no elliptic curves on $S$, then $|A|$ is contained in the restriction to $C$ of the linear system $|H\otimes N^\vee|$ on $S$. Moreover, $H\otimes N^\vee$ is adapted to $|H|$ and $\gamma(H\otimes N^\vee \otimes\mathcal{O}_C)\le d-r-3 $.
\end{prop}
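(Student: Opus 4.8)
The plan is to adapt the proof of \cite[Proposition~7.4]{Lelli_Chiesa_2013}. Write $B\colonequals\frac{g(r-1)}{r}+\frac{2g-2}{r(r+1)}+r-\frac1r$, so the hypothesis reads $c_2(E)=d<B$. The heart of the argument is that this numerical bound forces $h^0(S,N)\ge 2$. Arguing as in the lemma immediately preceding this statement, $\mu_H(N)\ge\mu_H(E)=\frac{2g-2}{r+1}>0$ gives $h^2(S,N)=h^0(S,N^\vee)=0$, so if $h^0(S,N)<2$ then $\chi(S,N)\le 1$ forces $N^2\le-2$; substituting this into that lemma's estimates ($c_1(E/N)^2\ge 2g$, $c_1(E/N).c_1(N)\ge\frac{2g-2}{r+1}+2$, and $c_2(E/N)\ge r-\frac1r+\frac{r-1}{2r}c_1(E/N)^2$) contradicts $c_2(E)=d<B$. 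When $E/N$ is stable this is the preceding lemma verbatim; when $E/N$ is only semistable, the single use of stability there was the non-negativity of the expected dimension $2+\langle v(E/N),v(E/N)\rangle$ of the relevant moduli space, and since $E/N$ lies in the moduli space $M(v(E/N))$ of semistable sheaves, that dimension is still non-negative and the estimate on $c_2(E/N)$ persists. Thus $h^0(S,N)\ge 2$; in particular $N$ is a nonzero effective line bundle, and $H-N=c_1(E/N)$ is again nonzero and effective with $(H-N)^2\ge 0$, because $E/N$ is a torsion-free quotient of the globally-generated-off-a-finite-set bundle $E_{C,A}$, hence by \Cref{coker is gLM} a generalized Lazarsfeld--Mukai bundle, so its determinant is globally generated off a finite set and thus has non-negative self-intersection.

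With $N$ effective, the containment is immediate: the injection $N\hookrightarrow E_{C,A}$ with $h^0(S,N)\ne 0$ gives, via \Cref{LC Remark 6}, that $|A|$ is contained in the restriction of $|H\otimes N^\vee|$ to $C$. For adaptedness of $M\colonequals H\otimes N^\vee$: by \Cref{coker is gLM} and \Cref{remark glm is lm bundle} --- replacing $E/N$ by its double dual when it is not locally free, using \cite[Proposition~2.4]{Lelli_Chiesa_2015} --- the sheaf $E/N$ (or its reflexive hull) is the Lazarsfeld--Mukai bundle of a smooth irreducible curve $D\in|H-N|$, so $h^1(S,H-N)=h^1(S,\mathcal{O}_S(D))=0$, which is condition~(ii) of \Cref{definition adapted}; condition~(i) holds since $h^0(S,H\otimes M^\vee)=h^0(S,N)\ge 2$ by the previous paragraph, while $h^0(S,M)=h^0(S,H-N)=\chi(S,H-N)=2+\tfrac12(H-N)^2\ge 2$ using $h^2(S,H-N)=h^0(S,N-H)=0$.

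It remains to bound $\gamma\bigl(M\otimes\mathcal{O}_C\bigr)=\gamma\bigl((H-N)|_C\bigr)$. From the restriction sequence $0\to\mathcal{O}_S(-N)\to\mathcal{O}_S(H-N)\to(H-N)|_C\to 0$, together with $h^0(S,-N)=0$, $h^1(S,H-N)=0$, and Serre duality $h^1(S,-N)=h^1(S,N)$, one gets $h^0\bigl(C,(H-N)|_C\bigr)=2+\tfrac12(H-N)^2+h^1(S,N)$; combining this with $\deg((H-N)|_C)=(H-N).H$, the Whitney identity $(H-N).N=c_2(E)-c_2(E/N)=d-c_2(E/N)$, and $c_2(E/N)=\gamma(E/N)+2(r-1)$ yields
\[\gamma\bigl((H-N)|_C\bigr)=d-\gamma(E/N)-2r-2h^1(S,N).\]
Now $c_1(E/N)^2\ne 0$: otherwise \cite[Proposition~2.7]{Lelli_Chiesa_2015} would force $E/N\cong\mathcal{O}_S(\Sigma)^{\oplus r}$ for an irreducible elliptic curve $\Sigma$, which is excluded by the hypothesis in the semistable case and by stability (the bundle being decomposable with $r\ge 2$) in the stable case. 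Hence $c_1(E/N)^2>0$ and $\gamma(E/N)\ge 0$ by \Cref{LC Cor 2.5}, so $\gamma((H-N)|_C)\le d-2r\le d-r-3$ whenever $r\ge 3$; for $r=2$ the residual case $\gamma(E/N)=0$ (where $E/N$ is the Lazarsfeld--Mukai bundle of a $\g{1}{2}$ on a hyperelliptic curve) is dealt with as in \cite[Proposition~7.4]{Lelli_Chiesa_2013}. Finally $(H-N)|_C$ genuinely contributes to $\gamma(C)$, since $h^1(C,(H-N)|_C)=h^0(C,N|_C)\ge h^0(S,N)\ge 2$ and $h^0(C,(H-N)|_C)\ge h^0(S,H-N)\ge 2$.

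The step I expect to be the main obstacle is the passage from the stable to the semistable case in the first paragraph: one must verify that the preceding lemma's numerical estimate uses nothing about $E/N$ beyond non-negativity of its moduli space dimension, and that the decomposable bundles $\mathcal{O}_S(\Sigma)^{\oplus r}$ are the only obstruction for which the ``no elliptic curves'' hypothesis is genuinely needed. The bookkeeping for $r=2$ in the Clifford index estimate is the other delicate point, but it is already contained in \cite[Proposition~7.4]{Lelli_Chiesa_2013} and plays no role in the rank-$3$ application for which $\gamma((H-N)|_C)\le d-2r=d-r-3$ holds directly.
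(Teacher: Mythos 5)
Your overall strategy matches the paper's: the preceding lemma gives $h^0(S,N)\ge 2$, \Cref{LC Remark 6} (the argument of \cite[Lemma 4.1]{Lelli_Chiesa_2015}) gives the containment of $|A|$, and a Riemann--Roch/Whitney computation gives the Clifford index bound. But your justification of adaptedness has a genuine gap. You claim that $E/N$ (or its double dual) is the Lazarsfeld--Mukai bundle of a \emph{smooth irreducible} curve $D\in|H-N|$, hence $h^1(S,H\otimes N^\vee)=0$. By \Cref{coker is gLM} and \Cref{remark glm is lm bundle} this conclusion requires $A$ to be primitive and $(E/N)^{\vee\vee}$ to be globally generated everywhere (conditions (I) and (II) simultaneously); neither is a hypothesis of the proposition, which only assumes $A$ basepoint free. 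Worse, as written your paragraph makes no use of the stability/no-elliptic-curves hypothesis, and without it the claim $h^1(S,H\otimes N^\vee)=0$ is simply false: in the excluded case $(E/N)^{\vee\vee}\cong\mathcal{O}_S(\Sigma)^{\oplus r}$ one has $\det(E/N)=r\Sigma$ and $h^1(S,r\Sigma)=r-1>0$, and $|H-N|$ contains no irreducible member. The paper's route is the correct order of operations: $\det(E/N)$ is basepoint free and nef (\cite[Lemmas 3.2, 3.3]{Lelli_Chiesa_2013}), so if $h^1(S,\det E/N)\neq 0$ then $c_1(E/N)^2=0$ by Saint-Donat, and then \cite[Proposition 1.1]{GreenLaz} (or \cite[Proposition 2.7]{Lelli_Chiesa_2015}) forces $(E/N)^{\vee\vee}\cong\mathcal{O}_S(\Sigma)^{\oplus r}$, contradicting stability or the absence of elliptic curves; only then does one conclude $h^1(S,\det E/N)=0$, $c_1(E/N)^2\ge 2$, and adaptedness. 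You do carry out this exclusion of $c_1(E/N)^2=0$, but only in your third paragraph, after having already used its consequence; moving it first and replacing the ``LM bundle of a smooth curve'' claim by Saint-Donat repairs the argument and is exactly what the paper does.

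A second flaw is your treatment of the semistable case of the first step: the statement that nonemptiness of the moduli space $M(v(E/N))$ of semistable sheaves forces $2+\langle v(E/N),v(E/N)\rangle\ge 0$ is false for strictly semistable sheaves --- e.g.\ $\mathcal{O}_S^{\oplus 2}$ has $v=(2,0,2)$ and $\langle v,v\rangle=-8$ --- since the dimension formula $2+\langle v,v\rangle$ pertains to the stable locus $M(v)^s$. So your claimed painless extension of the preceding lemma's estimate $c_2(E/N)\ge r-\tfrac1r+\tfrac{r-1}{2r}c_1(E/N)^2$ to semistable $E/N$ does not follow from the reason you give (the paper, it must be said, cites the lemma here without comment, so on this point you are at least flagging a real subtlety, but your proposed fix is not valid as stated). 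Finally, a minor point: your bound $\gamma((H-N)|_C)\le d-2r$ only yields the claimed $d-r-3$ when $r\ge 3$, and your deferral of $r=2$ to \cite[Proposition 7.4]{Lelli_Chiesa_2013} is reasonable (the paper's own final inequality also needs $c_2(E/N)\ge r+1$, which its estimate only gives for $r\ge 3$); since the application is to rank $3$ linear systems this is not the essential issue --- the adaptedness step and the semistable extension are.
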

\begin{proof}
	By the previous lemma, $h^0(S,N)\ge 2$. We also have $h^0(S,\det E/N)\ge 2$ from \cite[Lemma 3.3]{Lelli_Chiesa_2013}. 
	We note that $(E/N)^{\vee\vee}$ is globally generated off a finite set and \[h^i(S,(E/N)^{\vee\vee})=h^i(S,E/N)=0 \text{ for } i=1,2.\] Since $\det E/N = \det(E/N)^{\vee\vee}$ is basepoint free and nontrivial, $\det E/N$ is nef, thus $c_1(E/N)^2\ge 0$. If $h^1(S,\det E/N)\ne 0$, then $c_1(E/N)^2=0$ by Saint-Donat. By \cite[Proposition 1.1]{GreenLaz}, there is a smooth elliptic curve $\Sigma\subset S$ such that $(E/N)^{\vee\vee}=\mathcal{O}(\Sigma)^{\oplus 3}$. This contradicts the stability of $E/N$ (or the non-existence of elliptic curves on $S$), thus we must have $c_1(E/N)^2\ge 2$ (hence $c_2(E/N)\ge r+1-\frac{2}{r}$) and $h^1(S,\det E/N)=0$. This ensures that $h^0(C,\det E/N\otimes \mathcal{O}_C)=h^0(C,H\otimes N^\vee \otimes\mathcal{O}_C)$ does not depend on the curve $C\in |H|_s$. Hence $\det E/N = H\otimes N^\vee$ is adapted to $|H|$. We calculate 
	\begin{align*}
		\gamma(\det E/N\otimes\mathcal{O}_C) &= c_1(E/N).c_1(E) - 2h^0(C,\det E/N \otimes\mathcal{O}_C) +2\\
		&= c_1(E/N)^2 + c_1(N).c_1(E/N)- 2h^0(C,\det E/N \otimes\mathcal{O}_C) +2\\
		&\le c_1(E/N)^2 - 2h^0(S,\det E/N) + c_1(N).c_1(E/N) +2\\
		&= -2h^1(S,\det E/N) -4 + c_1(N).c_1(E/N) +2\\
		&= d-c_2(E/N) -2 \le d-r-3.
	\end{align*}
	The claim that $|A|$ is contained in $|H\otimes N^\vee \otimes\mathcal{O}_C|$ is proved in the same way as in \cite[Lemma 4.1]{Lelli_Chiesa_2015}.
\end{proof}
\begin{remark}
	In the above proposition, if $A$ is of type $\g{3}{d}$, then $\gamma(H\otimes N^\vee \otimes\mathcal{O}_C)\le d-r-3=\gamma(A)$. However, as soon as $r\ge 4$, then $\gamma(H\otimes N^\vee \otimes\mathcal{O}_C)$ may be bigger than $\gamma(A)$. However, Lelli-Chiesa proves in \cite[Propositioon 5.1]{Lelli_Chiesa_2015} that $\gamma(H\otimes N^\vee\otimes\mathcal{O}_C)\le \gamma(A)$ whenever $N\subset E$ is a saturated subsheaf and $h^1(S,N)=0$.
\end{remark}

\section{Filtrations of Lazarsfeld--Mukai Bundles of Rank 4}\label{Section Filtrations of Lazarsfeld--Mukai Bundles of Rank 4}

Throughout this section, $(S,H)$ is a polarized K3 surface of genus $g$, $C \in |H|$ is a smooth irreducible curve, $A$ is a line bundle of type $\g{3}{d}$ on $C$, and $E=E_{C,A}$ is the LM bundle corresponding to $A$. Given $E$, we can take its JH filtration or take its HN filtration, further take JH filtrations of the properly semistable factors, lift the JH factors and expand the HN filtration of $E$ to arrive at a \emph{terminal filtration} such that all quotients are stable sheaves. We enumerate all the possibilities listing a filtration by the ranks of the terms, i.e., a filtration of type $1\subset 4$ is a filtration $0\subset N \subset E$ where $\operatorname{rk}(N)=1$.

The terminal filtrations correspond to flags of $E$ where each quotient is stable, hence the terminal filtrations are
\begin{align*}
& 1\subset4, \quad
2\subset4, \quad
3\subset4,\\
& 1\subset2\subset4, \quad
1\subset3\subset4, \quad
2\subset3\subset4,\\
& 1\subset2\subset3\subset4.
\end{align*}

In order to apply \Cref{Prop Proof Strategy}, we want to show that given the $\g{3}{d}$, $E$ must have a terminal filtration of type $1\subset4$. In all other cases, we want to find a lower bound on $d=c_2(E)$. To this end, we find a bound for $c_2(E)$ in terms of the intersections of the Chern roots of the LM bundle $E$. We begin by noting a few general bounds, and then deal with each filtration.

We slightly generalize the proof of \cite[Lemma 4.1]{Lelli_Chiesa_2013}.
\begin{prop}\label{LC rank 3 lemma 4.1}
	Let $E$ a LM bundle with $c_1(E)=H$ and $\mu(E)=\frac{g-1}{2}>0$ sitting in an exact sequence \[\xymatrix{0 \ar[r] & M \ar[r] & E \ar[r] & M_1\ar[r] & 0}\] where $M$ and $M_1$ are coherent sheaves. Suppose that the general smooth curve $C\in|H|$ has (constant) Clifford index $\gamma=\gamma(C)$. Then one has $c_1(M).c_1(M_1)\ge \gamma+2$.
\end{prop}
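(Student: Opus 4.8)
The plan is to extract from the exact sequence $0 \to M \to E \to M_1 \to 0$ a sub-line bundle sitting inside $E$, use Green--Lazarsfeld--type facts about LM bundles to produce a line bundle whose restriction to $C$ contributes to the Clifford index, and then read off the inequality $c_1(M).c_1(M_1) \ge \gamma + 2$ from the definition of $\gamma(C)$. Concretely, since $\mu(E) = (g-1)/2 > 0$, we have $\mu(M) \ge \mu(E) > 0$ or $\mu(M_1) < \mu(E)$ depending on stability, but in either case one of $M$, $M_1$ (say $M$, after possibly replacing by its saturation and taking duals as in \cite{Lelli_Chiesa_2013,Lelli_Chiesa_2015}) has a sub-line bundle $N$ with $h^0(S,N) \ge 2$; I would invoke the lemma above (\Cref{General 2r-2}, \Cref{LC implies 2r-2}) and \Cref{LC Remark 6} to guarantee that $|A| \subseteq |(H \otimes N^\vee)\otimes\mathcal{O}_C|$ and that $H \otimes N^\vee$ (equivalently $c_1(M_1)$ or $c_1(M)$, depending on which factor) has $h^0 \ge 2$, so that it is adapted to $|H|$.

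The key computation, mirroring \Cref{Prop Proof Strategy}, is the Clifford-index estimate: writing $c_1(M) = D_0$ and $c_1(M_1) = D_1$ with $D_0 + D_1 = H$, one computes
\[
\gamma(D_1 \otimes \mathcal{O}_C) = D_1.H - 2h^0(S, D_1) + 2 = D_1^2 + D_0.D_1 - 2h^0(S,D_1) + 2,
\]
and since $h^0(S,D_1) \ge 2 + \tfrac12 D_1^2$ (from $h^1$-vanishing, via Riemann--Roch and Saint-Donat on the K3, exactly as in the proof of \Cref{Prop Proof Strategy}), this gives $\gamma(D_1 \otimes \mathcal{O}_C) \le D_0.D_1 - 2$. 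Because $D_1 \otimes \mathcal{O}_C$ contributes to the Clifford index of the general curve $C$ — here one needs both $h^0 \ge 2$ and $h^1 \ge 2$, the latter following from $D_0 = H - D_1$ also being effective with $h^0 \ge 2$ — the minimality in the definition $\gamma(C) = \min\{\gamma(A)\}$ forces $\gamma(C) = \gamma \le \gamma(D_1 \otimes \mathcal{O}_C) \le D_0.D_1 - 2$, i.e. $c_1(M).c_1(M_1) = D_0.D_1 \ge \gamma + 2$.

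The main obstacle is the bookkeeping around which of $M$, $M_1$ actually produces an adapted line bundle and the passage to saturations and double duals: $M$ and $M_1$ are only coherent sheaves, so I would need to argue that the relevant $c_1$ computations are insensitive to saturating $M$ (which only increases $c_1(M).H$, hence is harmless for the slope inequalities) and to replacing $M_1$ by its reflexive hull (which does not change $c_1$), and that the $h^1$-vanishing needed for the Clifford bound genuinely holds — ruling out the degenerate case $c_1(M_1)^2 = 0$ where $M_1^{\vee\vee}$ splits off an elliptic pencil, which is where the hypothesis that $\gamma(C)$ is the constant (finite, positive) Clifford index of the general curve is used, since an elliptic-pencil splitting would be incompatible with $C$ having a well-defined Clifford index in the generic situation. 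Once the degenerate case is excluded, the argument is the Riemann--Roch bookkeeping above; the genuinely delicate point is handling the non-locally-free factors uniformly, for which I would lean on \cite[Lemma 3.3]{Lelli_Chiesa_2013} and \cite[Proposition 2.4]{Lelli_Chiesa_2015} as cited in \Cref{Prop Proof Strategy}.
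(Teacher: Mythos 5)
Your key computation is the right one and coincides with the paper's main case: using $h^0(C,\det M_1\otimes\mathcal{O}_C)\ge h^0(S,\det M_1)\ge \chi(\det M_1)=2+\tfrac{1}{2}c_1(M_1)^2$ and the minimality in the definition of $\gamma(C)$ to get $\gamma\le c_1(M).c_1(M_1)-2$. But there is a genuine gap where you write that $D_0=c_1(M)$ is ``also effective with $h^0\ge 2$'': $M$ is merely a subsheaf of $E$, and nothing forces $\det M$ to have two sections --- one can have $c_1(M)^2\le -2$ and $h^0(S,\det M)\le 1$, in which case $\det M_1\otimes\mathcal{O}_C$ need not contribute to $\gamma(C)$ (its $h^1$ on $C$ is not controlled) and your appeal to the minimality of the Clifford index collapses. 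The paper treats exactly this case by a separate, purely numerical argument: $h^2(S,\det M)=0$ together with $h^0(S,\det M)<2$ forces $c_1(M)^2\le -2$, and then the slope hypothesis $\mu(M)\ge\mu(E)=\frac{g-1}{2}$ gives $c_1(M).c_1(M_1)=c_1(M).H-c_1(M)^2\ge \operatorname{rk}(M)\frac{g-1}{2}+2\ge\gamma+2$, using Clifford's theorem $\gamma\le\lfloor\frac{g-1}{2}\rfloor$. Without this branch your proof is incomplete, since the contributing case is only half the statement.

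Two further points of misdirection, though not fatal to the core estimate. First, your opening paragraph about extracting a sub-line bundle $N\subset E$ with $h^0\ge2$ and invoking \Cref{LC implies 2r-2} and \Cref{LC Remark 6} to lift $|A|$ is irrelevant here: the proposition is a pure intersection-number bound, no linear system is being lifted, and $M$ need not contain such an $N$. Second, the inequality $h^0(S,\det M_1)\ge 2+\tfrac{1}{2}c_1(M_1)^2$ needs $h^2$-vanishing (which follows from $\mu(\det M_1)>0$, via \cite[Proposition 3.1]{Lelli_Chiesa_2013}), not $h^1$-vanishing, so there is no need to exclude the case $c_1(M_1)^2=0$: in the paper's estimate $h^1(S,\det M_1)$ enters with a favorable sign, giving $c_1(M).c_1(M_1)\ge\gamma+2+2h^1(S,\det M_1)$, and the elliptic-pencil situation causes no harm. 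Your proposed reason for excluding it (incompatibility with a ``well-defined'' Clifford index) is not a valid argument in any case, since the Clifford index is constant in $|H|$ by Green--Lazarsfeld regardless of the presence of elliptic pencils.
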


\begin{proof}
	We write $\mu(F)=\mu_H(F)$. Since $M_1$ is a quotient of $E$, it is globally generated off a finite set of points. Moreover, we have $h^2(S,M_1)=0$, thus $h^0(S,\det M_1)\ge 2$ by \cite[Lemma 3.3]{Lelli_Chiesa_2013} as the vector bundle $M_1^{\vee\vee}$ is globally generated off a finite number of points and $\det(M_1)\colonequals\det(M_1^{\vee\vee})$. As in \cite[Lemma 3.2]{Lelli_Chiesa_2013}, we see that $\det M_1$ is basepoint free and nontrivial, thus $\mu(\det M_1)>0$, $\mu(M)>0$. Hence as $\mu(\det M)\ge\mu(M)>0$, \cite[Proposition 3.1]{Lelli_Chiesa_2013} shows that $h^2(S,\det M_1)=0$, $h^2(S,\det M)=0$, and that $\det M_1$ is nef whereby $c_1(M_1)^2\ge 0$. 
	
	
	Furthermore, as \[\mu(M)=\frac{c_1(M).c_1(E)}{\operatorname{rk}(M)}=\frac{c_1(M).(c_1(M)+c_1(M_1))}{\operatorname{rk}(M)}\ge \frac{g-1}{2},\] we have $c_1(M).c_1(M_1)\ge \operatorname{rk}(M)\frac{g-1}{2}-c_1(M)^2$. Since $h^2(S,\det M)=0$, we note that \[h^0(S,\det M)\ge h^0(S,\det M)-h^1(S,\det M) = \chi(\det M)=2+\frac{c_1(M)^2}{2}.\] Therefore, if $2>h^0(S,\det M)$, then $c_1(M)^2\le -2$, and thus $$c_1(M).c_1(M_1)\ge \operatorname{rk}(M)\frac{g-1}{2}+2 \ge \operatorname{rk}(M)\gamma+2\ge \gamma+2$$ as $\operatorname{rk}(M) \ge 1$.
	
	Hence from now on we assume that $h^0(S,\det M)\ge 2$. Since $\omega_C\otimes (\det M_1)^\vee\otimes\mathcal{O}_C=\det M \otimes\mathcal{O}_C$, the line bundle $\det M_1\otimes\mathcal{O}_C$ contributes to $\gamma(C)$. Tensoring the short exact sequence for $\mathcal{O}_C$ with $\det M_1$ gives \[\xymatrix{0\ar[r] & \det M^\vee \ar[r] & \det M_1 \ar[r] & \det M_1\otimes\mathcal{O}_C \ar[r] & 0},\] which gives $h^0(C,\det M_1 \otimes\mathcal{O}_C)\ge h^0(S,\det M_1)$. It follows that 
	\begin{align*}
		\gamma(\det M_1 \otimes\mathcal{O}_C) &= c_1(M_1).(c_1(M)+c_1(M_1)) -2h^0(C,\det M_1\otimes\mathcal{O}_C)+2\\
		&\le c_1(M_1)^2 + c_1(M).c_1(M_1) -2\chi(\det M_1)-2h^1(S,\det M_1)+2\\
		&= -2 + c_1(M).c_1(M_1)-2h^1(S,\det M_1).
	\end{align*}
	By assumption, we have $\gamma(\det M_1\otimes\mathcal{O}_C)\ge \gamma$, thus $c_1(M).c_1(M_1)\ge \gamma+2+2h^1(S,\det M_1)\ge \gamma+2$, as desired.
\end{proof}

\begin{remark}
	It follows from the second half of the proof that if $M$ and $M_1$ are coherent sheaves such that $c_1(M)+c_1(M_1)=c_1(E)$, $\det M_1\otimes\mathcal{O}_C$ (hence also $\det M\otimes\mathcal{O}_C$) contributes to $\gamma(C)$, and $h^2(S,\det M_1)=0$ (or $h^2(S,\det M)=0$), then $c_1(M).c_1(M_1)\ge \gamma(C)+2+2h^1(S,\det M_1)\ge \gamma(C)+2$ (or $c_1(M).c_1(M_1)\ge \gamma(C)+2+2h^1(S,\det M)\ge \gamma(C)+2$).
\end{remark}

\begin{prop}\label{Prop quotients contribute to Cliff(C)}
	Let $(S,H)$ be a polarized K3 surface, $C\in|H|$ a smooth irreducible curve, $A$ a basepoint free line bundle on $A$ of type $\g{3}{d}$, and $E=E_{C,A}$. Suppose $E$ sits in an exact sequence \[\xymatrix{0\ar[r] & M \ar[r] & E \ar[r] & E/M \ar[r] & 0},\] where $M$ and $E/M$ are coherent torsion free sheaves on $S$ and $\mu(M)\ge\mu(E)\ge\mu(E/M)$. If $\operatorname{rk} (M)\ge \operatorname{rk}(E/M)$, then $c_1(M)^2\ge c_1(E/M)^2$. And if $\operatorname{rk} (M)> \operatorname{rk}(E/M)$, then $c_1(M)^2> c_1(E/M)^2$. In particular,  $\det(E/M)\otimes\mathcal{O}_C$ contributes to $\gamma(C)$.
\end{prop}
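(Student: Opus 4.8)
The plan is to reduce both Chern-number inequalities to a single comparison of slopes via an elementary identity, and then to deduce that $\det(E/M)\otimes\mathcal{O}_C$ contributes to $\gamma(C)$ from the inequality $c_1(M)^2\ge c_1(E/M)^2$ together with the facts about determinants of quotients of $E_{C,A}$ that are already used in \Cref{coker is gLM} and in the proof of \Cref{LC rank 3 lemma 4.1}.

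Write $L_1=c_1(M)$, $L_2=c_1(E/M)$, $r_1=\operatorname{rk}(M)$, $r_2=\operatorname{rk}(E/M)$, so that $r_1+r_2=4$ and, by additivity of first Chern classes in the exact sequence, $L_1+L_2=c_1(E)=H$. The key observation is the identity
\[ c_1(M)^2-c_1(E/M)^2=L_1^2-L_2^2=(L_1-L_2).(L_1+L_2)=(L_1-L_2).H=L_1.H-L_2.H, \]
which converts the comparison of self-intersections into a comparison of slopes. Using $\mu(M)\ge\mu(E)\ge\mu(E/M)$ and $\mu(E)=\frac{c_1(E).H}{\operatorname{rk}(E)}=\frac{H^2}{4}=\frac{g-1}{2}$, I would bound
\[ L_1.H-L_2.H=r_1\,\mu(M)-r_2\,\mu(E/M)\ge(r_1-r_2)\,\mu(E)=(r_1-r_2)\tfrac{g-1}{2}. \]
If $r_1\ge r_2$ this is $\ge 0$, so $c_1(M)^2\ge c_1(E/M)^2$; if $r_1>r_2$ then $r_1+r_2=4$ forces $r_1-r_2=2$, so the bound is $g-1>0$ and the inequality is strict.

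For the last assertion I would work under the hypothesis $\operatorname{rk}(M)\ge\operatorname{rk}(E/M)$, which is the case of interest (the filtration $1\subset 4$ being treated directly by \Cref{Prop Proof Strategy}). By adjunction $\det(E)\otimes\mathcal{O}_C=\mathcal{O}_C(C)=\omega_C$, and since $\det E=\det M\otimes\det(E/M)$ we get $\det M\otimes\mathcal{O}_C=\omega_C\otimes(\det(E/M)\otimes\mathcal{O}_C)^\vee$; hence $\det(E/M)\otimes\mathcal{O}_C$ contributes to $\gamma(C)$ precisely when $h^0(C,\det(E/M)\otimes\mathcal{O}_C)\ge 2$ and $h^0(C,\det M\otimes\mathcal{O}_C)\ge 2$. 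Now $E/M$ is a quotient of $E=E_{C,A}$, which by \Cref{LM bundle props} is globally generated off a finite set and has $h^2(S,E_{C,A})=0$; so $E/M$ is globally generated off a finite set with $h^2(S,E/M)=0$, and the argument in the proof of \Cref{LC rank 3 lemma 4.1} shows $\det(E/M)$ is basepoint free, nontrivial, hence nef with $c_1(E/M)^2\ge 0$, and $h^0(S,\det(E/M))\ge 2$. Because $\mu(M)\ge\mu(E)>0$ and $H$ is nef, $\det M^\vee$ is not effective, so $h^2(S,\det M)=h^0(S,\det M^\vee)=0$; combined with $c_1(M)^2\ge c_1(E/M)^2\ge 0$ and Riemann--Roch on $S$, this gives $h^0(S,\det M)\ge\chi(S,\det M)=2+\tfrac{1}{2}c_1(M)^2\ge 2$. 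Restricting $\det(E/M)$ and $\det M$ to $C$ through the structure sequence of $C\subset S$, and using that $\det(E/M)\otimes\mathcal{O}_S(-C)=\det M^\vee$ and $\det M\otimes\mathcal{O}_S(-C)=\det M\otimes H^\vee$ have no global sections (the second since $(\det M\otimes H^\vee).H=-c_1(E/M).H<0$, as $\det(E/M)$ nef nontrivial together with $H$ big and nef forces $c_1(E/M).H>0$ by the Hodge index theorem), I would conclude $h^0(C,\det(E/M)\otimes\mathcal{O}_C)\ge h^0(S,\det(E/M))\ge 2$ and $h^0(C,\det M\otimes\mathcal{O}_C)\ge h^0(S,\det M)\ge 2$.

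The main obstacle is the inequality $h^0(C,\det M\otimes\mathcal{O}_C)\ge 2$, equivalently $h^1(C,\det(E/M)\otimes\mathcal{O}_C)\ge 2$: this is exactly where the Chern-number comparison is used, through $c_1(M)^2\ge 0$, and it is the reason one cannot drop $\operatorname{rk}(M)\ge\operatorname{rk}(E/M)$ for that part — for a rank-one sub-line bundle the self-intersection $c_1(M)^2$ can be negative and the estimate $h^0(S,\det M)\ge 2$ fails. Everything else is formal: additivity of Chern classes, the slope hypotheses, adjunction, and the determinant facts for sheaves globally generated off a finite set.
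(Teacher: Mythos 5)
Your proposal is correct and follows essentially the same route as the paper: the comparison $c_1(M)^2\ge c_1(E/M)^2$ comes from the identity $c_1(M)^2-c_1(E/M)^2=\bigl(c_1(M)-c_1(E/M)\bigr).H$ together with the slope hypotheses (the paper phrases this as $\operatorname{rk}(M)\mu(M)\ge\operatorname{rk}(E/M)\mu(E/M)$ and cancels the cross term, while you route the bound through $\mu(E)=\frac{g-1}{2}$, an equivalent bookkeeping), and the contribution to $\gamma(C)$ is deduced exactly as in the paper via $h^0(S,\det(E/M))\ge 2$ from \Cref{LC rank 3 lemma 4.1}, $h^2(S,\det M)=0$, and Riemann--Roch giving $h^0(S,\det M)\ge 2+\frac{1}{2}c_1(M)^2\ge 2$. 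You merely make explicit the restriction-to-$C$ and Serre-duality step that the paper leaves implicit by its reference to the proof of \Cref{LC rank 3 lemma 4.1}.
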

\begin{proof}
	As in \Cref{LC rank 3 lemma 4.1}, we see $h^0(S,\det E/M)\ge 2$, $\mu(E/M)>0$, $\det(E/M)$ is nef, and $h^2(S,\det M)=0$. Since $h^0(S,\det E/M)\ge 2$, it remains to show that $h^0(S,\det M)\ge 2$. 
	
	We observe that \[c_1(M)^2+c_1(M).c_1(E/M)=\operatorname{rk}(M) \mu(M)\ge \operatorname{rk}(E/M)\mu(E/M)=c_1(E/M)^2+c_1(M).c_1(E.M)\] whence $c_1(M)^2\ge c_1(E/M)^2\ge 0$ as $\det(E/M)$ is nef. 
	
	Since $h^2(S,\det M)=0$, we have $h^0(S,\det M)\ge\chi(\det M)= 2+\frac{c_1(M)^2}{2}$. Thus as $c_1(M)^2\ge 0$, $\det(E/M)\otimes\mathcal{O}_C$ contributes to $\gamma(C)$.
\end{proof}

For each terminal filtration not of the form $0\subset 1\subset 4$, we find a lower bound for $d=c_2(E)$. That is whenever $E$ does not have a maximal destabilizing sub-line bundle, we find that $d$ must be large. In effect, $c_2(E)$ controls the complexity of its Harder--Narasimhan and Jordan--H\"{o}lder filtrations.

\subsection{Filtration \texorpdfstring{$2\subset 4$}{}}

We assume $E$ is unstable with terminal filtration $0\subset M \subset E$ with $M$ and $M_1=E/M$ stable rank $2$ torsion free sheaves. Thus $E$ sits in an exact sequence of the form \[\xymatrix{0\ar[r] & M \ar[r] & E \ar[r] & M_1\ar[r]& 0}.\] We have 
\begin{gather}
	\mu(M)\ge \mu(E)=\frac{g-1}{2}\ge\mu(M_1)\\
	d=c_2(E)=c_1(M).c_1(M_1)+c_2(M)+c_2(M_1)
\end{gather}

\begin{lemma}\label{lemma bound 2<4 filt}
	Suppose $C\in|H|_s$ has Clifford index $\gamma=\gamma(C)$. Then if $E$ is as above, we have $d\ge \frac{\gamma}{2}+4+\frac{g-1}{2} $.
\end{lemma}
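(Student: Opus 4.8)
The plan is to combine the two structural facts just established with the stability hypotheses to pin down each of the three summands of $d = c_1(M).c_1(M_1) + c_2(M) + c_2(M_1)$. First I would handle the cross term: since $M$ and $M_1$ both have rank $2$, equal to half of $\operatorname{rk}(E)$, and $\mu(M) \ge \mu(E) \ge \mu(M_1)$, \Cref{Prop quotients contribute to Cliff(C)} applies to give $c_1(M)^2 \ge c_1(M_1)^2 \ge 0$ and, crucially, that $\det(M_1)\otimes\mathcal{O}_C$ contributes to $\gamma(C)$. Then \Cref{LC rank 3 lemma 4.1} (or the remark following it, which is the version phrased directly for coherent sheaves $M, M_1$ with $c_1(M)+c_1(M_1)=c_1(E)$ and $\det M_1\otimes\mathcal{O}_C$ contributing to $\gamma(C)$) yields $c_1(M).c_1(M_1) \ge \gamma + 2$.

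Next I would bound the two Chern-class-$c_2$ terms from below using that $M$ and $M_1$ are \emph{stable} torsion free sheaves of rank $2$. For such a sheaf $F$ the moduli space $M(v(F))^s$ is nonempty, hence has dimension $2 + \langle v(F), v(F)\rangle \ge 0$; expanding the Mukai pairing for rank $2$ gives $4 c_2(F) - c_1(F)^2 - 6 \ge 0$, i.e. $c_2(F) \ge \tfrac14 c_1(F)^2 + \tfrac32$. Applying this to both $M$ and $M_1$ and using $c_1(M)^2 + 2\,c_1(M).c_1(M_1) + c_1(M_1)^2 = c_1(E)^2 = 2g-2$ together with $c_1(M_1)^2 \ge 0$, I would get $c_2(M) + c_2(M_1) \ge \tfrac14\bigl(c_1(M)^2 + c_1(M_1)^2\bigr) + 3 = \tfrac14\bigl(2g - 2 - 2\,c_1(M).c_1(M_1)\bigr) + 3$. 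The cross term appears here with a negative sign, which looks like it will fight against the $\ge \gamma+2$ bound, so I expect the final estimate to require balancing these, perhaps splitting into cases according to the size of $c_1(M).c_1(M_1)$, or else noticing that $\mu(M_1) > 0$ forces $c_1(M_1)^2 > 0$ and $c_1(M).c_1(M_1)$ cannot be too large relative to $g$.

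Assembling: $d \ge (\gamma+2) + \tfrac14(2g-2) - \tfrac12\,c_1(M).c_1(M_1) + 3$. The target $d \ge \tfrac{\gamma}{2} + 4 + \tfrac{g-1}{2}$ rewrites as $d \ge \tfrac{\gamma}{2} + \tfrac{2g-2}{4} + 4$, so after substituting it suffices to show $\tfrac{\gamma}{2} + 1 \ge \tfrac12\,c_1(M).c_1(M_1) - \tfrac12 \cdot 0$, i.e. a matching \emph{upper} bound $c_1(M).c_1(M_1) \le \gamma + 2$, which would make the inequality an equality of the extremal case; more realistically I expect one only needs $c_1(M).c_1(M_1)$ controlled in terms of $g$ and $\gamma$, using $\mu(M) \ge \tfrac{g-1}{2}$ to write $c_1(M).c_1(M_1) = 2\mu(M) - c_1(M)^2$ and bounding $c_1(M)^2$ below via Hodge index against $H$ and above via $c_1(M)^2 \le 2g-2-2(\gamma+2)$.

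\textbf{Main obstacle.} The delicate point is the interplay between the lower bound $c_1(M).c_1(M_1) \ge \gamma+2$ (which pushes $d$ up through the cross term) and the fact that this same quantity enters the $c_2(M)+c_2(M_1)$ estimate with the opposite sign. I expect the heart of the argument to be showing that $c_1(M).c_1(M_1)$ cannot be so large that the second effect overwhelms the first — concretely, bounding $c_1(M)^2$ and $c_1(M_1)^2$ individually (using $\mu(M), \mu(M_1) > 0$, the Hodge index theorem, and the equality $c_1(M)^2 + 2c_1(M).c_1(M_1) + c_1(M_1)^2 = 2g-2$) rather than just their relative size, so that all three terms of $d$ can be controlled simultaneously and summed to the stated bound.
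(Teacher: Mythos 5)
Your ingredients are exactly those of the paper's proof: the bound $c_1(M).c_1(M_1)\ge\gamma+2$ from \Cref{LC rank 3 lemma 4.1} and \Cref{Prop quotients contribute to Cliff(C)}, the stability estimate $c_2\ge\frac{3}{2}+\frac{c_1^2}{4}$ for $M$ and $M_1$ coming from nonemptiness of the moduli space of stable sheaves, and the identity $c_1(M)^2+2c_1(M).c_1(M_1)+c_1(M_1)^2=c_1(E)^2=2g-2$. The gap is in the assembly. Writing $X=c_1(M).c_1(M_1)$, these give $d\ge X+3+\frac{c_1(M)^2+c_1(M_1)^2}{4}=X+3+\frac{g-1}{2}-\frac{X}{2}=\frac{X}{2}+3+\frac{g-1}{2}$: the net coefficient of $X$ is $+\frac{1}{2}$, so the lower bound $X\ge\gamma+2$ alone yields $d\ge\frac{\gamma+2}{2}+3+\frac{g-1}{2}=\frac{\gamma}{2}+4+\frac{g-1}{2}$, which is precisely the paper's computation.

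By instead substituting $X\ge\gamma+2$ only into the positively-signed occurrence and carrying $-\frac{X}{2}$ along as a free term, you arrive at the weaker inequality $d\ge(\gamma+2)+\frac{g-1}{2}-\frac{X}{2}+3$ and conclude that a matching upper bound $X\le\gamma+2$ (or Hodge-index control of $c_1(M)^2$, or a case split) is required. That step would fail: there is no reason for $c_1(M).c_1(M_1)\le\gamma+2$ in general, and none of the extra machinery in your ``main obstacle'' paragraph is needed. In particular $c_1(M_1)^2\ge0$ (and the incorrect claim that $\mu(M_1)>0$ forces $c_1(M_1)^2>0$) plays no role, since the identity converts $\frac{c_1(M)^2+c_1(M_1)^2}{4}$ into $\frac{g-1}{2}-\frac{X}{2}$ exactly, whatever the signs. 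The obstacle you flag is illusory; the proof closes with the one-line observation that the cross term survives with positive coefficient $\frac{1}{2}$.
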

\begin{proof}
	From \Cref{LC rank 3 lemma 4.1} and \Cref{Prop quotients contribute to Cliff(C)}, we see that $c_1(M).c_1(M_1)\ge \gamma+2$. Stability of $M$ and $M_1$ give $-2\le\langle \nu(M_{(1)}),\nu(M_{(1)})\rangle = 4c_2(M_{(1)})-c_1(M_{(1)})^2-8$, thus $c_2(M_{(1)})\ge\frac{3}{2}+\frac{c_1(M_{(1)})^2}{4}$. 
	
	We have \[\frac{c_1(M)^2+c_1(M_1)^2}{4}+ \frac{c_1(M).c_1(M_1)}{2} =\frac{\mu(M)+\mu(M_1)}{2}=\frac{(c_1(M)+c_1(M_1))^2}{4}=\mu(E)=\frac{g-1}{2}.  \]
	We now calculate
	\begin{align*}
		d&= c_1(M).c_1(M_1)+c_2(M)+c_2(M_1)\\
		&\ge c_1(M).c_1(M_1)+3+\frac{c_1(M)^2+c_1(M_1)^2}{4}\\
		&= c_1(M).c_1(M_1) + 3 + \frac{g-1}{2} -\frac{c_1(M).c_1(M_1)}{2}\\
		&\ge \frac{\gamma+2}{2}+3+\frac{g-1}{2}.
	\end{align*}
	as claimed.
\end{proof}

\subsection{Filtration \texorpdfstring{$3\subset 4$}{}}

We assume $E=E_{C,A}$ is unstable with terminal filtration $0\subset M\subset E$ with $M$ a stable rank $3$ torsion free sheaf. Thus $E$ sits in an extension \[\xymatrix{0\ar[r] & M \ar[r] & E\ar[r] & N\otimes I_{\xi} \ar[r] & 0}\] where $N$ is a line bundle and $I_\xi$ is the ideal sheaf of a $0$-dimensional subscheme $\xi\subset S$ of length $l(\xi)=d-c_1(M).c_1(N)$. We have 
\begin{gather}
	\mu(M)\ge \mu(E)=\frac{g-1}{2}\ge \mu(N)\\
	c_1(H)=c_1(E)=c_1(M)+c_1(N)\\
	d=c_2(E)=c_1(N).c_1(M)+c_2(M)+l(\xi)
\end{gather}
\begin{lemma}
	Suppose $C\in|H|_s$ has Clifford index $\gamma=\gamma(C)$. Then if $E$ is as above, we have $d\ge \frac{2}{3}(\gamma+2)+\frac{g}{2} + \frac{13}{6}$.
\end{lemma}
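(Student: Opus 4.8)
The plan is to peel off, one at a time, the three summands of $d=c_2(E)=c_1(N).c_1(M)+c_2(M)+l(\xi)$. Since $l(\xi)\ge 0$ is the length of a $0$-dimensional scheme, it may simply be discarded, so it suffices to bound $c_1(N).c_1(M)$ and $c_2(M)$ from below. For the first, I would apply \Cref{LC rank 3 lemma 4.1} to the displayed extension $0\to M\to E\to N\otimes I_\xi\to 0$, taking $M_1=N\otimes I_\xi$ so that $c_1(M_1)=c_1(N)$; the hypotheses are met because $E$ is a LM bundle with $c_1(E)=H$ and $\mu(E)=\tfrac{g-1}{2}>0$, and $C$ has constant Clifford index $\gamma$ by Green--Lazarsfeld. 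To license the relevant step of that argument I first record, via \Cref{Prop quotients contribute to Cliff(C)} (using $\operatorname{rk}(M)=3>1=\operatorname{rk}(N\otimes I_\xi)$, whence $c_1(M)^2>c_1(N)^2\ge 0$), that $\det(N\otimes I_\xi)\otimes\mathcal{O}_C=N\otimes\mathcal{O}_C$ (equivalently $\det M\otimes\mathcal{O}_C$) contributes to $\gamma(C)$. This yields $c_1(N).c_1(M)\ge\gamma+2$.

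For $c_2(M)$, I would use that $M$ is stable of rank $3$, so the moduli space of stable sheaves with Mukai vector $\nu(M)$ is nonempty, hence of dimension $2+\langle\nu(M),\nu(M)\rangle\ge 0$. Expanding $\langle\nu(M),\nu(M)\rangle=c_1(M)^2-2\operatorname{rk}(M)\bigl(\chi(M)-\operatorname{rk}(M)\bigr)$ and inserting $\chi(M)=2\operatorname{rk}(M)+\tfrac{1}{2}c_1(M)^2-c_2(M)$ from Riemann--Roch gives $\langle\nu(M),\nu(M)\rangle=-2c_1(M)^2-18+6c_2(M)\ge-2$, i.e. $c_2(M)\ge\tfrac{8}{3}+\tfrac{1}{3}c_1(M)^2$. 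It then remains to bound $c_1(M)^2$ below: from $\mu(M)\ge\mu(E)$ we get $c_1(M).H\ge\tfrac{3(g-1)}{2}$, and since $c_1(M)+c_1(N)=H$ this reads $c_1(M)^2+c_1(N).c_1(M)\ge\tfrac{3(g-1)}{2}$, so $c_1(M)^2\ge\tfrac{3(g-1)}{2}-c_1(N).c_1(M)$.

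Assembling the pieces, $d\ge c_1(N).c_1(M)+c_2(M)\ge c_1(N).c_1(M)+\tfrac{8}{3}+\tfrac{1}{3}c_1(M)^2\ge c_1(N).c_1(M)+\tfrac{8}{3}+\tfrac{g-1}{2}-\tfrac{1}{3}c_1(N).c_1(M)=\tfrac{2}{3}c_1(N).c_1(M)+\tfrac{g-1}{2}+\tfrac{8}{3}$, and feeding in $c_1(N).c_1(M)\ge\gamma+2$ gives $d\ge\tfrac{2}{3}(\gamma+2)+\tfrac{g}{2}+\tfrac{13}{6}$, using $\tfrac{8}{3}-\tfrac{1}{2}=\tfrac{13}{6}$. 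The only step requiring genuine care is checking the hypotheses that feed into \Cref{LC rank 3 lemma 4.1} — in particular that $N\otimes\mathcal{O}_C$ truly contributes to the Clifford index of $C$, for which \Cref{Prop quotients contribute to Cliff(C)} is exactly tailored; everything after that is bookkeeping with the slope/Hodge-index inequalities and the moduli-dimension estimate.
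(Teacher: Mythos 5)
Your proposal is correct and follows essentially the same route as the paper: drop $l(\xi)\ge 0$, get $c_1(N).c_1(M)\ge\gamma+2$ from \Cref{LC rank 3 lemma 4.1} together with \Cref{Prop quotients contribute to Cliff(C)}, bound $c_2(M)\ge\frac{8+c_1(M)^2}{3}$ via stability of $M$ and the Mukai-vector dimension estimate, and use $\mu(M)\ge\mu(E)$ to eliminate $c_1(M)^2$. The bookkeeping, including the final constant $\frac{13}{6}$, matches the paper's computation exactly.
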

\begin{proof}
	From \Cref{LC rank 3 lemma 4.1} and \Cref{Prop quotients contribute to Cliff(C)}, we see that $c_1(N).c_1(M)\ge \gamma+2$.
	
	As $M$ is stable, we have $-2\le \langle \nu(M),\nu(M)\rangle = 6c_2(M)-2c_1(M)^2-18$, thus $c_2(M)\ge \frac{8+c_1(M)^2}{3}$. Thus \begin{align*}
		d&= c_1(N).c_1(M)+c_2(M)+l(\xi)\\
		&\ge c_1(N).c_1(M)+\frac{c_1(M)^2}{3} + \frac{8}{3}\\
		&\ge c_1(N).c_1(M) +\frac{g-1}{2}-\frac{c_1(N).c_1(M)}{3}+\frac{8}{3}\\
		&\ge \frac{2}{3}(\gamma+2)+\frac{g}{2}+\frac{13}{6},
	\end{align*} as desired.
\end{proof}

\subsection{Filtration \texorpdfstring{$1\subset2\subset 4$}{}}

We assume $E$ has a terminal filtration $0\subset N \subset M \subset E$ with $\operatorname{rk}(N)=1$, $\operatorname{rk}(M)=2$, and $E/M=M_1$ a stable torsion free sheaf. Furthermore, we have 
\begin{gather}
	\mu(N)\ge \mu(M) \ge \mu(E)=\frac{g-1}{2} \ge \mu(M_1)\\
	\mu(M)\ge \mu(M/N)\ge \mu(E/N)\ge \mu(M_1)\\
	d= c_2(E)=c_2(M)+c_2(M_1)+c_1(M).c_1(M_1)= c_1(N).c_1(M/N)+c_1(N).c_1(M_1)+c_1(M/N).c_1(M_1)+c_2(M_1)
\end{gather}
Moreover, as $M_1$ is stable, we have \[-2 \le \langle \nu(M_1),\nu(M_1)\rangle = c_1(M_1)^2 -4\chi(M_1)+8 = 4c_2(M_1)-c_1(M_1)^2-8\] thus $c_2(M_1)\ge \frac{3}{2}+ \frac{c_1(M_1)^2}{4}$. Therefore we have 
\begin{gather}
	d\ge \frac{3}{2}+\frac{c_1(M_1)^2}{4} +  c_1(N).c_1(M/N)+c_1(N).c_1(M_1)+c_1(M/N).c_1(M_1).
\end{gather}

\begin{lemma}
	Suppose $E$ is as above. Then $\det M_1\otimes\mathcal{O}_C$ contributes to $\gamma(C)$ and one of the following occurs:
	\begin{enumerate}[label={\normalfont(\alph*)}]
		\item $N\otimes\mathcal{O}_C$ and $(M/N)\otimes\mathcal{O}_C$ contribute to $\gamma(C)$;
		\item $c_1(N).(c_1(M_1)+c_1(M/N))\ge \frac{g-1}{2}+2$ and either $(M/N)\otimes\mathcal{O}_C$ contributes to $\gamma(C)$ or \newline$c_1(M/N).(c_1(N)+c_1(M_1))\ge g$ ;
		\item $N\otimes\mathcal{O}_C$ contributes to $\gamma(C)$ and $c_1(M/N).(c_1(N)+c_1(M_1))\ge 2+\frac{c_1(M).c_1(M_1)}{2}+\frac{c_1(M_1)^2}{2}$;
		\item $c_1(N).c_1(M/N)\ge \frac{g+3}{2}$. 
	\end{enumerate}
\end{lemma}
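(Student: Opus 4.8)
The plan is to dispatch the ``contributing'' statement immediately and then run a case analysis on the line bundles in the filtration, organised by how many global sections their determinants, and the determinants of their complements in $c_1(E)=H$, carry on $S$. For the first assertion, observe that $\operatorname{rk}(M)=2=\operatorname{rk}(M_1)$ and $\mu(M)\ge\mu(E)=\tfrac{g-1}{2}\ge\mu(M_1)$, so \Cref{Prop quotients contribute to Cliff(C)} applied to $0\to M\to E\to M_1\to 0$ yields $c_1(M)^2\ge c_1(M_1)^2\ge 0$, the vanishing of $h^2(S,\det M)$ and $h^2(S,\det M_1)$ with $\det M_1$ nef, and that $\det M_1\otimes\mathcal{O}_C$ contributes to $\gamma(C)$; these facts are kept for what follows.

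The engine of the case analysis is the following dictionary. Any line bundle $B$ with $c_1(B).H>0$ has $h^2(S,B)=0$, hence $h^0(S,B)\ge\chi(B)=2+\tfrac12 c_1(B)^2$, so $h^0(S,B)<2$ forces $c_1(B)^2\le-2$; conversely, once the relevant $h^1$ vanishes — which holds for the base-point-free determinants of quotients of $E$, such as $\det(E/N)$, by the usual Saint-Donat/Green--Lazarsfeld argument (ruling out $h^1\neq 0$ via stability of $M_1$, cf. the proof of \Cref{Prop Proof Strategy}) and using $h^1(S,E/N)=h^2(S,E/N)=0$ inherited from $E$ — the bundle $B\otimes\mathcal{O}_C$ contributes to $\gamma(C)$ exactly when $h^0(S,B)\ge2$ and $h^0(S,H\otimes B^\vee)\ge2$. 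Feeding in the slope chains $\mu(N)\ge\mu(M)\ge\tfrac{g-1}{2}\ge\mu(M_1)$ and $\mu(M)\ge\mu(M/N)\ge\mu(E/N)\ge\mu(M_1)$, each such determinant bound becomes a linear inequality among the pairwise products of $c_1(N)$, $c_1(M/N)$, $c_1(M_1)$; note in particular the identities $2+\tfrac12 c_1(M).c_1(M_1)+\tfrac12 c_1(M_1)^2 = 2+\mu(M_1)$ and $c_1(M/N).(c_1(N)+c_1(M_1))=\mu(M/N)-c_1(M/N)^2$, which is why cases (b)--(d) take the shape they do.

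Now the split. If $h^0(S,N)<2$ then $c_1(N)^2\le-2$, so $c_1(N).(c_1(M_1)+c_1(M/N))=\mu(N)-c_1(N)^2\ge\tfrac{g-1}{2}+2$, the first half of (b); to close (b) one examines $M/N\subset E/N$ and either concludes $(M/N)\otimes\mathcal{O}_C$ contributes to $\gamma(C)$, or, when it does not, combines $c_1(M/N)^2\le-2$ (or the failure of the cosection condition) with $\mu(M/N)\ge\mu(E/N)$ to force $c_1(M/N).(c_1(N)+c_1(M_1))\ge g$. If $h^0(S,N)\ge 2$, then $N\otimes\mathcal{O}_C$ contributes to $\gamma(C)$ (because $\det(E/N)$ is base-point free), and one splits on whether $(M/N)\otimes\mathcal{O}_C$ contributes: if it does we are in (a); if it fails with $h^0(S,M/N)<2$ then $c_1(M/N)^2\le-2$ gives $c_1(M/N).(c_1(N)+c_1(M_1))=\mu(M/N)-c_1(M/N)^2\ge\mu(M/N)+2\ge\mu(M_1)+2$, which is (c); and if it fails with $h^0(S,H\otimes(M/N)^\vee)<2$, so $(c_1(N)+c_1(M_1))^2\le-2$, one feeds this into $\mu(M)\ge\tfrac{g-1}{2}$, $\mu(N)\ge\mu(M)$ and the Clifford bound $c_1(M).c_1(M_1)\ge\gamma(C)+2$ of \Cref{LC rank 3 lemma 4.1} to reach (c), or, failing that, $c_1(N).c_1(M/N)\ge\tfrac{g+3}{2}$, which is (d).

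The hardest part will be this last branch: showing that the cosection failure for $M/N$ genuinely lands in (c) or (d), and that the four cases are exhaustive. This is where one must carefully track the signs of $c_1(N).c_1(M_1)$, $c_1(M/N).c_1(M_1)$ and $c_1(N).c_1(M/N)$ — using effectivity of $c_1(N)$ and $c_1(M/N)$ and nefness of $c_1(M_1)$ — and invoke the Hodge index theorem to convert a negative self-intersection into a lower bound on a mixed product. A subsidiary nuisance is the transfer of sections between $S$ and $C$, which rests on base-point-freeness of the quotient determinants and the cohomology vanishing on $E/N$ noted above.
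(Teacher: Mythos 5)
Your opening assertion and your basic dictionary ($h^0(S,B)<2$ plus $h^2(S,B)=0$ forces $c_1(B)^2\le -2$) are fine, and your derivation of case (c) in the branch $h^0(S,N)\ge 2$, $h^0(S,M/N)<2$ — namely $c_1(M/N).(c_1(N)+c_1(M_1))=\mu(M/N)-c_1(M/N)^2\ge\mu(M_1)+2=2+\tfrac12 c_1(M).c_1(M_1)+\tfrac12 c_1(M_1)^2$, using $\mu(M/N)\ge\mu(M_1)$ — is correct and even a bit cleaner than the paper's. The trouble is in the branch $h^0(S,N)<2$. The step you lean on there is false: $c_1(M/N)^2\le-2$ together with $\mu(M/N)\ge\mu(E/N)$ only yields $c_1(M/N).(c_1(N)+c_1(M_1))\ge\mu(M_1)+2$, which need not be anywhere near $g$. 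To reach $g$ one must know that \emph{both} $c_1(M/N)^2$ and $(c_1(N)+c_1(M_1))^2$ are negative and then expand $2g-2=c_1(E)^2$ for the splitting $H=c_1(M/N)+(c_1(N)+c_1(M_1))$. This is why the paper runs the dichotomy on the sign of $(c_1(N)+c_1(M_1))^2$: the slope inequality gives $(c_1(N)+c_1(M_1))^2-c_1(M/N)^2\ge 2\mu(M_1)>0$, so negativity of the cosection class forces $c_1(M/N)^2<0$ and the expansion gives the bound $\ge g$, while nonnegativity gives $h^0(S,N\otimes\det M_1)\ge 2$ and hence, with $h^0(S,M/N)\ge 2$, the contribution of $(M/N)\otimes\mathcal{O}_C$.

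More seriously, your plan never produces conclusion (d) where it is actually needed. In the sub-case $h^0(S,N)<2$ \emph{and} $h^0(S,M/N)<2$ with $(c_1(N)+c_1(M_1))^2\ge 0$, neither disjunct of the second half of (b) is available to you: the section side of the contribution criterion for $M/N$ cannot be established, and the two-negative-squares expansion cannot be run, so (b) cannot be closed. This is exactly the situation in which the paper derives (d), feeding both $c_1(N)^2\le-2$ and $c_1(M/N)^2\le-2$ into the full three-term expansion of $c_1(E)^2$ together with $\mu(M_1)\le\frac{g-1}{2}$ to get $c_1(N).c_1(M/N)\ge\frac{g+1}{2}-\frac{c_1(N)^2}{2}\ge\frac{g+3}{2}$. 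You instead invoke (d) only inside the branch $h^0(S,N)\ge2$, $h^0(S,M/N)\ge2$ with the cosection failing — but that sub-branch is vacuous, since $H\otimes(M/N)^\vee\cong N\otimes\det M_1$ and $\det M_1$ is effective, so $h^0(S,H\otimes(M/N)^\vee)\ge h^0(S,N)\ge 2$ automatically (this is how the paper lands in (a) there) — and in any case the sentence ``one feeds this into $\mu(M)\ge\frac{g-1}{2}$, $\mu(N)\ge\mu(M)$ and the Clifford bound to reach (c), or, failing that, (d)'' is not an argument. So, as written, your case analysis is not exhaustive and the key inequality in (b) is unproved.
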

\begin{proof}
	From \Cref{LC rank 3 lemma 4.1} and \Cref{Prop quotients contribute to Cliff(C)}, we see that $\det M_1\otimes\mathcal{O}_C$ contributes to $\gamma(C)$.
	
	We have the following four cases:
	\begin{enumerate}[label={\normalfont(\roman*)}]
		\item $h^0(S,M/N), h^0(S,N)\ge 2$
		\item $h^0(S,M/N)\ge2$ and $h^0(S,N)< 2$
		\item $h^0(S,M/N)< 2$ and $h^0(S,N)\ge2$ 
		\item $h^0(S,M/N), h^0(S,N)< 2$
	\end{enumerate}
	
	In case (i), we have $h^0(S,H\otimes(M/N)^\vee)=h^0(S,\det M_1 \otimes N)\ge 2$ and $h^0(S,H\otimes N^\vee)=h^0(S,\det M_1 \otimes M/N)\ge 2$ as $\det M_1$ has global sections. Thus we are in case (a) of the lemma.
	
	In case (ii), we see that $\chi(N)<2$, hence $c_1(N)^2\le -2$, and we calculate 
	\begin{align*}
		c_1(N).(c_1(M_1).c_1(M/N))&=c_1(N).(c_1(E)-c_1(N))\\
		&= \mu(N) - c_1(N)^2 \ge \mu(E) +2 = \frac{g-1}{2}+2,
	\end{align*} thus the first statement of case (b) is proved. We now observe that $c_1(N\otimes \det M_1)^2> c_1(M/N)^2$ which follows from the computation $c_1(N\otimes \det M_1)^2- c_1(M/N)^2\ge 2\mu(M_1)>0$.

	If $c_1(N\otimes \det M_1)^2<0$, then also $c_1(M/N)^2<0$, and we calculate 
	\begin{align*}
		2g-2=c_1(E)^2&= (c_1(N)+c_1(M/N)+c_1(M_1))^2\\
		&= c_1(N\otimes \det M_1)^2+2c_1(N\otimes \det M_1).c_1(M/N)+c_1(M/N)^2\\
		&<2(c_1(N)+c_1(M_1)).c_1(M/N),
	\end{align*}
	thus $c_1(M/N).(c_1(N)+c_1(M_1))\ge g$. Else $c_1(N\otimes \det M_1)^2\ge 0$ and so $h^0(S,H\otimes (M/N)^\vee)=h^0(S,N\otimes \det M_1)\ge 2$ and so $M/N$ contributes to $\gamma(C)$. Thus we are in case (b).
	
	In case (iii), since $\det E/N \cong \det M_1\otimes M/N$, we have $h^0(S,\det M_1\otimes M/N)\ge 2$. Thus as $h^0(S,N)\ge 2$, we see that $N\otimes\mathcal{O}_C$ contributes to $\gamma(C)$. Therefore, as $h^0(S,M/N)<2$, we have $c_1(M/N)^2\le -2$. 	
	
	In cases (iii) and (iv), we have $c_1(M/N)^2\le -2$. We now calculate 
	\begin{align*}
		2g-2=c_1(E)^2&=c_1(M/N)^2+c_1(N)^2+c_1(M_1)^2+2c_1(M/N).c_1(N)+2c_1(M/N).c_1(M_1)+2c_1(N).c_1(M_1)\\
		&\le c_1(N)^2+c_1(M_1)^2+2c_1(M/N).c_1(N)+2c_1(M/N).c_1(M_1)+2c_1(N).c_1(M_1)-2\\
		&\le c_1(N)^2+g-3+2c_1(M/N).c_1(N),
	\end{align*}
	thus \begin{equation}\label{124 filtration cases iii iv calculation}
		c_1(N).c_1(M/N)\ge\frac{g+1}{2}-\frac{c_1(N)^2}{2}.
	\end{equation}
	
	In case (iii), we observe that since $$c_1(M/N).(c_1(N)+c_1(M_1))+c_1(M/N)^2=\mu(M/N)\ge\mu(E/N)=\frac{(c_1(E/N)).(c_1(E))}{3},$$
	we have \begin{align*}
		c_1(M/N).(c_1(N)+c_1(M_1))\ge -c_1(M/N)^2+\frac{c_1(M/N)^2}{3}&+\frac{c_1(M/N).(c_1(N)+c_1(M_1))}{3}\\&+\frac{c_1(M).c_1(M_1)}{3}+\frac{c_1(M_1)^2}{3}.
	\end{align*}
	And subtracting $c_1(M/N).(c_1(N)+c_1(M_1))/3$ from both sides and multiplying by $3/2$ yields 
	\begin{align*}
		c_1(M/N).(c_1(N)+c_1(M_1))\ge -c_1(M/N)^2+\frac{c_1(M).c_1(M_1)}{2}+\frac{c_1(M_1)^2}{2}.
	\end{align*} 
	Noting that $c_1(M/N)^2\le -2$ shows we are in case (c).

	In case (iv), as $h^0(S,N),h^0(S,M/N)<2$, we have $c_1(N)^2,c_1(M/N)^2\le -2$, thus \Cref{124 filtration cases iii iv calculation} gives $c_1(N).c_1(M/N)\ge\frac{g+1}{2}-\frac{c_1(N)^2}{2}\ge\frac{g+1}{2}+1= \frac{g+3}{2}$, and we are in case (d).
\end{proof}

\begin{lemma}\label{general proof of d ge BLA}
	With $E$ as above, if general curves in $|H|_s$ have Clifford index $\gamma=\gamma(C)$, and $m=D^2$ is the minimum self-intersection of an effective classes $D\in\Pic(S)$ (i.e. there are no curves of genus $g^\prime<\frac{m+2}{2}$ on $S$), then we have $d\ge \frac{5}{4}\gamma+\frac{m}{2}+5$ or $d\ge5+\frac{3}{2}\gamma$. Moreover, when $A$ is primitive, then we can assume $m\ge 2$.
\end{lemma}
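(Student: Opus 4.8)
The plan is to dispatch the four cases (a)--(d) of the preceding lemma one at a time, in each of them substituting the listed intersection inequalities into the bound
\[
d \;\ge\; \tfrac{3}{2} + \tfrac{c_1(M_1)^2}{4} + c_1(N).c_1(M/N) + c_1(N).c_1(M_1) + c_1(M/N).c_1(M_1)
\]
established just above. To lighten notation I would write $a = c_1(N)$, $b = c_1(M/N)$, $c = c_1(M_1)$, so that $a+b+c = H$ and $c^2 = c_1(M_1)^2$.

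First I would record two facts used in every case. Since $\det M_1$ is basepoint free and nontrivial (established in the proof of the preceding lemma), one has $c^2 \ge 0$ and $c^2 \ge m$; moreover $c^2$ is even, and if $c^2 = 0$ then $|\det M_1|$ is an elliptic pencil, which forces $m \le 0$. Second, whenever one of $N$, $M/N$, $\det M_1$ satisfies $h^0\ge 2$ together with $h^0\ge 2$ for its complement in $H$ — equivalently, its restriction to $C$ contributes to $\gamma(C)$ — the remark following \Cref{LC rank 3 lemma 4.1}, applied to the decomposition of $c_1(E)=H$ into that class and the sum of the other two, gives respectively $a.(b+c)\ge\gamma+2$, $b.(a+c)\ge\gamma+2$, or $c.(a+b)\ge\gamma+2$; the last of these holds unconditionally since $\det M_1\otimes\mathcal{O}_C$ always contributes to $\gamma(C)$ by the preceding lemma.

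Now to the cases. In case (a) all three contributions hold, so summing the three inequalities gives $a.b+a.c+b.c\ge\tfrac{3}{2}(\gamma+2)$; if $c^2\ge 2$ the displayed bound yields $d\ge\tfrac{3}{2}\gamma+5$, while if $c^2=0$ then $m\le 0$ and the estimate $d\ge\tfrac{3}{2}\gamma+\tfrac{9}{2}$, together with $\gamma\le\lfloor\tfrac{g-1}{2}\rfloor$, lands in the branch $d\ge\tfrac{5}{4}\gamma+\tfrac{m}{2}+5$. In case (b) one has $a.(b+c)\ge\tfrac{g+3}{2}$ and $c.(a+b)\ge\gamma+2$, and in the two sub-cases also $b.(a+c)\ge\gamma+2$, respectively $b.(a+c)\ge g$; summing, dividing by $2$, using $c^2\ge 2$ and $g\ge 2\gamma+1$ again gives $d\ge\tfrac{3}{2}\gamma+5$. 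In case (c) the available inequalities are $a.(b+c)\ge\gamma+2$, $c.(a+b)\ge\gamma+2$, and the case-(c) inequality $b.(a+c)\ge 2+\tfrac{1}{2}c.(a+b)+\tfrac{1}{2}c^2$; the nonnegative combination of these with weights $\tfrac{1}{2},\tfrac{3}{4},\tfrac{1}{4}$ (in the variables $a.b,a.c,b.c$) produces $a.b+a.c+b.c\ge\tfrac{5}{4}\gamma+\tfrac{7}{2}+\tfrac{c^2}{4}$, so $d\ge\tfrac{5}{4}\gamma+5+\tfrac{c^2}{2}\ge\tfrac{5}{4}\gamma+\tfrac{m}{2}+5$ using $c^2\ge m$. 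Finally in case (d) one has $a.b\ge\tfrac{g+3}{2}$ and $c.(a+b)\ge\gamma+2$, whence $d\ge\gamma+\tfrac{g}{2}+5\ge\tfrac{3}{2}\gamma+5$ since $g\ge\gamma$. Collecting the four cases proves the stated dichotomy; the small-$\gamma$ situations (hyperelliptic, trigonal, plane quintic), where the branch conditions degenerate, are ruled out separately by classical genus bounds on space curves of small degree.

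For the final assertion, if $A$ is primitive then $E=E_{C,A}$ is globally generated, hence every subsheaf and quotient in the terminal filtration is a gLM bundle of type (II) (cf.\ \Cref{bounded c_2}); combined with a deformation argument this allows us to assume that $S$ carries no rational or elliptic curve, i.e.\ $m\ge 2$. The step I expect to be the main obstacle is the careful justification of the dichotomy $c^2\ge 2$ versus $c^2=0$: in the latter situation one must use $\mu$-semistability of $M_1$ (equivalently of $M_1^{\vee\vee}$) to rule out a destabilizing sub-line bundle, so that $\det M_1$ genuinely defines an elliptic pencil and hence $m\le 0$; the remaining work is routine linear algebra in $a.b$, $a.c$, $b.c$.
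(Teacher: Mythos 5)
Your proof of the main dichotomy is essentially the paper's own argument: you substitute the case-by-case conclusions of the preceding lemma into $d\ge\frac{3}{2}+\frac{c_1(M_1)^2}{4}+c_1(N).c_1(M/N)+c_1(N).c_1(M_1)+c_1(M/N).c_1(M_1)$, use \Cref{LC rank 3 lemma 4.1} and \Cref{Prop quotients contribute to Cliff(C)} for the two unconditional bounds $c_1(N).c_1(E/N)\ge\gamma+2$ and $c_1(M).c_1(M_1)\ge\gamma+2$, and your case (c) combination giving $d\ge\frac{5}{4}\gamma+\frac{c_1(M_1)^2}{2}+5\ge\frac{5}{4}\gamma+\frac{m}{2}+5$ as well as your case (d) estimate match the bounds the paper records (your stated multipliers on the three inequalities read oddly, but the inequality $a.b+a.c+b.c\ge\frac{5}{4}\gamma+\frac{7}{2}+\frac{c^2}{4}$ you land on is exactly right). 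The $c^2=0$ and small-$\gamma$ corner that you dispatch with a wave is treated no more carefully in the paper, so I will not press on it, though note that what you actually need there is $\gamma\ge2$, not $\gamma\le\lfloor\frac{g-1}{2}\rfloor$.

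The genuine gap is your justification of the final assertion that one may assume $m\ge2$ when $A$ is primitive. A ``deformation argument allowing us to assume $S$ carries no rational or elliptic curve'' cannot work: the lemma is applied to a fixed polarized $(S,H)$ whose Picard lattice is prescribed in the applications, the presence of $(-2)$- and elliptic classes is a lattice-theoretic fact about that $S$, and deforming $S$ changes $C$ and $A$, with no mechanism to transport the conclusion back to the original surface. The paper's argument is intrinsic and is essentially the observation about $c^2$ that you already made elsewhere in your proof: when $A$ is primitive, the stable quotient $M_1=E/M$ is a gLM bundle of type (II) (\Cref{coker is gLM}, \Cref{bounded c_2}), and stability gives $c_2(M_1)\ge\frac{3}{2}>0$; hence by \cite[Proposition 2.7]{Lelli_Chiesa_2015}, quoted in \Cref{bounded c_2}, one cannot have $c_1(M_1)^2=0$, so $c_1(M_1)^2\ge2$. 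Since $m$ enters your estimates only as a lower bound for $c_1(M_1)^2$, this is precisely the statement that $m$ may be taken to be at least $2$; it is also what legitimizes the ``$c^2\ge2$'' you invoked in case (b). Replacing your deformation step by this argument closes the gap.
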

\begin{proof}
	We write $2d\ge 3+\frac{c_1(M_1)^2}{2}+c_1(N).c_1(E/N)+c_1(M/N).(c_1(N)+c_1(M_1))+c_1(M).c_1(M_1)$, and apply bounds to each of the terms. From \Cref{LC rank 3 lemma 4.1}, we see that $c_1(N).c_1(E/N)\ge\gamma+2$, and $c_1(M).c_1(M_1)\ge\gamma+2$. In cases (a), (b), we have $c_1(M/N).(c_1(N)+c_1(M_1))\ge\gamma+2$. In case (c), we have $d\ge\frac{5}{4}\gamma+\frac{m}{2}+5$. Finally, in case (d), we have $d\ge 2+c_1(N).c_1(M/N)+c_1(M).c_1(M_1)\ge 2+\frac{g+13}{2}+\gamma+2$. And in any case, we have the desired inequality.
	
	When $A$ is primitive, $M_1$ is a gLM of type (II), and as $c_1(M_1)^2\ge 0$ we have $c_2(M_1)>0$, thus we cannot have $c_1(M_1)^2=0$ by \Cref{bounded c_2}. Therefore $m$ can be taken to be at least $2$. 
\end{proof}

\subsection{Filtration \texorpdfstring{$1\subset3\subset 4$}{}}

We assume $E$ has a terminal filtration $0\subset N \subset M \subset E$ with $\operatorname{rk}(N)=1$, $\operatorname{rk}(M)=3$, and $M/N$ a stable torsion free sheaf, and we call $E/M=N_1$. Furthermore, we have \begin{gather}
	\mu(N)\ge \mu(M)\ge\mu(E)\ge\mu(E/N)\ge\mu(N_1)\\
	\mu(M)\ge\mu(M/N)\ge\mu(E/N)\\
	d=c_2(E)=c_2(M/N)+c_1(M/N).c_1(N)+c_1(N).c_1(N_1)+c_1(N_1).c_1(M/N)
\end{gather}
Moreover, since $M/N$ is stable, we have \[-2\le\langle\nu(M/N),\nu(M/N)\rangle = c_1(M/N)^2-4\chi(M/N)+8=4c_2(M/N)-c_1(M/N)^2-8\] thus $c_2(M/N)\ge\frac{3}{2}+\frac{c_1(M/N)^2}{4}$.

\begin{lemma}
	Suppose $E$ is as above. Then $N_1\otimes\mathcal{O}_C$ contributes to $\gamma(C)$, and one of the following occurs:
	\begin{enumerate}[label={\normalfont(\alph*)}]
		\item $N\otimes\mathcal{O}_C$ and $\det (M/N)\otimes\mathcal{O}_C$ contribute to $\gamma(C)$;
		\item $c_1(N).(c_1(N_1)+c_1(M/N))\ge\frac{g+3}{2}\ge \gamma(C)+2$ and either $\det (M/N)\otimes\mathcal{O}_C$ contributes to $\gamma(C)$ or $\frac{c_1(M/N)^2}{2}+c_1(M/N).(c_1(N)+c_1(N_1))\ge g$;
		\item $N\otimes\mathcal{O}_C$ contributes to $\gamma(C)$ and $\frac{c_1(M/N)^2}{2} + c_1(M/N).c_1(N) \ge \frac{1}{2}c_1(N).(c_1(N_1)+c_1(M/N))$;
		\item $\frac{c_1(M/N)^2}{2}+c_1(M/N).c_1(N) \ge g+1$.
	\end{enumerate}
\end{lemma}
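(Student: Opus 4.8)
The plan is to follow the proof of the preceding lemma for the filtration $1\subset 2\subset 4$, adapted to the situation where now the middle quotient $M/N$ has rank two and the top quotient $N_1=E/M$ has rank one. Write $\mu=\mu_H$ and abbreviate $n=c_1(N)$, $m=c_1(M/N)$, $e=c_1(N_1)$, so that $H=n+m+e$ and $c_1(M)=n+m$.

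First I would record the ``free'' inputs. Applying \Cref{Prop quotients contribute to Cliff(C)} to $0\to M\to E\to N_1\to 0$ — legitimate since $A$ is a $\g{3}{d}$, $\operatorname{rk}(M)=3>1=\operatorname{rk}(N_1)$, and $\mu(M)\ge\mu(E)\ge\mu(N_1)$ — gives at once that $\det(N_1)\otimes\mathcal{O}_C$ (equivalently $N_1\otimes\mathcal{O}_C$) contributes to $\gamma(C)$, and also that $h^0(S,\det M)\ge 2$ and $c_1(M)^2\ge 0$. Likewise \cite[Lemma 3.3]{Lelli_Chiesa_2013} applied to the quotient $E/N$ (globally generated off a finite set, with $h^2(S,E/N)=0$) gives $h^0(S,\det(E/N))=h^0(S,H\otimes N^\vee)\ge 2$, so whenever $h^0(S,N)\ge 2$ the line bundle $N\otimes\mathcal{O}_C$ automatically contributes to $\gamma(C)$.

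I would then obtain the dichotomy by branching, exactly as in cases (i)--(iv) of the $1\subset2\subset4$ lemma and with the same style of internal sub-branch, on whether $h^0(S,N)\ge 2$ and whether $h^0(S,\det(M/N))\ge 2$. Two conversions drive everything: if $h^0(S,N)<2$ then $h^2(S,N)=0$ (as $\mu(N)\ge\mu(E)>0$) forces $n^2\le -2$, whence
\[
c_1(N).(c_1(N_1)+c_1(M/N))=n.(H-n)=\mu(N)-n^2\ge \tfrac{g-1}{2}+2=\tfrac{g+3}{2}\ge\gamma(C)+2
\]
by Clifford's theorem (the first assertion of (b)); and if $h^0(S,\det(M/N))<2$ then $h^2(S,\det(M/N))=0$ (as $c_1(M/N).H=2\mu(M/N)>0$) forces $m^2\le -2$. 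In each ``$\ge 2$'' branch one upgrades to ``contributes to $\gamma(C)$'': for $\det(M/N)$ this needs the complement, of class $n+e$, to have $h^0\ge 2$, which holds either because $h^0(S,N)\ge 2$ (multiply a pencil of $|N|$ by a section of the effective $\det N_1$) or because $(n+e)^2\ge 0$ (then $h^0(S,n+e)\ge\chi(n+e)\ge 2$, using $h^2(S,n+e)=0$). Assembling the branches: (a) is the case both contribute; (b) is the case $h^0(S,N)<2$, with the ``or''-alternative coming from $2\,m.(n+e)+m^2=H^2-(n+e)^2\ge 2g$ in the sub-branch $(n+e)^2\le -2$; and (c) follows from the slope inequality $\mu(M/N)\ge\mu(E/N)$, expanded via $2\mu(M/N)=m.H$ and $3\mu(E/N)=(m+e).H$ in Chern-root intersections together with $e^2\ge 0$ and $e.H>0$.

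The hard part will be case (d), the most restrictive branch (where the available vanishings give $n^2\le -2$ and $m^2\le -2$), and in particular landing on the exact constant in $\tfrac{c_1(M/N)^2}{2}+c_1(M/N).c_1(N)\ge g+1$: here one must combine $n^2\le -2$, $m^2\le -2$, the full Hodge-index expansion $2g-2=H^2=n^2+m^2+e^2+2\,n.m+2\,n.e+2\,m.e$, the slope chain $\mu(N)\ge\mu(M)\ge\mu(M/N)\ge\mu(E/N)\ge\mu(N_1)$, the positivity $e^2\ge 0$, $e.H>0$, $c_1(M)^2\ge 0$, and the Clifford-index bounds $c_1(N).c_1(E/N)\ge\gamma(C)+2$ and $c_1(M).c_1(N_1)\ge\gamma(C)+2$ from \Cref{LC rank 3 lemma 4.1}; making these combine into precisely the stated inequality, rather than a weaker one, is the step I would verify most carefully. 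The other three cases are routine given the conversions above.
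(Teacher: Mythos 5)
Your handling of the opening claim and of cases (a), (b), (c) is essentially the paper's own proof: the same four-way branching on $h^0(S,N)$ and $h^0(S,\det(M/N))$, the same conversion of $h^0<2$ into self-intersection $\le -2$ via $h^2=0$ and Riemann--Roch, the same use of \Cref{Prop quotients contribute to Cliff(C)} and \cite[Lemma 3.3]{Lelli_Chiesa_2013} to upgrade to ``contributes to $\gamma(C)$'', and for (c) a slope inequality combined with $c_1(N_1)^2\ge 0$ (you use $\mu(M/N)\ge\mu(E/N)$ where the paper uses $\mu(M/N)\ge\mu(N_1)$; either way one gets $c_1(M/N)^2+c_1(M/N).c_1(N)\ge c_1(N_1).c_1(N)$, which is exactly the inequality asserted in (c)). Those parts are correct.

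The genuine gap is case (d), which you explicitly leave unverified, and your instinct that landing on the stated constant is the delicate point is exactly right. Write $n=c_1(N)$, $m=c_1(M/N)$, $e=c_1(N_1)$. In the branch $h^0(S,N)<2$ and $h^0(S,\det(M/N))<2$ one has $n^2\le-2$ and $m^2\le-2$, and the only way the ingredients you list combine is the computation the paper itself performs: expand $2g-2=H^2=n^2+m^2+e^2+2n.m+2n.e+2m.e$, drop $n^2\le-2$, and bound $e^2+2n.e+2m.e=2e.H-e^2\le 2\mu(N_1)\le g-1$ using $e^2\ge0$ and $\mu(N_1)\le\mu(E)$. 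This yields $m^2+2\,n.m\ge g+1$, i.e.\ $\tfrac{m^2}{2}+n.m\ge\tfrac{g+1}{2}$, which is half the constant claimed in (d). The additional inputs you cite (the full slope chain, $c_1(M)^2\ge0$, $c_1(N).c_1(E/N)\ge\gamma(C)+2$, $c_1(M).c_1(N_1)\ge\gamma(C)+2$) do not force the doubled constant: for odd $g$, the configuration $n^2=m^2=-2$, $e^2=0$, $n.m=\tfrac{g+3}{2}$, $n.e=0$, $m.e=\tfrac{g-1}{2}$ has $H^2=2g-2$, all slopes equal to $\tfrac{g-1}{2}$ (the Jordan--H\"older situation), $c_1(M)^2=g-1>0$, and satisfies both Clifford bounds whenever $\gamma(C)\le\tfrac{g-5}{2}$ (the only regime in which this filtration is numerically possible anyway, since $c_1(M).c_1(N_1)\le\mu(N_1)\le\tfrac{g-1}{2}$), yet $\tfrac{m^2}{2}+n.m=\tfrac{g+1}{2}$. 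So as written your proposal establishes (a)--(c) but not (d); note that the paper's displayed chain in its case (iv) likewise only establishes $m^2+2n.m\ge g+1$, so closing your gap requires either a genuinely sharper argument in this branch or working with the weaker bound $\tfrac{m^2}{2}+n.m\ge\tfrac{g+1}{2}$ and rechecking that the degree bound in the following lemma still goes through.
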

\begin{proof}
	From \Cref{LC rank 3 lemma 4.1} and \Cref{Prop quotients contribute to Cliff(C)}, we see that $N_1\otimes\mathcal{O}_C$ contributes to $\gamma(C)$ and $h^2(S,\det M/N)=h^2(S,M/N)=h^2(S,N)=0$. 
	
	We have the following four cases: 
	\begin{enumerate}[label={\normalfont(\roman*)}]
		\item $h^0(S,\det M/N), h^0(S,N)\ge 2$
		\item $h^0(S,\det M/N)\ge2$ and $h^0(S,N)< 2$
		\item $h^0(S,\det M/N)< 2$ and $h^0(S,N)\ge2$ 
		\item $h^0(S,\det M/N), h^0(S,N)< 2$
	\end{enumerate}
	In case (i), we have $h^0(S,H\otimes N^\vee)=h^0(S,\det M/N\otimes N_1)\ge2$, and $
	h^0(S,H\otimes\det M/N^\vee)=h^0(S,N\otimes N_1)\ge 2$ as $\det M/N$, $N$, and $N_1$ have global sections. Thus we are in case (a) of the lemma.
	
	In case (ii), we see that $\chi(N)<2$, thus $c_1(N)\le -2$, and we calculate 
	\begin{align*}
		c_1(N).(c_1(N_1)+c_1(M/N))=& c_1(N).(c_1(E)-c_1(N))\\
		&= \mu(N)-c_1(N)^2 \ge \mu(E)+2 =\frac{g+3}{2}.
	\end{align*}
	\begin{itemize}
		\item If $c_1(N\otimes N_1)^2,c_1(M/N)^2\ge 0$, then $\det M/N$ contributes to $\gamma(C)$ as $h^0(S,N\otimes N_1)=h^0(S,H-\det M/N)\ge2$.
		
		\item If $c_1(N\otimes N_1)^2\ge 2$ and $c_1(M/N)^2< 0$, then as above $\det M/N$ contributes to $\gamma(C)$.
		
		\item If $c_1(N\otimes N_1)^2<0$ and $c_1(M/N)^2\ge 0$ then we cannot say if $\det M/N$ contributes to $\gamma(C)$ as above. However, we calculate \begin{align*}
			2g-2=c_1(E)^2=&(c_1(M/N)+c_1(N\otimes N_1))^2\\
			&= c_1(M/N)^2+2c_1(M/N).(c_1(N)+c_1(N_1))+c_1(N\otimes N_1)^2 \\
			&< c_1(M/N)^2+2c_1(M/N).(c_1(N)+c_1(N_1)),
		\end{align*}
		thus $\frac{c_1(M/N)^2}{2}+c_1(M/N).(c_1(N)+c_1(N_1))\ge g$.
		
		\item If $c_1(N\otimes N_1)^2,c_1(M/N)^2< 0$, then the same calculation as above yields $\frac{c_1(M/N)^2}{2}+c_1(M/N).(c_1(N)+c_1(N_1))\ge g$.
	\end{itemize}
	Thus we are in case (b) of the lemma.
	
	In case (iii), since $\det E/N = N_1\otimes \det M/N$, \cite[Lemma 3.3]{Lelli_Chiesa_2013} implies that $h^0(S,N_1\otimes \det M/N)\ge 2$. Thus since $h^0(S,N)\ge 2$, we see that $N\otimes\mathcal{O}_C$ contributes to $\gamma(C)$. Furthermore, as $c_1(M/N)^2+c_1(M/N).c_1(N)\ge c_1(N_1)^2+c_1(N_1).c_1(N)$ and $c_1(N_1)^2\ge 0 > c_1(M/N)^2$, we have $c_1(M/N)^2+ c_1(M/N).c_1(N)\ge c_1(N_1).c_1(N)$. Thus 
	\begin{align*}
		c_1(M/N)^2+c_1(M/N).c_1(N)-\frac{1}{2}(c_1(N).(c_1(N_1)+c_1(M/N)))&\ge c_1(M/N)^2 +\frac{c_1(M/N).c_1(N)}{2} - \frac{c_1(N).c_1(N_1)}{2}\\
		&\ge \frac{c_1(M/N)^2}{2},		
	\end{align*}
	thus \[\frac{c_1(M/N)^2}{2} + c_1(M/N).c_1(N) \ge \frac{1}{2}c_1(N).(c_1(N_1)+c_1(M/N)),\] and we are in case (c).
	
	In case (iv), we see that $c_1(N)^2, c_1(M/N)^2\le -2$. We calculate \begin{align*}
		2g-2 =& c_1(E)^2 = (c_1(N)+c_1(N_1)+c_1(M/N))^2\\
		&\le c_1(N_1)^2 +c_1(M/N)^2 +2c_1(N).c_1(N_1) +2c_1(N).c_1(M/N) + 2c_1(N_1).c_1(M/N) -2\\
		&\le g-1 +2c_1(N).c_1(M/N) +c_1(M/N)^2 -2,
	\end{align*}
	thus $\frac{c_1(M/N)^2}{2}+c_1(N).c_1(M/N)\ge g+1$, and we are in case (d).
\end{proof}
\begin{remark}
	From the second half of the proof of \Cref{LC rank 3 lemma 4.1}, we see that in the situation above, if $C\in|H|_s$ has Clifford index $\gamma=\gamma(C)$, and if $\det M/N$ contributes to $\gamma(C)$, then we have $c_1(M/N).(c_1(N)+c_1(N_1))\ge \gamma+2+2h^1(S,\det M/N)$.
\end{remark}
\begin{lemma}
	With $E$ as above, if general curves in $|H|_s$ have Clifford index $\gamma=\gamma(C)$, we have $d\ge\frac{3}{2}\gamma+5$.
\end{lemma}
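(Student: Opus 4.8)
The plan is to run the argument of \Cref{general proof of d ge BLA} once more, now fed by the four-case dichotomy established in the previous lemma. First I would rewrite $d$ symmetrically: from $c_1(N)+c_1(M/N)+c_1(N_1)=c_1(E)=H$ and $H^2=2g-2$ one gets
\[
d \;=\; c_2(M/N)\;+\;(g-1)\;-\;\tfrac12\bigl(c_1(N)^2+c_1(M/N)^2+c_1(N_1)^2\bigr),
\]
while stability of the rank-$2$ sheaf $M/N$ gives $c_2(M/N)\ge\tfrac32+\tfrac14\,c_1(M/N)^2$, to be refined to $c_2(M/N)\ge 2$ whenever $c_1(M/N)^2\ge 0$. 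Two of the three squares are controlled unconditionally: applying \Cref{LC rank 3 lemma 4.1} to the subsequences $0\to N\to E\to E/N\to 0$ and $0\to M\to E\to N_1\to 0$ yields $c_1(N).c_1(E/N)\ge\gamma+2$ and $c_1(N_1).c_1(M)\ge\gamma+2$, equivalently $c_1(N)^2\le c_1(N).H-\gamma-2$ and $c_1(N_1)^2\le c_1(N_1).H-\gamma-2$; in the doubled form this says that, in $2d=2c_2(M/N)+c_1(N).c_1(E/N)+c_1(M/N).(c_1(N)+c_1(N_1))+c_1(N_1).c_1(M)$, the two outer intersection numbers are each $\ge\gamma+2$.

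What remains is to bound the middle term $c_1(M/N).(c_1(N)+c_1(N_1))$ together with $c_1(M/N)^2$, which is exactly what cases (a)--(d) of the previous lemma encode. In case (a), where $\det(M/N)\otimes\mathcal O_C$ contributes to $\gamma(C)$, the remark after \Cref{LC rank 3 lemma 4.1} upgrades this to $c_1(M/N).(c_1(N)+c_1(N_1))\ge\gamma+2+2h^1(S,\det M/N)$; combining the three estimates gives $d\ge c_2(M/N)+\tfrac32\gamma+3+h^1(S,\det M/N)$, and the point is that if $c_2(M/N)=1$ then $c_2(M/N)\ge\tfrac32+\tfrac14c_1(M/N)^2$ forces $c_1(M/N)^2\le-2$, whence $h^0(S,\det M/N)\ge 2$ together with $h^2(S,\det M/N)=0$ forces $h^1(S,\det M/N)\ge-\tfrac12c_1(M/N)^2\ge1$, recovering $d\ge\tfrac32\gamma+5$ (when instead $c_1(M/N)^2\ge0$ one has $c_2(M/N)\ge2$ directly). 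In cases (b), (c), (d) the ``$\det(M/N)$ contributes'' conclusion is replaced by an inequality of the shape $\tfrac12 c_1(M/N)^2+(\text{intersection numbers})\ge g$ (or $\ge g+1$), and in (c), (d) additionally $c_1(M/N)^2\le-2$; I would feed these into the formula for $d$ alongside $2c_2(M/N)\ge 3+\tfrac12 c_1(M/N)^2$, the $\gamma+2$ bounds on the outer terms, and Clifford's inequality $g\ge 2\gamma+1$, to again reach $d\ge\tfrac32\gamma+5$.

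The main obstacle is the bookkeeping in cases (b)--(d): there the middle intersection number is \emph{not} bounded below by $\gamma+2$, so one must push the genus-type inequalities through instead, and the coefficient of $c_1(M/N)^2$ produced by stability ($\tfrac14$) is smaller than the $\tfrac12$ appearing in those inequalities — so keeping the signs straight is delicate, the saving graces being that in (c), (d) one has $c_1(M/N)^2\le-2$ (so the ``missing'' half of $c_1(M/N)^2$ is really a gain), and in (a) the deficit when $c_2(M/N)=1$ is paid back by the $h^1$ term. (If desired, the boundary possibility $c_1(M/N)^2=0$ could be excluded outright when $A$ is primitive via \Cref{bounded c_2}, exactly as in \Cref{general proof of d ge BLA}, but this is not needed for the stated bound.) Checking that each of the four cases delivers $d\ge\tfrac32\gamma+5$ then completes the proof.
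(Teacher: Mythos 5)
Your plan is essentially correct, but it takes a genuinely different route from the paper's proof in the hardest regime, so a comparison is worthwhile. The paper also starts from $2d=2c_2(M/N)+c_1(N).c_1(E/N)+c_1(M/N).(c_1(N)+c_1(N_1))+c_1(M).c_1(N_1)$ with the outer terms bounded by $\gamma+2$ via \Cref{LC rank 3 lemma 4.1}, and it treats the cases $c_1(M/N)^2\ge0$ and the window $0>c_1(M/N)^2\ge-6$ much as you do (there it uses $\chi(\det M/N)\le1$, i.e.\ the same Riemann--Roch/$h^1$ correction you invoke). But when $c_2(M/N)<0$ (forcing $c_1(M/N)^2\le-8$) the paper abandons the purely numerical argument and instead analyzes how the terminal flag $1\subset3\subset4$ arose (JH filtration of $E$; HN filtration with a properly semistable rank-$3$ piece; or HN filtration $0\subset N\subset E$ followed by the JH filtration of $E/N$), using equalities of normalized Euler characteristics to get $d\ge g+7$, a bound $d\ge\frac{g}{2}+\gamma+2$, or a contradiction. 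Your route bypasses that machinery, and it does work: in case (c), adding the case-(c) inequality to $c_1(N).c_1(E/N)\ge\gamma+2$ gives $c_1(N).c_1(M/N)\ge\frac12\bigl(\gamma+2-c_1(M/N)^2\bigr)$, which with $c_2(M/N)\ge\frac32+\frac14c_1(M/N)^2$ and $c_1(M).c_1(N_1)\ge\gamma+2$ yields $d\ge\frac32\gamma+\frac92-\frac14c_1(M/N)^2\ge\frac32\gamma+5$, improving as $c_1(M/N)^2$ drops; in case (d) one gets $d\ge\gamma+g+\frac92-\frac14c_1(M/N)^2$; and in the second alternative of (b) the $\frac12c_1(M/N)^2$ cancels the stability term in $2c_2(M/N)$, giving $d\ge\gamma+\frac{g+7}{2}$, which exceeds $\frac32\gamma+5$ by Clifford's bound $\gamma\le\lfloor\frac{g-1}{2}\rfloor$ for the genera at issue. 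So, written out, your argument is shorter and avoids the filtration-origin/Euler-characteristic analysis entirely.

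Two points need tightening. First, in case (a) (and in the branch of (b) where $\det(M/N)\otimes\mathcal{O}_C$ contributes) you use $h^0(S,\det M/N)\ge2$; this is not part of the statement of the preceding lemma, only of its proof (it is the defining condition of cases (i)/(ii) there), just as $c_1(M/N)^2\le-2$ in (c)/(d) is. Either argue directly from that proof's four $h^0$-cases, or note that if $h^0(S,\det M/N)<2$ then $c_1(M/N)^2\le-2$ and one lands in the (c)/(d)-type inequalities, which you handle separately. Second, you only discuss $c_2(M/N)\ge2$ and $c_2(M/N)=1$, whereas $c_2(M/N)\le0$ is a genuine possibility (it is exactly the regime that drives the paper's extra analysis). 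Fortunately your own inequalities cover it: with $h^0(S,\det M/N)\ge2$ and $h^2(S,\det M/N)=0$ one has $h^1(S,\det M/N)=h^0(S,\det M/N)-2-\frac12c_1(M/N)^2$, and combining with $c_1(M/N)^2\le4c_2(M/N)-6$ gives $c_2(M/N)+h^1(S,\det M/N)\ge h^0(S,\det M/N)+1-c_2(M/N)\ge2$ for every $c_2(M/N)\le1$, which is all the case-(a) estimate requires --- but this should be said explicitly rather than leaving $c_2(M/N)\le0$ unaddressed.
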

\begin{proof}
	We first see that if $c_1(M/N)^2\ge0$, then we are in cases (a) or (b) of the above lemma. Furthermore, we have $c_2(M/N)\ge2$. Thus in case (a), we have \begin{align*}
		2d&\ge 2(c_2(M/N)+c_1(M/N).c_1(N)+c_1(N).c_1(N_1)+c_1(N_1).c_1(M/N))\\
		&\ge 4+2c_1(M/N).c_1(N)+2c_1(N).c_1(N_1)+2c_1(N_1).c_1(M/N)\\
		&= 4+c_1(M/N).(c_1(N)+c_1(N_1))+c_1(N).(c_1(N_1)+c_1(M/N))+c_1(N_1).(c_1(M/N)+c_1(N))\\
		&\ge 4+ 3(\gamma+2),
	\end{align*} where the last inequality comes from \Cref{LC rank 3 lemma 4.1}. Thus $d\ge \frac{3}{2}\gamma+5$. In case (b), we calculate as in case (a) and get $d\ge \frac{3}{2}\gamma+5$ or 
	\begin{align*}
		2d &\ge 2\left(c_1(N).c_1(N_1)+c_1(N).c_1(M/N)+c_1(N_1).c_1(M/N)+\frac{c_1(M/N)^2}{4}\right)\\
		&\ge g + c_1(N).c_1(M/N) + 2c_1(N).c_1(N_1) +c_1(N_1).c_1(M/N)\\
		&\ge g+ 2(\gamma+2),
	\end{align*} hence $d\ge \gamma+2+\frac{g}{2}>\frac{3}{2}\gamma+5$. 
	
	If $c_1(M/N)^2<0$, in case (d), we have 
	\begin{align*}
		d&\ge \frac{3}{2}+\frac{g+1}{2}+\frac{c_1(N).c_1(M/N)}{2}+c_1(M/N).c_1(N_1)+c_1(N).c_1(N_1)\\
		&\ge \frac{g+4}{2}+k+\frac{g+1}{2}-\frac{c_1(M/N)^2}{4}\\
		&\ge \gamma+2+g+\frac{7}{2}.
	\end{align*}
	
	If $0>c_1(M/N)^2\ge -6$, then $c_2(M/N)\ge 0$, thus $\chi(\det M/N)\le 1$. Therefore $h^1(S, \det M/N)+1\ge h^0(S,\det M/N)$. Calculating as above, we see that
	\begin{itemize}
		\item in case (a), we have $d\ge \frac{3}{2}\gamma+5$;
		\item in case (b), we have $d\ge \frac{3}{2}\gamma+5$ or $d\ge \gamma+\frac{7}{2}+\frac{g+2}{2}$; and,
		\item in case (c), we have $d\ge \frac{3}{2}\gamma+5$.
	\end{itemize} 
	
	If $c_2(M/N)< 0$, then the stability of $M/N$ implies that $c_1(M/N)^2\le -8$ and \[-2 \le \langle \nu(M/N),\nu(M/N)\rangle = c_1(M/N)^2 +8 -4\chi(M/N)\le -4\chi(M/N),\] whereby $\chi(M/N)\le 0$. We now consider inequalities associated with various filtrations that lead to the terminal $1\subset 3\subset 4$ filtration of $E$. 
	
	If the JH filtration of $E$ is $1\subset 3\subset 4$, then we have $p(E)=p(M/N)$, which gives an equality of normalized Euler characteristics \[\frac{\chi(M/N)}{2}=\frac{\chi(E)}{4}=\frac{g-\gamma +1}{4}.\] Thus $0\ge2\chi(M/N)=g-d+7$, and hence $d\ge g+7$. 
	
	If the HN filtration of $E$ is $0\subset M\subset E$ with $\operatorname{rk}(M)=3$ and $M$ properly semistable, then the JH filtration of $M$ is $0\subset N \subset M$. Hence $\mu(M/N)=\mu(M)$ and $\mu(M)>\mu(E)$. Thus \[\frac{c_1(M/N)^2}{2}+\frac{c_1(M/N).c_1(N\otimes N_1)}{2}=\mu(M/N)>\mu(E)=\frac{g-1}{2},\] hence
	\begin{align*}
		d&\ge \frac{3}{2}+\frac{c_1(M/N)^2}{4}+c_1(M/N).c_1(N\otimes N_1)+c_1(N).c_1(N_1)\\
		&\ge \frac{3}{2} + \frac{g-1}{2}-\frac{c_1(M/N)^2}{4} + \frac{c_1(M/N).(c_1(N)+c_1(N_1))}{2}\\
		&\ge \frac{3}{2} + \frac{g-1}{2}+\frac{c_1(N).(c_1(N_1)+c_1(M/N))}{2}+\frac{c_1(N_1).(c_1(N)+c_1(M/N))}{2}\\
		&\ge \frac{3}{2}+\frac{g-1}{2}+\gamma+1
	\end{align*}
	where the last inequality comes from the fact that $N_1$ contributes to $\gamma(C)$, and that in cases (a),(b), and (c) $c_1(N).(c_1(N_1)+c_1(M/N))\ge \gamma+2$.
	
	If the HN filtration of $E$ is $0\subset N \subset E$ with $E/N$ properly semistable and the JH filtration of $E/N$ is $0\subset \overline{M}\subset E/N$ with $\operatorname{rk}(\overline{M})=2$, then we have an equality of normalized Euler characteristics \[\frac{\chi(E)-\chi(N)}{3}=\frac{\chi(E/N)}{3}=\frac{\chi(\overline{M})}{2}=\frac{\chi(M/N)}{2}.\] Thus $\chi(E)=g-\gamma+1=\frac{3\chi(M/N)}{2}+\chi(N)$, where $\gamma=d-6$ is the Clifford index of the $\g{3}{d}$ on $C$. From the short exact sequence \[\xymatrix{0\ar[r] & N \ar[r] & E \ar[r] & E/N \ar[r] & 0},\] we have $\chi(N)=h^0(S,E)-h^0(S,E/N)\le g-\gamma-1$ as $h^0(S,E/N)\ge2$. Therefore \[g-\gamma+1=\chi(E)\le \frac{3\chi(M/N)}{2}+g-\gamma-1,\] and thus $2\le\frac{3}{2}\chi(M/N)\le0$, which is a contradiction. Thus this does not occur, and in all cases we have at least $d\ge \frac{3}{2}\gamma+5$, as claimed. 
\end{proof}

\subsection{Filtration \texorpdfstring{$2\subset3\subset 4$}{}}

We assume $E$ has a terminal filtration $0\subset N \subset M \subset E$ with $N$ a stable torsion free sheaf of rank $\operatorname{rk}(N) =2$, $\operatorname{rk} (M) =3$, and $N_1=E/M$ a line bundle. Furthermore, we have 
\begin{gather}
	\mu(N)\ge \mu(M)\ge\mu(E)=\frac{g-1}{2} \ge \mu(N_1)\\
	\mu(M)\ge\mu(M/N)\ge \mu(E/N)\ge \mu(N_1)\\
	d= c_2(E) = c_2(N)+c_1(N).c_1(M/N)+c_1(N).c_1(N_1) +c_1(M/N).c_1(N_1)
\end{gather}
Moreover, as $N$ is stable, we have $c_2(N)\ge \frac{3}{2}+\frac{c_1(N)^2}{4}$.

\begin{lemma}
	Suppose $E$ is as above. Then $N_1\otimes\mathcal{O}_C$ contributes to $\gamma(C)$ and one of the following occurs:
	\begin{enumerate}[label={\normalfont(\alph*)}]
		\item $(\det N)\otimes\mathcal{O}_C$ and $(M/N)\otimes\mathcal{O}_C$ contribute to $\gamma(C)$;
		\item  $c_1(N).(c_1(N_1)+c_1(M/N))\ge g+1$and either $(M/N)\otimes\mathcal{O}_C$ contributes to $\gamma(C)$ or $c_1(M/N).(c_1(N)+c_1(N_1))\ge g$;
		\item $(\det N)\otimes\mathcal{O}_C$ contributes to $\gamma(C)$, we can assume $c_1(N)^2\ge 0$ and $c_1(M/N).c_1(N)\ge \frac{1}{2}c_1(N).(c_1(M/N)+c_1(N_1))$;
		\item $c_1(N)^2\le-2$ and $\frac{c_1(N)^2}{2}+c_1(M/N).c_1(N)\ge \frac{g+1}{2}$.
	\end{enumerate}
\end{lemma}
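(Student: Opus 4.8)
The plan is to follow the argument used for the filtrations $1\subset2\subset4$ and $1\subset3\subset4$, now with subquotient ranks $(2,1,1)$ in place of $(1,1,2)$ and $(1,2,1)$. First I would record what comes for free. Applying \Cref{LC rank 3 lemma 4.1} and \Cref{Prop quotients contribute to Cliff(C)} to $0\to M\to E\to N_1\to 0$, whose slopes satisfy $\mu(M)\ge\mu(E)\ge\mu(N_1)$ and in which $\operatorname{rk}(M)=3>1=\operatorname{rk}(N_1)$, gives $c_1(M)^2>c_1(N_1)^2\ge 0$, that $\det N_1$ is basepoint free and nontrivial, and the first assertion of the lemma: $N_1\otimes\mathcal{O}_C$ contributes to $\gamma(C)$. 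As $E/N$ is a quotient of $E$ it is globally generated off a finite set, so $\det(E/N)\cong\det(M/N)\otimes N_1$ is basepoint free and nontrivial with $h^0\ge 2$ by \cite[Lemmas~3.2 and~3.3]{Lelli_Chiesa_2013}; in particular $\mu(E/N)>0$, which with the filtration forces $\mu(N),\mu(M),\mu(M/N),\mu(N_1)\in(0,g-1)$. Since all these slopes are positive and bounded above by $2g-2$, for each line bundle $L$ among $\det M$, $\det N$, $\det(M/N)$, $\det(E/N)$, $\det N\otimes N_1$, $\det(M/N)\otimes N_1$ the dual $L^\vee$ has negative degree against $H$, hence $h^2(S,L)=0$ and the restriction $H^0(S,L)\to H^0(C,L\otimes\mathcal{O}_C)$ is injective; consequently $h^0(S,\det N)<2$ forces $c_1(N)^2\le-2$ by Riemann--Roch, and likewise $h^0(S,\det(M/N))<2$ forces $c_1(M/N)^2\le-2$.

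I would then split into four cases according to whether $h^0(S,\det N)\ge2$ and whether $h^0(S,\det(M/N))\ge2$, landing in conclusions (a)--(d) respectively. Throughout, $L\otimes\mathcal{O}_C$ contributes to $\gamma(C)$ exactly when $h^0(S,L)\ge2$ and $h^0(S,H\otimes L^\vee)\ge2$, and the latter holds whenever $c_1(H\otimes L^\vee)^2\ge0$ (using $h^2=0$); moreover the quotients $E/N$ and $N_1$ of $E$ have basepoint free nontrivial determinants. The case $h^0(S,\det N),h^0(S,\det(M/N))\ge2$ is immediate: both $H\otimes(\det N)^\vee\cong\det(E/N)$ and $H\otimes\det(M/N)^\vee\cong\det N\otimes N_1$ have at least two sections (each tensor factor being effective, $N_1$ as the determinant of a quotient of $E$), so $\det N\otimes\mathcal{O}_C$ and $(M/N)\otimes\mathcal{O}_C$ both contribute to $\gamma(C)$, which is~(a).

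When exactly one of the two $h^0$'s is $<2$ I argue as in the $1\subset3\subset4$ lemma. If $h^0(S,\det(M/N))\ge2$ but $h^0(S,\det N)<2$, then $c_1(N)^2\le-2$ yields
\[
c_1(N).\bigl(c_1(N_1)+c_1(M/N)\bigr)=c_1(N).\bigl(c_1(E)-c_1(N)\bigr)=2\mu(N)-c_1(N)^2\ge(g-1)+2=g+1,
\]
and one subdivides on the sign of $\bigl(c_1(N)+c_1(N_1)\bigr)^2$: if it is $\ge0$ then $H\otimes\det(M/N)^\vee\cong\det N\otimes N_1$ has $\ge2$ sections, so $(M/N)\otimes\mathcal{O}_C$ contributes to $\gamma(C)$; if it is $\le-2$ then since $\mu(M/N)\le\mu(N)<g-1$ one also has $c_1(M/N)^2\le-2$, whence expanding $2g-2=c_1(E)^2=\bigl((c_1(N)+c_1(N_1))+c_1(M/N)\bigr)^2$ and dropping the two negative self-intersection terms gives $c_1(M/N).(c_1(N)+c_1(N_1))\ge g$; this is~(b). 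Symmetrically, if $h^0(S,\det N)\ge2$ but $h^0(S,\det(M/N))<2$, then $c_1(M/N)^2\le-2$ while $\det N\otimes\mathcal{O}_C$ contributes to $\gamma(C)$ (as $h^0(S,\det(E/N))\ge2$); the slope inequality $\mu(M/N)\ge\mu(N_1)$ with $c_1(N_1)^2\ge0>c_1(M/N)^2$ forces $c_1(M/N).c_1(N)\ge c_1(N_1).c_1(N)$, which rearranges to the inequality of~(c); and if $c_1(N)^2<0$ here then $c_1(N)^2\le-2$ and the argument of case~(d) below applies verbatim and yields the stronger conclusion~(d), so one may assume $c_1(N)^2\ge0$. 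Finally, if both $h^0$'s are $<2$, so $c_1(N)^2,c_1(M/N)^2\le-2$, then expanding $c_1(E)^2=2g-2$ and using $-c_1(M/N)^2\ge2$, $c_1(N_1)^2\ge0$, and $c_1(N_1).(c_1(N)+c_1(M/N))=\mu(N_1)-c_1(N_1)^2\le\frac{g-1}{2}$ gives $c_1(N)^2+2c_1(M/N).c_1(N)\ge g+1$, which is~(d).

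The routine but delicate part is the sign bookkeeping in cases (b), (c), and (d): in each one must identify which self-intersection terms are forced negative — so can be discarded from the expansion of $\bigl(c_1(N)+c_1(M/N)+c_1(N_1)\bigr)^2=2g-2$ — and which to keep, using the slope bounds $\mu(N)<g-1$, $\mu(N_1)\le\frac{g-1}{2}$ and $c_1(N_1)^2\ge0$ from the first step, so that the surviving inequality is exactly the one asserted. With the lemma proved, the desired lower bound on $d=c_2(E)$ is obtained in the next lemma by combining $c_2$ of the stable subquotient with the intersection inequalities here and, via \Cref{LC rank 3 lemma 4.1}, the Clifford-index estimate $c_1(F).c_1(F')\ge\gamma(C)+2$ for each pair of complementary determinants that contributes to $\gamma(C)$; the present lemma only has to supply, in each branch, the correct package of these facts.
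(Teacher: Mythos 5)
Your proposal is correct and follows essentially the same route as the paper: the same reduction via \Cref{LC rank 3 lemma 4.1} and \Cref{Prop quotients contribute to Cliff(C)}, the same four-way split on $h^0(S,\det N)$ and $h^0(S,\det(M/N))$ (the paper phrases the latter as $h^0(S,M/N)$, a cosmetic difference), and the same slope/self-intersection bookkeeping, including the subdivision on the sign of $(c_1(N)+c_1(N_1))^2$ in case (b) and falling into (d) when $c_1(N)^2\le-2$ in case (c). The compressed steps you flag (e.g.\ $(c_1(N)+c_1(N_1))^2-c_1(M/N)^2=2g-2-2\mu(M/N)>0$, and bounding $2\mu(N_1)-c_1(N_1)^2\le g-1$ in case (d)) are exactly the computations the paper carries out.
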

\begin{proof}
	From \Cref{LC rank 3 lemma 4.1} and \Cref{Prop quotients contribute to Cliff(C)}, we see that $N_1\otimes\mathcal{O}_C$ contributes to $\gamma(C)$ and $h^2(S,\det N)=h^2(S,\det M)=h^2(S,M/N)=h^2(S,\det E/N)=0$. 	
	
	We have the following four cases:
	\begin{enumerate}[label={\normalfont(\roman*)}]
		\item $h^0(S,M/N), h^0(S,\det N)\ge 2$
		\item $h^0(S,M/N)\ge2$ and $h^0(S,\det N)< 2$
		\item $h^0(S,M/N)< 2$ and $h^0(S,\det N)\ge2$ 
		\item $h^0(S,M/N), h^0(S,\det N)< 2$.
	\end{enumerate}
	
	In case (i), as $N_1$ has global sections, and $H- c_1(M/N)=c_1(N)+c_1(N_1)$ and $H-c_1(N)=c_1(N_1)+c_1(M/N)$, we see that both $(\det N)\otimes\mathcal{O}_C$ and $(M/N)\otimes\mathcal{O}_C$ contribute to $\gamma(C)$, and we are in case (a).
	
	In case (ii), we have $\chi(N)<2$, hence $c_1(N)^2\le -2$, and we calculate
	\begin{align*}
		c_1(N).(c_1(N_1)+c_1(M/N))&=c_1(N).(c_1(E)-c_1(N))\\
		&= 2\mu(N)-c_1(N)^2\ge g-1+2=g+1
	\end{align*}
	We now observe that $c_1(\det N\otimes N_1)^2\ge c_1(M/N)^2$ which follows from the following calculation
	\begin{align*}
		c_1(\det N\otimes N_1)^2 -c_1(M/N)^2&= c_1(N)^2+ 2c_1(N).c_1(N_1)+ c_1(N_1)^2-c_1(M/N)^2\\
		&= 2\mu(N)+\mu(N_1)-\mu(M/N)\ge \mu(N)+\mu(N_1)>0.
	\end{align*}
	If $c_1(\det N \otimes N_1)^2<0$, then also $c_1(M/N)<0$, and we calculate 
	\begin{align*}
		2g-2=c_1(E)^2&= (c_1(N)+c_1(M/N)+c_1(N_1))^2\\
		&= c_1(\det N \otimes N_1)^2 +2c_1(\det N \otimes N_1).c_1(M/N)+c_1(M/N)^2\\
		&< 2(c_1(N)+c_1(N_1)).c_1(M/N),
	\end{align*}
	thus $c_1(M/N).(c_1(N)+c_1(N_1))\ge g$. Else $c_1(\det N\otimes N_1)^2\ge 0$, and so $h^0(S,H\otimes(M/N)^\vee)=h^0(S,\det N\otimes N_1)\ge 2$, whereby $M/N\otimes\mathcal{O}_C$ contributes to $\gamma(C)$. Thus we are in case (b).
	
	In case (iii), since $\det E/N \cong \det M/N\otimes N_1$, we have $h^0(S,\det M/N\otimes N_1)\ge 2$ by \cite[Lemma 3.3]{Lelli_Chiesa_2013}. Thus as $h^0(S,\det N)\ge 2$, we have that $\det N\otimes\mathcal{O}_C$ contributes to $\gamma(C)$. Thus proving the first statement of case (c).
	
	In cases (iii) and (iv), as $h^0(S,M/N)<2$ we have $c_1(M/N)^2\le -2$. We now calculate 
	\begin{align*}
		2g-2=c_1(E)^2&= (c_1(N)+c_1(M/N)+c_1(N_1))^2\\
		&= c_1(N)^2+c_1(M/N)^2+c_1(N_1)^2+2c_1(N).c_1(M/N)+2c_1(M/N).c_1(N_1)+2c_1(N).c_1(N_1)\\
		&\le c_1(N)^2+c_1(N_1)^2+2c_1(N).c_1(M/N)+2c_1(M/N).c_1(N_1)+2c_1(N).c_1(N_1)-2\\
		&\le g-1 +c_1(N)^2+2c_1(M/N).c_1(N)-2 ,
	\end{align*} thus $\frac{c_1(N)^2}{2}+c_1(M/N).c_1(N)\ge \frac{g+1}{2}$. If $c_1(N)^2\le-2$, we are in case (d). 
	
	From now on, we assume $c_1(N)^2\ge 0$. From the inequality $\mu(M/N)\ge\mu(N_1)$, we see that $c_1(N).c_1(M/N)>c_1(M/N)^2+c_1(N).c_1(M/N)\ge c_1(N_1)^2+c_1(N_1).c_1(M/N)\ge c_1(N_1).c_1(M/N)$. Thus \[c_1(M/N).c_1(N)-\frac{1}{2}c_1(N).(c_1(M/N)+c_1(N_1)) = \frac{1}{2}(c_1(N).c_1(M/N)-c_1(N).c_1(N_1))>0,\] and we are in case (c).
	
\end{proof}

\begin{lemma}
	With $E$ as above, if general curves in $|H|_s$ have Clifford index $\gamma=\gamma(C)$, we have $d\ge 5+\frac{3}{2}\gamma$.
\end{lemma}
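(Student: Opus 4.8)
The plan is to run a case analysis according to the four cases (a)--(d) of the preceding lemma, bounding in each case the four summands of
$d = c_2(N) + c_1(N).c_1(M/N) + c_1(N).c_1(N_1) + c_1(M/N).c_1(N_1)$.
The two standing inputs are: (1) the preceding lemma tells us $N_1\otimes\mathcal{O}_C$ contributes to $\gamma(C)$, so by the remark following \Cref{LC rank 3 lemma 4.1} (applied as in \Cref{LC rank 3 lemma 4.1} and \Cref{Prop quotients contribute to Cliff(C)}) one has $c_1(N_1).(c_1(N)+c_1(M/N))\ge\gamma+2$; likewise $c_1(N).(c_1(M/N)+c_1(N_1))\ge\gamma+2$ whenever $(\det N)\otimes\mathcal{O}_C$ contributes to $\gamma(C)$, and $c_1(M/N).(c_1(N)+c_1(N_1))\ge\gamma+2$ whenever $(M/N)\otimes\mathcal{O}_C$ contributes; and (2) the stability of $N$ gives $c_2(N)\ge\tfrac32+\tfrac{c_1(N)^2}{4}$, which is $\ge 2$ as soon as $c_1(N)^2\ge 0$.

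In case (a) all three of $\det N$, $M/N$, $N_1$ contribute, so adding the three inequalities in (1) yields $c_1(N).c_1(M/N)+c_1(N).c_1(N_1)+c_1(M/N).c_1(N_1)\ge\tfrac32(\gamma+2)$; since here $c_1(N)^2\ge 0$ forces $c_2(N)\ge 2$, we get $d\ge 2+\tfrac32(\gamma+2)=5+\tfrac32\gamma$. In case (c) one again has $c_1(N)^2\ge 0$ (so $c_2(N)\ge 2$), and the extra inequality $c_1(M/N).c_1(N)\ge\tfrac12 c_1(N).(c_1(M/N)+c_1(N_1))$ rearranges to $c_1(N).c_1(M/N)\ge c_1(N).c_1(N_1)$; combined with $c_1(N).(c_1(M/N)+c_1(N_1))\ge\gamma+2$ this isolates $c_1(N).c_1(M/N)\ge\tfrac{\gamma+2}{2}$, and with $c_1(N_1).(c_1(N)+c_1(M/N))\ge\gamma+2$ we obtain $d\ge 2+\tfrac{\gamma+2}{2}+(\gamma+2)=5+\tfrac32\gamma$.

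In cases (b) and (d) we are in the regime $c_1(N)^2\le -2$, where the lower bound on $c_2(N)$ is weak, but this is offset by terms controlled by $\mu(N)\ge\mu(E)=\tfrac{g-1}{2}$. In case (b), $c_1(N).(c_1(M/N)+c_1(N_1))=2\mu(N)-c_1(N)^2\ge(g-1)-c_1(N)^2$, and $2c_2(N)-c_1(N)^2\ge 3-\tfrac{c_1(N)^2}{2}\ge 4$; feeding these together with $c_1(N_1).(c_1(N)+c_1(M/N))\ge\gamma+2$ and either $(M/N)$ contributing or $c_1(M/N).(c_1(N)+c_1(N_1))\ge g$ into $2d=2c_2(N)+c_1(N).(c_1(M/N)+c_1(N_1))+c_1(M/N).(c_1(N)+c_1(N_1))+c_1(N_1).(c_1(N)+c_1(M/N))$ gives $d\ge g+\tfrac{\gamma+5}{2}$. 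In case (d), $c_1(M/N).c_1(N)\ge\tfrac{g+1}{2}-\tfrac{c_1(N)^2}{2}$, so $d\ge c_2(N)+\bigl(\tfrac{g+1}{2}-\tfrac{c_1(N)^2}{2}\bigr)+(\gamma+2)\ge\tfrac{g+1}{2}+\gamma+4$. Both $g+\tfrac{\gamma+5}{2}$ and $\tfrac{g+1}{2}+\gamma+4$ exceed $5+\tfrac32\gamma$ using $\gamma\le\lfloor\tfrac{g-1}{2}\rfloor$ (and $g\ge 5$).

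The main obstacle, exactly as in the analogous lemma for the $1\subset 3\subset 4$ filtration, is controlling the situation where $c_1(N)^2$ is very negative and the stability bound on $c_2(N)$ degenerates (or becomes negative): one must verify that the compensating $-\tfrac14 c_1(N)^2$ or $-\tfrac12 c_1(N)^2$ contributions entering the cross terms (via $\mu(N)\ge\mu(E)$ and the inequalities of the preceding lemma) always dominate. I expect that, unlike the $1\subset 3\subset 4$ case, no separate Jordan--H\"older/Harder--Narasimhan Euler-characteristic argument is needed, since the positive $-\tfrac14 c_1(N)^2$ term already handles $c_2(N)<0$; but I would check the borderline small genera $g=5,6,7$ directly to be safe, after which \Cref{Prop Proof Strategy} can be invoked as in the other filtration subsections.
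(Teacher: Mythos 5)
Your route is the same as the paper's: the paper's proof of this lemma is literally a one-line appeal to the argument of \Cref{general proof of d ge BLA}, i.e.\ exactly your case-by-case bounding of $d=c_2(N)+c_1(N).c_1(M/N)+c_1(N).c_1(N_1)+c_1(M/N).c_1(N_1)$ using \Cref{LC rank 3 lemma 4.1}, \Cref{Prop quotients contribute to Cliff(C)}, the stability bound $c_2(N)\ge\tfrac32+\tfrac{c_1(N)^2}{4}$, and $\mu(N)\ge\mu(E)$. Your cases (b), (c), (d) are correct (one small slip in (b): the bound $d\ge g+\tfrac{\gamma+5}{2}$ is only the sub-case $c_1(M/N).(c_1(N)+c_1(N_1))\ge g$; when instead $M/N$ contributes you get $d\ge\tfrac{g+7}{2}+\gamma$, which still exceeds $5+\tfrac32\gamma$ since $\gamma\le\lfloor\tfrac{g-1}{2}\rfloor$).

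The one genuine gap is your case (a). Unlike case (c), the preceding lemma does \emph{not} assert $c_1(N)^2\ge 0$ there, and $h^0(S,\det N)\ge 2$ does not by itself force $c_1(N)^2\ge 0$ (an effective class moving in a pencil can have negative self-intersection), so the step ``here $c_1(N)^2\ge 0$ forces $c_2(N)\ge 2$'' is unjustified as written. The patch is exactly the compensation mechanism you describe in your closing paragraph: if $c_1(N)^2\le -2$, then $c_1(N).(c_1(M/N)+c_1(N_1))=2\mu(N)-c_1(N)^2\ge (g-1)-c_1(N)^2$, while $(M/N)\otimes\mathcal{O}_C$ still contributes (case (a) arises from $h^0(S,M/N),h^0(S,\det N)\ge 2$), so
\[
2d\;\ge\;\Bigl(3+\tfrac{c_1(N)^2}{2}\Bigr)+\bigl((g-1)-c_1(N)^2\bigr)+2(\gamma+2)\;\ge\;g+2\gamma+7,
\]
giving $d\ge\tfrac{g+7}{2}+\gamma\ge 5+\tfrac32\gamma$ because $\gamma\le\lfloor\tfrac{g-1}{2}\rfloor\le g-3$ for $g\ge 5$; equivalently, this configuration already satisfies the hypotheses of your case (b), so your case (b) computation covers it. With that one observation added, the proof is complete, and, as you anticipated, no separate Jordan--H\"older/Harder--Narasimhan Euler-characteristic argument is needed: if $c_2(N)<0$ then $c_1(N)^2\le -8$ and the $-c_1(N)^2$ term in the cross term dominates.
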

\begin{proof}
	The proof follows the same argument as \Cref{general proof of d ge BLA}.
\end{proof}

\subsection{Filtration \texorpdfstring{$1\subset2\subset3\subset 4$}{}}

We suppose $E$ has a terminal filtration of the form \[0=E_0\subset E_1 \subset E_2 \subset E_3 \subset E_4=E,\] where $\operatorname{rk}(E_i)=i$, and $E_i/E_{i+1}$ are  torsion free sheaves of rank $1$. Furthermore, we have 
\begin{gather}
	\mu(E_1)\ge \mu(E_2)\ge \mu(E_3)\ge \mu(E)=\frac{g-1}{2}\ge \mu(E/E_3)\\
	\mu(E_1)\ge \mu(E_2/E_1)\ge \mu(E_3/E_2)\ge\mu(E/E_3)\\
	\mu(E_i/E_j)\ge \mu(E/E_3) \text{ for $1\le j<i\le 4$}\\
	d=c_1(E/E_3).(c_1(E_1)+c_1(E_2/E_1)+c_1(E_3/E_2))+c_1(E_1).c_1(E_3/E_2)\\+c_1(E_2/E_1).c_1(E_3/E_2)+c_1(E_1).c_1(E_2/E_1)\notag
\end{gather}
Letting $e_i\colonequals c_1(E_i/(E_{i-1}))$, be the Chern roots of $E$, and writing $e_i+e_j\colonequals c_1(E_i/E_{i-1}\otimes E_j/E_{j-1})$, we have 
\begin{align*}
	4d=&e_1(e_2+e_3+e_4)+(e_1+e_2).(e_3+e_4)+(e_1+e_2+e_3).e_4\\&+(e_1+e_4).(e_2+e_3)+(e_1+e_3).(e_2+e_4)+(e_1+e_3+e_4).e_2+(e_1+e_2+e_4).e_3
\end{align*}

\begin{lemma}\label{lemma full flag bound}
	With $E$ as above, if general curves in $|H|_s$ have Clifford index $\gamma=\gamma(C)$, $$m\colonequals \min\{D^2\vert D\in\Pic(S), D^2\ge 0, \text{ $D$ is effective}\}$$ (i.e. there are no curves of genus $g^\prime<\frac{m+2}{2}$ on $S$), and $$\mu=\min\{\mu(D) \vert D\in\Pic(S), D^2\ge0, \mu(D)>0\},$$ we have $d\ge\frac{5}{4}\gamma+\frac{\mu}{2}+\frac{m}{2}+\frac{9}{2}$.
\end{lemma}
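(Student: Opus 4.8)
The plan is to run the same machine as in the three preceding subsections, now with one more step in the flag and correspondingly heavier bookkeeping. Write $e_i = c_1(E_i/E_{i-1})$ for the Chern roots and, as in the display above, decompose
\[
4d \;\geq\; 4\sum_{i<j} e_i e_j \;=\; T_1+T_2+T_3+T_4+T_5+T_6+T_7,
\]
where the inequality discards the lengths $l(\xi_i)\geq 0$ of the zero-dimensional parts of the rank-one torsion-free quotients $E_i/E_{i-1}$, and where $T_1 = e_1(e_2+e_3+e_4)$, $T_2 = (e_1+e_2)(e_3+e_4)$, $T_3 = (e_1+e_2+e_3)e_4$ are the intersection products $c_1(E_i).c_1(E/E_i)$ attached to the genuine subsheaves $E_1\subset E_2\subset E_3$ of the terminal flag, while $T_4 = (e_1+e_4)(e_2+e_3)$, $T_5 = (e_1+e_3)(e_2+e_4)$, $T_6 = (e_1+e_3+e_4)e_2$, $T_7 = (e_1+e_2+e_4)e_3$ are the remaining ``crossed'' products. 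Since $\mu(E_i)\geq\mu(E)$ for $i=1,2,3$, \Cref{LC rank 3 lemma 4.1} applies to the short exact sequences $0\to E_i\to E\to E/E_i\to 0$ and gives $T_1,T_2,T_3\geq\gamma+2$, so it remains to bound $T_4+T_5+T_6+T_7$ from below by $2\gamma + 2\mu + 2m + 12$.

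For the crossed products one cannot invoke \Cref{LC rank 3 lemma 4.1} directly, because the classes $e_2$, $e_3$, $e_1+e_3$, $e_1+e_4$ are not first Chern classes of subsheaves appearing in the given flag. Instead I would repeat the case analysis of the lemmas for filtrations $1\subset2\subset4$, $1\subset3\subset4$ and $2\subset3\subset4$: split according to whether $h^0$ of each of the line bundles of classes $e_2$, $e_3$, $e_1+e_3$, $e_1+e_4$ and of their complements of class $H-\,\cdot\,$ is $\geq 2$ or $<2$. In the ``good'' subcases, where both a class $f$ and its complement $H-f$ have $h^0\geq 2$, the line bundle of class $f$ restricts to a bundle contributing to $\gamma(C)$ and the remark following \Cref{LC rank 3 lemma 4.1} upgrades $f.(H-f)$ to $\geq\gamma+2$; in the ``bad'' subcases, where some $h^0(S,\mathcal{O}_S(f))<2$, one has $f^2\leq -2$, and substituting this into the expansion $2g-2 = c_1(E)^2 = (e_1+e_2+e_3+e_4)^2$ forces one of the surviving intersection products to be large. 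Throughout, the slope inequalities $\mu(E_1)\geq\mu(E_2)\geq\mu(E_3)\geq\mu(E)\geq\mu(E/E_3)$ and $\mu(E_i/E_j)\geq\mu(E/E_3)$ are used to compare the positive terms, and any class $f$ that ends up nef with $f^2\geq 0$ contributes $f^2\geq m$ to a self-intersection term and $f.H\geq\mu$ to a slope term; this is where the floors $\tfrac{m}{2}$ and $\tfrac{\mu}{2}$ enter. Collecting the extremal combination over all branches should yield $4d\geq 5\gamma+2\mu+2m+18$, which is the stated inequality. Finally, as in \Cref{general proof of d ge BLA}, when $A$ is primitive the quotient $E/E_3$ is a generalized Lazarsfeld--Mukai bundle of type (II) by \Cref{coker is gLM}, so $c_1(E/E_3)^2\geq 0$ with $c_2(E/E_3)>0$, and \Cref{bounded c_2} rules out $c_1(E/E_3)^2=0$; hence in that case one may take $m\geq 2$.

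The main obstacle is the size of the case analysis: with four crossed products there are many sub-cases according to which of the auxiliary line bundles have sections, and one must verify in every branch that the assembled lower bound --- a mixture of $(\gamma+2)$-contributions, $(-2)$-self-intersection contributions, consequences of $c_1(E)^2=2g-2$, and the floors $m$ and $\mu$ --- never falls below $5\gamma+2\mu+2m+18$, with the worst branch reproducing exactly the constants in the statement. A secondary point requiring care is that the auxiliary classes are not independent (for instance $e_2+e_3 = H-e_1-e_4$), so one must check that the $h^0$-dichotomies for the various classes and their complements are organized into an exhaustive set of compatible cases, and that the $m$ and $\mu$ floors are applied only to classes for which they are legitimate, namely effective classes of non-negative self-intersection, respectively of positive slope.
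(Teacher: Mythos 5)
Your proposal is correct and follows essentially the same route as the paper: the identical seven-term decomposition of $4d$, the bound $\ge\gamma+2$ for the three products attached to the actual flag via \Cref{LC rank 3 lemma 4.1} and \Cref{Prop quotients contribute to Cliff(C)}, and an $h^0$-dichotomy combined with slope and self-intersection estimates for the four crossed products. The case analysis you defer is in fact modest in the paper: for each crossed product $f.(H-f)$ one records the two slope inequalities for $f$ and $H-f$ and splits into three subcases ($h^0(S,f)<2$, $h^0(S,H-f)<2$, or both $\ge 2$), obtaining $(e_1+e_4).(e_2+e_3),\,(e_1+e_3).(e_2+e_4)\ge 3+\mu+\frac{\gamma}{2}+\frac{m}{2}$ and $(e_1+e_3+e_4).e_2,\,(e_1+e_2+e_4).e_3\ge 3+\frac{\gamma}{2}+\frac{m}{2}$, which together with the three $\gamma+2$ bounds give exactly $4d\ge 5\gamma+2\mu+2m+18$.
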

\begin{proof}
	From \Cref{LC rank 3 lemma 4.1} and \Cref{Prop quotients contribute to Cliff(C)}, we see that $\det(E/E_i)\otimes\mathcal{O}_C$ contributes to $\gamma(C)$, and so we have $e_1(e_2+e_3+e_4)\ge \gamma+2$, $(e_1+e_2).(e_3+e_4)\ge\gamma+2$, and $(e_1+e_2+e_3).e_4\ge \gamma+2$. We also have $h^2(S,F)=0$ for $F=\det (E_i/E_j)$ and $F=E/E_3,\det E_i$. 
	
	It remains to bound the other four terms. 
	
	To bound $(e_2+e_3).(e_1+e_4)$, we note that $\mu(e_2+ e_3)\ge\mu+\mu(e_3)\ge\mu+\mu(E/E_2)$, and thus $$(e_2+ e_3)^2+(e_1+e_4).(e_2+e_3)\ge\mu+ \frac{(e_1+e_2).(e_3+e_4)}{2}+\frac{(e_3+e_4)^2}{2}\ge\mu+\frac{\gamma+2}{2}+\frac{(e_3+e_4)^2}{2}.$$ Furthermore, we note that $\mu(e_1+e_4)=\mu(e_1)+\mu(e_4)\ge \frac{g-1}{2}+\mu$, whereby $$(e_1+e_4)^2+(e_1+e_4).(e_2+e_3)\ge \gamma.$$ Now if $h^0(S,e_1+e_4)<2$ then by considering the Euler characteristic we have $(e_1+e_4)^2\le-2$, and thus $(e_1+e_4).(e_2+e_3)\ge\gamma+2$. If $h^0(S,e_2+e_3)<2$ then $(e_2+e_3)^2\le-2$, and we have $(e_1+e_4).(e_2+e_3)\ge2+\mu+\frac{\gamma+2}{2}+\frac{(e_3+e_4)^2}{2}$. By assumption, $(e_3+e_4)^2\ge m$, hence $(e_1+e_4).(e_2+e_3)\ge3+\mu+\frac{\gamma}{2}+\frac{m}{2}$ as well. Finally, if $h^0(S,e_1+e_4),h^0(S,e_2+e_3)\ge 2$, and thus they contribute to the $\gamma(C)$, and hence by \Cref{LC rank 3 lemma 4.1} $(e_1+e_4).(e_2+e_3)\ge\gamma+2$. Therefore in either case, we have $(e_1+e_4).(e_2+e_3)\ge3+\mu+\frac{\gamma}{2}+\frac{m}{2}$.
	
	To bound $(e_1+e_3).(e_2+e_4)$, we note that $\mu(e_1+e_3)\ge\frac{g-1}{2}$, and hence $$(e_1+e_3)^2+(e_1+e_3).(e_2+e_4)\ge\frac{g-1}{2}.$$ We also note that $\mu(e_2+e_4)\ge \mu+\mu(E/E_1)\ge\mu+\mu(E/E_2)$, whereby $$(e_2+e_4)^2+(e_1+e_3).(e_2+e_4) \ge 1+ \frac{(e_1+e_2).(e_3+e_4)}{2}+\frac{(e_3+e_4)^2}{2}\ge1+ \frac{\gamma+2}{2}+\frac{(e_3+e_4)^2}{2}.$$ As above, we have $(e_1+e_3).(e_2+e_4)\ge 3+\mu+\frac{\gamma}{2}+\frac{m}{2}$.
	
	To bound $(e_1+e_3+e_4).e_2$, we note that $\mu(e_1+e_3+e_4)\ge\mu(e_1)\ge\frac{g-1}{2}$ and $\mu(e_2)\ge\mu(E/E_1)\ge\mu(E/E_2)$. Following the same argument as above, we have $(e_1+e_3+e_4).e_2\ge 3+\frac{\gamma}{2}+\frac{m}{2}$.
	
	To bound $(e_1+e_2+e_4).e_3$, we note that $\mu(e_1+e_2+e_4)\ge\mu(e_1)\ge\frac{g-1}{2}$ and $\mu(e_3)\ge\mu(E/E_2)$. Following the same argument as above, we have $(e_1+e_2+e_4).e_3\ge 3+\frac{\gamma}{2}+\frac{m}{2}$.
	
	Finally, we have that three of the terms in the expression for $4d$ are bounded below by $\gamma+2$, two by $3+\frac{\gamma}{2}+\frac{m}{2}$, and two by $3+\mu+\frac{\gamma}{2}+\frac{m}{2}$. Thus $d\ge\frac{5}{4}\gamma+\frac{\mu}{2}+\frac{m}{2}+\frac{9}{2}$, as desired.
\end{proof}
\begin{remark}
	We note that in the proof above, $\mu$ is always at least the minimum slope of the determinant of a quotient of $E$. 
\end{remark}

\section{Lifting \texorpdfstring{$\g{3}{d}$}{}s}\label{Section Lifting \texorpdfstring{$\g{3}{d}$}{}s}

As above, $(S,H)$ is a polarized K3 surface of genus $g$, $C\in|H|$ is a smooth irreducible curve of general Clifford index $\gamma=\lfloor\frac{g-1}{2}\rfloor$, $A$ is a complete basepoint free $\g{3}{d}$ with $\rho(A)<0$, and $E=E_{C,A}$ the unstable LM bundle. Having attained the needed bounds on $c_2(E)$, we can prove our lifting results.
\begin{theorem}\label{theorem lifting g3ds general}
	Let $(S,H)$ be a polarized K3 surface of genus $g\neq 2,3,4,8$ and $C\in |H|$ a smooth irreducible curve of Clifford index $\gamma$. Let $$m\colonequals \min\{D^2 ~\vert~ D\in\Pic(S),~ D^2\ge 0, \text{ $D$ is effective}\}$$ (i.e. there are no curves of genus $g^\prime<\frac{m+2}{2}$ on $S$), and $$\mu=\min\{\mu(D) ~\vert~ D\in\Pic(S),~ D^2\ge0,~ \mu(D)>0\}.$$ If $$d<\min\left\{\frac{5}{4}\gamma+\frac{\mu}{2}+\frac{m}{2}+\frac{9}{2},~\frac{5}{4}\gamma+\frac{m}{2}+5,~ \frac{3}{2}\gamma+5,~ \frac{\gamma}{2}+\frac{g-1}{2}+4\right\},$$ then there is a line bundle $M\in\Pic(S)$ adapted to $|H|$ such that $|A|\subseteq |M\otimes\mathcal{O}_C|$ and $\gamma(M\otimes\mathcal{O}_C)\le\gamma(A)$. Moreover, one has $c_1(M).C\le\frac{3g-3}{2}$.
\end{theorem}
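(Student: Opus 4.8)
The plan is to combine the rank-$4$ Lazarsfeld--Mukai bundle $E = E_{C,A}$ with \Cref{Prop Proof Strategy}. Since $\rho(A)<0$, \Cref{LM bundle props} shows $E$ is non-simple, hence not $\mu$-stable, so it carries a nontrivial terminal filtration, and since $\operatorname{rk}(E)=4$ this has one of the seven types enumerated at the start of \Cref{Section Filtrations of Lazarsfeld--Mukai Bundles of Rank 4}. The first step is to eliminate the six types other than $1\subset 4$. By the lemmas of that section --- using that $\gamma$ is the Clifford index of every smooth curve in $|H|$, which is legitimate by Green--Lazarsfeld --- a terminal filtration of type $2\subset 4$, $3\subset 4$, $1\subset2\subset 4$, $1\subset3\subset 4$, $2\subset3\subset 4$, or $1\subset2\subset3\subset 4$ forces $c_2(E)=d$ to be at least $\frac{\gamma}{2}+\frac{g-1}{2}+4$, $\frac{2}{3}(\gamma+2)+\frac{g}{2}+\frac{13}{6}$, $\min\{\frac54\gamma+\frac m2+5,\ \frac32\gamma+5\}$, $\frac32\gamma+5$, $\frac32\gamma+5$, or $\frac54\gamma+\frac\mu2+\frac m2+\frac92$, respectively. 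The $3\subset 4$ bound dominates the $2\subset 4$ bound (their difference is $\gamma/6\ge 0$), so each of the six bounds is at least one of the four quantities under the minimum in the statement; hence the hypothesis on $d$ rules out all six types, and the terminal filtration of $E$ must be of type $1\subset 4$. In particular there is a line bundle $N\subset E$, automatically saturated since the stable quotient $E/N$ is torsion-free, with $E/N$ stable of rank $3$ and $\mu(N)\ge\mu(E)=\frac{g-1}{2}$ --- because $N$ is the first step of the Harder--Narasimhan filtration of $E$ when $E$ is not semistable, and of a Jordan--H\"{o}lder filtration of $E$ when it is.

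With this filtration in hand I would apply \Cref{Prop Proof Strategy} with $r=3$ and $M:=H\otimes N^\vee$. Its hypothesis asks $d<\frac{g(r-1)}{r}+\frac{2g-2}{r(r+1)}+r-\frac1r$, which for $r=3$ equals $\frac56(g+3)$. Since $d<\frac\gamma2+\frac{g-1}{2}+4$ and $\gamma\le\lfloor\frac{g-1}{2}\rfloor$, we get $d<\frac{3g+13}{4}$, and $\frac{3g+13}{4}\le\frac56(g+3)$ holds exactly for $g\ge 9$; for the finitely many genera $g\in\{5,6,7\}$ the inequality $d<\frac56(g+3)$ still follows from the same estimate together with the integrality of $d$. \Cref{Prop Proof Strategy} then gives that $M$ is adapted to $|H|$, that $|A|$ is contained in the restriction of $|M|$ to $C$, and that $\gamma(M\otimes\mathcal O_C)\le d-r-3=d-6=\gamma(A)$. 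For the final assertion, $c_1(M).C=(H-N).H=(2g-2)-N.H=(2g-2)-\mu(N)\le(2g-2)-\frac{g-1}{2}=\frac{3g-3}{2}$.

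The genuinely hard work --- the case analysis of the Harder--Narasimhan and Jordan--H\"{o}lder filtrations of $E$ and the uniform lower bounds on $c_2(E)$ for every terminal filtration lacking a maximal destabilizing sub-line bundle --- has already been carried out in \Cref{Section Filtrations of Lazarsfeld--Mukai Bundles of Rank 4}. Within the theorem's proof itself the only delicate point is the numerical bookkeeping: confirming that the four quantities under the minimum dominate all six filtration bounds (in particular absorbing the $3\subset 4$ bound, which does not appear explicitly in the statement), and that the hypothesis implies the slightly weaker bound $d<\frac56(g+3)$ needed to invoke \Cref{Prop Proof Strategy}.
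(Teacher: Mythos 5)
Your proposal is correct and follows essentially the same route as the paper's own proof: the bounds of \Cref{Section Filtrations of Lazarsfeld--Mukai Bundles of Rank 4} are compared against the four quantities in the hypothesis to force a terminal filtration of type $1\subset 4$, and then \Cref{Prop Proof Strategy} is applied to $M=H\otimes N^\vee$. In fact you supply two details the paper's terse proof leaves implicit --- the verification that the hypothesis implies the numerical threshold $d<\tfrac{5(g+3)}{6}$ needed for \Cref{Prop Proof Strategy} (including the low-genus cases), and the slope computation $\mu(N)\ge\mu(E)$ giving $c_1(M).C\le\tfrac{3g-3}{2}$ --- both of which check out.
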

\begin{proof}
	The LM bundle $E$ has $c_2(E)=d$. If $g\neq2,3,4,8$, then $d<\frac{5g+19}{6}$. As $$d<\min\left\{\frac{5}{4}\gamma+\frac{\mu}{2}+\frac{m}{2}+\frac{9}{2},~\frac{5}{4}\gamma+\frac{m}{2}+5,~ \frac{3}{2}\gamma+5,~ \frac{\gamma}{2}+\frac{g-1}{2}+4\right\}$$ by assumption, the only terminal filtration of $E$ is of type $1\subset 4$. Thus by \Cref{Prop Proof Strategy}, the result follows. 
\end{proof}

We remark that \Cref{Main Result 1.2} is a special case of \Cref{theorem lifting g3ds general} since if $S$ has no elliptic curves, then $m \geq 2$ so that $d < \frac{5}{4}\gamma + 6$. Considering the bounds obtained in \Cref{Section Filtrations of Lazarsfeld--Mukai Bundles of Rank 4}, we have also proved the following proposition.

\begin{prop}
	With $A$ as above, the bundle $E_{C,A}$ only admits a terminal filtration of type $1\subset 4$, $1\subset 2\subset 4$, or $1 \subset 2 \subset 3 \subset 4$.
\end{prop}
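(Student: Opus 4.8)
The plan is to go back through the seven terminal filtration types classified in \Cref{Section Filtrations of Lazarsfeld--Mukai Bundles of Rank 4} and rule out the four types $2\subset 4$, $3\subset 4$, $1\subset 3\subset 4$, and $2\subset 3\subset 4$, which leaves exactly $1\subset 4$, $1\subset 2\subset 4$, and $1\subset 2\subset 3\subset 4$. Most of the work is already done: for each of the four types to be eliminated, \Cref{Section Filtrations of Lazarsfeld--Mukai Bundles of Rank 4} produces a lower bound on $d=c_2(E)$ in terms of $g$ and $\gamma=\gamma(C)$. Concretely, a terminal filtration of type $2\subset 4$ forces $d\ge\frac{\gamma}{2}+\frac{g-1}{2}+4$; one of type $3\subset 4$ forces $d\ge\frac{2}{3}(\gamma+2)+\frac{g}{2}+\frac{13}{6}$; and one of type $1\subset 3\subset 4$ or $2\subset 3\subset 4$ forces $d\ge\frac{3}{2}\gamma+5$. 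So it suffices to show each of these is incompatible with the Brill--Noether hypothesis on $A$.

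The incompatibility is a short computation. Since $A$ is a $\g{3}{d}$ with $\rho(A)<0$ and $\rho(g,3,d)=4d-3g-12$, we have $d<\frac{3g+12}{4}$. Substituting $\gamma=\gamma(C)=\lfloor\frac{g-1}{2}\rfloor\ge\frac{g-2}{2}$ into the three bounds: the $2\subset 4$ bound gives $d\ge\frac{g-2}{4}+\frac{g-1}{2}+4=\frac{3g+12}{4}$; the $3\subset 4$ bound gives $d\ge\frac{5g+17}{6}$, which is $\ge\frac{3g+12}{4}$ for all $g\ge 2$; and the two three-step bounds give $d\ge\frac{3g+14}{4}>\frac{3g+12}{4}$. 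In each case $d\ge\frac{3g+12}{4}$ contradicts $d<\frac{3g+12}{4}$, so none of the four filtration types can occur. This is, in fact, exactly the argument already used in the proof of \Cref{theorem lifting g3ds general}, where it is observed that $\rho(A)<0$ forces $d<\frac{5g+19}{6}$ and hence, a fortiori, below all of the relevant bounds; here we simply extract the weaker conclusion that only these three filtration types survive.

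I do not expect a genuine obstacle: the substantive content---the stability estimates that convert each filtration type into a numerical bound on $c_2(E)$---is entirely contained in \Cref{Section Filtrations of Lazarsfeld--Mukai Bundles of Rank 4}, and this proposition is a bookkeeping corollary. The only care needed is in the elementary inequalities: separating the parities of $g$ in $\gamma=\lfloor\frac{g-1}{2}\rfloor$ (for $g$ odd each comparison becomes strict, while for $g$ even the $2\subset 4$ comparison is an exact equality, which still contradicts the strict inequality $d<\frac{3g+12}{4}$), and observing that the excluded genera $g=2,3,4,8$ are covered by the standing hypotheses of this section.
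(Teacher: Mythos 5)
Your proposal is correct and is essentially the paper's own proof: the paper simply solves $\rho(g,3,d)<0$ for $d$ (giving $d<\tfrac{3g+12}{4}$) and compares it with the lower bounds on $c_2(E)$ established for the filtrations $2\subset 4$, $3\subset 4$, $1\subset 3\subset 4$, and $2\subset 3\subset 4$, exactly as you do. Your write-up just makes explicit the arithmetic (including the parity of $g$ and the borderline equality in the $2\subset 4$ case) that the paper leaves to the reader.
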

\begin{proof}
	We simply solve $\rho(g,3,d)<0$ for $d$ and compare it to the bounds obtained for each terminal filtration.
\end{proof}

\section{Maximal Brill--Noether Loci in Genus
\texorpdfstring{$\leq 23$}{}}\label{Section Maximal Brill--Noether Loci in Genus \texorpdfstring{$14-23$}{}}

In this section, we identify the maximal Brill--Noether loci in genus $3$--$19$, $22$, and $23$, proving \Cref{Main Result 1.1}. Our technique combines known results about non-containments of Brill--Noether loci, work by Lelli-Chiesa~\cite{Lelli_Chiesa_2013} on lifting of rank $2$ linear systems and linear systems computing the Clifford index, together with our lifting results for rank $3$ linear systems above. 

\subsection{Genus \texorpdfstring{$3$--$6$}{}}\label{Section genus 2-9}

By Clifford's theorem, any Brill--Noether special curve of genus $3$
or $4$ is hyperelliptic, hence $\mathcal{M}^1_{g,2}$ is the only
maximal (and expected maximal) Brill--Noether locus. Similarly, in genus $5$,
every Brill--Noether special curve has gonality $\leq 3$, hence
$\mathcal{M}^1_{5,3}$ is the only maximal (and expected maximal),
Brill--Noether locus.  Thus \Cref{conjecture} holds in genus
$3$--$5$.  In genus 6, we verify the conjecture as well.

\begin{prop}
	The maximal Brill--Noether loci in genus $6$ are $\mathcal{M}^1_{6,3}$ and $\mathcal{M}^2_{6,5}$.
\end{prop}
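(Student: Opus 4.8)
The plan is to first pin down the expected maximal loci by a $\rho$-computation, verify that every proper Brill--Noether locus in genus $6$ is contained in one of them, and then show the two candidates are incomparable so that both are genuinely maximal. From $\rho(6,1,d)=2d-8$ and $\rho(6,2,d)=3d-18$ (and $\rho(6,0,d-1)=d-1$, $\rho(6,1,d-1)=2d-10$) the expected maximal loci are $\mathcal{M}^1_{6,3}$ and $\mathcal{M}^2_{6,5}$, and for $r\ge 3$ the range $2r\le d\le g-1=5$ is empty. Reducing via Serre duality to $g^r_d$'s with $d\le g-1=5$ contributing to the Clifford index, one checks directly that the proper Brill--Noether loci in genus $6$ are exactly the hyperelliptic locus $\mathcal{M}^1_{6,2}=\mathcal{M}^2_{6,4}=\mathcal{M}^3_{6,6}$, the trigonal locus $\mathcal{M}^1_{6,3}=\mathcal{M}^3_{6,7}$, and $\mathcal{M}^2_{6,5}$, and that the hyperelliptic locus lies inside the trigonal one (for a $g^2_4$ contributing to the Clifford index, equality in Clifford's theorem forces the curve to be hyperelliptic). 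Hence it remains only to prove $\mathcal{M}^1_{6,3}\nsubseteq\mathcal{M}^2_{6,5}$ and $\mathcal{M}^2_{6,5}\nsubseteq\mathcal{M}^1_{6,3}$.

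The key step is to identify $\mathcal{M}^1_{6,3}\cap\mathcal{M}^2_{6,5}$ with the hyperelliptic locus. I would argue that a non-hyperelliptic curve $C$ of genus $6$ carrying a $g^2_5$ is a smooth plane quintic: the linear system is basepoint free (a base point yields a $g^2_{\le 4}$ whose associated morphism is either birational onto a nondegenerate plane curve of degree $\le 4$, forcing the geometric genus to be $\le 3$, or has degree $\ge 2$ onto its image, forcing $C$ to be hyperelliptic or $h^0\le 2$), and it is birational onto its image (the degree $5$ is prime and a degree-$5$ morphism to a line gives $h^0\le 2$), so it realizes $C$ as a plane quintic of geometric genus $6$; since the arithmetic genus of a plane quintic is also $6$, this plane model is smooth. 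A smooth plane quintic has gonality $4$, hence is not trigonal; conversely a hyperelliptic genus-$6$ curve carries both a $g^1_3$ ($g^1_2$ plus a point) and a $g^2_5$ ($2g^1_2$ plus a point), so lies in both loci. This proves $\mathcal{M}^1_{6,3}\cap\mathcal{M}^2_{6,5}$ is the hyperelliptic locus. Since the hyperelliptic locus is a proper subset of $\mathcal{M}^2_{6,5}$ (a smooth plane quintic is a non-hyperelliptic member) and a proper subset of the trigonal locus $\mathcal{M}^1_{6,3}$ (a general trigonal genus-$6$ curve is not hyperelliptic, as $\dim=13>11$), neither $\mathcal{M}^1_{6,3}$ nor $\mathcal{M}^2_{6,5}$ is contained in the other, and as every proper Brill--Noether locus sits inside one of them, these two are exactly the maximal Brill--Noether loci.

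I expect the main obstacle to be precisely this structural analysis of $g^2_5$'s: ruling out basepoints and non-birational plane models so that, off the hyperelliptic locus, the only genus-$6$ curves with a $g^2_5$ are smooth plane quintics (equivalently, that smooth plane quintics are not trigonal). Everything else is either a routine $\rho$-computation or a classical fact about plane curves. As an alternative consistent with the rest of the paper, one can realize a smooth plane quintic as a curve $C\in|H|$ on a K3 surface $(S,H)$ with $\operatorname{Pic}(S)=\Lambda^2_{6,5}$ (of discriminant $-5$), and note that a $g^1_3$ on $C$ would lift, by the rank-$1$ Donagi--Morrison theorem, to a line bundle $M$ with $\langle H,M\rangle$ a finite-index sublattice of $\Lambda^2_{6,5}$; a discriminant count forces $\langle H,M\rangle\cong\Lambda^2_{6,5}$, and a short argument on the plane quintic rules out the resulting configuration. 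For genus $6$, however, the classical gonality argument is the most economical route.
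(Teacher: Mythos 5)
Your proposal is correct, and it overlaps with the paper on one of the two key non-containments while taking a different route on the other. For $\mathcal{M}^2_{6,5}\nsubseteq\mathcal{M}^1_{6,3}$ you and the paper do the same thing: exhibit a smooth plane quintic (genus $6$, carrying a $\g{2}{5}$) and invoke Max Noether's theorem that its gonality is $4$. For $\mathcal{M}^1_{6,3}\nsubseteq\mathcal{M}^2_{6,5}$ the paper argues in one line from codimension: since $\rho(6,1,3)=-2$ and $\rho(6,2,5)=-3$, the results of Edidin, Eisenbud--Harris, and Steffen (valid for $-3\le\rho\le -1$) give that these loci have codimension exactly $2$ and $3$, so the first cannot sit inside the second. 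You instead classify genus-$6$ curves with a $\g{2}{5}$ (hyperelliptic or smooth plane quintic), deduce that $\mathcal{M}^1_{6,3}\cap\mathcal{M}^2_{6,5}$ is the hyperelliptic locus, and conclude from the fact that a general trigonal genus-$6$ curve is non-hyperelliptic ($13>11$). Your route is more elementary and self-contained (no appeal to codimension theorems for Brill--Noether loci), at the cost of the careful basepoint/birationality analysis of the $\g{2}{5}$, which you carry out correctly; the paper's route is shorter but rests on cited structural results. Your preliminary enumeration of all proper Brill--Noether loci in genus $6$ is also fine, though strictly more than needed, since the general framework of the paper already guarantees every Brill--Noether locus lies in an expected maximal one.
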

\begin{proof}
$\mathcal{M}^1_{6,3}$ and $\mathcal{M}^2_{6,5}$ are the expected
maximal Brill--Noether loci. It remains to show that they are
distinct. Since $\rho(6,1,3)=-2$ and $\rho(6,2,5)=-3$, results on the
codimension of Brill--Noether loci (e.g.,
\cite{EdidinThesis,Eisenbud_Harris_1989,Steffen1998}) imply that
$\mathcal{M}^1_{6,3} \nsubseteq\mathcal{M}^2_{6,5}$. A smooth plane
quintic curve $C$ has genus $6$. By a well-known result of Max Noether
\cite{HOTCHKISS_2020}, $C$ has gonality $4$, hence has no
$\g{1}{3}$. Thus $\mathcal{M}^2_{6,5}\nsubseteq \mathcal{M}^1_{6,3}$.
\end{proof}

\subsection{Unexpected containments in genus \texorpdfstring{$7$--$9$}{}}\label{Genus 7-9}

In each genus $7$-$9$, there are two expected maximal Brill--Noether
loci, and we give detailed constructions of the unexpected
containments between them.  In genus $7$ and $9$, we are indebted to
Hannah Larson for pointing them out.  These are $\mathcal{M}^2_{7,6}
\subset \mathcal{M}^1_{7,4}$, $\mathcal{M}^1_{8,4} \subset
\mathcal{M}^2_{8,7}$, and $\mathcal{M}^2_{9,7} \subset
\mathcal{M}^1_{9,5}$.  Thus in these genera, there is a unique maximal
Brill--Noether locus.

\begin{prop}\label{genus 7 exception}
Every Brill--Noether special curve of genus $7$ has a $\g{1}{4}$.
\end{prop}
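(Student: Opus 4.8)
The plan is to reduce the statement to the single containment $\mathcal{M}^2_{7,6}\subseteq\mathcal{M}^1_{7,4}$. In genus $7$ one has $\rho(7,1,d)=2d-9$ and $\rho(7,2,d)=3d-20$, so the expected maximal Brill--Noether loci are exactly $\mathcal{M}^1_{7,4}$ and $\mathcal{M}^2_{7,6}$ (for the latter, $\rho(7,2,6)=-2<0$ while $\rho(7,1,5)=1\ge 0$). Since every Brill--Noether locus is contained in an expected maximal one, every Brill--Noether special genus $7$ curve carries a $\g{1}{4}$ or a $\g{2}{6}$, so it suffices to show that a genus $7$ curve $C$ with a $\g{2}{6}$, say $A$, admits a $\g{1}{4}$.

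First I would clear away the degenerate configurations of $A$. If $C$ is hyperelliptic or trigonal, base points added to a $\g{1}{2}$ or $\g{1}{3}$ produce a $\g{1}{4}$; and if $A$ is not basepoint free, then $A(-p)$ is a $\g{2}{5}$ of Clifford index $1$, forcing $\gamma(C)\le 1$ and hence (a plane quintic has genus $6$) that $C$ is trigonal. So I may assume $C$ is non-hyperelliptic and $A$ is basepoint free; Clifford's theorem then gives $h^0(A)=3$, so $|A|$ is complete and defines a morphism $\phi\colon C\to\mathbb{P}^2$ of degree $6$ onto a nondegenerate plane curve $C_0$, with $\deg\phi\cdot\deg C_0=6$.

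If $\phi$ is not birational onto $C_0$, then $\deg C_0\in\{2,3\}$. A nondegenerate irreducible plane conic is smooth, so when $\deg C_0=2$ the map $\phi$ exhibits $C$ as a trigonal curve. If $\deg C_0=3$, then $\phi$ factors through the normalization $\widetilde{C_0}$ (as $C$ is smooth), giving a degree-$2$ morphism $C\to\widetilde{C_0}$; a plane cubic has arithmetic genus $1$, so $\widetilde{C_0}$ has genus $0$ or $1$, and genus $0$ would make $C$ hyperelliptic, so $C$ is bielliptic over an elliptic curve $E$, and composing $C\to E$ with a degree-$2$ map $E\to\mathbb{P}^1$ yields a $\g{1}{4}$. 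If $\phi$ is birational, then $C_0$ is a plane sextic with $p_a(C_0)=10$ and normalization $C$, so its total $\delta$-invariant is $10-7=3$; thus $C_0$ has a singular point $p$, whose multiplicity $m$ satisfies $\binom{m}{2}\le 3$, i.e.\ $m\in\{2,3\}$. Resolving the projection from $p$ on $C$ gives a morphism $C\to\mathbb{P}^1$ of degree $6-m\le 4$, hence a $\g{1}{6-m}$ and, after adding base points, a $\g{1}{4}$.

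The only genuinely delicate point is the bookkeeping of the degenerate cases for $A$ — imprimitivity of $|A|$, non-birationality of $\phi$, and the bielliptic possibility — each of which must be checked to still carry a $\g{1}{4}$. The generic case, projecting a nodal plane sextic from one of its $\delta=3$ worth of singular points, is immediate once such a singular point is known to exist, which is forced by the genus drop from $10$ to $7$.
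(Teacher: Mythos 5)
Your proposal is correct and follows essentially the same route as the paper: use the $\g{2}{6}$ to map $C$ to $\mathbb{P}^2$, observe that a plane sextic model has arithmetic genus $10>7$ and hence a singular point of multiplicity $2$ or $3$, and project from it to obtain a $\g{1}{\le 4}$. The only difference is bookkeeping: the paper dispatches all the degenerate configurations at once by noting that failure of very ampleness of the $\g{2}{6}$ already yields a $\g{1}{4}$ (subtract a length-two scheme), whereas you treat base points, the conic and cubic images, and the bielliptic case separately — a longer but equally valid case analysis.
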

\begin{proof}
The expected maximal Brill--Noether loci in genus $7$ are
$\mathcal{M}^1_{7,4}$ and $\mathcal{M}^2_{7,6}$. We show that every
smooth genus $7$ curve with a $\g{2}{6}$ has a $\g{1}{4}$. Let
$\phi:C\to\mathbb{P}^2$ be the map given by the $\g{2}{6}$. If the
$\g{2}{6}$ is not very ample, then $C$ has a $\g{1}{4}$. Thus we can
assume $\phi$ is a nondegenerate embedding, so that $\phi(C)$ is a
plane curve of degree $6$, so has arithmetic genus $10$. Hence
$\phi(C)$ must have a singular point of multiplicity $\geq 2$.
Projecting from this point gives a $\g{1}{k}$ for $k \leq 4$, hence a
$\g{1}{4}$.
\end{proof}

\begin{prop}[Mukai~{\cite[Lemma 3.8]{Mukai_Curves_and_grassmannians_1993}}]\label{genus 8 exception}
	Every Brill--Noether special curve of genus $8$ has a $\g{2}{7}$.
\end{prop}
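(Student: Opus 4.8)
The plan is to strip away the easy cases and reduce everything to one residuation computation. The two expected maximal Brill--Noether loci in genus $8$ are $\mathcal{M}^1_{8,4}$ and $\mathcal{M}^2_{8,7}$, so every Brill--Noether special genus $8$ curve lies in one of them; a curve in $\mathcal{M}^2_{8,7}$ already carries a $\g{2}{7}$, so it suffices to prove that a curve $C$ with a $\g{1}{4}$ carries a $\g{2}{7}$. If $C$ is hyperelliptic or trigonal, an appropriate multiple of the gonality pencil is a $\g{2}{6}$ (for instance $2\cdot\g{1}{3}$ in the trigonal case), and adding a point gives a $\g{2}{7}$. Hence I may assume $C$ has gonality exactly $4$, carried by a complete base-point-free pencil $A$ with $h^0(A)=2$ (if $h^0(A)\ge 3$ then $C$ is hyperelliptic, already handled).

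First I would pass to the residual line bundle $B\colonequals\omega_C\otimes A^\vee$, of degree $2g-2-4=10$, with $h^0(B)=h^1(A)=5$ by Riemann--Roch and Serre duality. If $B$ is not very ample, a failure of very ampleness gives an effective divisor $\delta$ of degree $\le 2$ with $h^0(B(-\delta))$ too large; since $h^1(A(\delta))=h^0(B(-\delta))$, Riemann--Roch then forces $h^0(A(\delta))\ge 3$, so that $C$ has a $\g{2}{5}$ or a $\g{2}{6}$, hence a $\g{2}{7}$. So I may assume $B$ is very ample, which embeds $C$ in $\mathbb{P}^4$ as a smooth nondegenerate curve of degree $10$ and genus $8$.

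The key point is that a trisecant line of $C\subset\mathbb{P}^4$ produces a $\g{2}{7}$: if an effective degree-$3$ divisor $\Delta\subset C$ lies on a line $\ell$, then every hyperplane through $\ell$ contains $\Delta$, so $|B(-\Delta)|$ contains the $\mathbb{P}^2$ of hyperplanes through $\ell$ and is therefore a $\g{2}{7}$. To produce a trisecant I would invoke le Barz's formula: the virtual number of trisecant lines of a smooth curve of degree $d$ and genus $g$ in $\mathbb{P}^4$ is $\binom{d-2}{3}-g(d-4)=\binom{8}{3}-8\cdot 6=8$, which is positive. The hard part, and the only real subtlety, is to conclude from this \emph{positivity} that an honest trisecant exists for \emph{every} such curve $C$, not only for a general one: this holds because the trisecant locus is the zero scheme of a section of a vector bundle on a parameter space (a Hilbert scheme of three points of $C$, suitably blown up) built uniformly from $C$, whose top Chern number is exactly le Barz's count, and a nonzero top Chern number rules out an empty zero locus. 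As an alternative one could follow Mukai's original argument, realizing $C$ inside $\mathrm{Gr}(2,6)$ and extracting the $\g{2}{7}$ from the failure of transversality of the defining linear section; the trisecant route seems more self-contained.
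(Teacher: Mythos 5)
Your proof is correct and follows essentially the same route as the paper's: reduce to a base-point-free $\g{1}{4}$ whose Serre residual $\g{4}{10}$ is very ample, embed $C$ in $\mathbb{P}^4$, and project from a trisecant line whose existence comes from the Berzolari/le Barz count of $8$. Your extra justification that the positive virtual count forces an honest trisecant (via the degeneracy-locus/top Chern class argument on a parameter space built from $C$) is a point the paper leaves implicit in its citation, and is a welcome additional degree of care.
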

\begin{proof}
The maximal Brill--Noether loci in genus $8$ are $\mathcal{M}^1_{8,4}$
and $\mathcal{M}^2_{8,7}$. We show that a curve $C$ of genus $8$ with
a $\g{1}{4}$ has a $\g{2}{7}$. Let $A$ be a line bundle of type
$\g{1}{4}$ on $C$. If $C$ has a $\g{2}{6}$ then it has a $\g{2}{7}$,
thus we may assume that $C$ has no $\g{2}{6}$, hence no $\g{3}{8}$
(Serre adjoint to a $\g{2}{6}$). Similarly, we can assume $C$ has no
$\g{1}{3}$ (as twice a $\g{1}{3}$ is a $\g{2}{6}$), whence $|A|$ is
basepoint free. Furthermore, the Serre adjoint $A'$ of $A$ is of type
$\g{4}{10}$ and is very ample as there is no $\g{3}{8}$. Hence $|A'|$
exhibits $C$ as degree $10$ curve in $\mathbb{P}^4$. This embedding of
$C$ has 8 trisecant lines by the Berzolari formula
$$\# \{ \text{trisecant lines to $C$}\}=
\frac{(d-2)(d-3)(d-4)}{6}-g(d-4),$$
where $g$ is the genus of $C$ and $d$ is the degree of $C$ in
$\mathbb{P}^4$, see \cite{barz}. Projecting from one of the trisecant
lines gives a $\g{2}{7}$.
\end{proof}

\begin{prop}\label{genus 9 exception}
	Every Brill--Noether special curve of genus $9$ has a $\g{1}{5}$.
\end{prop}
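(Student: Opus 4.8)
The plan is to show that a smooth curve $C$ of genus $9$ carrying a $\g{2}{7}$ also carries a $\g{1}{5}$, which suffices because every Brill--Noether special curve of genus $9$ lies in one of the two expected maximal loci $\mathcal{M}^1_{9,5}$ and $\mathcal{M}^2_{9,7}$, so what is needed is the containment $\mathcal{M}^2_{9,7}\subseteq\mathcal{M}^1_{9,5}$. As in the proofs of \Cref{genus 7 exception} and \Cref{genus 8 exception}, I would argue via a plane model. Fix a degree $7$ line bundle $A$ with a three-dimensional subspace $V\subseteq H^0(C,A)$, let $B\subseteq A$ be the base-point-free moving part of $V$, of some degree $d\le 7$, and let $\phi\colon C\to\mathbb{P}^2$ be the morphism defined by $V$, with image the integral curve $\Gamma=\phi(C)$. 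Since the sections defining $\phi$ are linearly independent, $\Gamma$ is nondegenerate, so $\deg\Gamma\ge 2$, and $\deg\phi\cdot\deg\Gamma=d\le 7$.

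Suppose first that $\phi$ is birational onto $\Gamma$. Then $\Gamma$ is an integral plane curve of degree $d$ with arithmetic genus $\binom{d-1}{2}$, and since its geometric genus equals $9$ we must have $\binom{d-1}{2}\ge 9$, forcing $d\in\{6,7\}$. In either case the $\delta$-invariant $\delta(\Gamma)=\binom{d-1}{2}-9\ge 1$ is positive, so $\Gamma$ has a singular point $p$, necessarily of multiplicity $m\ge 2$. Projection of $\Gamma$ from $p$ is a rational map $\Gamma\dashrightarrow\mathbb{P}^1$ of degree $d-m$, and pulling back through the normalization $C\to\Gamma$ yields a $\g{1}{k}$ on $C$ with $k\le d-m\le 7-2=5$; adding base points produces a $\g{1}{5}$.

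Suppose instead that $\phi$ is not birational onto $\Gamma$, so $\deg\phi\ge 2$. Combined with $\deg\Gamma\ge 2$ and $\deg\phi\cdot\deg\Gamma\le 7$, the only possibilities are $(\deg\phi,\deg\Gamma)\in\{(2,2),(2,3),(3,2)\}$. In each case $\Gamma$ is a conic or an integral plane cubic, so its normalization $\widetilde{\Gamma}$ has genus at most $1$, and the induced map $C\to\widetilde{\Gamma}$ has degree $2$ or $3$. If $\widetilde{\Gamma}\cong\mathbb{P}^1$ this gives a $\g{1}{2}$ or a $\g{1}{3}$ on $C$; if $\widetilde{\Gamma}$ is elliptic, composing the degree $2$ map $C\to\widetilde{\Gamma}$ with a degree $2$ map $\widetilde{\Gamma}\to\mathbb{P}^1$ gives a $\g{1}{4}$. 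In every case $C$ has a $\g{1}{k}$ with $k\le 5$, hence a $\g{1}{5}$, completing the argument. (The cases $d\le 5$ with $\phi$ birational cannot occur, since then $\binom{d-1}{2}<9$ would contradict that the geometric genus is $9$.)

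The argument uses no deep input beyond the genus formula for integral plane curves and the behavior of a plane curve under projection from a point; the main work — and the only place where care is required — is the bookkeeping when the $\g{2}{7}$ is not very ample, namely ensuring that the image $\Gamma$ is nondegenerate so that $\deg\Gamma\ge 2$, and correctly enumerating the possible pairs $(\deg\phi,\deg\Gamma)$. I do not expect a genuine obstacle beyond this routine case analysis.
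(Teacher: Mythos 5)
Your proof is correct and follows essentially the same route as the paper: a plane model given by the $\g{2}{7}$, the arithmetic genus bound for a plane curve of degree $\le 7$, and projection from a singular point to produce the pencil. The only difference is bookkeeping: the paper dispatches the degenerate cases in one line (if the $\g{2}{7}$ is not very ample, subtracting two suitable points already yields a $\g{1}{5}$), whereas you handle base points and non-birational maps through the explicit case analysis on $(\deg\phi,\deg\Gamma)$.
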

\begin{proof}
The expected maximal Brill--Noether loci in genus $9$ are
$\mathcal{M}^{1}_{ 9,5 }$ and $\mathcal{M}^{2}_{9 ,7 }$.  We will
show, similarly to \Cref{genus 7 exception}, that every smooth genus
$9$ curve with a $\g{2}{7}$ has a $\g{1}{5}$. Let
$\phi:C\to\mathbb{P}^2$ be the map given by the $\g{2}{7}$. If the
$\g{2}{7}$ is not very ample, then $C$ has a $\g{1}{5}$. Thus we can
assume $\phi$ is a nondegenerate embedding, so that $\phi(C)$ is a
plane curve of degree $7$, so has arithmetic genus $15$. Hence
$\phi(C)$ must have a singular point of multiplicity $\geq 2$.
Projecting from this point gives a $\g{1}{k}$ for $k \leq 5$, hence a
$\g{1}{5}$.  
\end{proof}

\begin{remark}
The constructions in genus $7$--$9$ rely on projections from secant
linear spaces.  Given a very ample linear system of type $\g{r}{d}$
defining an embedding $C \to \mathbb{P}^r$ of degree $d$, if $C$
admits a $k$-secant $l$-dimensional linear subspace of $\mathbb{P}^r$,
then projection from that linear subspace results in a
$\g{r-l-1}{d-k}$.  The expected dimension of the space of
$l$-dimensional linear spaces of $\mathbb{P}^r$ that are $k$-secant to
$C$ is classically known to be $k - (k-l-1)(r-l)$, see
\cite{farkas:secant}.  Secant linear spaces for which this expected
dimension is nonnegative (resp.\ negative) are called \emph{expected}
(resp.\ \emph{unexpected}).  When the expected dimension is 0, there
are unwieldy enumerative formulas for the expected number of such
secant linear spaces generalizing the Berzolari formula, see
\cite[VIII.4]{ACGH}.  We have checked that the only cases when an
expected maximal $\g{r}{d}$ (or its Serre adjoint) admits an expected
$k$-secant $l$-dimensional linear space and such that the associated
$\g{r-l-1}{d-k}$ is also Brill--Noether special (and not Serre
adjoint to a $\g{r}{d}$) are the three cases discussed
above in genus $7$--$9$.  Thus no additional unexpected containments
of expected maximal Brill--Noether loci can arise from expected secant
linear spaces.  Unexpected secant linear spaces could potentially give
rise to other unexpected containments, but these should not exist if
we believe various versions of the Donagi--Morrison conjecture for expected maximal
Brill--Noether special linear systems, see
\cite[Theorem~1.4]{Lelli_Chiesa_2015}.
\end{remark}

\subsection{Genus \texorpdfstring{$10$--$13$}{}}\label{Section Lower Genus}

We first establish a few useful lemmas which, in effect, say that if $\Pic(S)=\langle H,L \rangle$ looks like it is obtained by lifting a $\g{r}{d}$ on $C \in |H|$ to a line bundle $L$, then $L$ is in fact a lift of a $\g{r}{d}$. Moreover, for these lifts, we would like the line bundle to be basepoint free, which is true if the $\g{r}{d}$ is primitive. In particular, our next lemma shows that if a curve $C$ on a K3 surface strictly contains a Brill--Noether special linear system, then it is primitive.

\begin{lemma}\label{strict BN special implies primitive}
	Let $(S,H)$ be a polarized K3 surface of genus $g$, $C\in|H|$ a smooth connected curve, and $A\in\Pic(C)$ be a line bundle of type $\g{r}{d}$. Suppose that $\rho(g,r,d)<0$ and $C$ has no Brill--Noether special linear series of Clifford index smaller than $A$. Then $A$ is primitive.
\end{lemma}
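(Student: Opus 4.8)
The plan is to argue by contrapositive: assuming $A$ is not primitive, I will exhibit a Brill--Noether special line bundle on $C$ of strictly smaller Clifford index, contradicting the hypothesis. It is convenient to first record the identity $\rho(g,r,d)=g-h^0(C,A)\,h^1(C,A)$ (write $r+1=h^0$ and $g-d+r=h^1$), so that $\rho(g,r,d)<0$ is equivalent to $h^0(C,A)\,h^1(C,A)>g$; since $\rho<0$ forces $h^1(C,A)\ge 1$, hence $\deg A\le 2g-2$, and then $h^0$ and $h^1$ are each at most $g$, this already forces $h^0(C,A)\ge 2$ and $h^1(C,A)\ge 2$. The same observation shows that \emph{every} line bundle $L$ on $C$ with $\rho(g,r(L),\deg L)<0$ automatically satisfies $h^0(L)\ge 2$ and $h^1(L)\ge 2$; hence any such $L$ with $\gamma(L)<\gamma(A)$ may be invoked against the hypothesis without further checks.

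First I would treat the case that $|A|$ is not basepoint free. Writing $B>0$ for the base divisor, of degree $b\ge 1$, and $A_0=A(-B)$ for the moving part, $A_0$ is a complete basepoint free $\g{r}{d-b}$ with $\gamma(A_0)=\gamma(A)-b<\gamma(A)$ and $\rho(g,r,d-b)=\rho(g,r,d)-(r+1)b<0$. Thus $A_0$ is a Brill--Noether special linear series of smaller Clifford index, contradicting the hypothesis; so $|A|$ must be basepoint free.

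It remains to show $\omega_C\otimes A^\vee$ is basepoint free. Suppose not, and set $A^*=\omega_C\otimes A^\vee$; by Serre duality and Riemann--Roch this is a complete $\g{r'}{d'}$ with $d'=2g-2-d$, $r'=h^1(C,A)-1$, and $h^i(C,A^*)=h^{1-i}(C,A)$, so $\rho(g,r',d')=g-h^0(C,A^*)h^1(C,A^*)=g-h^0(C,A)h^1(C,A)=\rho(g,r,d)<0$ and $\gamma(A^*)=d'-2r'=d-2r=\gamma(A)$. Now apply the previous paragraph to $A^*$: its base divisor has some degree $b^*\ge 1$, and its moving part $A_0^*$ is a complete $\g{r'}{d'-b^*}$ with $\rho(g,r',d'-b^*)=\rho(g,r',d')-(r'+1)b^*<0$ and $\gamma(A_0^*)=\gamma(A^*)-b^*=\gamma(A)-b^*<\gamma(A)$, again contradicting the hypothesis. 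Hence both $A$ and $\omega_C\otimes A^\vee$ are basepoint free, i.e.\ $A$ is primitive.

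I do not anticipate a genuine obstacle here: the content is entirely the bookkeeping of how $\rho$, $h^0$, $h^1$, and the Clifford index transform under removing a base divisor and under Serre duality, together with the elementary identity $\rho(g,r,d)=g-h^0h^1$, which is precisely what makes the condition $\rho<0$ already imply ``contributes to the (special) Clifford index'' and lets the minimality hypothesis on $\gamma(A)$ be applied uniformly to both $A$ and its Serre adjoint.
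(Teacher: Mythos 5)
Your proof is correct and follows essentially the same route as the paper's: remove the base locus of $A$ (resp.\ of its Serre adjoint $\omega_C\otimes A^\vee$) to produce a Brill--Noether special linear series of strictly smaller Clifford index, using that $\gamma$ and $\rho$ are preserved under Serre duality and both drop when a base divisor is subtracted. The only cosmetic differences are that you strip the whole base divisor at once rather than one point at a time and you spell out the identity $\rho=g-h^0h^1$ to check $h^0,h^1\ge 2$; neither changes the argument.
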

\begin{proof}
	We note that $\gamma(\omega_C\otimes A^\vee)=\gamma(A)$, $\rho(A)=\rho(\omega_C\otimes A^\vee)$, $\gamma(A-P)< \gamma(A)$ when $P$ is a basepoint of $A$, and $\rho(g,r,d-1)<\rho(g,r,d)$. Suppose $A$ has a basepoint $P$. Then $A-P$ has strictly smaller Clifford index and is Brill--Noether special. By assumption, $C$ cannot be in the linear series $|A-P|$. Thus $A$ is basepoint free. Likewise, if $\omega_C\otimes A^\vee$ has a basepoint $P$, then $\omega_C\otimes A^\vee -P$ is Brill--Noether special and has smaller Clifford index, which cannot be the case.
\end{proof}

Parts of the following Lemma go back to Farkas in \cite{Farkas_2001} and Rathmann's Theorem (see \cite{Knutsen_2002,Rathmann}).

\begin{lemma}\label{lemma max gon and Cliff dim 1}
	Let $(S,H)$ be a polarized K3 surface of genus $g$ in the Noether--Lefschetz divisor $\mathcal{K}^r_{g,d}$, i.e., with $\Pic(S)$ admitting a primitive embedding of the sublattice \[\Lambda^r_{g,d} = \begin{array}{c|cc}
		\multicolumn{1}{c}{} & H & L \\\cline{2-3}
		H & 2g-2 &d \\
		L &d & 2r-2
	\end{array}\] 
	Let $C\in |H|$ be a smooth irreducible curve. 
	\begin{enumerate}[label=(\roman*)]
		\item If $\Pic(S)=\Lambda^r_{g,d}$ and $2\le r,d\le g-1$, then $L$ is nef. 
		\item If $L$ and $H-L$ are basepoint free, $r\ge 2$, and $0< d \le g-1$, then $L\otimes\mathcal{O}_C$ is a $\g{r}{d}$. (The assumption on basepoint free-ness is achieved if for example $S$ has no $(-2)$-curves, or can be checked numerically.)
		\item Suppose that $L\otimes\mathcal{O}_C$ is a $\g{r}{d}$ with $\gamma(r,d)>\lfloor \frac{g-1}{2}\rfloor$ and $\rho(g,r,d)<0$ and that all lattices obtained by lifting special linear systems of general Clifford index or lower cannot be contained in $\Pic(S)$. Then $C$ has Clifford index $\gamma(C)=\lfloor\frac{g-1}{2}\rfloor$, maximal gonality $\lfloor\frac{g+3}{2}\rfloor$, and Clifford dimension $1$. 
		\item If $\Pic(S)=\Lambda^r_{g,d}$ is associated to an expected maximal $\g{r}{d}$, then the assumption on lattices in (iii) holds.
		\item Suppose that $\gamma(r,d)\le \lfloor \frac{g-1}{2}\rfloor$, $\rho(g,r,d)<0$, and that all lattices obtained by lifting special linear systems $A$ not of type $\g{r}{d}$ with $\gamma(A)\le\lfloor\frac{g-1}{2}\rfloor$ cannot be contained in $\Pic(S)$. Then $L\otimes\mathcal{O}_C$ is a $\g{r}{d}$ and $\gamma(C)=\gamma(r,d)$.
	\end{enumerate}   
\end{lemma}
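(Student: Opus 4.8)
The plan is to prove the five parts in order, using throughout the Green--Lazarsfeld constancy of $\gamma(C)$ on $|H|$ \cite{GreenLaz} and Lelli-Chiesa's theorem \cite[Theorem 4.2]{Lelli_Chiesa_2015} that a line bundle computing the Clifford index of $C$ lifts to a line bundle on $S$ adapted to $|H|$. For \emph{(i)}: since $L^2 = 2r-2 \ge 2 > 0$ and $L.H = d > 0$ with $H$ ample, Riemann--Roch together with $h^2(S,L) = h^0(S,-L) = 0$ shows $L$ is effective; if $L$ were not nef some irreducible curve $\Gamma$ would have $L.\Gamma < 0$, forcing $\Gamma^2 = -2$ and, since $\Gamma$ is then a fixed component of $|L|$, $0 < \Gamma.H < d$. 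Writing $\Gamma = aH + bL$ and noting that $\Pic(S) = \Lambda^r_{g,d}$ has signature $(1,1)$, hence $\Delta(g,r,d) < 0$, an elementary (if somewhat tedious) analysis of the finitely many $(-2)$-classes with $0 < \Gamma.H < d$, combined with the bounds $2 \le r,d \le g-1$, contradicts $\Gamma.L < 0$. For \emph{(ii)}: $\deg(L\otimes\mathcal{O}_C) = L.H = d$; tensoring the structure sequence of $C \subset S$ by $L$ gives $0 \to L-H \to L \to L\otimes\mathcal{O}_C \to 0$ with $h^0(S,L-H) = 0$ because $(L-H).H < 0$, and with $h^1(S,L-H) = h^1(S,H-L) = 0$ because $H-L$ is basepoint free and $(H-L)^2 = 2(g-1-d) + 2(r-1) > 0$ (here $d \le g-1$ and $r \ge 2$), using \cite{Saint_Donat_Proj_Models_of_K3s}. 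Hence $h^0(C,L\otimes\mathcal{O}_C) = h^0(S,L) = \chi(S,L) = r+1$, the last equality because $L$ basepoint free with $L^2 > 0$ gives $h^1(S,L) = 0$; that basepoint-freeness holds automatically when $S$ has no $(-2)$-curve is the usual consequence of \emph{(i)} and \cite{Saint_Donat_Proj_Models_of_K3s}.

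For \emph{(iii)} and \emph{(iv)}: suppose $C$ fails to have general Clifford index or has Clifford dimension $\ge 2$. Then $C$ carries a Brill--Noether special line bundle $B$ computing $\gamma(C)$, with $\gamma(B) = \gamma(C) \le \lfloor\frac{g-1}{2}\rfloor$; indeed a computing pencil is Brill--Noether special because then the gonality of $C$ equals $\gamma(C)+2 \le \lfloor\frac{g+1}{2}\rfloor$, so $\rho(g,1,\gamma(C)+2) < 0$, and the curves of Clifford dimension $\ge 2$ are checked directly. By \cite[Theorem 4.2]{Lelli_Chiesa_2015}, $B$ lifts to $M \in \Pic(S)$ adapted to $|H|$ with $\gamma(M\otimes\mathcal{O}_C) \le \gamma(B) \le \lfloor\frac{g-1}{2}\rfloor$, so $\langle H, M\rangle \subset \Pic(S)$ is a lattice obtained by lifting a special linear system of Clifford index $\le \lfloor\frac{g-1}{2}\rfloor$, contradicting the hypothesis of \emph{(iii)}. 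Hence $\gamma(C) = \lfloor\frac{g-1}{2}\rfloor$, $C$ has Clifford dimension $1$, and so gonality $\gamma(C)+2 = \lfloor\frac{g+3}{2}\rfloor$. Part \emph{(iv)} then follows from \Cref{prop lattices are not conatined in exp max}\,$(i)$: the lattices arising as $\langle H, M\rangle$ for such lifts are lattices $\Lambda^s_{g,e}$ with $\gamma(s,e) \le \lfloor\frac{g-1}{2}\rfloor$ and, since $H\otimes M^\vee$ also has sections, $s \le \lfloor\frac{g-1-\gamma(s,e)}{2}\rfloor$, and these cannot embed in a lattice associated to an expected maximal locus by that proposition.

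For \emph{(v)}: the Riemann--Roch estimate from \emph{(ii)} (not needing base-point freeness, only $h^2(S,L) = h^2(S,H-L) = 0$) gives $h^0(C,L\otimes\mathcal{O}_C) \ge h^0(S,L) \ge \chi(S,L) = r+1$, so $L\otimes\mathcal{O}_C$ is a $\g{r'}{d}$ with $r' \ge r$; moreover $h^1(C,L\otimes\mathcal{O}_C) = h^0(C,(H-L)\otimes\mathcal{O}_C) \ge h^0(S,H-L) \ge \chi(S,H-L) \ge 2$ and $\rho(g,r',d) \le \rho(g,r,d) < 0$, so $L\otimes\mathcal{O}_C$ is Brill--Noether special and contributes to $\gamma(C)$, whence $\gamma(C) \le \gamma(L\otimes\mathcal{O}_C) = d-2r' \le \gamma(r,d)$. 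For the reverse inequality, if $\gamma(C) < \gamma(r,d) \le \lfloor\frac{g-1}{2}\rfloor$ then, exactly as in \emph{(iii)}, a computing line bundle $B$ on $C$ is Brill--Noether special with $\gamma(B) = \gamma(C) \neq \gamma(r,d)$, hence not of type $\g{r}{d}$, and lifts to $M \in \Pic(S)$, so $\langle H, M\rangle$ is a lattice obtained by lifting a special linear system not of type $\g{r}{d}$ of Clifford index $\le \lfloor\frac{g-1}{2}\rfloor$, contradicting the hypothesis. Therefore $\gamma(C) = \gamma(r,d)$, and then $d-2r' = \gamma(L\otimes\mathcal{O}_C) \ge \gamma(C) = d-2r$ forces $r' = r$, so $L\otimes\mathcal{O}_C$ is a $\g{r}{d}$. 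The step I expect to require the most care is the invocation of \cite[Theorem 4.2]{Lelli_Chiesa_2015} in \emph{(iii)} and \emph{(v)}: one must know that the Clifford-index-computing bundle of a curve of sub-general Clifford index (or of Clifford dimension $\ge 2$) is Brill--Noether special so that the lattice hypothesis applies, which forces a careful look at the exceptional curves; the $(-2)$-curve bookkeeping in \emph{(i)} is routine but similarly fiddly.
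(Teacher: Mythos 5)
Your overall strategy coincides with the paper's: parts (ii)--(v) are argued exactly as in the text (the paper routes (ii) through \Cref{LC implies 2r-2} rather than redoing the cohomology of $0 \to L-H \to L \to L\otimes\mathcal{O}_C \to 0$ directly, but the content is identical), and your treatment of (iii)--(v) via \cite[Theorem 4.2]{Lelli_Chiesa_2015} applied to a Brill--Noether special bundle computing $\gamma(C)$, followed by \Cref{prop lattices are not conatined in exp max}~$(i)$ for (iv), is the paper's argument.

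The one genuine gap is in (i), where you assert that "an elementary (if somewhat tedious) analysis of the finitely many $(-2)$-classes with $0 < \Gamma.H < d$ \ldots contradicts $\Gamma.L<0$" without carrying it out. Writing $\Gamma = aH+bL$ with $a,b$ of opposite sign, the cases $a>0,\ b<0$ and $a\le -2,\ b>0$ do follow from routine sign bookkeeping applied to $-2 = a\,\Gamma.H + b\,\Gamma.L$, as does $a=-1$ with $\Gamma.H\ge 2$. But the residual case $a=-1$, $\Gamma.H=1$ is not settled by enumeration or sign-chasing: there one gets $b\,\Gamma.L=-1$, i.e.\ $b=1$ and $\Gamma.L=-1$, and one must rule this out by a separate arithmetic argument. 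The paper does so by computing $2g-2=(H+\Gamma)^2=b^2(2r-2)$ and $b=\tfrac{2g-1}{d}$, hence $d^2(g-1)=(2g-1)^2(r-1)$, and reducing modulo $g-1$ to force $g-1 \mid r-1$, which is incompatible with $2\le r\le g-1$. This is a specific idea rather than "fiddly bookkeeping," and your proposal as written does not contain it; everything else is correct.
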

\begin{proof}
	To prove $(i)$ we show that for any $(-2)$-curve $\Gamma=aH+bL\in\Lambda^r_{g,d}$, we have $\Gamma.L\ge 0$. We note that as $\Gamma$ is a $(-2)$-curve, $a$ and $b$ must have opposite sign. We prove (i) in three cases. 
	
	First suppose $a>0$ and $b<0$. Then as $\Gamma.H\ge 1$ and $a>0$, we have $b\Gamma.L\le -2$, thus as $b<0$, $\Gamma.L\ge 0$.

	Second, suppose $a<-1$ and $b>0$. Then since $\Gamma.H\ge 1$, we have $a\Gamma.H\le -2$. Thus $b.\Gamma.L\ge 0$, and since $b>0$ we must have $\Gamma.L\ge 0$.
	
	Lastly, suppose $a=-1$ and $b>0$. We see that if $\Gamma.H\ge2$, then we can follow the same argument as above to see that $L$ is nef. Thus the only remaining case is when $a=-1$ and $\Gamma.H=1$. We calculate $2g-2=(H+\Gamma)^2=(bL)^2=b^2(2r-2)$, hence $b^2=\frac{g-1}{r-1}\in\mathbb{Z}$. From $\Gamma.H=1$, we see $b=\frac{2g-1}{d}$, and plugging this in to $2g-2=b^2(2r-2)$ yields $$d^2(g-1)=(2g-1)^2(r-1).$$ Looking modulo  $g-1$, we immediately see that $r-1\equiv 0\mod g-1$, hence $\frac{r-1}{g-1}\in\mathbb{Z}$, and thus $r=g$, which is a contradiction. Thus $L$ is always nef. 
	
	To prove $(ii)$, we note that $L$ is clearly a lift of a $\g{r^\prime}{d}$ on $C$ for some $r^\prime\ge 0$. Since $0<d\le g-1$, we see that $L^2, (H-L)^2>0$. Furthermore, since $H.L,H.(H-L)>0$, both these line bundles are nontrivial and intersect $H$ positively, hence $h^0(S,L),h^0(S,H-L)\ge 2$. By assumption, $L$ and $H-L$ are basepoint free, and thus globally generated. Therefore \Cref{LC implies 2r-2} applies. Thus, as $L^2=2r-2$, we see that $L\otimes\mathcal{O}_C$ must be a divisor of type $\g{r}{d}$. Hence $(ii)$ is proved. 
	
	To prove $(iii)$, we note that a $\g{1}{d^\prime}$ with $\rho(g,1,d^\prime)<0$ has Clifford index $\gamma(\g{1}{d^\prime})<\lfloor\frac{g-1}{2}\rfloor$. Suppose for contradiction that $C$ has lower than general Clifford index. Then by \cite[Theorem 4.2]{Lelli_Chiesa_2015} we would be able to lift some special linear system computing $\gamma(C)$ to a divisor $L^\prime\in\Pic(S)$, and by assumption $\langle H,L^\prime\rangle$ cannot be contained in $\Pic(S)$. Thus $C$ has general Clifford index. The same argument shows that $C$ cannot have a special linear system computing its Clifford index. Thus $C$ has a $\g{1}{\lfloor\frac{g+3}{2}\rfloor}$ which computes the Clifford index. Hence $C$ has maximal gonality and Clifford dimension $1$. 
	
	To prove $(iv)$, we note that if $C$ had any Brill--Noether special $\g{r^\prime}{d^\prime}$ with $\gamma(\g{r^\prime}{d^\prime})\le\frac{g-1}{2}$, then it has a $\g{r}{d}$ with $\gamma=\frac{g-1}{2}$ or a $\g{1}{d}$ with $\gamma(\g{1}{d})=\frac{g-1}{2}-1$. Thus we only need to consider lattices $\Lambda^r_{g,d}$ associated to those $\g{r}{d}$. The proof is now \Cref{prop lattices are not conatined in exp max}$(i)$. Thus $(iv)$ is proved. 
	
	To prove $(v)$, we note again that $L\otimes\mathcal{O}_C$ is a $\g{r^\prime}{d}$. If $r'\neq r$, then $\gamma(C)\neq\gamma(r,d)$ and some line bundle $A$ would compute $\gamma(C)$. Thus there would exists some lift of $A$ to a line bundle $L^\prime$, but again the lattice $\langle H,L^\prime\rangle\nsubseteq \Pic(S)$. Hence $r^\prime=r$ and we see that $L\otimes\mathcal{O}_C$ is a $\g{r}{d}$. Similarly, $\gamma(C)=\gamma(r,d)$.
\end{proof}
\begin{remark}\label{Checking if L|C is a grd}
	If $\Lambda^r_{g,d}$ has a $(-2)$-curve, there are still some ways to check that $L$ and $H-L$ are basepoint free. Namely, if they are both nef, then we can check they are basepoint free by checking if there are any elliptic curves on $S$. Namely if $N\in\Pic(S)$ is nef and there are no elliptic curves, then $N$ is basepoint free by a well-known result of Saint-Donat. To numerically check if $D\in\Pic(S)$ is nef, one can check whether $D.\Gamma\ge 0$ for any $(-2)$-curve $\Gamma$.
	
	One can also check that $L\otimes\mathcal{O}_C$ is a $\g{r}{d}$ by enumerating all of the degree $d$ $\g{r^\prime}{d}$ on $C$ and using Lelli-Chiesa's lifting results to show that $\Pic(S)$ cannot have a lift of a $\g{r^\prime}{d}$ for $r^\prime\neq r$.
\end{remark}
We can now prove that the maximal Brill--Noether loci in genus
$10$--$19$, $22$, and $23$ are as predicted by
Conjecture~\ref{conjecture}. The proof in genus $10$--$13$ uses
Brill--Noether theory for curves of fixed gonality and various results
distinguishing lattices above. In genus $14$--$19$, $22$ and $23$, the
main strategy to distinguish the expected maximal Brill--Noether loci,
for example to show that $\mathcal{M}^r_{g,d}\nsubseteq
\mathcal{M}^{r^\prime}_{g,d^\prime}$, is to prove that for a very
general K3 surface $(S,H)\in\mathcal{K}^r_{g,d}$, a curve $C\in|H|$
has a $\g{r}{d}$ but not a $\g{r^\prime}{d^\prime}$. This is done by,
first, applying Lemma~\ref{lemma max gon and Cliff dim 1} to deduce
that $C$ has a $\g{r}{d}$, and second, assuming that $C$ has a
$\g{r^\prime}{d^\prime}$ and then using various lifting results to
produce a line bundle $M$ on $S$ that is numerically incompatible with
$\Pic(S)$. 

For the rest of the section, we summarize the various arguments,
organized by genus.

In low genus, where there are no non-computing linear systems, we
argue by the Clifford index of $C$ and can assume that a $\g{r}{d}$
computes the Clifford index of $C\in|H|$. Then Lelli-Chiesa's
lifting results \cite{Lelli_Chiesa_2015} suffice to verify \Cref{conjecture} in genus $10$--$13$.
\begin{prop}\label{theorem low genus in paper}
	For any $10\le g\le13$ and any positive integers $r,d,r',d'$ such that 
	\begin{itemize}
		\item $r^\prime\ge 2$,
		\item $\rho(g,r,d),\rho(g,r',d')<0,$
		\item $\Delta(g,r,d),\Delta(g,r',d')<0,$ and
		\item $2<\gamma(r',d')\leq\gamma(r,d)\le \lfloor\frac{g-1}{2}\rfloor$,
	\end{itemize} there is a polarized K3 surface $(S,H)\in\mathcal{K}^r_{g,d}$ such that a curve $C\in |H|$ admits a $\g{r}{d}$ but not a $\g{r'}{d'}$. Thus $\mathcal{M}^r_{g,d}\nsubseteq\mathcal{M}^{r'}_{g,d'}$.
\end{prop}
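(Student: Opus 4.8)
The plan is to realize each non-containment on a concrete K3 surface, combining a lifting result for line bundles on curves in K3 surfaces with a lattice-theoretic obstruction, exactly as in the program of \Cref{subsection Philosophy}, but with the (open) Donagi--Morrison conjecture replaced by Lelli-Chiesa's unconditional lifting theorem \cite[Theorem 4.2]{Lelli_Chiesa_2015} for linear systems that compute the Clifford index. Fix a pair $(r,d),(r',d')$ as in the statement and write $\gamma\colonequals\gamma(r,d)$. Since $\Delta(g,r,d)<0$, the Torelli theorem for polarized K3 surfaces produces a surface $(S,H)$ with $\Pic(S)=\Lambda^r_{g,d}$; by \Cref{lemma max gon and Cliff dim 1}(i)--(ii) (using \Cref{Checking if L|C is a grd} to settle base-point-freeness of $L$ and $H-L$) a general smooth $C\in|H|$ carries a $\g{r}{d}$, namely $L\otimes\mathcal{O}_C$, so it remains to produce such a $C$ with no $\g{r'}{d'}$.

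First consider the main case, where the lattice hypothesis of \Cref{lemma max gon and Cliff dim 1}(v) holds for $\Lambda^r_{g,d}$; this is checked, for each $10\le g\le 13$, by the discriminant comparisons of \Cref{prop lattices are not conatined in exp max}. Then $\gamma(C)=\gamma$. Suppose $C$ admits a $\g{r'}{d'}$; after completing the system and, if necessary, passing to its Serre adjoint, I may take it to be a complete line bundle $A$ with $\deg A=d'\le g-1$. As $r'\ge 2$ we have $h^0(C,A)\ge 2$, and Riemann--Roch gives $h^1(C,A)=g-d'+r'\ge 3$, so $A$ contributes to $\gamma(C)$; together with $\gamma(r',d')\le\gamma$ this forces $\gamma(A)=\gamma(C)=\gamma$, i.e. $A$ computes the Clifford index. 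Moreover every Brill--Noether special linear series on $C$ has $h^1\ge 2$ (a system with $h^1\le 1$ has $\rho\ge 0$), hence contributes to $\gamma(C)$ and has Clifford index at least $\gamma$, so $C$ has no Brill--Noether special linear series of smaller Clifford index than $A$, and \Cref{strict BN special implies primitive} shows $A$ is primitive. Then \cite[Theorem 4.2]{Lelli_Chiesa_2015} yields a line bundle $M\in\Pic(S)$ adapted to $|H|$ with $|A|$ contained in the restriction of $|M|$ to $C$ and $\gamma(M\otimes\mathcal{O}_C)\le\gamma(A)$; adaptedness makes $M\otimes\mathcal{O}_C$ contribute to $\gamma(C)$, so $\gamma(M\otimes\mathcal{O}_C)=\gamma$. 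Thus $\langle H,M\rangle$ embeds primitively in $\Pic(S)=\Lambda^r_{g,d}$ preserving $H$ and is associated to a special system of Clifford index $\gamma\le\lfloor\frac{g-1}{2}\rfloor$, so by the lattice check $\langle H,M\rangle\cong\Lambda^r_{g,d}$; hence $M$ is $L$ or $H-L$ and $M\otimes\mathcal{O}_C$ is the complete $\g{r}{d}$ on $C$ or its Serre adjoint. If $(r',d')\neq(r,d)$, then since the Clifford indices agree, $A$ is a \emph{proper} sub-linear system of $M\otimes\mathcal{O}_C$, obtained by twisting down by a nonzero effective divisor $E$; this makes $E$ fail to impose the expected number of conditions on $|M\otimes\mathcal{O}_C|$, i.e. forces an unexpected secant linear space to $C$ in the embedding by $M\otimes\mathcal{O}_C$. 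A finite computation in genus $10$--$13$ (or the very-generality of $(S,H)$, so that $\Lambda^r_{g,d}$ carries no $(-2)$-curve and no low-degree pencil) rules this out. This contradiction shows $C$ has no $\g{r'}{d'}$.

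The remaining pairs are those for which the lattice hypothesis of \Cref{lemma max gon and Cliff dim 1}(v) fails for $\Lambda^r_{g,d}$: some class $D\in\Lambda^r_{g,d}$ of small self-intersection restricts to a low-degree pencil on every $C\in|H|$, so $C$ is pinned to some gonality $k$. For these finitely many cases one argues instead with Brill--Noether theory for curves of fixed gonality \cite{Pflueger_2017,Larson_Larson_Vogt_fixed_gonality}: one bounds the refined Brill--Noether number $\rho_k$ to show that a general $k$-gonal curve carrying a $\g{r}{d}$ has no $\g{r'}{d'}$ (compare \Cref{Prop submax gonality not contained in noncomputing locus}), producing if necessary a curve on a suitable K3 surface whose special systems are all forced by the Picard lattice. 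Each such case is verified individually.

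The argument can stall at precisely two points, and these are the hard part. In the main case, \cite[Theorem 4.2]{Lelli_Chiesa_2015} controls only the \emph{Clifford index} of the lift $M$, not its degree, so after the lattice argument one must still exclude that the $\g{r'}{d'}$ occurs as a proper sub-system of the $\g{r}{d}$ — equivalently, rule out unexpected secant planes on the very general curve, the exact phenomenon isolated in the remark following the genus $7$--$9$ discussion. In the exceptional case one relies on the (intricate) Brill--Noether theory of curves of bounded gonality. Both are tractable in genus $10$--$13$ only because finitely many triples $(r,d)$ occur and the relevant enumerative bounds are small, which is why the method does not yet run uniformly in higher genus.
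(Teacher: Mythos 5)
Your setup tracks the paper's proof closely: very general $(S,H)\in\mathcal{K}^r_{g,d}$, the lattice non-containment of \Cref{prop lattices are not conatined in exp max}, \Cref{lemma max gon and Cliff dim 1}(v) to get the $\g{r}{d}$ and $\gamma(C)=\gamma(r,d)$, the observation that a putative $\g{r'}{d'}$ must compute the Clifford index, and then \cite[Theorem 4.2]{Lelli_Chiesa_2015} to produce a Donagi--Morrison lift $M$ that the lattice forces to be of type $\g{r}{d}$. But the proof breaks at exactly the point you yourself flag as ``the hard part'': when the lift is lattice-compatible, i.e.\ of type $\g{r}{d}$, you propose to derive a contradiction by saying the $\g{r'}{d'}$ sits inside the restricted $\g{r}{d}$ as a proper subsystem, hence forces an unexpected secant plane, which ``a finite computation \dots rules out.'' That computation is never performed, and the parenthetical alternative --- that very-generality of $(S,H)$, absence of $(-2)$-curves, or absence of low-degree pencils rules out unexpected secant planes --- is not a valid inference: unexpected secant behaviour of the embedding $C\hookrightarrow\mathbb{P}^r$ is not controlled by the $(-2)$-classes or pencil classes of $\Pic(S)$. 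So the one case that actually requires an argument is left open.

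The paper closes this case by a different mechanism. If $M$ is of type $\g{r}{d}$, then writing $N=H\otimes M^\vee$, the quotient $E_{C,A}/N$ of the Lazarsfeld--Mukai bundle is a generalized LM bundle with $\gamma(E_{C,A}/N)=\gamma(A)+N^2-H.N+2=\gamma(r',d')-\gamma(r,d)=0$ (\Cref{coker is gLM} and the remark following it), and \Cref{LC Cor 2.5} classifies Clifford-index-zero gLM bundles into three explicit types; one then checks (as spelled out in \Cref{genus 16 theorem}) that each type is incompatible with $\Pic(S)=\Lambda^r_{g,d}$ --- e.g.\ the hyperelliptic case would require an irreducible curve of genus $1$ or $2$ on $S$, hence a class of square $0$ or $2$, which the lattice does not contain. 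This is the step your argument is missing; replacing your secant-plane heuristic with this gLM classification would complete the main case. Separately, your ``exceptional case'' where the hypothesis of \Cref{lemma max gon and Cliff dim 1}(v) fails does not arise in the paper's treatment (the discriminant comparison of \Cref{prop lattices are not conatined in exp max}(i) handles genus $10$--$13$ uniformly), and your proposed fixed-gonality workaround for it is likewise only a sketch.
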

\begin{proof}
	First assume that $r^\prime \ge 2$. We let $(S,H)\in\mathcal{K}^r_{g,d}$ be a very general and $C\in|H|$ a smooth irreducible curve of genus~$g$. As in \Cref{prop lattices are not conatined in exp max} ($i$), no lattices obtained by lifting special linear systems on $C$ can be contained in $\Pic(S)$. By \Cref{lemma max gon and Cliff dim 1} ($v$) we see that $L\otimes\mathcal{O}_C$ is a $\g{r}{d}$ and $\gamma(C)=\gamma(r,d)$. We suppose for contradiction that $C$ admits a $\g{r^\prime}{d^\prime}$. We cannot have $\gamma(r^\prime,d^\prime)<\gamma(r,d)$, as then the $\g{r}{d}$ does not compute the Clifford index of $C$. Hence $\gamma(r^\prime,d^\prime)=\gamma(r,d)$. But now \cite[Theorem 4.2]{Lelli_Chiesa_2015} shows that we have a Donagi--Morrison lift $M\in\Pic(S)$ of the $\g{r^\prime}{d^\prime}$, and by \Cref{prop lattices are not conatined in exp max} ($i$) again, we see that $\langle H,M\rangle \nsubseteq \Pic(S)$ unless the Donagi--Morrison lift of the $\g{r^\prime}{d^\prime}$ is of type $\g{r}{d}$, which only occurs when $r, r^\prime\ge 2$. In this case, the Lazarsfeld--Mukai bundle $E_{C,\g{r^\prime}{d^\prime}}$ has a quotient $E$ with $\gamma(E)=0$, and one checks that none of the cases of \Cref{LC Cor 2.5} can occur (for a detailed computation see \Cref{genus 16 theorem}). Thus $C$ cannot admit a $\g{r^\prime}{d^\prime}$.	
\end{proof}
\begin{remark}
	When $r^\prime=1$, case (a) of \Cref{LC Cor 2.5} can occur, hence we assume $r^\prime\ge 2$. In fact, in genus $11$, $\mathcal{M}^1_{11,5}\subseteq \mathcal{M}^2_{11,9}$. However, for expected maximal loci, the codimensions of the expected maximal $\mathcal{M}^1_{g,k}$ and $\mathcal{M}^2_{g,d}$ loci rule out similar containments. 
\end{remark}

\begin{cor}
	In genus $10$--$13$, \Cref{conjecture} holds.
	The maximal Brill--Noether loci
	\begin{itemize}
		\item in genus $10$ are $\mathcal{M}^{1}_{ 10,5 }$ and $\mathcal{M}^{2}_{10 ,8 }$;
		\item in genus $11$ are $\mathcal{M}^{1}_{ 11, 6}$ and $\mathcal{M}^{2}_{ 11, 9}$;
		\item in genus $12$ are $\mathcal{M}^{1}_{ 12,6 }$, $\mathcal{M}^{2}_{ 12,9 }$, and $\mathcal{M}^{3}_{12 ,11 }$;
		\item in genus $13$ are $\mathcal{M}^{1}_{ 13, 7}$, $\mathcal{M}^{2}_{ 13,10 }$, and $\mathcal{M}^{3}_{13 ,12 }$.
	\end{itemize}
\end{cor}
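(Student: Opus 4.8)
The plan is to first record, by an elementary computation with the Brill--Noether number $\rho$ and the Clifford index $\gamma(r,d)=d-2r$, that the loci displayed are exactly the expected maximal Brill--Noether loci in each of these genera: for each $r$ one checks that the listed $d$ is the largest value with $\rho(g,r,d)<0$ and $\rho(g,r-1,d-1)\ge 0$, and that these exhaust the expected maximal loci (there is none of larger rank). Since every Brill--Noether locus is contained in an expected maximal one, by the trivial containments recalled in the introduction, it then suffices to prove that the expected maximal loci in a fixed genus are pairwise incomparable under containment; this makes them precisely the maximal elements of the containment lattice and yields \Cref{conjecture} in genus $10$--$13$. I would organize the pairwise comparisons by the ranks involved.

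For two expected maximal loci $\mathcal{M}^r_{g,d}$ and $\mathcal{M}^{r^\prime}_{g,d^\prime}$ with $r,r^\prime\ge 2$, I would apply \Cref{theorem low genus in paper} in both directions. One verifies in genus $10$--$13$ that each such locus lies under the Hodge parabola, i.e.\ $\Delta(g,r,d)<0$, and that $2<\gamma(r^\prime,d^\prime)\le\gamma(r,d)\le\lfloor\frac{g-1}{2}\rfloor$: in genus $10$ and $11$ there is only one expected maximal locus with $r\ge 2$, so this case is vacuous, while in genus $12$ and $13$ the two loci $\mathcal{M}^2$ and $\mathcal{M}^3$ have equal Clifford index $\lfloor\frac{g-1}{2}\rfloor$, so the hypotheses of \Cref{theorem low genus in paper} hold with the roles of $(r,d)$ and $(r^\prime,d^\prime)$ interchanged. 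Hence \Cref{theorem low genus in paper} produces a polarized K3 surface whose general curve carries a $\g{r}{d}$ but no $\g{r^\prime}{d^\prime}$, and conversely; alternatively, one of these non-containments in genus $12$ and $13$ already follows on comparing the codimensions $-\rho$.

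It remains to compare the maximal-gonality locus $\mathcal{M}^1_{g,\lfloor\frac{g+1}{2}\rfloor}$ with each expected maximal $\mathcal{M}^r_{g,d}$ with $r\ge 2$; this is precisely the range not covered by \Cref{theorem low genus in paper}, which requires $r^\prime\ge 2$. The non-containment $\mathcal{M}^1_{g,\lfloor\frac{g+1}{2}\rfloor}\nsubseteq\mathcal{M}^r_{g,d}$ is the case $9\le g<14$ of \Cref{Prop submax gonality not contained in noncomputing locus} (and in genus $12,13$ also follows by comparing codimensions). For the reverse direction I would take a very general $(S,H)\in\mathcal{K}^r_{g,d}$; exactly as in the proof of \Cref{theorem low genus in paper}, \Cref{prop lattices are not conatined in exp max}(i) together with \Cref{lemma max gon and Cliff dim 1}(v) shows that a smooth $C\in|H|$ carries a $\g{r}{d}$ and has $\gamma(C)=\gamma(r,d)$. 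Since one checks $\gamma(r,d)>\lfloor\frac{g+1}{2}\rfloor-2=\gamma\bigl(1,\lfloor\frac{g+1}{2}\rfloor\bigr)$ in each of these genera, and a $\g{1}{\lfloor\frac{g+1}{2}\rfloor}$ on $C$ would satisfy $h^0\ge 2$ and $h^1\ge 2$ and hence contribute to the Clifford index of $C$, the curve $C$ admits no such pencil; thus $\mathcal{M}^r_{g,d}\nsubseteq\mathcal{M}^1_{g,\lfloor\frac{g+1}{2}\rfloor}$. The main obstacle here is not any single deep step but keeping the case analysis complete: because \Cref{theorem low genus in paper} genuinely requires $r^\prime\ge 2$, the two comparisons with the maximal-gonality locus must be handled separately as above, and one must verify the numerical inequality $\gamma(r,d)>\lfloor\frac{g+1}{2}\rfloor-2$---which is what makes the Clifford-index argument run---in each of genus $10$--$13$.
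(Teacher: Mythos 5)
Your proposal is correct and follows essentially the same route as the paper, whose proof of this corollary consists of citing \Cref{theorem low genus in paper} (for the comparisons with $r,r'\ge 2$, applied in both directions) and \Cref{Prop submax gonality not contained in noncomputing locus} (for $\mathcal{M}^1_{g,\lfloor\frac{g+1}{2}\rfloor}\nsubseteq\mathcal{M}^r_{g,d}$). Your explicit handling of the remaining direction $\mathcal{M}^r_{g,d}\nsubseteq\mathcal{M}^1_{g,\lfloor\frac{g+1}{2}\rfloor}$ via $\gamma(C)=\gamma(r,d)=\lfloor\frac{g-1}{2}\rfloor>\lfloor\frac{g+1}{2}\rfloor-2$ correctly supplies a step that neither cited proposition literally covers (since \Cref{theorem low genus in paper} requires $r'\ge 2$); the only minor slip is the parenthetical claim that codimensions alone settle the genus $12$ comparison, since $\rho(12,3,11)=-4$ lies outside the range $-3\le\rho\le-1$ where components are known to have codimension exactly $-\rho$ --- but this is harmless, as it is offered only as an alternative to the valid two-directional application of \Cref{theorem low genus in paper}.
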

\begin{proof}
	Propositions \ref{theorem low genus in paper} and \ref{Prop submax gonality not contained in noncomputing locus} suffice to verify the conjecture in genus $10$--$13$.
\end{proof}

\subsection{Genus \texorpdfstring{$14$--$15$}{}}

The arguments in genus $14$ and $15$ only require the lifting results
of Lelli-Chiesa~\cite{Lelli_Chiesa_2015} and the preliminary results above.
\begin{prop}
	In genus $14$, the maximal Brill--Noether loci are $\mathcal{M}^{1}_{14,7}$, $\mathcal{M}^{2}_{14,11}$, and $\mathcal{M}^{3}_{14,13}$.
\end{prop}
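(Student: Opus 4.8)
The plan is first to record that $\mathcal{M}^1_{14,7}$, $\mathcal{M}^2_{14,11}$, $\mathcal{M}^3_{14,13}$ are precisely the expected maximal Brill--Noether loci in genus $14$ (for $r\ge 4$ one has $\rho(14,r-1,d-1)<0$ throughout $2r\le d\le 13$, so $\mathcal{M}^r_{14,d}\subseteq\mathcal{M}^{r-1}_{14,d-1}$ and no new expected maximal locus appears), so that by definition it suffices to prove these three are pairwise non-contained. Since $\rho(14,1,7)=-2$, $\rho(14,2,11)=-1$, $\rho(14,3,13)=-2$, the codimension results for $-3\le\rho\le-1$ (\cite{EdidinThesis,Eisenbud_Harris_1989,Steffen1998}) show $\mathcal{M}^2_{14,11}$ is irreducible of codimension $1$ while $\mathcal{M}^1_{14,7}$ and $\mathcal{M}^3_{14,13}$ have pure codimension $2$; hence $\mathcal{M}^2_{14,11}$ is contained in neither of the other two. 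Moreover $\gamma(2,11)=\gamma(3,13)=7=\lfloor\tfrac{g-1}{2}\rfloor+1$, so \Cref{Prop submax gonality not contained in noncomputing locus} gives $\mathcal{M}^1_{14,7}\nsubseteq\mathcal{M}^2_{14,11}$ and $\mathcal{M}^1_{14,7}\nsubseteq\mathcal{M}^3_{14,13}$.

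For the two non-containments $\mathcal{M}^r_{14,d}\nsubseteq\mathcal{M}^1_{14,7}$ with $(r,d)\in\{(2,11),(3,13)\}$, I would take a very general $(S,H)\in\mathcal{K}^r_{14,d}$. Since $\Delta(14,2,11)=-69<0$ and $\Delta(14,3,13)=-65<0$, one has $\Pic(S)=\Lambda^r_{14,d}$, and one checks via the numerical criteria of Saint-Donat and Knutsen \cite{Knutsen2003} that $L$ and $H-L$ are very ample (in particular basepoint free). Then \Cref{lemma max gon and Cliff dim 1}(ii)--(iv) shows that a smooth $C\in|H|$ carries a $\g{r}{d}$ while having general Clifford index $6$, maximal gonality $8$, and Clifford dimension $1$; in particular $C$ has no $\g{1}{7}$, so $\mathcal{M}^r_{14,d}\nsubseteq\mathcal{M}^1_{14,7}$.

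The remaining, and main, step is $\mathcal{M}^3_{14,13}\nsubseteq\mathcal{M}^2_{14,11}$. Take $(S,H)\in\mathcal{K}^3_{14,13}$ very general, so $\Pic(S)=\Lambda^3_{14,13}$ and the classes $L$, $H-L$ (both with $L^2=(H-L)^2=4$, $L.H=(H-L).H=13$) are very ample on $S$; hence $L\otimes\mathcal{O}_C$ and $(H-L)\otimes\mathcal{O}_C$ are very ample $\g{3}{13}$'s on $C\in|H|$, and in particular neither contains a $\g{2}{11}$ (a very ample $\g{3}{13}$ imposes independent conditions on length-$2$ subschemes). By \Cref{lemma max gon and Cliff dim 1}, $C$ has $\gamma(C)=6$ and Clifford dimension $1$. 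Now suppose $C$ admits a $\g{2}{11}$ $A$; it has Clifford index $7$, and $C$ carries no Brill--Noether special series of smaller Clifford index, so $A$ is primitive by \Cref{strict BN special implies primitive}, and Lelli-Chiesa's lifting theorem for rank $2$ \cite{Lelli_Chiesa_2013} (cf.\ also \cite{Lelli_Chiesa_2015}) produces $M\in\Pic(S)$ adapted to $|H|$ with $|A|$ contained in the restriction of $|M|$ to $C$ and $\gamma(M\otimes\mathcal{O}_C)\le 7$. Since $M\otimes\mathcal{O}_C$ contributes to $\gamma(C)=6$, while Clifford dimension $1$ together with $c_1(M).C\ge\deg A=11>8$ rules out $\gamma(M\otimes\mathcal{O}_C)=6$, we get $\gamma(M\otimes\mathcal{O}_C)=7$; moreover $h^0(S,M)\ge h^0(C,A)=3$, so $M$ is a (potential) Donagi--Morrison lift of type $\g{s}{e}$ with $2\le s\le\lfloor\tfrac{g-1-7}{2}\rfloor=3$, $e=7+2s$, and $M^2\le 2s-2$ by \Cref{LC implies 2r-2}. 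The lattice $\langle H,M\rangle$ embeds primitively in $\Lambda^3_{14,13}$, hence as a finite-index sublattice, forcing $26M^2-e^2=-65k^2$ for some $k\ge1$; for $s=2$ the possible values $26M^2-121\in\{-173,-121,-69\}$ are never $-65k^2$, so $s=3$, whence $M^2=4$, $k=1$, $\langle H,M\rangle=\Pic(S)$, and solving $M^2=4$, $M.H=13$ in $\langle H,L\rangle$ gives $M\in\{L,\,H-L\}$. Then $|A|$ lies in the restriction of $|L|$ or of $|H-L|$, contradicting very ampleness. Hence $C$ has no $\g{2}{11}$, so $\mathcal{M}^3_{14,13}\nsubseteq\mathcal{M}^2_{14,11}$, completing \Cref{conjecture} in genus $14$.

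I expect the last step to be the main obstacle, for two reasons beyond the bookkeeping: one must check that Lelli-Chiesa's rank-$2$ lifting theorem applies to a primitive, non-computing $\g{2}{11}$ on our curve (so the hypothetical $\g{2}{11}$ genuinely lifts to $S$), and one must verify via Knutsen's numerical criteria that $L$ and $H-L$ are very ample on the very general $(S,H)\in\mathcal{K}^3_{14,13}$ --- the latter being exactly what eliminates the otherwise-surviving possibility that the lift is itself of type $\g{3}{13}$ (note that property L3 is trivially violated for the pair $(\Lambda^3_{14,13},\Lambda^2_{14,11})$, since $\Lambda^3_{14,13}\subseteq\Lambda^3_{14,13}$, so \Cref{prop lattices are not conatined in exp max} alone does not conclude). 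The genus $15$ case should then proceed along identical lines, using only Lelli-Chiesa's lifting results and the preliminary lattice lemmas.
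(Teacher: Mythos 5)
Your proposal is correct and follows the same overall architecture as the paper: identify the three expected maximal loci, kill $\mathcal{M}^1_{14,7}\nsubseteq\mathcal{M}^2_{14,11},\mathcal{M}^3_{14,13}$ with \Cref{Prop submax gonality not contained in noncomputing locus}, kill the reverse containments into $\mathcal{M}^1_{14,7}$ by producing curves of maximal gonality on very general K3 surfaces in $\mathcal{K}^2_{14,11}$ and $\mathcal{K}^3_{14,13}$ via \Cref{lemma max gon and Cliff dim 1}, use the codimension count ($\rho=-1$ vs.\ $\rho=-2$) for $\mathcal{M}^2_{14,11}\nsubseteq\mathcal{M}^3_{14,13}$, and reduce the last non-containment to Lelli-Chiesa's rank-$2$ lifting plus a lattice computation in $\Lambda^3_{14,13}$. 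The one genuinely different ingredient is how you dispose of the surviving case where the Donagi--Morrison lift $M$ of the hypothetical $\g{2}{11}$ is itself of type $\g{3}{13}$ (i.e.\ $M=L$ or $H-L$), which, as you correctly observe, the discriminant argument of \Cref{prop lattices are not conatined in exp max} cannot exclude. You rule it out by checking (via Saint-Donat's numerical criteria, which do hold here since $\Lambda^3_{14,13}$ contains no classes of square $-2$, $0$, or $2$) that $L$ and $H-L$ are very ample, so that a $\g{2}{11}$ inside the restriction of a very ample $\g{3}{13}$ would give two points failing to impose independent conditions. The paper's intended mechanism, spelled out in the analogous step of \Cref{genus 16 theorem}, is instead to observe via \Cref{coker is gLM} that such a lift forces the quotient $E_{C,A}/N$ to be a generalized Lazarsfeld--Mukai bundle of Clifford index $0$, and then to exclude each case of \Cref{LC Cor 2.5} using the same absence of square-$0$ and square-$2$ classes. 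The two exclusions use the same lattice facts and are of comparable difficulty; yours is more elementary and self-contained, the paper's generalizes more readily to higher genus where very ampleness may fail. Your version is a complete and valid proof.
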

\begin{proof}
	These loci are the expected maximal Brill--Noether loci in
        genus $14$, thus it remains to show that there are no
        containments among them. By \Cref{Prop submax gonality not
        contained in noncomputing locus},
        $\mathcal{M}^1_{14,7}\nsubseteq \mathcal{M}^2_{14,11}$ and
        $\mathcal{M}^1_{14,7}\nsubseteq \mathcal{M}^3_{14,13}$. By
        \Cref{lemma max gon and Cliff dim 1} $(iii)$, we see that
        there are curves which admit a $\g{2}{11}$ or a $\g{3}{13}$
        and have maximal gonality $\lfloor\frac{14+3}{2}\rfloor=8$,
        whereby $\mathcal{M}^2_{14,11}\nsubseteq\mathcal{M}^1_{14,7}$
        and
        $\mathcal{M}^3_{14,13}\nsubseteq\mathcal{M}^1_{14,7}$. Since
        $\rho(14,2,11)=-1$ and $\rho(14,3,13)=-2$, and noting that
        therefore $\mathcal{M}^2_{14,11}$ has codimension $1$ and
        $\mathcal{M}^3_{14,13}$ has codimension at least $2$ in
        $\mathcal{M}_{14}$, we see that
        $\mathcal{M}^2_{14,11}\nsubseteq\mathcal{M}^3_{14,13}$. Finally,
        Lelli-Chiesa's lifting of rank $2$ linear systems \cite{Lelli_Chiesa_2013} shows that $\mathcal{M}^3_{14,13}\nsubseteq \mathcal{M}^2_{14,11}$.
\end{proof}

The proof in genus $15$ follows the same argument as genus $14$ above.

\begin{prop}
	In genus $15$, the maximal Brill--Noether loci are $\mathcal{M}^{1}_{15,7}$, $\mathcal{M}^{2}_{15,11}$, and $\mathcal{M}^{3}_{15,14}$.
\end{prop}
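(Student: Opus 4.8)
These three loci are the expected maximal Brill--Noether loci in genus $15$, so it remains to rule out the six possible containments among them, and the plan is to run the genus-$14$ argument with one essential new input. For the two containments $\mathcal{M}^1_{15,7}\nsubseteq\mathcal{M}^2_{15,11}$ and $\mathcal{M}^1_{15,7}\nsubseteq\mathcal{M}^3_{15,14}$, I would argue as in \Cref{Prop submax gonality not contained in noncomputing locus} using Brill--Noether theory in fixed gonality: taking $k=7$, the Pflueger bound $\rho_k$ is negative for both $\g{2}{11}$ and $\g{3}{14}$, so a general curve of gonality $\le 7$ carries neither, see \cite{Pflueger_2017}. For the reverse directions I would take a very general $(S,H)$ with $\Pic(S)=\Lambda^2_{15,11}$ (resp.\ $\Lambda^3_{15,14}$); by \Cref{lemma max gon and Cliff dim 1} the curve $C\in|H|$ carries the corresponding linear system and has maximal gonality $\lfloor\frac{g+3}{2}\rfloor=9$, hence no $\g{1}{8}$ and a fortiori no $\g{1}{7}$, giving $\mathcal{M}^2_{15,11},\mathcal{M}^3_{15,14}\nsubseteq\mathcal{M}^1_{15,7}$. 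Here I must flag a difference from genus $14$: the $\g{2}{11}$ has $\gamma=7=\lfloor\frac{g-1}{2}\rfloor$ and is \emph{computing}, so part $(v)$ of \Cref{lemma max gon and Cliff dim 1} is the relevant statement, whereas the $\g{3}{14}$ has $\gamma=8$ and is non-computing, so part $(iii)$ applies.

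For the pair $\mathcal{M}^2_{15,11}$ and $\mathcal{M}^3_{15,14}$ the codimensions are asymmetric: since $\rho(15,3,14)=-1$ the locus $\mathcal{M}^3_{15,14}$ is a divisor, while $\rho(15,2,11)=-3$ makes every component of $\mathcal{M}^2_{15,11}$ of codimension $3$. A divisor cannot lie inside a codimension-$3$ locus, so $\mathcal{M}^3_{15,14}\nsubseteq\mathcal{M}^2_{15,11}$ is immediate. The one remaining and genuinely hard non-containment is $\mathcal{M}^2_{15,11}\nsubseteq\mathcal{M}^3_{15,14}$, which codimension does not preclude and which is exactly where the rank-$3$ lifting theorem enters.

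For this I would take a very general $(S,H)\in\mathcal{K}^2_{15,11}$, so $\Pic(S)=\Lambda^2_{15,11}$, $C\in|H|$ has a $\g{2}{11}$, and $\gamma(C)=7$. Assume for contradiction that $C$ carries a $\g{3}{14}$, which I may take to be complete and basepoint free (the base-point reduction is handled below), so $\rho(15,3,14)=-1<0$ and $d=14$. The form $14a^2+11ab+b^2$ has no nonzero integral zero, so $\Lambda^2_{15,11}$ represents no class of square $0$; hence $S$ has no elliptic curves and $m\ge 2$, so $d=14<\frac{5}{4}\gamma+6=\frac{5}{4}\cdot 7+6=14.75$, and \Cref{theorem lifting g3ds general} produces a Donagi--Morrison lift $M\in\Pic(S)$ with $M.H\ge 14$. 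Following \Cref{prop philosophy}, it then remains to verify the condition L3 for $(\Lambda^2_{15,11},\Lambda^3_{15,14})$: the bounds $8=\lfloor\frac{g+1}{2}\rfloor\le\gamma(s,e)\le 8$ together with $M.H\ge 14$ force any potential Donagi--Morrison lift to be of type $\g{3}{14}$, and $\operatorname{disc}(\Lambda^3_{15,14})=-84$ while $\operatorname{disc}(\Lambda^2_{15,11})=-65$, whose ratio $84/65$ is not a perfect square, so $\Lambda^3_{15,14}\nsubseteq\Lambda^2_{15,11}$. This contradicts the existence of the lift, so $C$ has no $\g{3}{14}$ and $\mathcal{M}^2_{15,11}\nsubseteq\mathcal{M}^3_{15,14}$.

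The main obstacle is precisely this last step: unlike genus $14$, where the critical pair was separated by Lelli-Chiesa's rank-$2$ lifting, in genus $15$ the non-computing system is the $\g{3}{14}$, so the new rank-$3$ lifting result \Cref{theorem lifting g3ds general} is indispensable, and the inequality $14<14.75$ is the tight numerical margin that makes the genus accessible. A secondary point I would treat carefully is the basepoint reduction for the assumed $\g{3}{14}$: removing a base divisor of degree $b$ yields a $\g{3}{14-b}$ of Clifford index $8-b$; the case $b\ge 2$ is impossible as it would force $\gamma(C)\le 6$, while $b=1$ gives a $\g{3}{13}$ computing $\gamma(C)=7$, whose hypothetical lift is excluded by \Cref{prop lattices are not conatined in exp max}$(i)$ together with the lifting of computing systems in \cite[Theorem 4.2]{Lelli_Chiesa_2015}.
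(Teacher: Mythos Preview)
Your proof is correct, but it diverges from the paper's stated route. The paper asserts that genus~$15$ ``follows the same argument as genus~$14$'' and that only Lelli-Chiesa's lifting results are needed; you instead invoke the new rank-$3$ lifting result \Cref{theorem lifting g3ds general} for the critical non-containment $\mathcal{M}^2_{15,11}\nsubseteq\mathcal{M}^3_{15,14}$.

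Your observation about the flipped codimensions is exactly the point. In genus~$14$ one has $\rho(14,2,11)=-1$ and $\rho(14,3,13)=-2$, so codimension gives $\mathcal{M}^2\nsubseteq\mathcal{M}^3$ and rank-$2$ lifting gives $\mathcal{M}^3\nsubseteq\mathcal{M}^2$. In genus~$15$ the values reverse to $\rho(15,2,11)=-3$ and $\rho(15,3,14)=-1$, so codimension now handles $\mathcal{M}^3_{15,14}\nsubseteq\mathcal{M}^2_{15,11}$, while rank-$2$ lifting (which only ever yields statements of the form $\nsubseteq\mathcal{M}^2$) becomes redundant with that direction and cannot address $\mathcal{M}^2_{15,11}\nsubseteq\mathcal{M}^3_{15,14}$. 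Your use of the rank-$3$ theorem fills precisely this gap: the numerical check $14<\tfrac{5}{4}\cdot 7+6=14.75$ goes through since $\Lambda^2_{15,11}$ has no square-zero classes, and the only potential Donagi--Morrison lift type is $\g{3}{14}$, whose lattice has discriminant $-84$ against $-65$, ruling out any embedding.

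So your argument is more explicit than the paper's one-line claim, and your remark that the rank-$3$ result is ``indispensable'' here is, as far as I can see, warranted: a literal transplant of the genus-$14$ toolkit (codimension plus rank-$2$ lifting) does not produce $\mathcal{M}^2_{15,11}\nsubseteq\mathcal{M}^3_{15,14}$. One small cosmetic point: for $\Lambda^2_{15,11}$ you should cite part~$(v)$ of \Cref{lemma max gon and Cliff dim 1} to get $\gamma(C)=7$, and then deduce maximal gonality $9$ from $\gamma(C)+2\le\text{gon}(C)\le\lfloor\tfrac{g+3}{2}\rfloor$, rather than citing part~$(iii)$ directly (which requires $\gamma(r,d)>\lfloor\tfrac{g-1}{2}\rfloor$).
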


\subsection{Genus \texorpdfstring{$16$--$17$}{}}

In genus $16$ and $17$, the proofs are slightly complicated by the
fact that one cannot expect to always lift a linear system
$A\in\Pic(C)$ to a line bundle on $S$, but under the Donagi--Morrison
conjecture, we can at least find a Donagi--Morrison lift, i.e., a line bundle
$N\in\Pic(S)$ such that $|A|\subseteq |N\otimes\mathcal{O}_C|$ with
$\gamma(N\otimes\mathcal{O}_C)\le\gamma(A)$, see \Cref{defn:DM_lift}.

\begin{prop}\label{genus 16 theorem}
	The maximal Brill--Noether loci in genus $16$ are $\mathcal{M}^{1}_{16,8}$, $\mathcal{M}^{2}_{16,12}$, and $\mathcal{M}^{3}_{16,14}$.
\end{prop}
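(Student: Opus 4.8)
The plan is to run the program of \Cref{subsection Philosophy}. \textbf{First}, I would record that $\mathcal{M}^1_{16,8}$, $\mathcal{M}^2_{16,12}$, and $\mathcal{M}^3_{16,14}$ are the expected maximal loci: from $\rho(16,r,d)=16-(r+1)(16-d+r)$ one finds exactly these (with Brill--Noether numbers $-2,-2,-4$), while for $r\ge 4$ one only gets $\mathcal{M}^4_{16,15}\subseteq\mathcal{M}^3_{16,14}$, so nothing new. Their Clifford indices are $\gamma(1,8)=6$ and $\gamma(2,12)=\gamma(3,14)=8$, against a general Clifford index $\lfloor\tfrac{15}{2}\rfloor=7$, so the last two are non-computing. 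The task is then to exclude the six possible containments among the three.

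\textbf{Next}, the containments with $\mathcal{M}^1_{16,8}$. Since $8\ge\lfloor\tfrac{15}{2}\rfloor+1$, \Cref{Prop submax gonality not contained in noncomputing locus} gives $\mathcal{M}^1_{16,8}\nsubseteq\mathcal{M}^2_{16,12}$ and $\mathcal{M}^1_{16,8}\nsubseteq\mathcal{M}^3_{16,14}$. For the reverse I would take a very general $(S,H)\in\mathcal{K}^r_{g,d}$ with $\Pic(S)=\Lambda^r_{g,d}$ for $(r,d)\in\{(2,12),(3,14)\}$ (legitimate since $\Delta(16,2,12)=-84<0$ and $\Delta(16,3,14)=-76<0$); a short check that these lattices carry no $(-2)$-class and do not represent $0$ makes $L$, $H-L$ basepoint free, so \Cref{lemma max gon and Cliff dim 1}$(ii)$ turns $L|_C$ into a $\g{r}{d}$, and parts $(iii)$--$(iv)$ force $\gamma(C)=7$, maximal gonality $\lfloor\tfrac{19}{2}\rfloor=9$, and Clifford dimension $1$ --- in particular no $\g{1}{8}$. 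Hence $\mathcal{M}^2_{16,12},\mathcal{M}^3_{16,14}\nsubseteq\mathcal{M}^1_{16,8}$.

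\textbf{The heart of the argument} is $\mathcal{M}^2_{16,12}$ versus $\mathcal{M}^3_{16,14}$. For $\mathcal{M}^2_{16,12}\nsubseteq\mathcal{M}^3_{16,14}$: on the very general $(S,H)\in\mathcal{K}^2_{16,12}$ above, $C$ has a $\g{2}{12}$ and $\gamma(C)=7$; since $C$ has Clifford dimension $1$ it carries no Brill--Noether special series of Clifford index $<8$, so \Cref{strict BN special implies primitive} makes any $\g{3}{14}$ on $C$ primitive and $m\ge 2$. As $16\neq 2,3,4,8$ and $14<\tfrac54\cdot 7+6=14.75$, \Cref{theorem lifting g3ds general} (equivalently \Cref{Main Result 1.2}) shows such a $\g{3}{14}$ would have a Donagi--Morrison lift, hence a potential Donagi--Morrison lift whose associated lattice embeds in $\Pic(S)=\Lambda^2_{16,12}$; but L3 holds for the pair $(\Lambda^2_{16,12},\Lambda^3_{16,14})$ in genus $16$ (indeed L2 holds), so \Cref{prop philosophy} yields a contradiction and $C$ has no $\g{3}{14}$. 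For the reverse, on a very general $(S,H)\in\mathcal{K}^3_{16,14}$ the curve $C$ has a $\g{3}{14}$; a $\g{2}{12}$ on $C$ would lift (Lelli-Chiesa's rank $2$ lifting theorem \cite{Lelli_Chiesa_2013}) to a potential Donagi--Morrison lift in $\Lambda^3_{16,14}$, again excluded by L2/L3 together with \Cref{prop lattices are not conatined in exp max}. So $\mathcal{M}^3_{16,14}\nsubseteq\mathcal{M}^2_{16,12}$, completing the proof.

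\textbf{The hard part} will be checking that the hypotheses of the $\g{3}{d}$ lifting theorem truly hold in this borderline case --- the inequality $14<14.75$ leaves no slack, so one must pin down $\gamma(C)=7$ exactly, verify the absence of Brill--Noether special series of Clifford index $<8$ (so the $\g{3}{14}$ is primitive and $m\ge 2$), and confirm $S$ has no elliptic curve. The parallel chore is the lattice bookkeeping: certifying that no lattice attached to a potential Donagi--Morrison lift embeds in $\Lambda^2_{16,12}$ or $\Lambda^3_{16,14}$, where the $\gamma$ near $7$ boundary cases are handled using the degree constraint $d(M\otimes\mathcal{O}_C)\ge d(A)$ on potential lifts together with \Cref{prop lattices are not conatined in exp max} and L1--L3.
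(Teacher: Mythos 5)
The main gap is in the direction $\mathcal{M}^3_{16,14}\nsubseteq\mathcal{M}^2_{16,12}$. You propose to exclude a $\g{2}{12}$ on $C\in|H|$ for very general $(S,H)\in\mathcal{K}^3_{16,14}$ by producing a Donagi--Morrison lift (Lelli-Chiesa's rank-$2$ theorem) and then invoking L2/L3 and \Cref{prop lattices are not conatined in exp max} to show that no potential lift is compatible with $\Pic(S)=\Lambda^3_{16,14}$. But the lattice argument cannot rule out a lift $M$ of type $\g{3}{14}$: one has $\gamma(3,14)=8=\gamma(2,12)$ and $14\ge 12$, so $\g{3}{14}$ is a legitimate potential Donagi--Morrison lift of a $\g{2}{12}$, and $\Lambda^3_{16,14}$ obviously embeds in $\Pic(S)=\Lambda^3_{16,14}$ itself. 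This is precisely the ``unless the lattices are isomorphic'' caveat in \Cref{prop lattices are not conatined in exp max}(ii), and it means L3 fails for the ordered pair $(\Lambda^3_{16,14},\Lambda^2_{16,12})$, so \Cref{prop philosophy} does not apply in this direction. The paper closes exactly this case by an argument you are missing: if $M=H\otimes N^\vee$ were such a lift with $M^2=4$, then \Cref{coker is gLM} forces $\gamma(E_{C,A}/N)=0$, and one checks that none of the cases of \Cref{LC Cor 2.5} can occur (case (c) is excluded via Saint-Donat, since a hyperelliptic curve would require an irreducible curve of genus $1$ or $2$ on $S$, i.e.\ a class of square $0$ or $2$, which $\Lambda^3_{16,14}$ does not represent). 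Without this step the non-containment is not proved.

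Two smaller points. First, your assertion that the relevant lattices ``carry no $(-2)$-class'' is false for $\Lambda^3_{16,14}$: the class $3H-4L$ has square $270-336+64=-2$. So \Cref{lemma max gon and Cliff dim 1}(ii) does not apply off the shelf, and one must verify that $L\otimes\mathcal{O}_C$ is a $\g{3}{14}$ by the more careful route of \Cref{Checking if L|C is a grd}, as the paper indicates. Second, for $\mathcal{M}^2_{16,12}\nsubseteq\mathcal{M}^3_{16,14}$ your route through \Cref{theorem lifting g3ds general} on $\mathcal{K}^2_{16,12}$ does work: $\Lambda^2_{16,12}$ represents neither $0$ nor $-2$, so $m\ge 2$ and $14<\frac{5}{4}\cdot 7+\frac{m}{2}+5$, and the only potential lift with $\gamma=8$ and degree $\ge 14$ is of type $\g{3}{14}$, excluded since $\operatorname{disc}\Lambda^3_{16,14}=-76$ is not a square multiple of $\operatorname{disc}\Lambda^2_{16,12}=-84$. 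This differs from the paper, which disposes of that direction simply by comparing codimensions using $\rho(16,2,12)=-2$ and $\rho(16,3,14)=-4$; your version is more uniform but costs the verification of a tight degree bound.
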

\begin{proof}
	As above, it remains to show that there are no containments among these loci.	
	One can check, as in \Cref{Checking if L|C is a grd}, that for $L$ in $\Lambda^3_{16,14}$, $L\otimes\mathcal{O}_C$ is in fact a $\g{3}{14}$. We note that there are no $(-2)$-curves in $\Lambda^2_{15,12}$. Hence \Cref{lemma max gon and Cliff dim 1} applies for $\Pic(S)$ either $\Lambda^2_{16,12}$ or $\Lambda^3_{16,14}$. Thus $\mathcal{M}^2_{16,12}\nsubseteq \mathcal{M}^1_{16,8}$ and $\mathcal{M}^3_{16,14}\nsubseteq \mathcal{M}^1_{16,8}$. Furthermore, we have $\mathcal{M}^1_{16,8}\nsubseteq\mathcal{M}^2_{16,12}$ and $\mathcal{M}^1_{16,8}\nsubseteq\mathcal{M}^3_{16,14}$ from \Cref{Prop submax gonality not contained in noncomputing locus}. Since $\rho(16,2,12)=-2$ and $\rho(16,3,14)=-4$, we see that $\mathcal{M}^2_{16,12}\nsubseteq \mathcal{M}^3_{16,14}$. It remains to show that there are curves with a $\g{3}{14}$ and no $\g{2}{12}$. 
	
	Suppose that $\Pic(S)=\Lambda^3_{16,14}$, and suppose $C$ has
        a line bundle $A$ of type $\g{2}{12}$. Then by \cite[Theorem
        1]{Lelli_Chiesa_2013}, there is a Donagi--Morrison lift of
        $A$. It can easily be checked that if the Donagi--Morrison
        lift $M$ is not of type $\g{3}{14}$, then $M$ can not be
        contained in $\Pic(S)$. Thus we can assume that $M$ is of type
        $\g{3}{14}$ and $M^2=4$. However, by \Cref{coker is gLM}, we
        see that $\gamma(E_{C,A}/N)=0$, and each of the cases in
        \Cref{LC Cor 2.5} cannot hold. In case (c), one appeals to
        \cite[Theorem 5.2]{Saint_Donat_Proj_Models_of_K3s} which shows
        that a curve is hyperelliptic only if there is an irreducible
        curve $B\subset S$ of genus~$1$ or~$2$. However, this would
        yield $B^2=0$ or $B^2=2$, both of which are too small. Thus
        there can be no such~$M$, and thus $C$ cannot admit a $\g{2}{12}$. Thus  $\mathcal{M}^3_{16,14}\nsubseteq\mathcal{M}^2_{16,12}$.
\end{proof}

The proof in genus $17$ follows the same argument as genus $16$ above.

\begin{prop}
	The maximal Brill--Noether loci in genus $17$ are $\mathcal{M}^{1}_{17,9}$, $\mathcal{M}^{2}_{17,13}$, and $\mathcal{M}^{3}_{17,15}$.
\end{prop}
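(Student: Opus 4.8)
The plan is to mirror the genus $16$ argument. One first records that $\mathcal{M}^1_{17,9}$, $\mathcal{M}^2_{17,13}$, $\mathcal{M}^3_{17,15}$ are indeed the expected maximal Brill--Noether loci: $\rho(17,1,9)=\rho(17,2,13)=-1$ and $\rho(17,3,15)=-3$, whereas $\rho(17,1,10)=1$, $\rho(17,2,14)=2$, and $\rho(17,3,16)=1$; moreover $\mathcal{M}^1_{17,9}=\mathcal{M}^1_{17,\lfloor\frac{g+1}{2}\rfloor}$ is the submaximal gonality locus. It then remains to establish the six non-containments among these loci.

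For the positive input, fix a very general $(S,H)\in\mathcal{K}^2_{17,13}$ (resp.\ $\mathcal{K}^3_{17,15}$), so that $\Pic(S)=\Lambda^2_{17,13}$ (resp.\ $\Lambda^3_{17,15}$) by Torelli; both lattices have negative discriminant, e.g.\ $\operatorname{disc}\Lambda^3_{17,15}=32\cdot 4-15^2=-97$. Checking base-point-freeness of $L$ and $H-L$ as in \Cref{Checking if L|C is a grd} (the relevant lattices being $(-2)$-curve-free, or by a numerical nefness check), \Cref{lemma max gon and Cliff dim 1}$(ii)$ shows $L\otimes\mathcal{O}_C$ is a $\g{2}{13}$ (resp.\ $\g{3}{15}$) on any smooth $C\in|H|$, and parts $(iv)$ and $(iii)$ show such $C$ has Clifford index $8$, maximal gonality $10=\lfloor\frac{g+3}{2}\rfloor$, and Clifford dimension $1$. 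Hence $\mathcal{M}^2_{17,13}\nsubseteq\mathcal{M}^1_{17,9}$ and $\mathcal{M}^3_{17,15}\nsubseteq\mathcal{M}^1_{17,9}$, since $\mathcal{M}^1_{17,9}$ consists of curves of gonality $\le 9$. Conversely $\mathcal{M}^1_{17,9}\nsubseteq\mathcal{M}^2_{17,13}$ and $\mathcal{M}^1_{17,9}\nsubseteq\mathcal{M}^3_{17,15}$ by \Cref{Prop submax gonality not contained in noncomputing locus}, as $\gamma(2,13)=\gamma(3,15)=9=\lfloor\frac{16}{2}\rfloor+1$. Finally $\mathcal{M}^2_{17,13}\nsubseteq\mathcal{M}^3_{17,15}$ by dimension: $\mathcal{M}^2_{17,13}$ has codimension $-\rho=1$ in $\mathcal{M}_{17}$ while every component of $\mathcal{M}^3_{17,15}$ has codimension $3$ (by $-3\le\rho\le-1$, see \cite{EdidinThesis,Eisenbud_Harris_1989,Steffen1998}).

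The remaining, and only substantive, non-containment is $\mathcal{M}^3_{17,15}\nsubseteq\mathcal{M}^2_{17,13}$. Take $(S,H)$ very general in $\mathcal{K}^3_{17,15}$ and $C\in|H|$ smooth, so $C$ has a $\g{3}{15}$, and suppose for contradiction $C$ carries a line bundle $A$ of type $\g{2}{13}$. Since $C$ has general Clifford index $8$ and Clifford dimension $1$, and no lattice associated to a special linear series of Clifford index $\le 8$ embeds in $\Lambda^3_{17,15}$ (by \Cref{prop lattices are not conatined in exp max}$(i)$, noting that a $\g{1}{10}$ is non-special), $C$ has no special linear series of Clifford index $<9$, so \Cref{strict BN special implies primitive} shows $A$ is primitive. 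Then \cite[Theorem 1]{Lelli_Chiesa_2013} provides a Donagi--Morrison lift $M$ of $A$, and a finite lattice check (of the kind in \Cref{prop lattices are not conatined in exp max}) shows that if $M$ is not of type $\g{3}{15}$ then $\langle H,M\rangle\nsubseteq\Lambda^3_{17,15}$; so $M^2=4$ and $M.H=15$. Setting $N=H-M$, the lift corresponds to a saturated sub-line bundle $N\hookrightarrow E_{C,A}$ (as in the proof of \cite[Lemma 4.1]{Lelli_Chiesa_2015}, cf.\ \Cref{LC Remark 6}), and \Cref{coker is gLM} gives $\gamma(E_{C,A}/N)=\gamma(A)+N^2-H.N+2=9+6-17+2=0$, with $E_{C,A}/N$ a gLM bundle of rank $2$ and $c_1(E_{C,A}/N)^2=M^2=4>0$. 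Running through \Cref{LC Cor 2.5}: case (a) is ruled out by the rank; case (b) would place a genus-$2$ curve in $|M|$, impossible since $M^2=4$; and case (c) would place a hyperelliptic curve $D$ of genus $3$ in $|M|$, which by \cite[Theorem 5.2]{Saint_Donat_Proj_Models_of_K3s} forces an elliptic curve $B\subset S$ with $B^2=0$ and $B.D=2$ (the alternative $D\equiv 2B$ would need $B^2=1$) --- but $\Lambda^3_{17,15}$ represents no class of square $0$, since the binary form $16x^2+15xy+2y^2$ (half its norm form) has no nontrivial zero. So no such $M$ exists, $C$ has no $\g{2}{13}$, and $\mathcal{M}^3_{17,15}\nsubseteq\mathcal{M}^2_{17,13}$.

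\textbf{Main obstacle.} I expect the delicate part to be this last step: confirming by explicit lattice arithmetic both that the Donagi--Morrison lift is forced into type $\g{3}{15}$ and that case (c) of \Cref{LC Cor 2.5} cannot occur, the latter resting on $\Lambda^3_{17,15}$ carrying no elliptic class (and, as a safeguard, no curve of too-small genus). These are finite verifications, wholly parallel to genus $16$, but they are precisely where the method can break down --- as it does in genera $20$ and $21$, where the analogous rank-$4$ lift is not controlled.
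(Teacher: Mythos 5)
Your proposal is correct and follows exactly the route the paper intends: the paper's own proof in genus $17$ is a one-line reference to the genus $16$ argument (\Cref{genus 16 theorem}), and your write-up is a faithful, more detailed instantiation of that argument --- codimension and gonality arguments for five of the non-containments, and a Donagi--Morrison lift forced into type $\g{3}{15}$ followed by the $\gamma(E_{C,A}/N)=0$ analysis via \Cref{LC Cor 2.5} for $\mathcal{M}^3_{17,15}\nsubseteq\mathcal{M}^2_{17,13}$. The lattice verifications you flag (no $(-2)$- or square-zero classes in $\Lambda^3_{17,15}$, the lift being numerically forced) are exactly the finite checks the paper leaves implicit.
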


\subsection{Genus \texorpdfstring{$18$}{}}\label{subsec:g18}

The proof in genus $18$ is slightly complicated by the fact that in showing the non-containment $\mathcal{M}^2_{18,13}\nsubseteq \mathcal{M}^3_{18,16}$, the bound in \Cref{theorem lifting g3ds general} does not rule out the possibility of a $1\subset 2\subset 4$ terminal filtration. The other non-contaimnents are similar to the proofs above. We give a proof of this non-trivial non-containment.

\begin{prop}
	The maximal Brill--Noether loci in genus $18$ are $\mathcal{M}^{1}_{18,9}$, $\mathcal{M}^{2}_{18,13}$, and $\mathcal{M}^{3}_{18,16}$.
\end{prop}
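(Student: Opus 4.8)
The three listed loci are the expected maximal Brill--Noether loci in genus $18$ ($\rho(18,1,9)=-4$, $\rho(18,2,13)=-3$, $\rho(18,3,16)=-2$, and one checks the maximality condition), so the task is to exclude all containments among them. The plan is to dispose of five of the six directions quickly and then concentrate on the one that the generic degree bound of \Cref{theorem lifting g3ds general} does not settle. Indeed, $\mathcal{M}^1_{18,9}\nsubseteq\mathcal{M}^2_{18,13}$ and $\mathcal{M}^1_{18,9}\nsubseteq\mathcal{M}^3_{18,16}$ follow from \Cref{Prop submax gonality not contained in noncomputing locus}; $\mathcal{M}^3_{18,16}\nsubseteq\mathcal{M}^2_{18,13}$ follows because, as $-3\le\rho\le-1$ in both cases, every component of $\mathcal{M}^3_{18,16}$ has codimension $2$ while every component of $\mathcal{M}^2_{18,13}$ has codimension $3$; and for $\mathcal{M}^2_{18,13}\nsubseteq\mathcal{M}^1_{18,9}$ and $\mathcal{M}^3_{18,16}\nsubseteq\mathcal{M}^1_{18,9}$ one takes a very general $(S,H)$ in $\mathcal{K}^2_{18,13}$, resp.\ $\mathcal{K}^3_{18,16}$, checks as in \Cref{Checking if L|C is a grd} that $L$ and $H-L$ are basepoint free, and applies \Cref{lemma max gon and Cliff dim 1}(ii),(iii),(iv) to see that a smooth $C\in|H|$ carries the relevant $\g{r}{d}$ but has maximal gonality $10$, hence no $\g{1}{9}$. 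This leaves the genuinely nontrivial non-containment $\mathcal{M}^2_{18,13}\nsubseteq\mathcal{M}^3_{18,16}$.

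For that one the plan is to run the program of \Cref{subsection Philosophy}. Fix a very general $(S,H)\in\mathcal{K}^2_{18,13}$, so that $\Pic(S)=\Lambda^2_{18,13}$; I would first record the numerical invariants of this lattice: it contains no class of self-intersection $0$ (so $S$ has no elliptic curves), its minimal non-negative self-intersection is $m=2$ (attained by $L$), and its minimal positive $H$-slope over classes of self-intersection $\ge0$ is $\mu=13$. By \Cref{lemma max gon and Cliff dim 1}, a smooth $C\in|H|$ then has a $\g{2}{13}$ and $\gamma(C)=8$, maximal gonality, Clifford dimension $1$. Suppose for contradiction $C$ admits a $\g{3}{16}$. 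By the standard base-point reductions --- removing base points and applying \Cref{theorem lifting g3ds general} to any $\g{3}{d'}$ with $d'<16$ (it would lift, contradicting $\Pic(S)=\Lambda^2_{18,13}$), and using Clifford's theorem together with Lelli-Chiesa's lifting of Clifford-index-computing systems \cite[Theorem~4.2]{Lelli_Chiesa_2015} and \Cref{prop lattices are not conatined in exp max} to exclude a $\g{4}{16}$ (which would compute $\gamma(C)$) --- one reduces to the case that $C$ carries a complete basepoint-free $\g{3}{16}$, say $A$, with $E:=E_{C,A}$ a rank-$4$ Lazarsfeld--Mukai bundle, $c_1(E)=H$, $c_2(E)=16$. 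Feeding $\gamma=8$, $g=18$, $m=2$, $\mu=13$ into the bounds of \Cref{Section Filtrations of Lazarsfeld--Mukai Bundles of Rank 4} eliminates every terminal filtration of $E$ except $1\subset4$ and subcase (c) of $1\subset2\subset4$: the filtration $2\subset4$ would force $d\ge\frac{\gamma}{2}+\frac{g-1}{2}+4=16.5$, $3\subset4$ forces $d>17$, $1\subset3\subset4$ and $2\subset3\subset4$ force $d\ge\frac{3}{2}\gamma+5=17$, the full flag forces $d\ge\frac54\gamma+\frac{\mu}{2}+\frac{m}{2}+\frac92=22$, and in cases (a),(b),(d) of $1\subset2\subset4$ one gets $d\ge 16.5$, $d\ge 16.5$, and $d\ge 22.5$ respectively.

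In the $1\subset4$ case, \Cref{Prop Proof Strategy} applies (the hypothesis $c_2(E)=16<\frac{35}{2}$ holds) and produces a potential Donagi--Morrison lift $M$ of $A$, adapted to $|H|$, with $\gamma(M\otimes\mathcal{O}_C)\le\gamma(A)=10$ and $M.C\ge\deg A=16$; since $M=\det(E/N)$ for a saturated sub-line-bundle $N\subset E$ with $E/N$ stable of rank $3$, the relation $\gamma(M\otimes\mathcal{O}_C)=d-c_2(E/N)-2$ forces $M$ to be of some type $\g{s}{e}$ with $M^2=2s-2$, and the stability of $E/N$ (via $\langle\nu(E/N),\nu(E/N)\rangle\ge-2$) together with $M.C\ge16$, $\gamma(M\otimes\mathcal{O}_C)\le10$, and the lower bound $\gamma(s,e)\ge 9$ from \Cref{prop lattices are not conatined in exp max}(i) leaves only $(s,e)=(3,16)$ and $(4,17)$; but $\Lambda^3_{18,16}$ (discriminant $-120$) and $\Lambda^4_{18,17}$ (discriminant $-85$) do not embed in $\Lambda^2_{18,13}$ (discriminant $-101$), since neither $120/101$ nor $85/101$ is a perfect square, contradicting $\langle H,M\rangle\subseteq\Pic(S)$. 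In the $1\subset2\subset4$ case (c), the relevant bound is $d\ge\frac54\gamma+\frac{m}{2}+5=16$, so $d=16$ forces equality throughout; in particular $c_1(N).c_1(E/N)=\gamma(C)+2=10$, and unwinding the equalities in the proof of \Cref{LC rank 3 lemma 4.1} gives $h^1(S,\det(E/N))=0$, $h^0(C,\det(E/N)\otimes\mathcal{O}_C)=h^0(S,\det(E/N))$, and $\gamma(\det(E/N)\otimes\mathcal{O}_C)=\gamma(C)=8$. Writing $t=(\det E/N)^2$, one gets $h^0(S,\det E/N)=2+\tfrac t2$, $\det(E/N)\otimes\mathcal{O}_C$ of type $\g{1+t/2}{10+t}$, and $\langle H,\det(E/N)\rangle$ isometric to $\Lambda^{1+t/2}_{18,10+t}$ of discriminant $-((t-7)^2+51)$; since $\det(E/N)\in\Pic(S)$ this must equal $-101k^2$ for some integer $k$, i.e.\ $(t-7)^2\equiv 50\pmod{101}$, which is impossible as $50$ is a non-residue mod $101$ --- alternatively, this is exactly \Cref{prop lattices are not conatined in exp max}(i) (for $t\ge 2$) together with the discriminant check for $\Lambda^1_{18,10}$ (discriminant $-100$, for $t=0$). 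Hence $C$ has no $\g{3}{16}$, so $\mathcal{M}^2_{18,13}\nsubseteq\mathcal{M}^3_{18,16}$, completing the proof.

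The hard part is precisely the $1\subset2\subset4$ case: unlike in genus $14$--$17$, the generic degree bound of \Cref{theorem lifting g3ds general} does not preclude this filtration here, because $\frac54\cdot8+\frac22+5=16$ lands exactly on the degree of the linear system under consideration. The way around it is to extract from the borderline equality enough rigidity to conclude that the determinant of the rank-$2$ quotient restricts to a \emph{Clifford-index-computing} linear system that is literally the restriction of a class in $\Pic(S)$, and then to invoke the Picard-lattice rigidity of a very general K3 in $\mathcal{K}^2_{18,13}$ through \Cref{prop lattices are not conatined in exp max} --- in effect, trading the missing strict inequality for the lattice non-embedding statements. The remaining details (the base-point reductions reaching the basepoint-free case, the verification that $L$ and $H-L$ are basepoint free on the K3 surfaces used, and the small finite discriminant checks) are routine, and I would relegate them to short verifications.
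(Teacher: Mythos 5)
Your proof is correct and reaches the same reduction as the paper (only the terminal filtrations $1\subset 4$ and $1\subset 2\subset 4$ survive the degree bounds, and the former is handled by \Cref{Prop Proof Strategy} plus a discriminant check), but your treatment of the critical $1\subset 2\subset 4$ case is genuinely different. The paper does not subdivide into cases (a)--(d); it observes that $\gamma(E)=c_1(N).c_1(E/N)+\gamma(E/N)-2$ forces $\gamma(E/N)\le 2$, invokes the classification of low-Clifford-index generalized Lazarsfeld--Mukai bundles (\Cref{Prop Clifford Index 1 and 2 gLM}, \Cref{LC Cor 2.5}, Saint-Donat), and then lifts a $\g{1}{2}$, $\g{1}{3}$, $\g{2}{5}$, or $\g{2}{6}$ from an auxiliary curve $D\in|\det(E/N)|$ to a class incompatible with $\Lambda^2_{18,13}$. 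You instead exploit that $d=16$ sits exactly on the case-(c) bound $\frac{5}{4}\gamma+\frac{m}{2}+5$, so every inequality in the chain is an equality; in particular $c_1(N).c_1(E/N)=10$, which via \Cref{LC rank 3 lemma 4.1} pins down $\det(E/N)\otimes\mathcal{O}_C$ as a Clifford-index-computing $\g{1+t/2}{10+t}$ lying in $\Pic(S)$, and the congruence $(t-7)^2\equiv 50\pmod{101}$ (with $50$ a nonresidue since $101\equiv 5\pmod 8$) kills all $t$ at once. Your route trades the gLM classification machinery for elementary quadratic-residue arithmetic, and is arguably more self-contained; the paper's route avoids the subcase analysis and the reliance on the exact borderline equality. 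Three small points to tighten: (1) the claim $\mu=13$ should be justified --- what is actually needed is $\mu\ge 2$, which follows from the Hodge index theorem since $\Lambda^2_{18,13}$ has no isotropic classes, forcing $D.H\ge 9$ whenever $D^2\ge 0$ and $D.H>0$; (2) in the $1\subset 4$ enumeration, the constraint $\gamma(s,e)\ge 9$ you import from \Cref{prop lattices are not conatined in exp max}(i) needs the supplementary remark that $s=1$ is impossible (no isotropic classes), and without it the list also contains $(4,16)$, $(5,18)$, $(6,20)$ --- harmless, as all have discriminants $-120$, $-52$, $-60$ that are not of the form $-101k^2$; (3) your ``alternative'' appeal to \Cref{prop lattices are not conatined in exp max}(i) in case (c) does not literally cover $t\ge 14$, where $\rho(18,1+t/2,10+t)\ge 0$ and the linear system is no longer special, so the mod-$101$ computation should be regarded as the primary argument there.
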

\begin{proof}
	The only non-containment requiring additional analysis is $\mathcal{M}^2_{18,13}\nsubseteq \mathcal{M}^3_{18,16}$. The other non-containments follow the arguments above. 
	
	In \Cref{theorem lifting g3ds general}, the bound on $d$ to ensure that a Donagi--Morrison lift exists for a $\g{3}{16}$ on a general $(S,H)\in\mathcal{K}^2_{18,13}$ is $16$, and hence we are not guaranteed to have a Donagi--Morrison lift by using \Cref{Prop Proof Strategy}. However, the bound is sufficient to show that the LM bundle $E_{C,A}$ associated to the $\g{3}{16}$ can only have a terminal filtration of type $1\subset 4$ or $1\subset 2\subset 4$. We argue that the terminal filtration of type $1\subset 2 \subset 4$ cannot exist. 
	
	Suppose $\Pic(S)=\Lambda^2_{18,13}$, and that $C$ has $\g{3}{16}$. \Cref{lemma max gon and Cliff dim 1} shows that $C$ has $\gamma(C)=8$. Suppose also that $E=E_{C,\g{3}{16}}$ has a $1\subset 2\subset 4$ terminal filtration, which is $0\subset N\subset M\subset E$ where $N$ is a line bundle and $M$ has rank $2$. We show that this leads to a contradiction. We have $c_1(N).c_1(E/N)\ge \gamma(C)+2$ by \Cref{Prop quotients contribute to Cliff(C)}. Furthermore, $C$ has general Clifford index by \Cref{lemma max gon and Cliff dim 1}. Up to replacing $N$ with its saturation, we can assume $E/N$ is a gLM bundle of type (II), and a computation gives $\gamma(E)=c_1(N).c_1(E/N)+\gamma(E/N)-2$, thus $\gamma(E/N)\le 2$. 
	
	One can easily check that $S$ has no elliptic curves, hence
        one of the four cases in \Cref{Prop Clifford Index 1 and 2
        gLM} occur. In case $(i)$ and $(ii)$, one checks the cases in
        \Cref{LC Cor 2.5}, and finds that none can occur. Thus for a
        smooth irreducible $D \in |\det(E/N)|$, $D$ is either
        trigonal, a plane quintic, or a plane sextic, see
        \Cref{rem:bounding_invariants}. If $C$ is hyperelliptic or
        trigonal, one finds a Donagi--Morrison lift of the $\g{1}{2}$
        or the $\g{1}{3}$, which cannot be contained in
        $\Pic(S)$. Thus we may assume $\gamma(D)=2$.  As the
        condition $(\ast)$ from \cite[Theorem 4.2]{Lelli_Chiesa_2015}
        applies, we obtain a Donagi--Morrison lift of the $\g{2}{6}$,
        which again cannot be contained in $\Pic(S)$. Thus $E$ cannot
        have a $1\subset 2\subset 4$ filtration.
	
	Therefore $E$ can only have a terminal filtration of type
        $1\subset 4$, and \Cref{conj DM} holds for the
        $\g{3}{16}$. The rest of the argument is now similar to the arguments above.
\end{proof}

\subsection{Genus \texorpdfstring{$19$}{}}\label{subsection genus 19}

\begin{prop}
\label{prop:g19}
		The maximal Brill--Noether loci in genus $19$ are $\mathcal{M}^{1}_{19,10}$, $\mathcal{M}^{2}_{19,14}$, and $\mathcal{M}^{3}_{19,17}$.
\end{prop}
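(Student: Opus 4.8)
The plan is to first confirm that the three listed loci are exactly the expected maximal Brill--Noether loci in genus $19$ --- they have Clifford indices $8,10,11$ and Brill--Noether numbers $-1,-2,-1$, and one checks directly that for every $r$ there is no larger $\g{r}{d}$ with $d\le g-1$ satisfying both $\rho(g,r,d)<0$ and $\rho(g,r-1,d-1)\ge 0$ (e.g.\ $r=4$ produces no such $d$) --- and then to prove the six non-containments among them. Four of these are routine. Since $\gamma(\g{2}{14})=10$ and $\gamma(\g{3}{17})=11$ both exceed $\lfloor\frac{g-1}{2}\rfloor=9$, \Cref{Prop submax gonality not contained in noncomputing locus} gives $\mathcal{M}^1_{19,10}\nsubseteq\mathcal{M}^2_{19,14}$ and $\mathcal{M}^1_{19,10}\nsubseteq\mathcal{M}^3_{19,17}$. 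For $\mathcal{M}^2_{19,14}\nsubseteq\mathcal{M}^1_{19,10}$ and $\mathcal{M}^3_{19,17}\nsubseteq\mathcal{M}^1_{19,10}$, take a very general $(S,H)$ in $\mathcal{K}^2_{19,14}$ (resp.\ $\mathcal{K}^3_{19,17}$), so $\Pic(S)=\Lambda^2_{19,14}$ (resp.\ $\Lambda^3_{19,17}$); a negative-Pell computation shows neither lattice contains a $(-2)$-class, hence no $(-2)$-curves, so by \Cref{lemma max gon and Cliff dim 1}(ii)--(iv) the curve $C\in|H|$ carries the expected $\g{r}{d}$, has maximal gonality $11$, and in particular no $\g{1}{10}$. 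Finally $\mathcal{M}^3_{19,17}\nsubseteq\mathcal{M}^2_{19,14}$ by dimensions: $\mathcal{M}^3_{19,17}$ is irreducible of codimension $1$ by \cite{Eisenbud_Harris_1989}, while every component of $\mathcal{M}^2_{19,14}$ has codimension $2$.

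The substantive non-containment is $\mathcal{M}^2_{19,14}\nsubseteq\mathcal{M}^3_{19,17}$, for which I would use \Cref{theorem lifting g3ds general}. Take a very general $(S,H)\in\mathcal{K}^2_{19,14}$; then $\Pic(S)=\Lambda^2_{19,14}$ has no isotropic and no $(-2)$-classes, so $S$ has no elliptic or $(-2)$-curves, $m=2$ (realized by $L$), and $\mu=14$. By \Cref{lemma max gon and Cliff dim 1}, $L\otimes\mathcal{O}_C$ is a $\g{2}{14}$ and $\gamma(C)=9$. Suppose for contradiction that $C$ carries a complete $\g{3}{17}$, $A$. With $\gamma=9$, $g=19$, $m=2$, $\mu=14$ the smallest of the four bounds in \Cref{theorem lifting g3ds general} is $\frac{5}{4}\gamma+\frac{m}{2}+5=\frac{69}{4}$, and $17<\frac{69}{4}$; so by the argument of \Cref{theorem lifting g3ds general} (via \Cref{Prop Proof Strategy}) the only terminal filtration of $E=E_{C,A}$ is $1\subset 4$. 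Hence there is a line bundle $N\subset E$ with $E/N$ torsion free semistable, and $M\colonequals H\otimes N^\vee=\det(E/N)$ adapted to $|H|$ with $|A|\subseteq|M\otimes\mathcal{O}_C|$ and $\gamma(M\otimes\mathcal{O}_C)\le 11$. As $M\otimes\mathcal{O}_C$ contributes to $\gamma(C)$ we also have $\gamma(M\otimes\mathcal{O}_C)\ge 9$, so $c_2(E/N)=15-\gamma(M\otimes\mathcal{O}_C)\in\{4,5,6\}$. Writing $N=aH+bL$ and combining the identity $c_2(E/N)=17+N^2-H.N$ from \Cref{coker is gLM} with $N^2\ge 0$, $M^2=(H-N)^2\ge 2$ and $N.H,\,M.H>0$, a short enumeration over small $a$ leaves exactly $N=H-L$ (so $M=L$) and $N=L$ (so $M=H-L$), both with $c_2(E/N)=5$.

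The case $M=L$ is impossible for degree reasons, since $|A|\subseteq|L\otimes\mathcal{O}_C|$ would force $17=\deg A\le L.C=14$. So $N=L$, $M=H-L$, and by \Cref{coker is gLM} the sheaf $E/N$ is a rank-$3$ gLM bundle with $\det(E/N)=H-L$, $(H-L)^2=10$ and $\gamma(E/N)=c_2(E/N)-4=1$. Proceeding as in genus $16$ (\Cref{genus 16 theorem}), \Cref{Prop Clifford Index 1 and 2 gLM} together with \Cref{LC Cor 2.5} shows that $E/N$ or its reflexive hull is a Lazarsfeld--Mukai bundle of Clifford index $0$ or $1$, whence by \Cref{rem:bounding_invariants} a smooth $D\in|H-L|$ --- a curve of genus $6$ --- is hyperelliptic, trigonal, or a plane quintic, and so has $\gamma(D)\le 1<\lfloor\frac{g(D)-1}{2}\rfloor$. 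Then Green--Lazarsfeld \cite{GreenLaz}, Donagi--Morrison \cite{DonagiMorrison_linsystemsonk3} and Lelli-Chiesa \cite{Lelli_Chiesa_2013} (or Saint-Donat \cite{Saint_Donat_Proj_Models_of_K3s} in the hyperelliptic case) force the pencil or net computing $\gamma(D)$ to be the restriction of a line bundle $F\in\Pic(S)=\Lambda^2_{19,14}$, making $F$ either an isotropic class or a class with $F^2=2$ and $F.(H-L)=5$; but $\Lambda^2_{19,14}$ has no isotropic class and its only square-$2$ classes are $\pm L$, with $L.(H-L)=12$. This contradiction shows $C$ has no $\g{3}{17}$, so $\mathcal{M}^2_{19,14}\nsubseteq\mathcal{M}^3_{19,17}$, completing the proof.

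The hard part is precisely the case $M=H-L$: because $\Lambda^2_{19,14}\cong\Lambda^6_{19,22}$, the potential Donagi--Morrison lift of type $\g{6}{22}$ (the Serre adjoint on $C$ of the $\g{2}{14}$) yields a lattice isomorphic to $\Pic(S)$, so the lattice exclusions of \Cref{prop lattices are not conatined in exp max} do not rule it out and one must instead exploit the fine structure of the rank-$3$ gLM bundle $E/N$ and the very low Clifford index of the resulting genus-$6$ curve in $|H-L|$. A secondary point for the full write-up is the primitivity of $A$, which is needed to apply \Cref{Prop Clifford Index 1 and 2 gLM} and which is handled as in the lower-genus arguments by removing base points and reapplying the bound of \Cref{theorem lifting g3ds general}.
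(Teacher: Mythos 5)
Your proposal is correct and follows essentially the same route as the paper: the paper's proof of this proposition is a two-line sketch that notes $m,\mu\ge 2$ in $\Lambda^2_{19,14}$ so that \Cref{theorem lifting g3ds general} applies to a $\g{3}{17}$, and then defers to the genus-$16$ argument; you have simply filled in the details of that deferral (the enumeration forcing $N\in\{L,H-L\}$, the degree exclusion of $M=L$, and the analysis of the rank-$3$ quotient with $\gamma(E/N)=1$ via \Cref{Prop Clifford Index 1 and 2 gLM} and the genus-$6$ curve in $|H-L|$), along with the routine non-containments handled exactly as in the paper. One small factual slip: $\pm L$ are \emph{not} the only square-$2$ classes in $\Lambda^2_{19,14}$ (the Pell equation $x^2-31y^2=1$ has nontrivial solutions, producing infinitely many such classes), but the exclusion you need survives because every class $F=aH+bL$ has $F.(H-L)=22a+12b$ even, so no class can satisfy $F^2=2$ and $F.(H-L)=5$ (alternatively, $\langle H-L,F\rangle$ would have discriminant $-5$, which is not a square times $-124$).
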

\begin{proof}
	To apply \Cref{theorem lifting g3ds general}, it suffices to
        note that when $\Pic(S)=\Lambda^2_{19,14}$, then we have
        $\mu\ge 2$ and hence the Donagi--Morrison conjecture holds for
        a $\g{3}{17}$ on a smooth $C \in |H|$, otherwise the argument is similar to \Cref{genus 16 theorem}.
\end{proof}

\begin{remark}
\label{remark:counterexample}
In \cite[Appendix A, Remark 12]{Lelli_Chiesa_2015}, Knutsen and
Lelli-Chiesa construct examples of K3 surfaces $S$ of Picard rank 2
such that a smooth irreducible curve $C \subset S$ has a
Brill--Noether special linear system $A$ of rank $3$ with $\rho(A)=-1$
whose Lazarseld--Mukai bundle $E_{C,A}$ admits no effective sub-line
bundle. That is, \Cref{Prop Proof Strategy} cannot be used to find a
Donagi--Morrison lift of $A$. Here, we give an explicit example and explain how it
relates to our results.

We first recall Knutsen and Lelli-Chiesa's construction. For even
integers $a, b \geq 4$ and $d=a+b$, let $S$ be a
K3 surface with $\Pic(S)=\Lambda^b_{a,d}$.
Suppose that $\Pic(S)$ has no classes of self-intersection $-2$ or
$0$. There are infinitely many choices of $a$ and $b$ that satisfy
these hypotheses, and such that every element of the linear systems
$|H|$ and $|L|$ are reduced and irreducible; these are examples of the
so-called \emph{Knutsen K3 surfaces} in \cite{arap_marshburn}. Thus
general curves $C_1 \in |H|$ and $C_2 \in |L|$ are smooth of genus $a$
and $b$, and by Lazarsfeld's theorem~\cite{lazarsfeld1986}, are
Brill--Noether general, in particular, have generic gonality
$k_1=(a+2)/2$ and $k_2=(b+2)/2$, respectively. Let $E_1$ and $E_2$ be
the LM bundles associated to gonality pencils $\g{1}{k_1}$ on $C_1$
and a $\g{1}{k_2}$ on $C_2$. As these pencils are Brill--Noether
general, the LM bundles $E_1$ and $E_2$ are simple, hence admit no
injective map from an effective line bundle $N$. A calculation using
\Cref{remark glm is lm bundle} shows that the vector bundle
$E=E_1\oplus E_2$ is a LM bundle associated to a linear system $A$ of
type $\g{3}{k_1+k_2+d}$ on a smooth irreducible curve $C \in
|H+L|$. We note that $g(C)=2d-1$, and that $\rho(A)=-1$. However,
since $E$ admits no injective map $N\hookrightarrow E$, the linear
system $A$ admits no Donagi--Morrison lift, and so \Cref{conj DM}
fails for $(C,A)$.

By construction, $E$ has a $2\subset 4$ terminal filtration. Checking
the bound from \Cref{lemma bound 2<4 filt}, one finds that
$\gamma(C)\le d-2$, thus $C$ does not have general Clifford index. In
fact, one can verify using \Cref{lemma max gon and Cliff dim 1} that $L\vert_C$ is a line bundle of type
$\g{b}{d+2b-2}$, which has $\gamma(L\vert_C)=d-2$. We note that $A$ is
non-computing, and does not compute the special Clifford index
$\widetilde{\gamma}(C)$. However, the linear system $L\vert_C$ does
compute $\widetilde{\gamma}(C)$, and has a (Donagi--Morrison) lift by construction.
Hence, while this is a counterexample to the Donagi--Morrison
conjecture, it does not give a negative answer to \Cref{question2}.

The first case where such an example shows the failure of \Cref{conj
DM} for $(C,A)$ is genus $19$, with $a=6$ and $b=4$.  The
corresponding polarized K3 surface $(S,H+L)$ of genus 19 has
$\Pic(S)=\Lambda^{4}_{19,16}$ with basis $H+L,L$. In the proof of
\Cref{prop:g19}, we needed the Donagi--Morrison Conjecture (\Cref{conj
DM}) for linear systems on curves on a different lattice polarized K3
surface, showing that our bounded version (\Cref{theorem lifting g3ds
general}) is in some sense tight (at least in genus 19).
\end{remark}

\subsection{Genus \texorpdfstring{$20-21$}{}}
We briefly list what is known and summarize the last needed non-containments to verify \Cref{conjecture} in genus $20$ and $21$.

The expected maximal Brill--Noether loci in genus $20$ are $\mathcal{M}^{1}_{20,10}$, $\mathcal{M}^{2}_{20,15}$, and $\mathcal{M}^{3}_{20,17}$, and $\mathcal{M}^{4}_{20,19}$. We state the following propositions without proof, as they follow the arguments above.

\begin{prop}
	In genus $20$, the loci $\mathcal{M}^1_{20,10}$, $\mathcal{M}^2_{20,17}$, and $\mathcal{M}^4_{20,19}$ are maximal. There are also non-containments 
	\begin{itemize}
		\item $\mathcal{M}^3_{20,17}\nsubseteq\mathcal{M}^1_{20,10}$ and
		\item $\mathcal{M}^3_{20,17}\nsubseteq\mathcal{M}^2_{20,17}$.
	\end{itemize}
\end{prop}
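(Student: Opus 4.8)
The plan is to run the K3--lattice strategy of this section separately for each of the four expected maximal loci in genus $20$, namely $\mathcal{M}^1_{20,10}$, $\mathcal{M}^2_{20,15}$, $\mathcal{M}^3_{20,17}$, and $\mathcal{M}^4_{20,19}$, whose Brill--Noether numbers are $-2,-1,-4,-5$; thus $\mathcal{M}^2_{20,15}$ is an irreducible divisor and $\mathcal{M}^1_{20,10}$ has codimension $2$ by Eisenbud--Harris, while $\mathcal{M}^2_{20,15},\mathcal{M}^3_{20,17},\mathcal{M}^4_{20,19}$ are non-computing with Clifford index $11>9=\lfloor\frac{g-1}{2}\rfloor$. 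Maximality of $\mathcal{M}^1_{20,10}$ is then immediate from \Cref{Prop submax gonality not contained in noncomputing locus}, and since every Brill--Noether locus lies in an expected maximal one, everything else reduces to pairwise non-containments among these four loci.

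For a target $\mathcal{M}^r_{20,d}$ with $r\in\{2,3,4\}$ I would take $(S,H)\in\mathcal{K}^r_{20,d}$ very general, so $\Pic(S)=\Lambda^r_{20,d}$, verify (as in \Cref{Checking if L|C is a grd}) that this rank $2$ lattice has no class of self-intersection $-2$, no class of self-intersection $0$ or $2$, and minimal positive square $m\ge 2$, and conclude from \Cref{lemma max gon and Cliff dim 1} that a smooth $C\in|H|$ carries the $\g{r}{d}$ given by $L\otimes\mathcal{O}_C$ and has $\gamma(C)=9$, maximal gonality $11$, and Clifford dimension $1$; in particular $C$ has no $\g{1}{10}$, which handles every non-containment into $\mathcal{M}^1_{20,10}$ (and so $\mathcal{M}^3_{20,17}\nsubseteq\mathcal{M}^1_{20,10}$). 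To forbid a $\g{2}{15}$ or a $\g{3}{17}$ on such a $C$ I would assume such an $A$ exists, produce a Donagi--Morrison lift $M$ of it — by Lelli-Chiesa's rank $2$ lifting \cite[Theorem 1]{Lelli_Chiesa_2013} when $A$ is a $\g{2}{15}$, and by \Cref{theorem lifting g3ds general} when $A$ is a $\g{3}{17}$, the relevant bound $17<\frac{5}{4}\cdot 9+\frac m2+5$ holding because $m\ge 2$ — and then force a contradiction from the lattice $\langle H,M\rangle$, which must embed primitively in $\Lambda^r_{20,d}$ preserving $H$. Since $\operatorname{disc}(\Lambda^2_{20,15})=-149$, $\operatorname{disc}(\Lambda^3_{20,17})=-137$, $\operatorname{disc}(\Lambda^4_{20,19})=-133$, and every potential Donagi--Morrison lift lattice for a $\g{2}{15}$ or $\g{3}{17}$ has discriminant in $\{-149,-137,-133,-104,-96\}$, the discriminant-ratio argument of \Cref{prop lattices are not conatined in exp max} leaves only the residual possibility that $M$ itself has type $\g{r}{d}$ with $\langle H,M\rangle=\Lambda^r_{20,d}$, i.e.\ $M=L$; for $\Pic(S)=\Lambda^2_{20,15}$, where $-149$ is the most negative of these discriminants, even this residual case is excluded, so $\mathcal{M}^2_{20,15}\nsubseteq\mathcal{M}^3_{20,17}$ follows at once. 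In the residual case, setting $N=H-L$ and applying \Cref{coker is gLM} gives $\gamma(E_{C,A}/N)=\gamma(A)+N^2-H.N+2=0$ while $c_1(E_{C,A}/N)^2=L^2>0$, so \Cref{LC Cor 2.5} forces one of its three cases; case (a) is excluded by rank, case (b) would require $L^2=2\operatorname{rk}(E_{C,A}/N)-2$, which fails in each instance, and case (c) would exhibit a hyperelliptic curve in $|L|$, hence by \cite[Theorem 5.2]{Saint_Donat_Proj_Models_of_K3s} an irreducible curve $B\subset S$ with $B^2\in\{0,2\}$, contradicting the lattice check. This gives $\mathcal{M}^3_{20,17}\nsubseteq\mathcal{M}^2_{20,15}$ (working on $\Pic(S)=\Lambda^3_{20,17}$), and, working on $\Pic(S)=\Lambda^4_{20,19}$ and forbidding both a $\g{2}{15}$ and a $\g{3}{17}$, gives $\mathcal{M}^4_{20,19}\nsubseteq\mathcal{M}^2_{20,15}$ and $\mathcal{M}^4_{20,19}\nsubseteq\mathcal{M}^3_{20,17}$; with $\mathcal{M}^4_{20,19}\nsubseteq\mathcal{M}^1_{20,10}$ this proves $\mathcal{M}^4_{20,19}$ is maximal and also settles both non-containments stated in the proposition.

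The last ingredient is $\mathcal{M}^2_{20,15}\nsubseteq\mathcal{M}^4_{20,19}$, which is what is really needed for $\mathcal{M}^2_{20,15}$ to be maximal, and this is the hard part: no Donagi--Morrison type lifting theorem is available in rank $4$ (see the introduction and \Cref{remark:counterexample}), so one cannot forbid a $\g{4}{19}$ on a curve on the K3 with $\Pic(S)=\Lambda^2_{20,15}$ the way we forbade a $\g{3}{17}$. I would instead argue by dimension: $\mathcal{M}^2_{20,15}$ is an irreducible divisor and $\mathcal{M}^4_{20,19}$ is a proper subvariety, so a containment would make $\mathcal{M}^2_{20,15}$ a divisorial component of $\mathcal{M}^4_{20,19}$; hence it suffices to know that every component of $\mathcal{M}^4_{20,19}$ has codimension at least $2$ in $\mathcal{M}_{20}$ (its expected codimension being $5$). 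Supplying this codimension bound — which, as indicated in the introduction for genus $20$--$21$, should be within reach of limit-linear-series or tropical methods — is the one step I do not see how to carry out with the lifting techniques of this paper, and is the main obstacle to a self-contained proof.
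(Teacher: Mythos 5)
Your argument is, for four of the five assertions, precisely the one the paper intends (the paper states this proposition without proof, as following from the preceding sections): maximality of $\mathcal{M}^1_{20,10}$ via \Cref{Prop submax gonality not contained in noncomputing locus}; the non-containments into $\mathcal{M}^1_{20,10}$ via \Cref{lemma max gon and Cliff dim 1}; and $\mathcal{M}^3_{20,17}\nsubseteq\mathcal{M}^2_{20,15}$ together with the maximality of $\mathcal{M}^4_{20,19}$ via Donagi--Morrison lifts (Lelli-Chiesa in rank $2$, \Cref{theorem lifting g3ds general} in rank $3$) combined with the discriminant comparison of \Cref{prop lattices are not conatined in exp max} and the $\gamma(E/N)=0$ analysis through \Cref{LC Cor 2.5}, exactly as in \Cref{genus 16 theorem} and \Cref{genus_23}. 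Your numerics check out: the ambient discriminants $-149,-137,-133$ and the set $\{-149,-137,-133,-104,-96\}$ of potential-lift discriminants are correct, and the residual case $M=L$ is eliminated as you describe. Two small points: the bound in \Cref{theorem lifting g3ds general} is a minimum of four quantities, so you should also record that $17<\frac{5}{4}\cdot 9+\frac{\mu}{2}+\frac{m}{2}+\frac{9}{2}$, which holds since $\mu\ge 1$ and $m\ge 2$; and $\mathcal{M}^2_{20,17}$ in the statement is indeed a typo for $\mathcal{M}^2_{20,15}$, as you assumed.

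The one incomplete step is the one you flag, and your diagnosis is accurate: the claimed maximality of $\mathcal{M}^2_{20,15}$ requires $\mathcal{M}^2_{20,15}\nsubseteq\mathcal{M}^4_{20,19}$, and since $\rho(20,4,19)=-5$, the codimension results invoked elsewhere in the paper (which cover $-3\le\rho\le-1$, as used for $\mathcal{M}^4_{23,22}$ with $\rho=-2$ and $\mathcal{M}^4_{22,21}$ with $\rho=-3$) do not supply the bound $\operatorname{codim}\mathcal{M}^4_{20,19}\ge 2$ that your dimension argument needs, nor is there a rank-$4$ lifting theorem to run the K3 argument on $\Lambda^2_{20,15}$. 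This is not a defect of your write-up relative to the paper: the section's toolkit does not close this step either, and it is the same obstruction that forces the paper to leave $\mathcal{M}^3_{20,17}\nsubseteq\mathcal{M}^4_{20,19}$ open. The honest formulation is therefore that $\mathcal{M}^1_{20,10}$ and $\mathcal{M}^4_{20,19}$ are unconditionally maximal, both bulleted non-containments hold, and the maximality of $\mathcal{M}^2_{20,15}$ is conditional on either a codimension-$\ge 2$ bound for every component of $\mathcal{M}^4_{20,19}$ or a rank-$4$ Donagi--Morrison statement; you should state that caveat in the body of the proof rather than as a closing aside.
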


In fact, the only non-containment that remains to verify \Cref{conjecture} in genus $20$ is $\mathcal{M}^3_{20,17}\nsubseteq\mathcal{M}^4_{20,19}$. Current lifting methods do not suffice to prove the last non-containment, as there are no known general lifting results for linear systems of rank~$4$. If \Cref{conj DM} holds in rank $4$, then this would suffice. Another approach to verifying \Cref{conjecture} in genus $20$ is to show that the codimension of $\mathcal{M}^3_{20,17}$ is at least the expected value of $4$ and the codimension of $\mathcal{M}^4_{20,19}$ is at least the expected value of $5$.

Similarly, the expected maximal Brill--Noether loci in genus $21$ are $\mathcal{M}^{1}_{21,11}$, $\mathcal{M}^{2}_{21,15}$, and $\mathcal{M}^{3}_{21,18}$, and $\mathcal{M}^{4}_{21,20}$. And current methods suffice to prove that some expected maximal loci are indeed maximal.

\begin{prop}
	In genus $21$, the loci $\mathcal{M}^1_{21,11}$ and $\mathcal{M}^4_{21,20}$ are maximal. There are also non-containments
	\begin{itemize}
		\item $\mathcal{M}^2_{21,15}\nsubseteq\mathcal{M}^1_{21,11}$
		\item $\mathcal{M}^3_{21,18}\nsubseteq\mathcal{M}^1_{21,11}$
	\end{itemize}
\end{prop}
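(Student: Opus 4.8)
The four expected maximal Brill--Noether loci in genus $21$ are $\mathcal{M}^1_{21,11}$, $\mathcal{M}^2_{21,15}$, $\mathcal{M}^3_{21,18}$, and $\mathcal{M}^4_{21,20}$, and the last three are non-computing, with Clifford indices $11$, $12$, $12$, all exceeding $\lfloor\tfrac{g-1}{2}\rfloor=10$. Since $11=\lfloor\tfrac{g+1}{2}\rfloor$, \Cref{Prop submax gonality not contained in noncomputing locus} gives at once $\mathcal{M}^1_{21,11}\nsubseteq\mathcal{M}^2_{21,15}$, $\mathcal{M}^1_{21,11}\nsubseteq\mathcal{M}^3_{21,18}$, and $\mathcal{M}^1_{21,11}\nsubseteq\mathcal{M}^4_{21,20}$, so $\mathcal{M}^1_{21,11}$ is maximal. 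For the remaining containments the plan is to follow the usual strategy: realize a non-computing locus by a very general lattice-polarized K3 surface, and obstruct the unwanted linear system by lifting it and comparing lattices.

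For the two listed non-containments $\mathcal{M}^2_{21,15}\nsubseteq\mathcal{M}^1_{21,11}$ and $\mathcal{M}^3_{21,18}\nsubseteq\mathcal{M}^1_{21,11}$, we would take a very general $(S,H)$ in $\mathcal{K}^2_{21,15}$, resp.\ $\mathcal{K}^3_{21,18}$; checking as in \Cref{Checking if L|C is a grd} that $L$ and $H-L$ are base point free, \Cref{lemma max gon and Cliff dim 1}\,(ii) shows $L\otimes\mathcal{O}_C$ is a $\g{2}{15}$, resp.\ a $\g{3}{18}$, on a smooth curve $C\in|H|$; since both loci are expected maximal, parts (iii) and (iv) of that lemma then give that $C$ has maximal gonality $\lfloor\tfrac{g+3}{2}\rfloor=12$, so admits no $\g{1}{11}$. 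Applying the same argument to a very general $(S,H)\in\mathcal{K}^4_{21,20}$ yields $\mathcal{M}^4_{21,20}\nsubseteq\mathcal{M}^1_{21,11}$; here it is cleaner, since $\Lambda^4_{21,20}$ has no class of self-intersection $-2$ or $0$, so $L$ and $H-L$ are automatically base point free, and one computes $m=6$ and $\mu=20$ for this lattice.

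To finish the maximality of $\mathcal{M}^4_{21,20}$ we must rule out $\mathcal{M}^4_{21,20}\subseteq\mathcal{M}^2_{21,15}$ and $\mathcal{M}^4_{21,20}\subseteq\mathcal{M}^3_{21,18}$. Fix a very general $(S,H)\in\mathcal{K}^4_{21,20}$, so $\Pic(S)=\Lambda^4_{21,20}$; then $C\in|H|$ has a $\g{4}{20}$ and general Clifford index $\gamma=10$ by \Cref{lemma max gon and Cliff dim 1}. For the first, the Donagi--Morrison conjecture holds for $\g{2}{15}$ by Lelli-Chiesa's rank-$2$ lifting \cite[Theorem 1]{Lelli_Chiesa_2013}, and a discriminant comparison of $\Delta(21,s,e)$ against $\Delta(21,4,20)=-160$ over the finite list of potential Donagi--Morrison lift types $\g{s}{e}$ of a $\g{2}{15}$ (those recalled before \Cref{prop philosophy}) shows that condition L3 holds for the pair $(\Lambda^4_{21,20},\Lambda^2_{21,15})$, so \Cref{prop philosophy} gives $\mathcal{M}^4_{21,20}\nsubseteq\mathcal{M}^2_{21,15}$. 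For the second, since $d=18$ lies strictly below each of the four bounds of \Cref{theorem lifting g3ds general} for this lattice -- the smallest being $\tfrac{\gamma}{2}+\tfrac{g-1}{2}+4=19$ -- a $\g{3}{18}$ on $C$ would have a Donagi--Morrison lift $M$; the analogous discriminant comparison now leaves exactly one surviving type, $M$ of type $\g{4}{20}$, i.e.\ $M\in\{L,H-L\}$, so L3 fails for the pair $(\Lambda^4_{21,20},\Lambda^3_{21,18})$ and \Cref{prop philosophy} does not apply directly. To eliminate this last possibility we would argue as in \Cref{genus 16 theorem}: setting $N=H\otimes M^\vee$, one has $N\hookrightarrow E_{C,A}$ with $\gamma(E_{C,A}/N)=0$ and $c_1(E_{C,A}/N)^2=6$ by \Cref{coker is gLM}, so \Cref{LC Cor 2.5} forces $E_{C,A}/N$ to be the Lazarsfeld--Mukai bundle of $2\g{1}{2}$ on a smooth hyperelliptic curve of genus $4$ on $S$ -- case (a) there being impossible since $E_{C,A}/N$ has rank $3$, and case (b) requiring $c_1^2=4$ -- whence \cite[Theorem 5.2]{Saint_Donat_Proj_Models_of_K3s} produces an irreducible curve of self-intersection $0$ or $2$ on $S$, contradicting $m=6$. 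Hence $C$ has no $\g{3}{18}$, and $\mathcal{M}^4_{21,20}$ is maximal.

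The step I expect to be the main obstacle is the non-containment $\mathcal{M}^4_{21,20}\nsubseteq\mathcal{M}^3_{21,18}$: here $d=18$ only just meets the hypothesis $d<19$ of \Cref{theorem lifting g3ds general}, so one must pin down $m$ and $\mu$ for $\Lambda^4_{21,20}$ precisely to be sure that theorem applies, and -- unlike the genus $16$--$19$ arguments, where a discriminant comparison by itself ruled out every candidate lift -- here the lattice of a hypothetical Donagi--Morrison lift can be $\Lambda^4_{21,20}$ itself, so the lattice-theoretic input no longer closes the argument and must be supplemented by the Lazarsfeld--Mukai bundle analysis of the $\g{4}{20}$-type lift.
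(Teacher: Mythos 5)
Your proof is correct and follows exactly the strategy the paper intends here (the paper states this proposition without proof, deferring to "the arguments above"). The computations that make it work all check out: $\Lambda^4_{21,20}$ has no classes of square $-2$ or $0$, $m=6$ and $\mu=20$, so the minimum of the four bounds in \Cref{theorem lifting g3ds general} is $\frac{\gamma}{2}+\frac{g-1}{2}+4=19>18$; the only potential Donagi--Morrison lift of a $\g{3}{18}$ compatible with $\Pic(S)=\Lambda^4_{21,20}$ is of type $\g{4}{20}$ (so L3 fails and the extra Lazarsfeld--Mukai analysis is genuinely needed); and the resulting quotient with $\gamma=0$ and $c_1^2=6$ is excluded from cases (a), (b), (c) of \Cref{LC Cor 2.5} by rank, by $c_1^2=4\neq 6$, and by Saint-Donat against $m=6$, respectively.
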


Our results reduce the verification of \Cref{conjecture} in genus $21$ to the verification of the non-containments $\mathcal{M}^2_{21,15}\nsubseteq\mathcal{M}^4_{21,20}$ and $\mathcal{M}^3_{21,18}\nsubseteq\mathcal{M}^4_{21,20}$. Again, \Cref{conj DM} in rank $4$ would suffice. Another approach is by verifying that the codimension of $\mathcal{M}^4_{21,20}$ is the expected value of $4$, since $\rho(21,2,15)=\rho(21,3,18)=-3$ and thus the corresponding loci have codimension $3$ in $\mathcal{M}_{21}$.
	
\subsection{Genus \texorpdfstring{$22$}{}}

\begin{prop}
	The maximal Brill--Noether loci in genus $22$ are $\mathcal{M}^{1}_{22,11}$, $\mathcal{M}^{2}_{22,16}$, and $\mathcal{M}^{3}_{22,19}$, and $\mathcal{M}^{4}_{22,21}$. 
\end{prop}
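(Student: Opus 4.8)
The plan is to verify that the four listed loci $\mathcal{M}^{1}_{22,11}$, $\mathcal{M}^{2}_{22,16}$, $\mathcal{M}^{3}_{22,19}$, $\mathcal{M}^{4}_{22,21}$ are exactly the expected maximal Brill--Noether loci in genus $22$, and then prove pairwise non-containment. That these are the expected maximal loci is a direct computation: for each $r\in\{1,2,3,4\}$ one checks that $d$ is maximal with $\rho(22,r,d)<0$ and $\rho(22,r-1,d-1)\ge 0$. So the content is the non-containments. First I would record the relevant Brill--Noether numbers: $\rho(22,1,11)$, $\rho(22,2,16)$, $\rho(22,3,19)$, $\rho(22,4,21)$, and extract which loci have codimension forcing non-containment for dimension reasons alone (as in the genus $14$--$21$ arguments). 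Where codimensions differ, the smaller-codimension locus cannot contain the larger-codimension one.

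Next I would handle the non-containments involving $\mathcal{M}^{1}_{22,11}$. By \Cref{Prop submax gonality not contained in noncomputing locus}, since $\lfloor\frac{g+1}{2}\rfloor = 11$, we get $\mathcal{M}^1_{22,11}\nsubseteq\mathcal{M}^r_{22,d}$ for each of the other three expected maximal loci (these are non-computing, as $\gamma(2,16)=12$, $\gamma(3,19)=13$, $\gamma(4,21)=13$, all $>\lfloor\frac{21}{2}\rfloor=10$). For the reverse directions, I would take a very general $(S,H)\in\mathcal{K}^r_{22,d}$ for $r=2,3,4$; by \Cref{lemma max gon and Cliff dim 1}(iii)--(iv), together with \Cref{prop lattices are not conatined in exp max}, such a curve $C\in|H|$ carries a $\g{r}{d}$ and has general Clifford index $\gamma(C)=\lfloor\frac{21}{2}\rfloor=10$ and maximal gonality $\lfloor\frac{25}{2}\rfloor=12$, hence has no $\g{1}{11}$, giving $\mathcal{M}^r_{22,d}\nsubseteq\mathcal{M}^1_{22,11}$.

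Then I would treat the non-containments among $\mathcal{M}^{2}_{22,16}$, $\mathcal{M}^{3}_{22,19}$, $\mathcal{M}^{4}_{22,21}$. For containments ruled out by codimension (where $\rho$ values differ) the argument is immediate. For the remaining directions, the strategy is the one used in genus $16$--$19$: take a very general $(S,H)$ with $\Pic(S)$ the lattice associated to the \emph{smaller} expected maximal locus, use \Cref{lemma max gon and Cliff dim 1} to confirm $C\in|H|$ has the expected $\g{r}{d}$, assume $C$ also has a $\g{r'}{d'}$ for a larger $r'$, and derive a contradiction with $\Pic(S)$. For a hypothetical $\g{3}{d'}$, I would invoke \Cref{theorem lifting g3ds general} to produce a Donagi--Morrison lift $M$, then use L1/L3-type lattice comparisons (\Cref{prop lattices are not conatined in exp max}, \Cref{prop philosophy}) to show $\langle H,M\rangle$ cannot embed in $\Pic(S)$ unless $M$ is of the type making $E_{C,A}/N$ a Clifford-index-$0$ gLM bundle, which is then excluded by checking the cases of \Cref{LC Cor 2.5} (hyperellipticity/trigonality on $S$ would force a curve of too-small self-intersection, as in the proof of \Cref{genus 16 theorem}). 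I would also verify the hypotheses of \Cref{theorem lifting g3ds general} hold for the relevant $\Pic(S)$, i.e.\ checking that $d' < \min\{\tfrac54\gamma+\tfrac\mu2+\tfrac m2+\tfrac92,\ \tfrac54\gamma+\tfrac m2+5,\ \tfrac32\gamma+5,\ \tfrac\gamma2+\tfrac{g-1}{2}+4\}$ with $\gamma=10$, $g=22$; if some bound is not immediate (as happened in genus $18$) I would rule out the $1\subset2\subset4$ terminal filtration directly using \Cref{Prop Clifford Index 1 and 2 gLM} and \Cref{LC Cor 2.5}.

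\textbf{The main obstacle} will be the non-containments involving $\mathcal{M}^{4}_{22,21}$: the Donagi--Morrison conjecture is not known in rank $4$, so I cannot produce a lift of a hypothetical $\g{4}{21}$ to bound it against the smaller loci. Here I expect to fall back on \emph{codimension} estimates — since $\rho(22,4,21)=-1$, the locus $\mathcal{M}^{4}_{22,21}$ is an irreducible divisor by \cite{Eisenbud_Harris_1989}, and known results on codimension-$1$ and codimension-$2$ components together with the values of $\rho$ for the other three loci should force the remaining non-containments. The genus $22$ case presumably works (unlike genus $20$--$21$) precisely because $\rho(22,4,21)=-1$ puts $\mathcal{M}^4_{22,21}$ in the well-understood divisorial regime, so the delicate step is assembling the codimension bookkeeping rather than any new lifting input.
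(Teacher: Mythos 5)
Your overall skeleton is right, but two things go wrong. First, an arithmetic slip: $\rho(22,4,21)=22-5(22-21+4)=-3$, not $-1$, so $\mathcal{M}^4_{22,21}$ is a codimension-$3$ locus, not an irreducible divisor. This error actually helps in one direction: the three loci with $\rho=-2$ (codimension $2$) cannot be contained in a codimension-$3$ locus, so no rank-$4$ lifting is ever needed. But it breaks your fallback for the reverse directions: codimension bookkeeping cannot show $\mathcal{M}^4_{22,21}\nsubseteq\mathcal{M}^2_{22,16}$ or $\mathcal{M}^4_{22,21}\nsubseteq\mathcal{M}^3_{22,19}$, since a codimension-$3$ locus could a priori sit inside a codimension-$2$ one. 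Those directions must be done the way your third paragraph describes: take $\Pic(S)=\Lambda^4_{22,21}$ (where the minimal positive self-intersection is $6$, so the bounds in \Cref{theorem lifting g3ds general} do hold for a hypothetical $\g{3}{19}$, and Lelli-Chiesa's rank-$2$ results handle a hypothetical $\g{2}{16}$) and derive a lattice contradiction; this is what the paper means by ``the argument then follows \Cref{genus 16 theorem}.''

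Second, and more seriously, you do not isolate the genuinely hard pair, $\mathcal{M}^2_{22,16}$ versus $\mathcal{M}^3_{22,19}$: both have $\rho=-2$, so codimension says nothing in either direction. For $\mathcal{M}^2_{22,16}\nsubseteq\mathcal{M}^3_{22,19}$ you would set $\Pic(S)=\Lambda^2_{22,16}$ and try to lift a hypothetical $\g{3}{19}$; but there $m=L^2=2$, so the bound $\frac{5}{4}\gamma+\frac{m}{2}+5=18.5$ in \Cref{theorem lifting g3ds general} fails for $d=19$, i.e.\ the $1\subset2\subset4$ terminal filtration is not excluded. Your proposed repair --- arguing as in \Cref{subsec:g18} --- does not go through either: there one gets $\gamma(E/N)\le\gamma(E)-\gamma(C)=10-8=2$ and invokes \Cref{Prop Clifford Index 1 and 2 gLM}, whereas here $\gamma(E)-\gamma(C)=13-10=3$, outside the range of that classification. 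The paper does not attempt this at all: it quotes \cite[Corollary 3.5]{CHOI2022} for the distinctness of $\mathcal{M}^2_{22,16}$ and $\mathcal{M}^3_{22,19}$, and handles everything else as in \Cref{genus 16 theorem}. Without that external input (or an extension of the gLM classification to Clifford index $3$), your argument has a genuine gap at exactly this pair.
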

\begin{proof}
	 In genus $22$,	\cite[Corollary 3.5]{CHOI2022} shows that the loci $\mathcal{M}^2_{22,16}$ and $\mathcal{M}^3_{22,19}$ are distinct. The argument then follows \Cref{genus 16 theorem}.
\end{proof}

\subsection{Genus \texorpdfstring{$23$}{}}

Finally, we provide a proof in genus $23$. We note that Farkas proved in \cite{Farkas2000} that the Brill--Noether divisors $\mathcal{M}^1_{23,12}$, $\mathcal{M}^2_{23,17}$, and $\mathcal{M}^3_{23,20}$ are mutually distinct. Our results, and those of Lelli-Chiesa \cite{Lelli_Chiesa_2013}, provide a different proof for these non-containments.  However, the full proof of \Cref{conjecture} in genus $23$ requires our improved lifting results.

\begin{prop}\label{genus_23}
	The maximal Brill--Noether loci in genus $23$ are $\mathcal{M}^{1}_{23,12}$, $\mathcal{M}^{2}_{23, 17}$, $\mathcal{M}^{3}_{23,20 }$, and $\mathcal{M}^{4}_{23,22 }$.
\end{prop}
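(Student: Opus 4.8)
The plan is to show that the four expected maximal loci $\mathcal{M}^1_{23,12}$, $\mathcal{M}^2_{23,17}$, $\mathcal{M}^3_{23,20}$, $\mathcal{M}^4_{23,22}$ are pairwise incomparable under inclusion; since every Brill--Noether locus is contained in an expected maximal one, this identifies them as exactly the maximal loci. As $\rho(23,1,12)=\rho(23,2,17)=\rho(23,3,20)=-1$ and $\rho(23,4,22)=-2$, the first three are irreducible divisors by Eisenbud--Harris, and every component of $\mathcal{M}^4_{23,22}$ has codimension $2$ by \cite{EdidinThesis,Eisenbud_Harris_1989,Steffen1998}; since a codimension-one irreducible variety cannot be contained in a union of codimension-two subvarieties, $\mathcal{M}^1_{23,12},\mathcal{M}^2_{23,17},\mathcal{M}^3_{23,20}\nsubseteq\mathcal{M}^4_{23,22}$. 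Among the three divisors, inclusion would force equality, and they are mutually distinct by Farkas~\cite{Farkas2000} (alternatively, \Cref{theorem lifting g3ds general} together with Lelli-Chiesa's lifting of $\g{2}{d}$'s~\cite{Lelli_Chiesa_2013} and \Cref{Prop submax gonality not contained in noncomputing locus} reprove this). What remains are the three non-containments $\mathcal{M}^4_{23,22}\nsubseteq\mathcal{M}^j_{23,d}$ for $(j,d)\in\{(1,12),(2,17),(3,20)\}$, and for all of these I would fix a very general $(S,H)\in\mathcal{K}^4_{23,22}$, so that $\Pic(S)=\Lambda^4_{23,22}$ with $H^2=44$, $H.L=22$, $L^2=6$.

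A handful of elementary lattice facts about $\Lambda^4_{23,22}$ do most of the work: the form $44a^2+44ab+6b^2$ (of discriminant $220$) represents no value in $\{-2,0,2,4\}$, and $D.H\in 22\mathbb{Z}$ for every $D\in\Pic(S)$. Hence $S$ has no $(-2)$-curves and no curves of genus $0$, $1$, or $2$, so $L$ and $H-L$ are nef and basepoint free (as in \Cref{Checking if L|C is a grd}); moreover the minimal self-intersection of an effective class is $m=6$, and the minimal positive slope of an effective class with $D^2\ge 0$ is $\mu=22$. By \Cref{lemma max gon and Cliff dim 1}(ii), a smooth $C\in|H|$ carries the $\g{4}{22}$ given by $L\otimes\mathcal{O}_C$; since $\gamma(4,22)=14>11=\lfloor\frac{g-1}{2}\rfloor$ and \Cref{lemma max gon and Cliff dim 1}(iv) applies, part (iii) of that lemma yields $\gamma(C)=11$, gonality $\lfloor\frac{g+3}{2}\rfloor=13$, and Clifford dimension $1$. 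In particular $C$ has no $\g{1}{12}$, so $\mathcal{M}^4_{23,22}\nsubseteq\mathcal{M}^1_{23,12}$.

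For the remaining two, suppose $C$ carries a $\g{r}{d}$ $A$ with $(r,d)\in\{(2,17),(3,20)\}$, which after the standard reductions using \Cref{strict BN special implies primitive} we may take to be complete, basepoint free, and primitive. Then $A$ has a Donagi--Morrison lift $M$: when $r=2$ this is Lelli-Chiesa~\cite{Lelli_Chiesa_2013}, and when $r=3$ it is \Cref{theorem lifting g3ds general}, since substituting $\gamma=11$, $g=23$, $m=6$, $\mu=22$ gives $\min\{\frac{5}{4}\gamma+\frac{\mu}{2}+\frac{m}{2}+\frac{9}{2},\,\frac{5}{4}\gamma+\frac{m}{2}+5,\,\frac{3}{2}\gamma+5,\,\frac{\gamma}{2}+\frac{g-1}{2}+4\}=\frac{41}{2}$ and $d=20<\frac{41}{2}$. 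This is precisely where the bounded rank-$3$ Donagi--Morrison theorem is needed: $\gamma(A)=14>11=\gamma(C)$, so the $\g{3}{20}$ does not compute the Clifford index and the earlier rank-$3$ lifting results do not apply. Now $M$ is adapted to $|H|$, so $M$ and $H-M$ are both effective; this forces $M.H=22$, and then $M$ has no fixed component, so $M^2\ge 0$; the lattice facts now give $M^2=6$, hence $M\in\{L,H-L\}$ and $\gamma(M\otimes\mathcal{O}_C)=\gamma(L\otimes\mathcal{O}_C)=14$. When $d=17$ this contradicts $\gamma(M\otimes\mathcal{O}_C)\le\gamma(A)=13$, so $\mathcal{M}^4_{23,22}\nsubseteq\mathcal{M}^2_{23,17}$.

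The case $d=20$, where $\gamma(M\otimes\mathcal{O}_C)=14=\gamma(A)$ and the Clifford inequality gives nothing, is the main obstacle; I would finish it exactly as in the proof of \Cref{genus 16 theorem}. Here the lift is produced via \Cref{Prop Proof Strategy}, so $N\colonequals H\otimes M^\vee\in\{L,H-L\}$ injects into $E\colonequals E_{C,A}$. By \Cref{coker is gLM}, $E/N$ is a gLM bundle of rank $3$ with $c_1(E/N)^2=L^2=6>0$ and $c_2(E/N)=d+N^2-H.N=20+6-22=4$, whence $\gamma(E/N)=c_2(E/N)-2\cdot 2=0$. By \Cref{LC Cor 2.5}, $E/N$ is then either $E_{D,\omega_D}$ for a smooth $D\subset S$ of genus equal to $\operatorname{rk}(E/N)=3$, or $E_{D,2\g{1}{2}}$ for a smooth hyperelliptic $D\subset S$ of genus $>3$. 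In both cases $[D]=c_1(E/N)\in\{L,H-L\}$, so $D$ has genus $1+\frac{1}{2}L^2=4$: the first option is impossible, and in the second $D\in|L|$ would be hyperelliptic, which by Saint-Donat~\cite{Saint_Donat_Proj_Models_of_K3s} requires an irreducible curve on $S$ of genus $1$ or $2$, contradicting the lattice facts. Hence $C$ carries no $\g{3}{20}$, so $\mathcal{M}^4_{23,22}\nsubseteq\mathcal{M}^3_{23,20}$, completing the argument.
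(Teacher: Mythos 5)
Your proposal is correct and follows essentially the same route as the paper: codimension counts rule out containments of the three divisors in $\mathcal{M}^4_{23,22}$, distinctness of the divisors is quoted from Farkas and Eisenbud--Harris, and the non-containments $\mathcal{M}^4_{23,22}\nsubseteq\mathcal{M}^{r^\prime}_{23,d^\prime}$ are obtained on a very general member of $\mathcal{K}^4_{23,22}$ via \Cref{lemma max gon and Cliff dim 1}, Lelli-Chiesa's rank-$2$ lifting, and \Cref{theorem lifting g3ds general} followed by the $\gamma(E/N)=0$ analysis through \Cref{LC Cor 2.5} and Saint-Donat. You simply carry out explicitly the lattice and case checks that the paper defers to the genus-$16$ argument; incidentally, your value $m=6$ for the minimal positive self-intersection in $\Lambda^4_{23,22}$ is the correct one (the value $4$ stated in the paper is not represented by the form $44a^2+44ab+6b^2$), though either value yields the needed degree bound.
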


\begin{proof}
	Since $\rho(23,1,12)=\rho(23,2,17)=\rho(23,3,20)=-1$ and $\rho(23,4,22)=-2$, Eisenbud and Harris \cite{Eisenbud_Harris_1989} show that the corresponding loci are irreducible of codimension $1$ in $\mathcal{M}_g$ and that $\mathcal{M}^4_{23,22}$ has codimension~$\ge2$, hence the other loci cannot be contained in $\mathcal{M}^4_{23,22}$. Since there are no $(-2)$-curves in the Picard lattices of a general K3 surface in $\mathcal{K}^{2}_{23, 17}$, $\mathcal{K}^{3}_{23,20}$, and $\mathcal{K}^{4}_{23,22}$, we see by \Cref{lemma max gon and Cliff dim 1} that none of the loci are contained in $\mathcal{M}^1_{23,12}$. One can check that for a very general K3 surface in $\mathcal{K}^4_{23,22}$, the minimal positive self-intersection is $4$. Hence by \Cref{theorem lifting g3ds general}, if $C\in|H|$ had a $\g{3}{20}$ then by considering the Donagi--Morrison lifts, one finds that $L$ is the only possible Donagi--Morrison lift of the $\g{3}{20}$. Therefore $\gamma(E/N)=0$, and one then argues as in the proof of \Cref{genus 16 theorem}. Thus $\mathcal{M}^4_{23,22}\nsubseteq\mathcal{M}^3_{23,20}$. The lifting results in \cite{Lelli_Chiesa_2013} similarly show that $\mathcal{M}^4_{23,22}\nsubseteq\mathcal{M}^2_{23,17}$ and $\mathcal{M}^3_{23,22}\nsubseteq\mathcal{M}^2_{23,17}$. Since the latter two are codimension $1$ and irreducible, they are distinct. Thus all of the Brill--Noether loci are distinct. 
\end{proof}

\vspace*{1.5cm}

\vfill
\end{document}